\documentclass[11pt,letterpaper]{article}

\usepackage[margin=1in]{geometry}

\usepackage{lineno}

\usepackage{cite}
\usepackage{hyperref}
\usepackage{colortbl}
\usepackage{paralist}
\usepackage{enumerate}
\usepackage{pdfpages}
\usepackage{float}
\usepackage{booktabs}
\usepackage[override]{cmtt}
\usepackage{color,xcolor}
\usepackage{graphicx,eso-pic}
\usepackage{boxedminipage}
\usepackage{url}
\usepackage{caption}
\usepackage{subcaption}
\usepackage{xspace}
\usepackage{multirow}
\usepackage{amsmath,amsthm,amstext,amssymb,amsfonts,latexsym}
\usepackage{wrapfig}
\usepackage{algorithm}
\usepackage{algpseudocode}
\usepackage{algorithmicx}
\usepackage{epstopdf}
\usepackage{mdframed}
\usepackage{cases}
\usepackage[capitalize]{cleveref}
\usepackage[compact]{titlesec}
\usepackage{bm}
\usepackage{tcolorbox}
\tcbuselibrary{skins, breakable}  

\usepackage{tikz-cd}
\usepackage{graphicx}  
\usepackage{mathrsfs}

\newcommand{\pr}{\mathrm{pr}} % or \mathrm{proj}
\newcommand{\Over}{\mathsf{Over}} 
\newcommand{\Fit}{\mathsf{Fit}} 
\newcommand{\Feas}{\mathsf{Feas}}

\newcommand{\essinf}{\mathsf{essinf}}

\renewcommand{\c}{\mathsf{c}}

\newcounter{note}[section]

\newcommand{\yuetodo}[1]{{\large\color{green}[Yue todo: #1]}}

\definecolor{yue}{rgb}{0.7, 0, 0}

\renewcommand{\yuetodo}[1]{}

\newcommand{\B}{\mathbb{B}}

\definecolor{darkgreen}{rgb}{0,0.5,0}
\definecolor{lightblue}{RGB}{0,176,240}
\definecolor{darkblue}{RGB}{0,112,192}
\definecolor{lightpurple}{RGB}{124, 66, 168}
\definecolor{grey}{RGB}{139, 137, 137}
\definecolor{maroon}{RGB}{178, 34, 34}
\definecolor{green}{RGB}{34, 139, 34}
\definecolor{types}{RGB}{72, 61, 139}
\definecolor{gold}{rgb}{0.8, 0.33, 0.0}

\definecolor{darkgray}{gray}{0.3}

\definecolor{darkred}{rgb}{0.5, 0, 0}
\definecolor{darkgreen}{rgb}{0, 0.5, 0}
\definecolor{darkblue}{rgb}{0,0,0.5}

\newcommand\markx[2]{}

\newcommand{\ignore}[1]{}

\newcounter{task}

\newtheorem{theorem}{Theorem}[section]

\newtheorem{corollary}[theorem]{Corollary}
\newtheorem{fact}[theorem]{Fact}

\newtheorem{lemma}[theorem]{Lemma}

\newtheorem{proposition}[theorem]{Proposition}
{
\theoremstyle{definition}
\newtheorem{definition}[theorem]{Definition}
\newtheorem{construction}[theorem]{Construction}
\newtheorem{remark}[theorem]{Remark}

\newtheorem{assumption}[theorem]{Assumption}
}

\newcounter{cnt:challenge}

\newcommand{\D}{\mathsf{D}}
\newcommand{\K}{\mathsf{K}}

\newcommand{\PR}{\mathsf{PR}}

\newcommand{\dom}{\ensuremath{\mathsf{dom}}}

\newcommand{\kprod}{\bowtie}

\newcommand{\Mplus}{\mathcal{M}_+}

\newcommand{\Pairi}{\Pi^{\mathcal E}_\iota \times \Pi^{\mathcal E}_{\bar\iota}}
\newcommand{\HS}{\mathsf{HS}}

\newcommand{\id}{\mathsf{id}}

\newcommand{\ac}{\mathrm{ac}}

\title{Universal Closest Refinement on Measurable Bipartite Relations}

\author{T-H. Hubert Chan\thanks{The University of Hong Kong.}}

\date{}

\begin{document}

\begin{titlepage}
    \maketitle

    \begin{abstract}

We study the universal closest refinement problem on measurable bipartite relations over standard Borel spaces. Given prescribed side measures, the feasible class consists of finite refinement plans concentrated on the relation and carrying one fixed marginal. The main question is whether this highly nonunique class nevertheless contains a mathematically distinguished class of refinements. We show that the correct one-sided extremal criterion is level-optimal maximin, a levelwise maximin principle formulated through truncation and overflow profiles. We then prove that this structure is exactly the one selected by convex refinement: every minimizer of a strictly convex refinement criterion is level-optimal maximin, while every level-optimal maximin refinement minimizes the full class of relevant proper lower semicontinuous convex divergence functionals. Proportional response then identifies the opposite-side partner and yields a universally closest refinement pair. Our main theorem shows that every such pair is universally closest among all feasible pairs for every divergence satisfying the data-processing inequality under measurable post-processing, and conversely that every universally closest pair has this structure. We also prove a converse paired characterization for strictly convex closest pairs. Finally, we give an equilibrium-theoretic characterization of level-optimal maximin pairs through a naturally associated continuum economy with measure-valued commodities. The measure-theoretic theory requires new tools beyond the finite case, including disintegration, measurable selection, measurable max-flow/min-cut duality, augmentation arguments, and a symmetric density decomposition separating absolutely continuous and singular components.

\end{abstract}

\noindent \textbf{Keywords:} universal closest refinement; measurable bipartite relation; level-optimal maximin; data-processing inequality; Walrasian equilibrium

    \thispagestyle{empty}
\end{titlepage}

\section{Introduction}
\label{sec:intro}

A measurable bipartite relation together with prescribed side measures gives rise to a large class of feasible refinement plans.  The existence problem is immediate to state: one seeks a finite plan concentrated on the relation with one prescribed marginal.  The harder question, and the one addressed in this paper, is whether such a highly nonunique feasible class contains a mathematically distinguished object.  We show that in the standard-Borel setting the correct answer is governed by a one-sided maximin principle.

The main results may be summarized as follows. 
First, the correct one-sided extremal criterion is level-optimal maximin, and, despite the universal scope of the final comparison theorem, the same class of refinements is already selected by a single strictly convex refinement criterion.
Second, proportional response---which reconstructs an opposite-side partner from a given refinement through its reverse conditional structure---lifts that one-sided refinement to a universally closest refinement pair, namely a feasible pair that is optimal among all feasible pairs for every divergence satisfying channel-DPI, the data-processing inequality under measurable post-processing.
Third, level-optimal maximin pairs admit an equilibrium-theoretic characterization through a naturally associated continuum economy, in a formulation that does not require proportional response. 
Thus the paper isolates a single underlying measure-theoretic structure and shows that it governs the level-optimal maximin criterion, convex selection, universal comparison, and equilibrium.

\subsection{The closest refinement problem}
\label{subsec:intro_canonical_refinement_problem}

Let \(\Omega_0\) and \(\Omega_1\) be standard Borel spaces, let \(\mathcal E \subseteq \Omega_0 \times \Omega_1\) be a measurable bipartite relation, and let \(\nu_0,\nu_1\) be finite base measures on the two sides. A refinement from side \(\iota \in \{0,1\}\) is a finite plan \(\pi\) on \(\Omega_0 \times \Omega_1\) concentrated on \(\mathcal E\) whose \(\iota\)-marginal is fixed to be \(\nu_\iota\). Its opposite marginal \(\nu_{\bar\iota}^{(\pi)}\) records the payload induced on the other side. Even under rigid relation constraints, such refinements are typically highly nonunique: the same side measure may be redistributed across \(\mathcal E\) in many inequivalent ways. The basic problem is therefore not merely to decide feasibility, but to understand the structure of this feasible class.

The central question is whether one can nevertheless identify a mathematically distinguished class of refinements. More precisely, is there a one-sided criterion that selects the right class of refinements, and from which an associated paired refinement can then be recovered? At first sight, one would not expect a satisfactory answer. The difficulty is not only that feasible refinements are numerous; rather, the relation and marginal constraints contain no evident preference relation that would privilege one refinement over another. The issue is thus not existence alone, but structural selection. A central insight of the paper is that the paired problem is controlled by a genuinely one-sided invariant: one must first identify the right one-sided criterion, and only afterward reconstruct the corresponding paired structure.

Two ideas organize the answer developed here. The first is a one-sided extremal criterion, expressed through a levelwise maximin principle. Rather than optimizing a single scalar quantity, this principle compares how much mass can be packed below each density level on the opposite side, and thereby identifies the relevant class of one-sided refinements. The second is universal optimality. Once this one-sided structure has been identified, it is shown to coincide exactly with the class selected by the relevant convex refinement programs, and the associated proportional-response pairs are shown to be simultaneously optimal for all divergences satisfying channel-DPI. In this way, the paper identifies the governing one-sided extremal structure through a levelwise maximin principle, shows that it admits equivalent convex and reciprocal characterizations, and proves that the resulting proportional-response pairs are universally closest among all feasible pairs.

\subsection{Why a measure-theoretic theory is needed}
\label{subsec:intro_measure_theoretic_need}

The universal closest refinement problem was introduced in the finite-space theory of Chan and
Xue~\cite{DBLP:conf/innovations/ChanX25}. That work already established the finite universal-closest-refinement phenomenon and related it to pointwise local maximin structure, symmetric density decomposition, and a market
equilibrium formulation. The present paper takes that finite theory as its direct precursor. Its aim
is not to reintroduce the phenomenon in a new language, but to identify the correct formulation on
standard Borel spaces and to prove the corresponding measure-theoretic theorem package there.

The finite setting is insufficient for two reasons.  First, the natural objects are not intrinsically finite.  Refinement plans are measures on a bipartite relation, and in many situations the relevant side laws and induced payload laws are not concentrated on finite or even countable sets.  The correct ambient framework is therefore that of standard Borel spaces equipped with measurable relations.  Second, the passage from finite spaces to this measurable setting is not a formal generalization.  It introduces genuinely new issues that do not arise in the same form in the finite theory.  One must work with disintegrations and measurable selection rather than explicit combinatorial decompositions; one must control analytic projections and the measurability of neighborhood sets; one must accommodate singular parts in Lebesgue decompositions of induced payloads; and one must address compactness and non-attainment phenomena in the existence theory.  Even the closedness of the relation~$\mathcal{E}$ plays a genuine role: without it, minimizing sequences may converge to boundary concentrations that are no longer feasible for the original problem.  The measure-theoretic theory therefore requires new structural tools, not merely a notational enlargement of the finite one.

More importantly, the measurable extension forces the theory to identify a different organizing
invariant. In the finite setting, density structure is exposed through a combinatorial
decomposition obtained by repeatedly extracting densest pieces. That mechanism does not survive
in the same form on general standard Borel spaces. The measurable theory is therefore organized
instead by truncation levels, overflow profiles, and the resulting level-optimal maximin principle.

One source of motivation for this extension comes from differential obliviousness and its composition theory \cite{DBLP:conf/eurocrypt/ZhouSCM23,DBLP:conf/innovations/ZhouZCS24}.  There the privacy condition for neighboring inputs can be formulated through the existence of a refinement pair for the induced view--output laws, concentrated on the output-neighbor relation and controlled by a suitable divergence.  A universally closest refinement then provides a single witness simultaneously for all divergences satisfying channel-DPI.  The finite theory already exhibits this mechanism, but the underlying laws in such applications need not be discrete, and modern divergence formalisms are naturally adapted to continuous probability models \cite{dong2022gaussian}.  This is one reason a genuinely measure-theoretic treatment is needed.

The present paper takes this as motivation, but its aim is to treat the underlying refinement problem in full measure-theoretic generality and to identify the structural mechanism that makes such a distinguished answer possible.  The resulting theory is driven by measurable transport, convex comparison, singularity structure, and duality.  It should therefore be read as a structural mathematical extension of the refinement problem rather than as an application-specific reformulation.

\subsection{Main results}
\label{subsec:intro_main_results}

The first main result identifies the correct one-sided extremal criterion.  A naive local maximin condition, suggested by the finite combinatorial picture, is too weak in the measurable setting.  More fundamentally, the finite decomposition mechanism that makes local density comparisons effective no longer survives in the same directly combinatorial form.  The paper therefore replaces it with a levelwise notion, \emph{level-optimal maximin}, defined through truncation levels on the opposite side.  This is the one-sided invariant that controls the theory.

The next result shows that level-optimal maximin is exactly the structure selected by convex refinement.  Every minimizer of a strictly convex refinement criterion is level-optimal maximin, and conversely every level-optimal maximin refinement minimizes the full convex family of the relevant lower semicontinuous divergence functionals.  Thus this one-sided structure is not introduced ad hoc: a single strictly convex criterion already recovers the same class that is selected by the entire convex family.  Proportional response then identifies an opposite-side partner and leads to the paired universal comparison theorem.

The flagship conclusion is the universal closestness theorem.

\begin{theorem}[Universal closestness]
\label{thm:intro_universal_closestness}
Under the standing regularity hypotheses on the measurable bipartite relation and the side measures, proportional-response pairs associated with level-optimal maximin refinements are universally closest among feasible pairs for every divergence satisfying channel-DPI, and conversely every such universally closest pair is level-optimal maximin on both sides and consists of mutual proportional responses.
\end{theorem}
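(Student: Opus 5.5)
The proof proceeds in two directions, both routed through the one-sided results announced above: the equivalence between level-optimal maximin and convex selection, and the proportional-response construction.

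\textbf{Forward direction.} Fix a level-optimal maximin refinement $\pi$ from side $\iota$ and its proportional-response partner $\pi'$ from side $\bar\iota$. Proportional response reverses the conditional structure of $\pi$ against $\nu_{\bar\iota}$. The first step is to show that the pair $(\pi,\pi')$ is \emph{channel-universal}: for every feasible pair $(\sigma,\sigma')$ there should be a measurable Markov kernel $K$ with $K_\#(\sigma,\sigma') = (\pi,\pi')$, applied jointly to both components. Once this holds, every divergence satisfying channel-DPI gives $D(\pi\|\pi')\le D(\sigma\|\sigma')$ immediately.

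To build $K$ I would use the symmetric density decomposition. The pair $(\pi,\pi')$ is determined by the density $d = d\nu^{(\pi)}_{\bar\iota}/d\nu_{\bar\iota}$ together with its singular part. So it suffices to show that the likelihood-ratio statistic of $(\pi,\pi')$ is dominated in the Blackwell/convex order by that of every feasible pair. Level-optimality says exactly that, for every truncation level $t$, the overflow profile of $\pi$ is minimal. This is the levelwise (Lorenz-curve) dominance, which, by a measurable Strassen-type theorem, is equivalent to minimizing every proper lsc convex divergence functional. That equivalence is the convex-family result stated above. I would then invoke Blackwell's theorem on standard Borel spaces, applied to the likelihood-ratio experiments, to upgrade convex-order dominance of the ratio distributions to the existence of a garbling kernel. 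The singular components must be handled separately, sending singular mass to singular mass.

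\textbf{Converse direction.} Suppose $(\sigma,\sigma')$ is universally closest. Take $D$ to be a strictly convex $f$-divergence; it is channel-DPI, so $(\sigma,\sigma')$ attains the same value as $(\pi,\pi')$. By the converse paired characterization for strictly convex closest pairs, the minimizers of such a criterion are exactly pairs that are level-optimal maximin on both sides and are mutual proportional responses. Strict convexity is what forces equality of the relevant conditional structures.

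\textbf{Main obstacle.} I expect the hardest step to be the forward construction of the joint garbling kernel in the presence of singular parts and non-attainment. Two points need care. First, convex-order dominance must be established uniformly over all feasible pairs, not only over pairs of the form refinement plus its own proportional response. For this I would first reduce an arbitrary pair $(\sigma,\sigma')$ to one-sided data: replacing $\sigma'$ by the proportional response to $\sigma$ can only decrease a DPI divergence, via a kernel built by disintegration. Second, I would make Blackwell's theorem measurable, using measurable selection and the closedness of $\mathcal E$ to ensure that the limiting plans remain feasible.
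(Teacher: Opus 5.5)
Your proposal is essentially the paper's argument. The joint garbling kernel you ask for is exactly what the paper's chain produces:
\begin{itemize}
\item project \((\sigma,\sigma')\) onto \((\nu^{(\sigma)}_{\bar\iota},\nu_{\bar\iota})\);
\item apply the kernel supplied by the binary randomization criterion in Proposition~\ref{prop:HS_implies_DPI};
\item lift by the proportional-response kernel, as in Lemma~\ref{lem:paired_to_one_sided_reduction}.
\end{itemize}
Your converse is Theorem~\ref{thm:strict_convex_closest_pair_induces_canonical_pair} together with the adjoint-integrand remark that follows it.

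The forward direction is simpler than you make it. You need neither a Strassen-type theorem nor the symmetric density decomposition. For \(t>0\) one has literally \(\Over^{\pi}(t)=\HS_t\bigl(\nu^{(\pi)}_{\bar\iota}\,\|\,\nu_{\bar\iota}\bigr)\). Level-optimal maximin therefore gives hockey-stick domination directly, because \(P_t^{(\pi)}(\Omega_{\bar\iota})\le\Fit_{\bar\iota}(t)\) for every feasible \(\pi\). That domination is all Blackwell's criterion requires.

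Two points need fixing.

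First, in the converse you must choose a strictly convex integrand whose optimal paired value is finite, e.g.\ \(\vartheta(u)=e^{-u}\). The strictly convex characterization says nothing at value \(+\infty\), and this case does occur. Suppose a \(\nu_\iota\)-positive set has all its neighbours in a \(\nu_{\bar\iota}\)-null set, such as a Cantor set. Then every feasible opposite marginal carries singular mass, and every feasible pair has infinite KL divergence. So every pair in \(\Pi^{\mathcal E}_\iota\times\Pi^{\mathcal E}_{\bar\iota}\) is KL-closest. Adding, for instance, a complete bipartite block elsewhere makes this set strictly larger than the level-optimal maximin/proportional-response class.

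Second, the obstacle you single out is misplaced. After normalizing the fixed masses, the binary randomization criterion on standard Borel spaces is classical and can simply be cited. The garbling kernel need not respect \(\mathcal E\) at all, since its outputs \(\pi,\pi'\) are already feasible. So no limiting plans, measurable selection, or closedness of \(\mathcal E\) enter this step. The paper uses closedness only to guarantee that level-optimal maximin refinements exist.
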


At first sight, one would not expect a single construction to optimize an entire divergence family.  The forward direction of Theorem~\ref{thm:intro_universal_closestness} shows that the one-sided maximin structure is exactly the structure that survives every channel-DPI comparison, and that proportional response packages it into a universally closest pair.  The converse direction is equally important: universal closestness is not merely a consequence of this structure, but in fact characterizes it.  In particular, the theorem is not proved by optimizing separately for different divergences.  Once a level-optimal maximin refinement is fixed, any associated proportional-response pair is optimal for all of them, and every pair with this universal optimality property must arise in this way.  An equivalent formulation may be given in terms of the power-function or receiver operating characteristic (ROC) order, by classical comparison principles \cite{blackwell1951comparison,blackwell1953equivalent,torgersen1991comparison}, but that language is not needed in the proofs and is therefore not used as the primary formulation here.

The paper also gives a distinct equilibrium-theoretic characterization.  In this reformulation, refinement plans become allocations of measure-valued commodities, the relation~$\mathcal E$ becomes an agent-wise feasibility constraint, and the density structure of the refinements is encoded through prices and individual optimality.  This viewpoint does not characterize the proportional-response construction described above, because proportional response is not part of the equilibrium formulation.  Instead, it characterizes level-optimal maximin pairs.

\begin{theorem}[Equilibrium characterization]
\label{thm:intro_equilibrium_characterization}
Under the same standing regularity hypotheses, level-optimal maximin pairs admit a Walrasian characterization in a naturally associated continuum economy, and conversely every Walrasian equilibrium in that induced economy yields a level-optimal maximin pair.
\end{theorem}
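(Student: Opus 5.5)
Both directions go through an explicit Walrasian construction. The economy is the one suggested by the finite theory of Chan and Xue: each point $u\in\Omega_\iota$ is an agent endowed with the mass $\nu_\iota$ near $u$. Each point $v\in\Omega_{\bar\iota}$ is a commodity with supply $\nu_{\bar\iota}$. An allocation is a plan $\pi$ concentrated on $\mathcal E$ with $\iota$-marginal $\nu_\iota$, so agent $u$ may only buy goods in its $\mathcal E$-section. Prices are a measurable density $p$ on $\Omega_{\bar\iota}$ relative to the reference supply.

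Given a level-optimal maximin refinement $\pi$, I take the price to be the symmetric density of the induced payload, $p=d\nu_{\bar\iota}^{(\pi)}/d\nu_{\bar\iota}$ on the absolutely continuous part. On the singular part of the Lebesgue decomposition I set $p=+\infty$, or treat it through the symmetric density decomposition so that those atoms are consistently priced. Market clearing then holds by construction. The content is individual optimality: $\nu_\iota$-a.e.\ agent $u$ should spend only on goods in its section that minimize price, or maximize bang-per-buck. I would prove this by contradiction using the truncation/overflow profile. Suppose a positive-measure set of agents puts mass on goods priced strictly above the essential infimum of $p$ over their sections. Then measurable selection gives a measurable rerouting: choose for each such $u$ a cheaper neighbour measurably, using the analyticity of the projections. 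Pushing a small amount of this mass strictly lowers the mass above some level $t$ without raising the mass packed below lower levels. This contradicts level-optimality at level $t$.

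For the converse, start from a Walrasian equilibrium $(\pi,p)$. Fix a level $t$ and compare $\pi$ with any feasible $\pi'$. Agents whose minimal price exceeds $t$ must send all their mass above $t$ in every feasible plan. Agents whose minimal price is at most $t$ already put all their mass in the sublevel set $\{p\le t\}$ in $\pi$. Hence the overflow of $\pi$ above $t$ equals the unavoidable overflow, which is a min-cut value. Measurable max-flow/min-cut duality certifies that no $\pi'$ does better. Running over all $t$ gives level-optimal maximin. Doing the same on the other side, with the reciprocal price $1/p$, yields the pair statement.

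I expect the main obstacle to be the forward direction's handling of singular components and the measurability of the "cheaper neighbour" sets. The essential infimum over a section is only analytic-measurable, so I would first try to pass to a universally measurable modification and apply Jankov–von Neumann selection. Closedness of $\mathcal E$ would be used only to ensure that the infimum is attained where it is needed.
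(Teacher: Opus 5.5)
The forward direction has a genuine gap, and it sits exactly at the two points you defer as technicalities.

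\textbf{Singular payloads.} Your one-sided Fisher-type economy cannot carry singular payloads, yet level-optimal maximin refinements may be forced to have them. With supply $\nu_{\bar\iota}$ and price $p=d\nu^{(\pi)}_{\bar\iota}/d\nu_{\bar\iota}$, clearing says that the money spent on goods equals $p\,\nu_{\bar\iota}$. So every equilibrium payload is $\nu_{\bar\iota}$-absolutely continuous.

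Take $\Omega_0=\{x_1,x_2\}$, $\nu_0=\delta_{x_1}+\delta_{x_2}$, $\Omega_1=[0,1]$, $\nu_1=\lambda$, and $\mathcal E=\{(x_1,0)\}\cup(\{x_2\}\times[0,1])$. Assumption~\ref{assump:closed_polish_graph_nonempty_nbhd} holds. But every $\pi\in\Pi^{\mathcal E}_0$ sends the unit mass of $x_1$ to the $\lambda$-null point $0$. Since your allocations are exactly such plans, your economy has no equilibrium at all for this instance.

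Declaring $p=+\infty$ there is not an admissible price, and singular parts need not be atoms. In the finite theory every point has positive mass, which is why the issue is invisible there.

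The paper's economy is built to absorb this. Both sides are agents with endowment $\delta_t$, and every agent may retain its own endowment. The retained mass is $\nu_\iota^\perp$, the part of $\nu_\iota$ singular to the absolutely continuous payload $P_\iota^{\mathrm{ac}}$ received from the \emph{opposite} refinement; it is not the singular part of the payload itself. Retaining agents are priced at $0$ and all their neighbours at $2$. This is consistent by Lemma~\ref{lem:singular_payload_stays_singular_under_backward_kernel} and the zero-flow step of Lemma~\ref{lem:disjoint_good_supports_null_bad_neighborhoods}. In the example, the mass $x_1$ ships to $0$ is simply never traded.

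\textbf{Pointwise optimality.} Walrasian optimality is tested against every affordable bundle, including $M\delta_v$ for a single point $v$. So for a.e.\ agent $u$ you need \emph{every} point of $N_{\bar\iota}(u)$, not just $\nu_{\bar\iota}$-a.e.\ point, to be priced at or above $u$'s threshold. A contradiction argument against level-optimality yields only an essential-infimum statement with a $u$-dependent exceptional null set, as in Lemma~\ref{lem:level_optimal_implies_pointwise_local_maximin}. An uncountable union of such null sets need not be null.

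Your rerouting through a measurably selected cheaper neighbour does not even give the essential-infimum statement. A selector ships mass onto points, i.e.\ into $\nu_{\bar\iota}$-null sets. That creates singular payload and adds nothing under the cap $t\,\nu_{\bar\iota}$, so $\Fit_{\bar\iota}(t)$ is never exceeded. A contradiction requires an augmenting subflow with bounded density on a set of positive $\nu_{\bar\iota}$-measure.

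The missing ingredient is Lemma~\ref{lem:disjoint_good_supports_null_bad_neighborhoods}. By Lemma~\ref{lem:reciprocal_densities_layer_compatibility} and the zero-flow principle, the maximal flow from $\nu_\iota^{\mathrm{ac}}\restriction\{\rho_\iota^{\mathrm{ac}}\le q\}$ to $\nu_{\bar\iota}\restriction\{\rho_{\bar\iota}^{\mathrm{ac}}<1/q\}$ vanishes for each rational $q$. Proposition~\ref{prop:measurable_maxflow_mincut_bipartite}, together with a $\liminf$ of near-optimal cuts, then puts all bad neighbours of a full-measure set of agents into one measurable null set, on which the price is raised.

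Closedness of $\mathcal E$ cannot give attainment of the infimum of a merely measurable price. In the converse, the paper derives attainment from individual optimality (Lemma~\ref{lem:walras_extracted_refinement_structure}).

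\textbf{The converse and the pair.} Your converse cut idea is sound in spirit and matches the bound with $C=\Omega_0\setminus A_t$ in Theorem~\ref{thm:walras_to_level_opt_refinement_pair}. However, your ``reciprocal price $1/p$'' is not a function on $\Omega_\iota$. The paper prices both sides in one economy by $\rho_\iota^{\mathrm{ac}}/(1+\rho_\iota^{\mathrm{ac}})$. The relation $\rho_0\rho_1=1$ holds only along traded edges, and it must itself be proved: Lemma~\ref{lem:reciprocal_densities_layer_compatibility} in the forward direction, and Lemma~\ref{lem:walras_extracted_refinement_structure}(iii) in the converse.
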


Theorem~\ref{thm:intro_equilibrium_characterization} likewise has two directions.  It shows not only that level-optimal maximin pairs give rise to equilibrium, but also that the equilibrium formulation recovers exactly the same refinement structure.  In this sense the continuum-economy viewpoint is not a separate application, but a second characterization of the same measure-theoretic phenomenon, in a formulation where prices and individual optimality replace proportional response.  Taken together, these results yield a single theorem package: the one-sided extremal criterion is identified by level-optimal maximin, convex refinement characterizes the same class, proportional response and universal closestness characterize one another at the paired level, and level-optimal maximin pairs admit a Walrasian characterization.

\subsection{Proof architecture and governing ideas}
\label{subsec:intro_proof_architecture}

The proof strategy is organized around a sequence of reductions that isolate the real core of the problem.  The first reduction passes from the paired refinement problem to a one-sided problem.  This is conceptually decisive: although the final universal comparison theorem is formulated for pairs, the structure that controls it is already present on one side.  The paired theory is therefore not developed independently of the one-sided theory; rather, the latter is the source from which the former is reconstructed.  This is why the paper identifies the one-sided extremal structure first and treats the paired construction only afterward; see Section~\ref{sec:paired_reduction}.

A second reduction organizes the universal comparison theorem itself.  Once the paired problem has been reduced to one side, channel-DPI comparison is reduced in turn to hinge-type, or hockey-stick, comparison.  This provides a unified route to universal optimality.  Instead of proving a separate comparison theorem for each divergence, the argument passes through a single family of elementary truncation functionals that generate the relevant convex comparisons.  This is the reason the paper introduces the comparison machinery early: it is not auxiliary background, but the device that converts the one-sided structural theory into the universal-closest theorem.

The one-sided theory is then analyzed through a truncation and overflow viewpoint.  This is one of the main ways in which the measure-theoretic theory departs from the finite precursor.  In the finite setting, density structure can be organized through iterative combinatorial decomposition.  Here the right language is instead levelwise.  One studies, for each truncation level, how much mass can be realized below that level, and compares refinements through the corresponding overflow profiles.  This explains why the correct one-sided extremal criterion is level-optimal maximin rather than a pointwise local condition: the invariant relevant to convex comparison is inherently level-based; see Section~\ref{sec:maximin}.

The technical engine behind this analysis is measure-theoretic rather than combinatorial.  The proofs rely on disintegration, measurable selection, measurable max-flow/min-cut principles, and an augmenting-subplan argument that replaces finite exchange constructions.  These tools interact with convex duality in a precise way.  The augmentation argument shows that failure of the maximin condition creates a local exchange that strictly improves the convex objective; the duality and hinge representation arguments then convert levelwise overflow comparisons into divergence comparisons.  In this way the paper turns measurable feasibility, convex optimization, and universal comparison into parts of a single proof architecture rather than separate techniques.

Existence is also delicate in the measurable setting, and the hypotheses are shaped accordingly.  In particular, closedness of the relation~$\mathcal E$ is not a cosmetic regularity assumption.  It enters the existence theory through the passage to limits, where minimizing sequences may otherwise converge to boundary concentrations that are no longer feasible for the original relation.  The paper isolates this issue explicitly, so that the reader can see that the standing assumptions are tied to genuine analytic obstructions rather than to convenience.

The equilibrium characterization requires one further structural ingredient.  For level-optimal maximin pairs, the paper isolates a symmetric density decomposition separating absolutely continuous and singular components and identifying the structure needed for the price construction.  This decomposition is the bridge from refinement theory to the continuum economy: it is what allows the measure-theoretic structure discovered on the refinement side to reappear as feasibility, prices, and individual optimality on the equilibrium side, as developed in Section~\ref{sec:continuum_agents_commodities}.

\subsection{Related work}
\label{subsec:intro_related_work}

The direct precursor is the finite-space theory of universal closest refinement
developed by Chan and Xue~\cite{DBLP:conf/innovations/ChanX25}. That work
already treats the finite problem and establishes, in that setting, the
universal-closest-refinement phenomenon together with pointwise local maximin
structure, symmetric density decomposition, and a market-equilibrium
characterization. The present paper extends that theory to measurable bipartite
relations on standard Borel spaces. This extension is not formal. The passage
from finite to measurable spaces forces one to confront precisely the issues
that finite proofs avoid: disintegration, measurable selection, analytic
projections, singular parts, attainment, and the role of relation closedness. It
also changes the proof architecture substantially. In place of the finite
combinatorial decomposition based on repeatedly extracting densest pieces, the
present paper works with one-sided reduction, truncation and overflow profiles,
measurable augmentation, and convex-duality arguments.

The convex-refinement aspect of the paper is adjacent to the classical
\(I\)-projection geometry of Csisz\'ar
\cite{csiszar1975idivergence}, where a single divergence is minimized over a
convex family of probability measures. Section~\ref{sec:one_side} likewise
studies minimization of a fixed convex divergence over a convex feasible family
of opposite marginals determined by marginal and relation constraints. The
difference is that the feasible family here arises from measurable refinements
on a bipartite relation, and the main theorem package is not confined to one
prescribed divergence: it identifies a structural class---level-optimal
maximin---for which every refinement is optimal for the full convex family and,
at the paired level, for every divergence satisfying channel-DPI.

The universal comparison aspect of the paper is adjacent to the classical
comparison-of-experiments literature initiated by Blackwell
\cite{blackwell1951comparison,blackwell1953equivalent} and developed
systematically by Torgersen and Le Cam
\cite{torgersen1991comparison,lecam1996comparison}. The theorem proved here is
not a comparison-of-experiments result in that tradition. Rather, it interacts
with that literature through the comparison criteria: universal closestness is
formulated for all channel-DPI divergences, and may equivalently be expressed
in power-function or ROC language. In the measurable-state setting, however,
these comparisons should not be viewed as a formal transplantation of
finite-state Blackwell intuition. Infinite-state comparison theory requires
additional care, and a modern treatment on general Polish state spaces has been
developed by Khan, Yu, and Zhang \cite{KhanYuZhang2024}. The present paper uses
the classical binary randomization criterion on standard Borel spaces in the
particular fixed-mass reduction recorded in
Proposition~\ref{prop:HS_implies_DPI}, but the surrounding measurable-state
comparison background is broader than the finite-space theory alone.

On the technical side, the paper draws on the measure-theoretic marginal and
duality tradition surrounding Strassen, Edwards, and Kellerer
\cite{Strassen1965,Edwards1978Marginals,Kellerer_1984}, as well as Lov\'asz's
measurable flow framework \cite{Lovasz_2021}. These works provide part of the
background for measurable couplings, marginal duality, and max-flow/min-cut
principles. The present paper does not, however, reduce to a standard transport
or coupling problem. Its central object is a refinement selected by a maximin
principle, and the technical contribution is to connect that principle to
convex comparison and universal divergence optimality on measurable relations.

The equilibrium characterization belongs to the background of continuum
economies and nonatomic assignment models, beginning with Aumann and
Hildenbrand \cite{Aumann1964,Aumann1966,Hildenbrand1974} and continuing through
Gretsky--Ostroy--Zame and related work
\cite{GretskyOstroyZame1992,GretskyOstroyZame1999,OstroyZame1994,Podczeck1997,TOURKY2001189}.
In the present paper, however, the continuum-economy viewpoint is not a
separate application. Its role is to characterize level-optimal maximin pairs
intrinsically through prices, feasibility, and individual optimality, thereby
providing an alternative structural description of the same measure-theoretic
refinement phenomenon.

\subsection{Organization of the paper}
\label{subsec:intro_organization}

The paper begins in Sections~\ref{sec:prelim} and \ref{sec:paired_reduction} by setting up the measurable refinement framework, including the bipartite relation model, feasible plans, induced payloads, divergence language, and the early reduction from paired comparison to a one-sided problem.  It then introduces level-optimal maximin as the correct one-sided extremal criterion in Section~\ref{sec:maximin} and develops the truncation and overflow viewpoint that governs the rest of the analysis.  The next stage, carried out in Section~\ref{sec:measure_bipartite} and Section~\ref{sec:one_side}, supplies the measure-theoretic infrastructure---disintegration, measurable max-flow/min-cut, augmenting-subplan arguments, and the auxiliary convex-comparison tools needed later in streamlined form.

On that basis, the paper turns in Sections~\ref{sec:one_side} and \ref{sec:universal_closest_pairs} to the structural characterization of level-optimal maximin refinements through convex optimization and proportional response, and from there proves the universal closest theorem for proportional-response pairs associated with level-optimal maximin refinements.  The final part, Section~\ref{sec:continuum_agents_commodities}, develops the continuum-economy formulation and proves the equilibrium characterization of level-optimal maximin pairs.  The appendices collect the supporting material on closedness and related technical points that would otherwise interrupt the main theorem narrative.

\section{Preliminaries}
\label{sec:prelim}

This section fixes the measurable refinement framework and the comparison language used throughout.
We first specify the measurable bipartite relation model and the standing regularity assumptions.
We then introduce the one-sided refinement plans and their induced payload measures, which will
become the basic comparison objects after the paired problem is reduced to one side in
Section~\ref{sec:paired_reduction}.  Finally, we record the divergence language needed for the
universal optimality theorem.

Throughout, write \(\mathbb B := \{0,1\}\) and \(\bar\iota := 1-\iota\).
When convenient, we formulate statements for a generic side \(\iota\in\mathbb B\) and write
\(\bar\iota:=1-\iota\) for the opposite side. In proofs, we occasionally fix \(\iota=0\) by symmetry.

For a measurable space \((\Omega,\Sigma)\), we write \(\mathcal M_+(\Omega)\) for the cone of finite
nonnegative measures on~\((\Omega,\Sigma)\).
Given \(\mu,\nu\in\mathcal M_+(\Omega)\), we write
\[
\mu = \mu^{\mathrm{ac}} + \mu^\perp = r\,\nu + \mu^\perp,
\qquad
r := \frac{d\mu^{\mathrm{ac}}}{d\nu},
\qquad
\mu^\perp \perp \nu,
\]
for the Lebesgue decomposition of \(\mu\) with respect to \(\nu\).

%, and $\mathcal P(\Omega)$ for the set of probability
%measures on $(\Omega,\Sigma)$.

\subsection{Measurable Bipartite Relation Model}
\label{subsec:model_infinite_bipartite}

We begin by fixing the ambient measurable relation and the standing assumptions under which the
existence and structure theory will be developed.

\begin{definition}[Measurable Bipartite Relation]
\label{defn:measurable_bipartite_graph}
For \(\iota\in\mathbb B\), let \((\Omega_\iota,\Sigma_\iota)\) be a measurable space.
A \emph{(measurable) bipartite relation} between the two vertex spaces \(\Omega_0\) and \(\Omega_1\)
is specified by a measurable edge relation \(\mathcal E\subseteq \Omega_0\times\Omega_1\),
measurable with respect to \(\Sigma_0\otimes\Sigma_1\). For \(x\in\Omega_0\) and \(y\in\Omega_1\), define
\[
  N_1(x):=\{\,y\in\Omega_1:(x,y)\in\mathcal E\,\},
  \qquad
  N_0(y):=\{\,x\in\Omega_0:(x,y)\in\mathcal E\,\}.
\]

For \(\iota \in \B\), \(S \subseteq \Omega_\iota\), denote
\(N_{\bar \iota}(S) := \cup_{s \in S} N_{\bar \iota}(s)\);
denote \(\mathcal{E}_\iota := \{(x_\iota, x_{\bar \iota}) \in \Omega_\iota \times \Omega_{\bar \iota}: (x_0, x_1) \in \mathcal{E}\}\).
\end{definition}

\begin{definition}[Finite Vertex Measures]
\label{defn:vertex_measures}
For each \(\iota\in\mathbb B\), let \(\nu_\iota\) be a finite measure on \((\Omega_\iota,\Sigma_\iota)\)
such that \(\nu_\iota(\Omega_\iota)>0\).
\end{definition}

\begin{assumption}[Closed Polish Relation With Nonempty Neighborhoods]
\label{assump:closed_polish_graph_nonempty_nbhd}
Throughout, we assume:
\begin{enumerate}[(i)]
\item \textbf{Polish vertex spaces.}
For each \(\iota\in\mathbb B\), \(\Omega_\iota\) is a Polish space equipped with its Borel
\mbox{\(\sigma\)-algebra~\(\Sigma_\iota\).}

\item \textbf{Closed edge relation.}
The edge set \(\mathcal E\subseteq \Omega_0\times\Omega_1\) is closed.

We record in Appendix~\ref{app:closed-support-needed} an example illustrating the role of the closedness assumption on \(\mathcal E\).

\item \textbf{Nonempty neighborhoods.}
For each \(\iota\in\mathbb B\), one has
\[
  N_{\bar\iota}(x)\neq\emptyset
  \qquad\text{for all }x\in\Omega_\iota .
\]
For the arguments below, it would in fact suffice to assume this only for \(\nu_\iota\)-a.e.\ \(x\).
\end{enumerate}
\end{assumption}

\subsection{Allocation Refinements and Induced Payloads}
\label{subsec:refinements_payloads}

We now introduce the basic one-sided refinement object and the opposite-side payload measure it
induces.  This payload will later become the true comparison variable in the one-sided reduction.

For a measurable map \(T:\mathcal{X}\to\mathcal{Y}\) and a measure \(\alpha\) on \(\mathcal{X}\), let
\((T)_\#\alpha\) denote the pushforward measure on \(\mathcal{Y}\), defined by
\(((T)_\#\alpha)(B)=\alpha(T^{-1}(B))\) for measurable \(B \subseteq \mathcal{Y}\).

\begin{definition}[Allocation Refinement as a Plan]
\label{defn:allocation_refinement_plan}
Fix \(\iota\in\mathbb B\). The collection of allocation refinements of \(\nu_\iota\) is
\[
  \Pi^{\mathcal E}_\iota(\nu_\iota)
  :=
  \Bigl\{
    \pi\in\mathcal M_+(\Omega_0\times\Omega_1)
    :
    \pi(\mathcal E^\c)=0,\ 
    (\pr_\iota)_\#\pi=\nu_\iota
  \Bigr\}.
\]
\end{definition}

Thus \(\pi\in\Pi^{\mathcal E}_\iota(\nu_\iota)\) distributes the mass of \(\nu_\iota\) across admissible edges while keeping the \(\iota\)-marginal fixed.
When there is no risk of ambiguity, we write
\(\Pi^{\mathcal E}_\iota:= \Pi^{\mathcal E}_\iota(\nu_\iota)\).
%and \(\Pi^{\mathcal E}_{\mathrm{pair}} := \Pi^{\mathcal E}_0 \times \Pi^{\mathcal E}_1\).

\begin{definition}[Payload Measure (Plan Form)]
\label{defn:payload}
Fix \(\iota\in\mathbb B\) and let \(\pi\in\Pi^{\mathcal E}_\iota(\nu_\iota)\).
The induced payload measure on the opposite side is the other marginal
\[
  \nu^{(\pi)}_{\bar\iota}
  :=
  (\pr_{\bar\iota})_\#\pi .
\]
Equivalently,
\[
  \nu^{(\pi)}_{\bar\iota}(B)
  =
  \pi(\Omega_\iota\times B),
  \qquad
  B\in\Sigma_{\bar\iota}.
\]
\end{definition}

Since \((\Omega_0,\Sigma_0)\) and \((\Omega_1,\Sigma_1)\) are standard Borel, disintegration will be
available whenever needed. We defer the corresponding measure-theoretic statements to the technical
infrastructure section.

\subsection{Divergences: Discrepancy Measures Between Finite Measures}
\label{subsec:discrepancy_measures_divergences_power}

We record here the comparison language used later in the universal optimality theorem.  At this
stage we need only the abstract divergence formalism and the monotonicity property that will govern
universal comparison after the reduction to one-sided payload measures.

\begin{definition}[Divergence Between Finite Measures]
\label{defn:divergence}
A \emph{divergence} \(\D\) is a family of maps
\[
  \D_{(\Omega,\Sigma)}:\mathcal M_+(\Omega)\times\mathcal M_+(\Omega)\to\overline{\mathbb R},
  \qquad
  \overline{\mathbb R}:=\mathbb R\cup\{+\infty,-\infty\},
\]
indexed by measurable spaces \((\Omega,\Sigma)\).
When the underlying space is clear, we write \(\D(P\|Q)\) for
\(\D_{(\Omega,\Sigma)}(P,Q)\).
\end{definition}

\begin{definition}[Data-Processing Inequality (Channel-DPI)]
\label{defn:DPI_divergence}
We say that a divergence \(\D\) satisfies the \emph{data-processing inequality} if for every
pair of measurable spaces \((\Omega,\Sigma)\) and \((\Omega',\Sigma')\), every Markov kernel
\(K:(\Omega,\Sigma)\rightsquigarrow(\Omega',\Sigma')\), and all \(P,Q\in\mathcal M_+(\Omega)\), one has
\[
  \D_{(\Omega',\Sigma')}(K_\#P\|K_\#Q)
  \le
  \D_{(\Omega,\Sigma)}(P\|Q),
\]
where \((K_\#P)(B):=\int_\Omega K(x,B)\,P(dx)\) for \(B\in\Sigma'\).
We write \(\mathfrak D_{\mathrm{DPI}}\) for the class of all divergences satisfying channel-DPI.
\end{definition}

\section{Paired-to-One-Sided Reduction and Restriction of the Divergence Class}
\label{sec:paired_reduction}

This section performs the two reductions that organize the rest of the paper. We first show that the paired refinement problem is controlled entirely by a one-sided comparison problem on the opposite marginal. We then reduce universal comparison over all channel-DPI divergences to a smaller comparison class, namely hockey-stick divergences on fixed-mass families. These two reductions explain why the later sections may focus on the one-sided extremal criterion and on truncation and hockey-stick comparison.

Fix the measurable bipartite relation \(\mathcal E\) and side measures \(\nu_0,\nu_1\) as in
Section~\ref{sec:prelim}.

\begin{definition}[$\mathsf D$-Closest Paired Refinement Problem]
\label{defn:D_closest_paired_refinement_problem}
Let \(\mathsf D\) be a divergence between finite measures; fix \(\iota \in \B\).

The \emph{\(\mathsf D\)-closest paired refinement problem} is
\begin{equation} \label{eq:paired_closest}
  \inf\bigl\{
    \mathsf D(\pi_\iota\|\pi_{\bar \iota})
    :
    (\pi_\iota,\pi_{\bar \iota})\in \Pairi
  \bigr\}.
\end{equation}
A pair \((\pi_\iota^\star,\pi_{\bar\iota}^\star)\in \Pairi\) is called a \emph{\(\mathsf D\)-closest refinement pair} if it attains this infimum.
\end{definition}

\begin{definition}[Universal Closest Paired Refinement Problem]
\label{defn:universal_closest_paired_refinement_problem}
The \emph{universal closest paired refinement problem} is to find a pair \((\pi_\iota^\star,\pi_{\bar\iota}^\star)\in \Pairi\) such that, for every \(\mathsf D\in\mathfrak D_{\mathrm{DPI}}\), the pair \((\pi_\iota^\star,\pi_{\bar\iota}^\star)\) is a \(\mathsf D\)-closest refinement pair.
Any such pair is called a \emph{universal closest refinement pair}.
\end{definition}

\subsection{Reduction from Paired to One-Sided Refinement}
\label{sec:reduction_paired}

We now pass from the paired formulation to the one-sided problem that will govern the rest of the
theory.  Although universal optimality is naturally stated for refinement pairs, the reduction below
shows that the relevant comparison is already determined by the opposite marginal of a single
one-sided refinement.  Proportional response then serves as the reconstruction mechanism that turns
a one-sided optimizer back into a paired one.

\begin{definition}[$\mathsf D$-Closest One-Sided Refinement Problem]
\label{defn:D_closest_one_sided_refinement_problem}
Fix \(\iota\in\mathbb B\), and let \(\mathsf D\) be a divergence between finite measures.

The \emph{\(\mathsf D\)-closest one-sided refinement problem on side \(\iota\)} is
\begin{equation} \label{eq:one_closest}
  \inf\bigl\{
    \mathsf D\bigl(\nu^{(\pi)}_{\bar\iota}\,\|\,\nu_{\bar\iota}\bigr)
    :
    \pi\in \Pi^{\mathcal E}_\iota
  \bigr\},
\end{equation}
where \(\nu^{(\pi)}_{\bar\iota}\) is the \({\bar \iota}\)-marginal
of \(\pi\) as in Definition~\ref{defn:payload}.

A plan \(\pi^\star\in \Pi^{\mathcal E}_\iota\) is called a \emph{\(\mathsf D\)-closest one-sided refinement on side \(\iota\)} if it attains this infimum.
\end{definition}

To formulate the reconstruction step precisely, we record the kernel notation and the reverse
disintegration formalism used in the definition of proportional response.

\paragraph{Kernel product.}
If \(\mu\in\Mplus(\Omega_0)\) and \(K:\Omega_0\times\Sigma_1\to[0,\infty)\) is a kernel, write
\(\mu\kprod K\in\Mplus(\Omega_0\times\Omega_1)\) for the finite measure determined by
\[
  (\mu\kprod K)(A\times B):=\int_A K(x,B)\,\mu(dx),
  \qquad A\in\Sigma_0,\ B\in\Sigma_1.
\]
Likewise, if \(\eta\in\Mplus(\Omega_1)\) and \(L:\Omega_1\times\Sigma_0\to[0,\infty)\) is a kernel, write
\(\eta\kprod L\) for the finite measure on \(\Omega_0\times\Omega_1\) determined by
\[
  (\eta\kprod L)(A\times B):=\int_B L(y,A)\,\eta(dy),
  \qquad A\in\Sigma_0,\ B\in\Sigma_1.
\]

\begin{definition}[Disintegration Kernel]
\label{defn:disintegration}
Assume \((\Omega_0,\Sigma_0)\) and \((\Omega_1,\Sigma_1)\) are standard Borel measurable spaces, and let
\(\pi\) be a finite measure on \((\Omega_0\times\Omega_1,\Sigma_0\otimes\Sigma_1)\).
Fix \(\iota\in\{0,1\}\), write \(\bar\iota:=1-\iota\), and let
\(\nu^{(\pi)}_\iota := (\pr_\iota)_\#\pi\) be the \(\iota\)-marginal of \(\pi\).

Let \(\mu_\iota\) be a \(\sigma\)-finite measure on \((\Omega_\iota,\Sigma_\iota)\) such that
\(\nu^{(\pi)}_\iota\ll \mu_\iota\).
A \emph{(regular) disintegration of \(\pi\) with respect to \(\mu_\iota\)} is a kernel
\[
  \Gamma^{(\pi)}_{\bar\iota}:\Omega_\iota\times\Sigma_{\bar\iota}\to[0,\infty)
\]
such that:
\begin{enumerate}\setlength{\itemsep}{0.2em}
\item for each \(x\in\Omega_\iota\), \(B\mapsto \Gamma^{(\pi)}_{\bar\iota}(x,B)\) is a finite measure on
\((\Omega_{\bar\iota},\Sigma_{\bar\iota})\);
\item for each \(B\in\Sigma_{\bar\iota}\), \(x\mapsto \Gamma^{(\pi)}_{\bar\iota}(x,B)\) is \(\Sigma_\iota\)-measurable;
\item \(\pi=\mu_\iota\kprod \Gamma^{(\pi)}_{\bar\iota}\).
\end{enumerate}
\end{definition}

\begin{remark}[Disintegration on standard Borel spaces]
\label{rem:disintegration_exists}
The existence and \(\mu_\iota\)-a.e.\ uniqueness of disintegrations in the sense of
Definition~\ref{defn:disintegration} are standard consequences of the disintegration theorem
on standard Borel spaces; see, for example,
Kallenberg~\cite[Chapter~7, ``Conditioning and Disintegration'']{Kallenberg2021Foundations}.
\end{remark}

\begin{definition}[Proportional Response to Refinement Plan]
\label{defn:proportional_response_plan}
Fix \(\iota\in\mathbb B\), write \(\bar\iota:=1-\iota\), and let \(\pi\in\Pi^{\mathcal E}_\iota\).
Set \(P:=(\pr_{\bar\iota})_\#\pi\), and let \(\K^{(\pi)}_{\iota}\) be a disintegration of \(\pi\) with
respect to \(P\) in the sense of Definition~\ref{defn:disintegration},
i.e., \(\pi = P \kprod \K^{(\pi)}_{\iota}\).
This kernel is unique \(P\)-a.e. and, since \(\pi(\mathcal E^\c)=0\), is carried by \(N_\iota(y)\)
for \(P\)-a.e. \(y\).

Write the mutual Lebesgue decompositions as
\(\nu_{\bar\iota}=(\nu_{\bar\iota})_{\mathrm{ac}}+(\nu_{\bar\iota})_{\perp}\) and
\(P=P_{\mathrm{ac}}+P_{\perp}\), where \((\nu_{\bar\iota})_{\perp}\),
\((\nu_{\bar\iota})_{\mathrm{ac}}(\sim P_{\mathrm{ac}})\), and \(P_\perp\) are mutually singular.

By Assumption~\ref{assump:closed_polish_graph_nonempty_nbhd},
\(\nu_{\bar\iota}(\{y:N_\iota(y)=\emptyset\})=0\). Fix any measurable probability kernel
\(\K^{\mathrm{fb}}_{\iota}:\Omega_{\bar\iota}\times\Sigma_{\iota}\to[0,1]\) carried by \(N_\iota(y)\)
for \((\nu_{\bar\iota})_{\perp}\)-a.e. \(y\).

Define the opposite-side refinement by
\[
  \PR_{\bar\iota}(\pi)
  :=
  (\nu_{\bar\iota})_{\mathrm{ac}} \kprod \K^{(\pi)}_{\iota}
  +
  (\nu_{\bar\iota})_{\perp} \kprod \K^{\mathrm{fb}}_{\iota} \in \Pi^{\mathcal{E}}_{\bar\iota}.
\]

Then \((\pr_{\bar\iota})_\#\PR_{\bar\iota}(\pi)=\nu_{\bar\iota}\) and
\(\PR_{\bar\iota}(\pi)(\mathcal E^\c)=0\).
\end{definition}

\begin{lemma}[Reduction of the paired problem to the one-sided problem]
\label{lem:paired_to_one_sided_reduction}
Fix \(\iota\in\mathbb B\), write \(\bar\iota:=1-\iota\), and let \(\mathsf D\) be a divergence
between finite measures satisfying channel-DPI.

Then
\[
  \inf\bigl\{
    \mathsf D(\pi_\iota\|\pi_{\bar\iota})
    :
    (\pi_\iota,\pi_{\bar\iota})\in \Pairi
  \bigr\}
  =
  \inf\bigl\{
    \mathsf D\bigl(\nu^{(\pi)}_{\bar\iota}\,\|\,\nu_{\bar\iota}\bigr)
    :
    \pi\in \Pi^{\mathcal E}_\iota
  \bigr\}.
\]

Moreover:
\begin{enumerate}
\item if \(\pi^\star\in\Pi^{\mathcal E}_\iota\) is a \(\mathsf D\)-closest one-sided refinement on side \(\iota\),
then \((\pi^\star,\PR_{\bar\iota}(\pi^\star))\) is a \(\mathsf D\)-closest refinement pair;

\item if \((\pi_\iota^\star,\pi_{\bar\iota}^\star)\in \Pairi\) is a
\(\mathsf D\)-closest refinement pair, then \(\pi_\iota^\star\) is a \(\mathsf D\)-closest one-sided refinement on side \(\iota\).
\end{enumerate}
\end{lemma}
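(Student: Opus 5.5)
The plan is to prove the two inequalities between the infima separately, each by a single application of channel-DPI (Definition~\ref{defn:DPI_divergence}) with a suitably chosen Markov kernel. Items~1 and~2 will then follow from the two inequalities.

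\emph{Paired $\ge$ one-sided.} Fix any pair $(\pi_\iota,\pi_{\bar\iota})\in\Pairi$. I will apply DPI with the deterministic Markov kernel $K(z,\cdot):=\delta_{\pr_{\bar\iota}(z)}$ from $\Omega_0\times\Omega_1$ to $\Omega_{\bar\iota}$. Its pushforwards are $K_\#\pi_\iota=\nu^{(\pi_\iota)}_{\bar\iota}$ and $K_\#\pi_{\bar\iota}=\nu_{\bar\iota}$, the latter because $\pi_{\bar\iota}\in\Pi^{\mathcal E}_{\bar\iota}$. DPI then gives
\[
\mathsf D(\pi_\iota\|\pi_{\bar\iota})\ge \mathsf D\bigl(\nu^{(\pi_\iota)}_{\bar\iota}\|\nu_{\bar\iota}\bigr).
\]
Since $\pi_\iota\in\Pi^{\mathcal E}_\iota$, the right side is at least the one-sided infimum. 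Taking the infimum over pairs yields ``$\ge$''.

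\emph{One-sided $\ge$ paired.} Fix $\pi\in\Pi^{\mathcal E}_\iota$ and set $P:=\nu^{(\pi)}_{\bar\iota}$. The aim is a single Markov kernel $\widetilde K$ from $\Omega_{\bar\iota}$ to $\Omega_0\times\Omega_1$ with $\widetilde K_\#P=\pi$ and $\widetilde K_\#\nu_{\bar\iota}=\PR_{\bar\iota}(\pi)$. DPI will then give
\[
\mathsf D(\pi\|\PR_{\bar\iota}(\pi))\le\mathsf D(P\|\nu_{\bar\iota}),
\]
and taking the infimum over $\pi$ yields ``$\le$''.

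To build $\widetilde K$, I first choose a measurable partition $\Omega_{\bar\iota}=S_{\mathrm{ac}}\sqcup S_P\sqcup S_\nu$. Here $(\nu_{\bar\iota})_{\mathrm{ac}}$ and $P_{\mathrm{ac}}$ are carried by $S_{\mathrm{ac}}$, $P_\perp$ is carried by $S_P$, and $(\nu_{\bar\iota})_\perp$ is carried by $S_\nu$. Such a partition exists by the mutual singularity in Definition~\ref{defn:proportional_response_plan}. Next I define the kernel on $\Omega_\iota$ by
\[
\widehat K(y,\cdot):=\K^{(\pi)}_\iota(y,\cdot)\ \text{on } S_{\mathrm{ac}}\cup S_P,
\qquad
\widehat K(y,\cdot):=\K^{\mathrm{fb}}_\iota(y,\cdot)\ \text{on } S_\nu,
\]
and lift it to the product by $\widetilde K(y,\cdot):=(\widehat K(y,\cdot)\otimes\delta_y)$, suitably ordered as a measure on $\Omega_0\times\Omega_1$. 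With this choice, $\widetilde K_\#\mu$ is exactly $\mu\kprod\widehat K$.

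The pushforward identities then follow from where each measure lives. Since $P(S_\nu)=0$, we get $\widetilde K_\#P=P\kprod\K^{(\pi)}_\iota=\pi$. Since $(\nu_{\bar\iota})_{\mathrm{ac}}$ lives on $S_{\mathrm{ac}}$ and $(\nu_{\bar\iota})_\perp$ lives on $S_\nu$, we get $\widetilde K_\#\nu_{\bar\iota}=\PR_{\bar\iota}(\pi)$.

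\emph{Consequences.} For item~1, the chain
\[
\mathsf D(\pi^\star\|\PR_{\bar\iota}(\pi^\star))\le\mathsf D(\nu^{(\pi^\star)}_{\bar\iota}\|\nu_{\bar\iota})=\text{one-sided inf}=\text{paired inf}
\]
shows the pair attains the paired infimum. For item~2, the first step gives
\[
\mathsf D(\nu^{(\pi^\star_\iota)}_{\bar\iota}\|\nu_{\bar\iota})\le\mathsf D(\pi^\star_\iota\|\pi^\star_{\bar\iota})=\text{paired inf}=\text{one-sided inf},
\]
so $\pi^\star_\iota$ attains the one-sided infimum.

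\emph{Main obstacle.} The delicate step is making $\widetilde K$ a genuine Markov kernel, defined everywhere and measurable, without changing the two pushforwards. $\K^{(\pi)}_\iota$ is only a $P$-a.e.\ defined disintegration: its total mass is $1$ only $P$-a.e. I would redefine it on a $P$-null measurable set to be $\K^{\mathrm{fb}}_\iota$, or any fixed probability measure, which leaves $P\kprod\K^{(\pi)}_\iota$ unchanged. It also leaves $(\nu_{\bar\iota})_{\mathrm{ac}}\kprod\K^{(\pi)}_\iota$ unchanged, because $(\nu_{\bar\iota})_{\mathrm{ac}}\sim P_{\mathrm{ac}}$ means $P$-null subsets of $S_{\mathrm{ac}}$ are $(\nu_{\bar\iota})_{\mathrm{ac}}$-null. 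This is exactly why $\PR_{\bar\iota}(\pi)$ uses the $\K^{(\pi)}_\iota$ branch only on the absolutely continuous part. I would verify measurability of $y\mapsto\widetilde K(y,C)$ on rectangles $C=A\times B$ and extend to all measurable $C$ by a $\pi$--$\lambda$ argument.
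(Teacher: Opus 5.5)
Your proof is correct and follows the paper's argument essentially step for step. One inequality comes from DPI under the projection $\pr_{\bar\iota}$. The other comes from DPI under the lifted kernel that uses $\K^{(\pi)}_\iota$ off a $P$-null set carrying $(\nu_{\bar\iota})_\perp$ and the fallback kernel on it; your $S_\nu$ plays the role of the paper's set $Z$, and your explicit lift $\widehat K(y,\cdot)\otimes\delta_y$, together with the observation that $(\nu_{\bar\iota})_{\mathrm{ac}}\ll P$ makes the $P$-null modification harmless, spells out details the paper leaves implicit.
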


\begin{proof}
Write
\[
  I_{\mathrm{pair}}
  :=
  \inf\bigl\{
    \mathsf D(\pi_\iota\|\pi_{\bar\iota})
    :
    (\pi_\iota,\pi_{\bar\iota})\in \Pairi
  \bigr\},
  \qquad
  I_{\mathrm{one}}
  :=
  \inf\bigl\{
    \mathsf D\bigl(\nu^{(\pi)}_{\bar\iota}\,\|\,\nu_{\bar\iota}\bigr)
    :
    \pi\in \Pi^{\mathcal E}_\iota
  \bigr\}.
\]

\paragraph{Projection bound.}
The inequality \(I_{\mathrm{one}}\le I_{\mathrm{pair}}\) is immediate from channel-DPI applied to the
projection \(\pr_{\bar\iota}\). Indeed, if \((\pi_\iota,\pi_{\bar\iota})\in\Pairi\), then
\((\pr_{\bar\iota})_\#\pi_\iota=\nu^{(\pi_\iota)}_{\bar\iota}\) and
\((\pr_{\bar\iota})_\#\pi_{\bar\iota}=\nu_{\bar\iota}\), so
\[
  \mathsf D\bigl(\nu^{(\pi_\iota)}_{\bar\iota}\,\|\,\nu_{\bar\iota}\bigr)
  \le
  \mathsf D(\pi_\iota\|\pi_{\bar\iota}).
\]
Taking the infimum over \((\pi_\iota,\pi_{\bar\iota})\in\Pairi\) gives \(I_{\mathrm{one}}\le I_{\mathrm{pair}}\).

\paragraph{Proportional-response comparison.}
For the reverse inequality, fix \(\pi\in \Pi^{\mathcal E}_\iota\) and set
\(P:=\nu^{(\pi)}_{\bar\iota}=(\pr_{\bar\iota})_\#\pi\).
Choose the disintegration \(\K^{(\pi)}_\iota\) and fallback kernel \(\K^{\mathrm{fb}}_\iota\) from
Definition~\ref{defn:proportional_response_plan}. Since
\((\nu_{\bar\iota})_{\perp}\perp P\), there exists \(Z\in\Sigma_{\bar\iota}\) such that
\(P(Z)=0\) and \((\nu_{\bar\iota})_{\perp}(Z^c)=0\). Redefine \(\K^{(\pi)}_\iota\) on \(Z\) by
\[
  \widehat{\K}(y,\cdot)
  :=
  \begin{cases}
    \K^{\mathrm{fb}}_\iota(y,\cdot), & y\in Z,\\
    \K^{(\pi)}_\iota(y,\cdot), & y\notin Z.
  \end{cases}
\]
Since \(P(Z)=0\), this modification does not change the disintegration of \(\pi\), and hence
\[
  \pi = P \kprod \widehat{\K}.
\]
On the other hand, \((\nu_{\bar\iota})_{\mathrm{ac}}\ll P\), so
\((\nu_{\bar\iota})_{\mathrm{ac}}(Z)=0\), while \((\nu_{\bar\iota})_{\perp}\) is carried by \(Z\).
Therefore
\[
  \nu_{\bar\iota}\kprod \widehat{\K}
  =
  (\nu_{\bar\iota})_{\mathrm{ac}}\kprod \K^{(\pi)}_\iota
  +
  (\nu_{\bar\iota})_{\perp}\kprod \K^{\mathrm{fb}}_\iota
  =
  \PR_{\bar\iota}(\pi).
\]

Let \(T\) denote the Markov kernel from \(\Omega_{\bar\iota}\) to
\(\Omega_0\times\Omega_1\) associated with \(\widehat{\K}\). Then
\(T_\#P=\pi\) and \(T_\#\nu_{\bar\iota}=\PR_{\bar\iota}(\pi)\). By channel-DPI,
\[
  \mathsf D\bigl(\pi\,\|\,\PR_{\bar\iota}(\pi)\bigr)
  \le
  \mathsf D(P\|\nu_{\bar\iota})
  =
  \mathsf D\bigl(\nu^{(\pi)}_{\bar\iota}\,\|\,\nu_{\bar\iota}\bigr).
\]
Since \(\PR_{\bar\iota}(\pi)\in\Pi^{\mathcal E}_{\bar\iota}\), this yields
\[
  I_{\mathrm{pair}}
  \le
  \mathsf D\bigl(\pi\,\|\,\PR_{\bar\iota}(\pi)\bigr)
  \le
  \mathsf D\bigl(\nu^{(\pi)}_{\bar\iota}\,\|\,\nu_{\bar\iota}\bigr).
\]
Taking the infimum over \(\pi\in\Pi^{\mathcal E}_\iota\) gives \(I_{\mathrm{pair}}\le I_{\mathrm{one}}\).
Hence \(I_{\mathrm{pair}}=I_{\mathrm{one}}\).

\paragraph{Attainment.}
For the first claim, let \(\pi^\star\in\Pi^{\mathcal E}_\iota\) be a \(\mathsf D\)-closest one-sided
refinement on side \(\iota\). Then
\[
  I_{\mathrm{pair}}
  \le
  \mathsf D\bigl(\pi^\star\|\PR_{\bar\iota}(\pi^\star)\bigr)
  \le
  \mathsf D\bigl(\nu^{(\pi^\star)}_{\bar\iota}\,\|\,\nu_{\bar\iota}\bigr)
  =
  I_{\mathrm{one}}
  =
  I_{\mathrm{pair}},
\]
so \((\pi^\star,\PR_{\bar\iota}(\pi^\star))\) attains the paired infimum.

For the second claim, let \((\pi_\iota^\star,\pi_{\bar\iota}^\star)\in\Pairi\) attain the paired
infimum. By the projection argument from the first paragraph,
\[
  \mathsf D\bigl(\nu^{(\pi_\iota^\star)}_{\bar\iota}\,\|\,\nu_{\bar\iota}\bigr)
  \le
  \mathsf D(\pi_\iota^\star\|\pi_{\bar\iota}^\star)
  =
  I_{\mathrm{pair}}
  =
  I_{\mathrm{one}}.
\]
Since \(I_{\mathrm{one}}\) is the infimum of the one-sided objective, equality must hold, and
\(\pi_\iota^\star\) is a \(\mathsf D\)-closest one-sided refinement on side \(\iota\).
\end{proof}

Lemma~\ref{lem:paired_to_one_sided_reduction} identifies the true core of the theory.  From this
point onward, paired optimality may be studied entirely through the one-sided opposite marginal.
The remaining problem is therefore to determine which one-sided refinements minimize the relevant
comparison functionals, a question that will be addressed through level-optimal maximin in
Section~\ref{sec:maximin}.

\subsection{Reduction from DPI-Divergences to a Restricted Class}
\label{sec:reduction_HS}

After the paired problem has been reduced to one-sided payload measures, the comparison class is
still too broad if left in the form of all channel-DPI divergences.  The next step is therefore to
reduce universal comparison to the hockey-stick subclass.  This is the form in which the comparison
theory will later connect to truncation, overflow, and convex-duality arguments.

We first record the convex divergence family on finite measures, then isolate the hockey-stick
subclass, and finally state the reduction from hockey-stick universality to
\(\mathfrak D_{\mathrm{DPI}}\)-universality on fixed-mass classes.

\begin{definition}[$\vartheta$-Divergence]
\label{defn:div_measure}
Let \((\Omega,\Sigma)\) be a measurable space, let \(P,Q\in\Mplus(\Omega)\), and let
\(\vartheta:[0,\infty)\to \mathbb R\cup\{+\infty\}\) be convex and proper.

Write the Lebesgue decomposition of \(P\) with respect to \(Q\) as
\(P=P_{\mathrm{ac}}+P_\perp\), where \(P_{\mathrm{ac}}\ll Q\) and \(P_\perp\perp Q\), and let
\(r:=dP_{\mathrm{ac}}/dQ\).
Define the recession slope
\[
  \vartheta'_\infty:=\lim_{t\to\infty}\frac{\vartheta(t)}{t}\in\mathbb R\cup\{+\infty\}.
\]
The \emph{\(\vartheta\)-divergence} of \(P\) from \(Q\) is
\[
  \D_\vartheta(P\|Q)
  :=
  \int_\Omega \vartheta(r)\,dQ
  +
  \vartheta'_\infty\,P_\perp(\Omega),
\]
with the convention that \(\D_\vartheta(P\|Q)=+\infty\) if \(\vartheta'_\infty=+\infty\) and
\(P_\perp(\Omega)>0\).
\end{definition}

\begin{definition}[Hockey-Stick Divergences]
\label{defn:hockey_stick_divergence}
For \(\gamma>0\), define the convex function
\[
  \vartheta_\gamma(t):=(t-\gamma)_+,\qquad t\ge 0.
\]
The corresponding \emph{hockey-stick divergence} is
\[
  \HS_\gamma(P\|Q):=\D_{\vartheta_\gamma}(P\|Q),
  \qquad P,Q\in\Mplus(\Omega).
\]
We write
\[
  \mathfrak D_{\mathrm{HS}}:=\{\HS_\gamma:\gamma>0\}
\]
for the hockey-stick subclass of \(\vartheta\)-divergences.
\end{definition}

The probability-measure input is standard in the Blackwell/ROC literature.
In particular, the implication from hockey-stick (equivalently, ROC) domination to
universality for all divergences satisfying data-processing inequality appears explicitly in
Dong--Roth--Su~\cite[Appendix~B]{dong2022gaussian} and is used implicitly in
Chan--Xue~\cite[Section~5.2]{DBLP:conf/innovations/ChanX25}.
In the measurable-state setting, however, such reductions should not be viewed as a purely formal
transplantation of finite-state Blackwell theory; compare the modern infinite-state comparison
framework of Khan, Yu, and Zhang~\cite{KhanYuZhang2024}.
For convenience, we record the fixed-mass finite-measure version needed here. The reduction is by
normalization, followed by an application of the probability-measure comparison theorem and
channel-DPI.

\begin{proposition}[Hockey-stick universality implies \(\mathfrak D_{\mathrm{DPI}}\)-universality]
\label{prop:HS_implies_DPI}
Let \((\Omega,\Sigma)\) be a standard Borel space, and let
\(\mathcal A,\mathcal B\subseteq \Mplus(\Omega)\) be collections of finite measures on \((\Omega,\Sigma)\).
Assume that there exist constants \(a,b>0\) such that
\(x(\Omega)=a\) for all \(x\in\mathcal A\) and \(y(\Omega)=b\) for all \(y\in\mathcal B\).

Suppose that \((x^\star,y^\star)\in\mathcal A\times\mathcal B\) is universally closest with respect
to hockey-stick divergences, in the sense that for every \(\gamma>0\),
\[
  \HS_\gamma(x^\star\|y^\star)
  =
  \inf\bigl\{
    \HS_\gamma(x\|y)
    :
    (x,y)\in\mathcal A\times\mathcal B
  \bigr\}.
\]
Then \((x^\star,y^\star)\) is universally closest with respect to
\(\mathfrak D_{\mathrm{DPI}}\), i.e. for every \(\mathsf D\in\mathfrak D_{\mathrm{DPI}}\),
\[
  \mathsf D(x^\star\|y^\star)
  =
  \inf\bigl\{
    \mathsf D(x\|y)
    :
    (x,y)\in\mathcal A\times\mathcal B
  \bigr\}.
\]
\end{proposition}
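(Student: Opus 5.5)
Fix an arbitrary \((x,y)\in\mathcal A\times\mathcal B\) and an arbitrary \(\mathsf D\in\mathfrak D_{\mathrm{DPI}}\). It suffices to show \(\mathsf D(x^\star\|y^\star)\le \mathsf D(x\|y)\). The plan is to construct a Markov kernel \(K\) on \((\Omega,\Sigma)\), or between suitable standard Borel spaces, with \(K_\#x=x^\star\) and \(K_\#y=y^\star\). Once \(K\) is available, channel-DPI gives the inequality immediately. The hypothesis only has to be used once, to verify that the binary experiment \((x,y)\) Blackwell-dominates \((x^\star,y^\star)\). Since the pair \((x,y)\in\mathcal A\times\mathcal B\) is itself a candidate in the infimum, the hypothesis yields
\[
  \HS_\gamma(x^\star\|y^\star)\le \HS_\gamma(x\|y)\qquad\text{for all }\gamma>0.
\]

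\textbf{Normalization.} Set \(\hat x:=x/a\), \(\hat y:=y/b\), and similarly \(\hat x^\star,\hat y^\star\); all four are probability measures. Radon--Nikodym derivatives scale by \(b/a\) and the singular parts scale by \(1/a\). A direct computation from Definition~\ref{defn:div_measure} then gives
\[
  \HS_\gamma(x\|y)=a\,\HS_{\gamma b/a}(\hat x\|\hat y).
\]
Since \(\gamma\mapsto\gamma b/a\) is a bijection of \((0,\infty)\), we obtain \(\HS_\gamma(\hat x^\star\|\hat y^\star)\le\HS_\gamma(\hat x\|\hat y)\) for all \(\gamma>0\). For \(\gamma\le 0\) the values \(\mathbb E_{\hat y}(r-\gamma)_++\hat x_\perp(\Omega)=1-\gamma\) are trivially equal, so the domination holds for every \(\gamma\in\mathbb R\). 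This is exactly domination of the trade-off (ROC) curves, equivalently of the Neyman--Pearson power functions at every level.

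\textbf{Blackwell step.} For binary experiments on standard Borel spaces, the classical comparison theorem (Blackwell; Torgersen, in the form recorded by Dong--Roth--Su, Appendix~B) says the following. If the hockey-stick profile of \((\hat x,\hat y)\) dominates that of \((\hat x^\star,\hat y^\star)\), then there is a Markov kernel \(K\) with \(K_\#\hat x=\hat x^\star\) and \(K_\#\hat y=\hat y^\star\).

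The construction can be made concrete. Reduce each experiment to its likelihood-ratio statistic, valued in \([0,\infty]\) with the singular part sent to \(\infty\); by channel-DPI in both directions this is sufficient. The ROC curve of \((\hat x^\star,\hat y^\star)\) lies below that of \((\hat x,\hat y)\), and both curves are concave. A randomized garbling of the likelihood ratio therefore realizes the smaller curve. This is the step I expect to be the main obstacle: making the kernel measurable, handling atoms of the likelihood ratio with randomization, and handling the singular parts, while staying in the standard Borel setting. I will cite the standard theorem rather than reprove it.

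\textbf{Conclusion.} Since \(K\) is a Markov kernel, it is linear and mass-preserving, so \(K_\#x=aK_\#\hat x=x^\star\) and \(K_\#y=y^\star\). Channel-DPI then gives \(\mathsf D(x^\star\|y^\star)\le\mathsf D(x\|y)\). Taking the infimum over \((x,y)\in\mathcal A\times\mathcal B\), and noting that \((x^\star,y^\star)\) is itself feasible, yields the claimed equality. The fixed-mass assumption is used only to make the rescaling by \(a,b\) uniform across \(\mathcal A\times\mathcal B\), so that a single reparametrization of \(\gamma\) works for every pair.
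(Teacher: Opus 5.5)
Your proposal is correct and follows essentially the same route as the paper: you normalize to probability measures, transfer the hockey-stick domination via \(\HS_\gamma(x\|y)=a\,\HS_{\gamma b/a}(x/a\,\|\,y/b)\), invoke the binary Blackwell randomization criterion to obtain a Markov kernel, and conclude by channel-DPI. The only cosmetic difference is that you push the unnormalized pair through \(K\) using linearity of \(K_\#\), which implicitly uses that \(x^\star\) and \(x\) both have mass \(a\) and that \(y^\star\) and \(y\) both have mass \(b\); the paper instead applies channel-DPI to the rescaled divergence \(\widetilde{\mathsf D}(P\|Q):=\mathsf D(aP\|bQ)\).
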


\begin{proof}
Write
\[
  \mathcal A_1:=\{x/a:x\in\mathcal A\},
  \qquad
  \mathcal B_1:=\{y/b:y\in\mathcal B\},
\]
so that \(\mathcal A_1\) and \(\mathcal B_1\) are collections of probability measures on \((\Omega,\Sigma)\).
Set \(P^\star:=x^\star/a\) and \(Q^\star:=y^\star/b\).

For \(x=aP\) and \(y=bQ\) with \(P,Q\) probability measures,
\[
  \HS_\gamma(x\|y)
  = a\,\HS_{\gamma b/a}(P\|Q),
  \qquad \gamma>0.
\]
Hence the hypothesis implies that for every \(\eta>0\),
\[
  \HS_\eta(P^\star\|Q^\star)
  =
  \inf\bigl\{
    \HS_\eta(P\|Q)
    :
    P\in\mathcal A_1,\ Q\in\mathcal B_1
  \bigr\},
\]
by taking \(\gamma=a\eta/b\).

Now fix \(\mathsf D\in\mathfrak D_{\mathrm{DPI}}\), and define a divergence on probability measures by
\[
  \widetilde{\mathsf D}(P\|Q):=\mathsf D(aP\|bQ).
\]
Since pushforward commutes with multiplication by a positive scalar,
\(\widetilde{\mathsf D}\) again satisfies channel-DPI.

Let \(P\in\mathcal A_1\) and \(Q\in\mathcal B_1\). By the binary randomization criterion for
comparison of experiments
(\cite{blackwell1951comparison,blackwell1953equivalent,torgersen1991comparison}; see also
\cite{dong2022gaussian}),
the inequalities
\[
  \HS_\eta(P^\star\|Q^\star)\le \HS_\eta(P\|Q)
  \qquad\text{for all }\eta>0
\]
imply that there exists a Markov kernel \(K\) such that
\[
  P^\star = K_\# P,
  \qquad
  Q^\star = K_\# Q.
\]
Applying channel-DPI for \(\widetilde{\mathsf D}\) yields
\[
  \widetilde{\mathsf D}(P^\star\|Q^\star)
  \le
  \widetilde{\mathsf D}(P\|Q).
\]
Equivalently,
\[
  \mathsf D(x^\star\|y^\star)\le \mathsf D(x\|y)
\]
for every \(x=aP\in\mathcal A\) and \(y=bQ\in\mathcal B\). Therefore
\[
  \mathsf D(x^\star\|y^\star)
  \le
  \inf\bigl\{
    \mathsf D(x\|y)
    :
    (x,y)\in\mathcal A\times\mathcal B
  \bigr\}.
\]
Since \((x^\star,y^\star)\in\mathcal A\times\mathcal B\), the reverse inequality is immediate. Hence
\[
  \mathsf D(x^\star\|y^\star)
  =
  \inf\bigl\{
    \mathsf D(x\|y)
    :
    (x,y)\in\mathcal A\times\mathcal B
  \bigr\}.
\]
Thus \((x^\star,y^\star)\) is universally closest with respect to \(\mathfrak D_{\mathrm{DPI}}\).
\end{proof}

Proposition~\ref{prop:HS_implies_DPI} is what allows the later universal theorem to be proved through a
restricted comparison family rather than through the full class of channel-DPI divergences one by
one.  In particular, once one-sided optimality has been identified at the hockey-stick level, the
passage back to \(\mathfrak D_{\mathrm{DPI}}\)-universality is automatic.

\section{Level-Optimal Maximin as the One-Sided Extremal Criterion}
\label{sec:maximin}

By Lemma~\ref{lem:paired_to_one_sided_reduction}, the paired problem is already reduced to a
one-sided comparison problem on opposite marginals.  The remaining task is therefore to identify the
correct one-sided invariant.  A direct pointwise local formulation, recorded in
Appendix~\ref{sec:pointwise_local_maximin}, is too weak for the global theory.  The point of the
present section is that the right notion is instead levelwise: one compares, at each density cap
\(t\), how much admissible mass can be packed below that level.  This leads to level-optimal
maximin, which will later be shown in Section~\ref{sec:one_side} to coincide with the variational
structure of convex refinement.

Before stating the definition, we isolate the quantities that must be compared at each level.
Given a refinement \(\pi\), its opposite marginal determines, for every \(t\ge 0\), a truncated
payload \(P_t^{(\pi)}\).  On the other hand, the feasible class \(\Feas_{\bar\iota}(t)\) records all
subplans whose shipment remains below the same density cap \(t\,\nu_{\bar\iota}\), and
\(\Fit_{\bar\iota}(t)\) is the largest total mass attainable under that constraint.  
The defining condition is that these two quantities agree at every level.

\begin{definition}[Level-optimal maximin]
\label{defn:level_opt_maximin}
Fix \(\iota\in\mathbb B\), and write \(\bar\iota:=1-\iota\).
Let \(\pi\in\Pi^{\mathcal E}_\iota\), and write
\[
  P:=\nu^{(\pi)}_{\bar\iota}=(\pr_{\bar\iota})_\#\pi \in \mathcal M_+(\Omega_{\bar\iota})
\]
for its opposite marginal.

Write the Lebesgue decomposition of \(P\) with respect to \(\nu_{\bar\iota}\) as
\[
  P = r\,\nu_{\bar\iota} + P^\perp,
  \qquad
  P^\perp\perp \nu_{\bar\iota}.
\]
For \(t\ge 0\), define the truncated absolutely continuous payload at level \(t\) by
\[
  P_t^{(\pi)} := (r\wedge t)\,\nu_{\bar\iota}\in\mathcal M_+(\Omega_{\bar\iota}).
\]

Define
\begin{equation} \label{eq:Feas_t}
  \Feas_{\bar\iota}(t)
  :=
  \Bigl\{
    \sigma\in\mathcal M_+(\Omega_0\times\Omega_1)
    :
    \sigma(\mathcal E^\c)=0,\ 
    (\pr_\iota)_\#\sigma \le \nu_\iota,\ 
    (\pr_{\bar\iota})_\#\sigma \le t\,\nu_{\bar\iota}
  \Bigr\},
\end{equation}
and
\begin{equation} \label{eq:Fit_t}
  \Fit_{\bar\iota}(t)
  :=
  \sup\bigl\{
    \sigma(\Omega_0\times\Omega_1)
    :
    \sigma\in\Feas_{\bar\iota}(t)
  \bigr\}.
\end{equation}

We say that \(\pi\) is \emph{level-optimal maximin} if, for every \(t>0\),
\begin{equation}
\label{eq:global_tail_optimality_exact}
  P_t^{(\pi)}(\Omega_{\bar\iota}) = \Fit_{\bar\iota}(t).
\end{equation}
Equivalently, the same identity holds for every \(t\ge 0\), since both sides vanish at \(t=0\).
\end{definition}

An equivalent language is provided by the overflow profile, which records the complementary mass that
cannot be packed below level \(t\).  This is the tail functional that will later connect the present
levelwise criterion to the hockey-stick reduction from Section~\ref{sec:reduction_HS} and to the
convex comparison arguments below.

\paragraph{Overflow profile.}
For \(\pi\in\Pi^\mathcal E_\iota\), let
\(P=\nu^{(\pi)}_{\bar\iota}=r\,\nu_{\bar\iota}+P^\perp\)
be the Lebesgue decomposition with respect to \(\nu_{\bar\iota}\). Define
\begin{equation}
\label{eq:overflow_profile}
  \Over^\pi(t)
  :=
  P^\perp(\Omega_{\bar\iota})
  +
  \int_{\Omega_{\bar\iota}}(r-t)_+\,d\nu_{\bar\iota},
  \qquad t\ge 0.
\end{equation}

\begin{lemma}[Overflow decomposition with uniquely determined opposite marginals]
\label{lem:overflow_decomposition}
Fix \(\iota\in\mathbb B\), write \(\bar\iota:=1-\iota\), and let \(\pi\in\Pi^{\mathcal E}_\iota\).
Write
\[
  P := \nu^{(\pi)}_{\bar\iota}=r\,\nu_{\bar\iota}+P^\perp,
  \qquad
  P^\perp\perp \nu_{\bar\iota},
\]
and fix \(t\ge 0\).

Then there exist measures
\[
  \sigma_t\in\Feas_{\bar\iota}(t),
  \qquad
  \pi_t^{\mathrm{ov}}\in\mathcal M_+(\Omega_0\times\Omega_1),
\]
such that
\[
  \pi=\sigma_t+\pi_t^{\mathrm{ov}},
  \qquad
  \sigma_t(\mathcal E^\c)=\pi_t^{\mathrm{ov}}(\mathcal E^\c)=0,
  \qquad
  \pi_t^{\mathrm{ov}}(\Omega_0\times\Omega_1)=\Over^\pi(t).
\]

Moreover, although the decomposition \(\pi=\sigma_t+\pi_t^{\mathrm{ov}}\) need not be unique, the
\(\bar\iota\)-marginals are uniquely determined:
\[
  (\pr_{\bar\iota})_\#\sigma_t=(r\wedge t)\,\nu_{\bar\iota},
  \qquad
  (\pr_{\bar\iota})_\#\pi_t^{\mathrm{ov}}=P^\perp+(r-t)_+\,\nu_{\bar\iota}.
\]
\end{lemma}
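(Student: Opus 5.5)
The plan is to split \(\pi\) by a measurable weight function on the \(\bar\iota\)-coordinate, using the disintegration of \(\pi\) with respect to its opposite marginal \(P\). Since \(P^\perp\perp\nu_{\bar\iota}\), fix \(Z\in\Sigma_{\bar\iota}\) with \(\nu_{\bar\iota}(Z)=0\) and \(P^\perp(Z^c)=0\). On \(Z^c\) we have \(P=r\,\nu_{\bar\iota}\), and we may take \(r=0\) on \(Z\). Define the measurable function \(w:\Omega_{\bar\iota}\to[0,1]\) by
\[
  w(y):=\begin{cases}\dfrac{r(y)\wedge t}{r(y)}, & y\notin Z,\ r(y)>0,\\[1ex] 1,& y\notin Z,\ r(y)=0,\\[0.5ex] 0,& y\in Z.\end{cases}
\]
Then set \(\sigma_t:=(1\otimes w)\,\pi\), meaning \(d\sigma_t(x_0,x_1)=w(x_{\bar\iota})\,d\pi(x_0,x_1)\), and \(\pi_t^{\mathrm{ov}}:=(1\otimes(1-w))\,\pi\). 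Both are nonnegative because \(0\le w\le1\), they sum to \(\pi\), and both are absolutely continuous with respect to \(\pi\), so \(\sigma_t(\mathcal E^\c)=\pi_t^{\mathrm{ov}}(\mathcal E^\c)=0\). Disintegration is not strictly needed here, since multiplication by a function of one coordinate suffices.

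Next I would check the marginals. For \(B\in\Sigma_{\bar\iota}\),
\[
  \sigma_t(\Omega_\iota\times B)=\int_B w\,dP=\int_{B\setminus Z} w\,r\,d\nu_{\bar\iota}=\int_B (r\wedge t)\,d\nu_{\bar\iota},
\]
using \(\nu_{\bar\iota}(Z)=0\) and \(w r=r\wedge t\) off \(Z\) (both sides vanish where \(r=0\)). This gives \((\pr_{\bar\iota})_\#\sigma_t=(r\wedge t)\nu_{\bar\iota}\le t\,\nu_{\bar\iota}\).

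The \(\iota\)-marginal satisfies \((\pr_\iota)_\#\sigma_t\le(\pr_\iota)_\#\pi=\nu_\iota\), because \(\sigma_t\le\pi\). Hence \(\sigma_t\in\Feas_{\bar\iota}(t)\).

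Subtracting, \((\pr_{\bar\iota})_\#\pi_t^{\mathrm{ov}}=P-(r\wedge t)\nu_{\bar\iota}=P^\perp+(r-t)_+\nu_{\bar\iota}\). Its total mass is \(P^\perp(\Omega_{\bar\iota})+\int(r-t)_+\,d\nu_{\bar\iota}=\Over^\pi(t)\). This value is finite since \(P\) is finite.

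The uniqueness claim is the only step that needs care, and I expect it to be the main obstacle. Literally read, an arbitrary split \(\pi=\sigma+\pi'\) with \(\sigma\in\Feas_{\bar\iota}(t)\) does not have a determined marginal; for example, \(\sigma=0\) is allowed. I would therefore prove uniqueness among all decompositions that have the stated properties, namely \(\sigma\in\Feas_{\bar\iota}(t)\), \(\sigma,\pi'\ge0\), and \(\pi'(\Omega_0\times\Omega_1)=\Over^\pi(t)\).

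Let \(S:=(\pr_{\bar\iota})_\#\sigma\). Then \(S\le P\) and \(S\le t\,\nu_{\bar\iota}\). The second bound gives \(S\ll\nu_{\bar\iota}\), so \(S(Z)=0\) and \(S=s\,\nu_{\bar\iota}\) for some density \(s\). The bound \(S\le P\) restricted to \(Z^c\) gives \(s\le r\) \(\nu_{\bar\iota}\)-a.e., so \(s\le r\wedge t\) a.e.

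The mass constraint gives \(S(\Omega_{\bar\iota})=P(\Omega_{\bar\iota})-\Over^\pi(t)=\int(r\wedge t)\,d\nu_{\bar\iota}\). A nonnegative deficit \((r\wedge t)-s\) with zero integral vanishes a.e., so \(s=r\wedge t\). It follows that \(S=(r\wedge t)\nu_{\bar\iota}\), and the \(\bar\iota\)-marginal of \(\pi'\) is \(P-S\), as claimed.

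Non-uniqueness of the joint decomposition itself comes from the freedom in how each fiber is split across \(\Omega_\iota\). The proof need not pursue this.
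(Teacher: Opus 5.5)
Your proposal is correct and is essentially the paper's proof. The paper disintegrates \(\pi=P\kprod K\), sets \(\sigma_t:=(fP)\kprod K\) with \(f=dQ/dP\) and \(Q=(r\wedge t)\,\nu_{\bar\iota}\), and then uses the monotone class theorem to obtain \(d\sigma_t=(f\circ\pr_{\bar\iota})\,d\pi\); since your \(w\) is an explicit version of \(f\), this is exactly your definition, so the disintegration is indeed dispensable, and your uniqueness argument (uniqueness among decompositions with \(\sigma\in\Feas_{\bar\iota}(t)\) and overflow mass \(\Over^\pi(t)\), squeezing \(s\le r\wedge t\) against equal total mass) is the same as the paper's.
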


\begin{proof}
Fix \(t\ge 0\). Set \(Q:=(r\wedge t)\,\nu_{\bar\iota}\) and \(R:=P-Q\). Since
\(Q\le r\,\nu_{\bar\iota}\le P\), one has \(Q\ll P\). Let \(f:=dQ/dP\), modified on a \(P\)-null set
so that \(0\le f\le 1\) everywhere. Also,
\(R=P^\perp + \bigl(r-(r\wedge t)\bigr)\,\nu_{\bar\iota}
  = P^\perp + (r-t)_+\,\nu_{\bar\iota}\).

\paragraph{Construction from the reverse disintegration.}
Choose a disintegration of \(\pi\) with respect to its \(\bar\iota\)-marginal \(P\), say
\(\pi=P\kprod K\), where \(K:\Omega_{\bar\iota}\times\Sigma_\iota\to[0,1]\) is a probability kernel
\(P\)-a.e. Define
\[
  \sigma_t:=(fP)\kprod K,
  \qquad
  \pi_t^{\mathrm{ov}}:=((1-f)P)\kprod K.
\]
Then \(\pi=\sigma_t+\pi_t^{\mathrm{ov}}\).

For every measurable rectangle \(A\times B\subseteq \Omega_\iota\times\Omega_{\bar\iota}\),
\[
  \sigma_t(A\times B)
  =
  \int_B K(y,A)\,f(y)\,P(dy)
  =
  \int_{A\times B} f\circ \pr_{\bar\iota}\,d\pi.
\]
By the monotone class theorem, \(d\sigma_t = (f\circ \pr_{\bar\iota})\,d\pi\). Likewise,
\(d\pi_t^{\mathrm{ov}} = \bigl(1-f\circ \pr_{\bar\iota}\bigr)\,d\pi\). Since \(0\le f\le 1\), it follows
that \(\sigma_t\le \pi\) and \(\pi_t^{\mathrm{ov}}\le \pi\). As \(\pi(\mathcal E^\c)=0\), one obtains
\(\sigma_t(\mathcal E^\c)=\pi_t^{\mathrm{ov}}(\mathcal E^\c)=0\).

\paragraph{Identification of the \(\bar\iota\)-marginals.}
Because \(fP\ll P\) and \((1-f)P\ll P\), the kernel \(K\) is a probability kernel
\((fP)\)-a.e.\ and \(((1-f)P)\)-a.e. Hence
\[
  (\pr_{\bar\iota})_\#\sigma_t = fP = Q = (r\wedge t)\,\nu_{\bar\iota},
  \qquad
  (\pr_{\bar\iota})_\#\pi_t^{\mathrm{ov}}
  = (1-f)P
  = P^\perp + (r-t)_+\,\nu_{\bar\iota}.
\]
In particular, \((\pr_{\bar\iota})_\#\sigma_t=(r\wedge t)\,\nu_{\bar\iota}\le t\,\nu_{\bar\iota}\).
Since \(\sigma_t\le \pi\) and \((\pr_\iota)_\#\pi=\nu_\iota\), also
\((\pr_\iota)_\#\sigma_t\le \nu_\iota\). Thus \(\sigma_t\in \Feas_{\bar\iota}(t)\).

Finally,
\[
  \pi_t^{\mathrm{ov}}(\Omega_0\times\Omega_1)
  =
  \bigl((\pr_{\bar\iota})_\#\pi_t^{\mathrm{ov}}\bigr)(\Omega_{\bar\iota})
  =
  P^\perp(\Omega_{\bar\iota})
  +
  \int_{\Omega_{\bar\iota}} (r-t)_+\,d\nu_{\bar\iota}
  =
  \Over^\pi(t).
\]
This proves existence and the stated formulas for the \(\bar\iota\)-marginals.

\paragraph{Uniqueness of the \(\bar\iota\)-marginals.}
Let \(\sigma,\pi^{\mathrm{ov}}\in\mathcal M_+(\Omega_0\times\Omega_1)\) satisfy
\[
  \pi=\sigma+\pi^{\mathrm{ov}},
  \qquad
  \sigma\in\Feas_{\bar\iota}(t),
  \qquad
  \pi^{\mathrm{ov}}(\Omega_0\times\Omega_1)=\Over^\pi(t).
\]
Set \(S:=(\pr_{\bar\iota})_\#\sigma\). Since \(S\le t\,\nu_{\bar\iota}\), one has
\(S\ll \nu_{\bar\iota}\), so \(S=q\,\nu_{\bar\iota}\) for some \(q\in L^1(\nu_{\bar\iota})\) with
\(0\le q\le t\) \(\nu_{\bar\iota}\)-a.e.

Also \(\sigma\le\pi\) implies \(S\le P\). Choose \(Z\in\Sigma_{\bar\iota}\) such that
\(\nu_{\bar\iota}(Z)=0\) and \(P^\perp(Z^\c)=0\). Then for every measurable
\(A\subseteq\Omega_{\bar\iota}\),
\[
  \int_{A\cap Z^\c} q\,d\nu_{\bar\iota}
  =
  S(A\cap Z^\c)
  \le
  P(A\cap Z^\c)
  =
  \int_{A\cap Z^\c} r\,d\nu_{\bar\iota}.
\]
Hence \(q\le r\) \(\nu_{\bar\iota}\)-a.e., so \(0\le q\le r\wedge t\) \(\nu_{\bar\iota}\)-a.e.

Now
\[
  (\pr_{\bar\iota})_\#\pi^{\mathrm{ov}}
  =
  P-S
  =
  P^\perp + (r-q)\,\nu_{\bar\iota},
\]
and therefore
\[
  \pi^{\mathrm{ov}}(\Omega_0\times\Omega_1)
  =
  P^\perp(\Omega_{\bar\iota})
  +
  \int_{\Omega_{\bar\iota}} (r-q)\,d\nu_{\bar\iota}.
\]
Comparing with \(\pi^{\mathrm{ov}}(\Omega_0\times\Omega_1)=\Over^\pi(t)\), one gets
\[
  \int_{\Omega_{\bar\iota}} \bigl((r\wedge t)-q\bigr)\,d\nu_{\bar\iota}=0.
\]
Since \((r\wedge t)-q\ge 0\) \(\nu_{\bar\iota}\)-a.e., it follows that
\(q=r\wedge t\) \(\nu_{\bar\iota}\)-a.e. Thus
\[
  (\pr_{\bar\iota})_\#\sigma=(r\wedge t)\,\nu_{\bar\iota},
  \qquad
  (\pr_{\bar\iota})_\#\pi^{\mathrm{ov}}
  =
  P-(r\wedge t)\,\nu_{\bar\iota}
  =
  P^\perp+(r-t)_+\,\nu_{\bar\iota}.
\]
This proves the claimed uniqueness.
\end{proof}

\medskip
\noindent
Lemma~\ref{lem:overflow_decomposition} provides, for each level \(t\), a decomposition of a plan into (i) a largest subplan whose shipment stays everywhere below density level \(t\) with respect to \(\nu_{\bar\iota}\), and (ii) a complementary \emph{overflow} subplan.  Although this decomposition need not be unique as a measure on \(\Omega_0\times\Omega_1\), its \(\bar\iota\)-marginals are uniquely determined.  The total mass of the overflow part is precisely \(\Over^\pi(t)\), namely the amount of shipment that exceeds level \(t\) together with any \(\nu_{\bar\iota}\)-singular mass.  Equivalently, the profile \(t\mapsto \Over^\pi(t)\) is the same tail functional that appears in the hockey-stick divergence between the opposite marginal \((\pr_{\bar\iota})_\#\pi\) and \(\nu_{\bar\iota}\), up to the standard constant term.  
The next lemma uses this viewpoint. Its convex-analytic input is standard: a proper lower
semicontinuous convex function with finite recession slope admits the corresponding hinge
representation through the Stieltjes measure of its right derivative; see
\cite{rockafellar_convex_1970}. Consequently, any convex objective with finite recession slope can
be written as a nonnegative Stieltjes mixture of such hinge tails, so pointwise comparisons of
overflow levels integrate directly into divergence comparisons.

\begin{lemma}[Hinge representation of a finite-slope convex divergence]
\label{lem:convex_hinge_representation_for_divergence}
Fix \(\iota\in\mathbb B\), write \(\bar\iota:=1-\iota\), and let
\(\phi:[0,\infty)\to\mathbb R\) be convex and lower semicontinuous with
\(\phi'_\infty\in\mathbb R\).
Let \(\phi'_+\) denote the right derivative, and let \(\mu_\phi\) be the finite Borel measure on
\((0,\infty)\) determined by
\[
  \mu_\phi((a,b])=\phi'_+(b)-\phi'_+(a),
  \qquad 0<a<b<\infty.
\]
Then, for every \(\pi\in\Pi^{\mathcal E}_\iota\),
\begin{equation}
\label{eq:Dphi_overflow_integral}
  \D_\phi\bigl(\nu^{(\pi)}_{\bar\iota}\,\|\,\nu_{\bar\iota}\bigr)
  =
  \phi(0)\,\nu_{\bar\iota}(\Omega_{\bar\iota})
  +
  \phi'_+(0)\,\nu_\iota(\Omega_\iota)
  +
  \int_{(0,\infty)} \Over^\pi(t)\,\mu_\phi(dt).
\end{equation}
In particular, the first two terms are independent of \(\pi\).
\end{lemma}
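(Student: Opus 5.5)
The plan is to reduce the divergence to a pointwise hinge identity for \(\phi\) and then integrate it against \(\nu_{\bar\iota}\), using Tonelli to exchange the order of integration. Write \(P:=\nu^{(\pi)}_{\bar\iota}=r\,\nu_{\bar\iota}+P^\perp\). By Definition~\ref{defn:div_measure},
\[
  \D_\phi(P\|\nu_{\bar\iota})=\int\phi(r)\,d\nu_{\bar\iota}+\phi'_\infty\,P^\perp(\Omega_{\bar\iota}).
\]
The first step is the scalar representation: for a finite-valued convex \(\phi\) on \([0,\infty)\) that is lower semicontinuous at \(0\), we have for every \(s\ge 0\)
\[
  \phi(s)=\phi(0)+\phi'_+(0)\,s+\int_{(0,\infty)}(s-t)_+\,\mu_\phi(dt).
\]
This follows from \(\phi(s)-\phi(0)=\int_0^s\phi'_+(u)\,du\) together with \(\phi'_+(u)=\phi'_+(0)+\mu_\phi((0,u])\) and Fubini. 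Before using it, I must check that \(\phi'_+(0)\) is finite. Finiteness of \(\phi'_\infty\) bounds \(\phi'_+\) above, so the measure \(\mu_\phi\) is finite with total mass \(\phi'_\infty-\phi'_+(0^+)\). The remaining danger is \(\phi'_+(0)=-\infty\). I would handle this either by noting that the statement implicitly requires \(\phi'_+(0)\in\mathbb R\), since otherwise the right-hand side is meaningless, or by interpreting \(\phi'_+(0)\) as the limit \(\phi'_+(0^+)\) and verifying the identity through monotone limits. I expect this boundary issue, together with the sign bookkeeping that lets Tonelli apply, to be the main technical point. Because \(\mu_\phi\) is nonnegative, the hinge integral is nonnegative, so Tonelli applies once the affine part has been separated off.

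Next, integrate the scalar identity at \(s=r(y)\) against \(\nu_{\bar\iota}\). The constant term contributes \(\phi(0)\,\nu_{\bar\iota}(\Omega_{\bar\iota})\). The linear term contributes \(\phi'_+(0)\int r\,d\nu_{\bar\iota}\). The hinge term becomes, by Tonelli,
\[
  \int_{(0,\infty)}\int(r-t)_+\,d\nu_{\bar\iota}\,\mu_\phi(dt).
\]

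It remains to absorb the singular term. Since \(\mu_\phi\) is finite with \(\mu_\phi((0,\infty))=\phi'_\infty-\phi'_+(0)\), we can write
\[
  \phi'_\infty\,P^\perp(\Omega_{\bar\iota})
  =\phi'_+(0)\,P^\perp(\Omega_{\bar\iota})+\int_{(0,\infty)}P^\perp(\Omega_{\bar\iota})\,\mu_\phi(dt).
\]
Adding this to the previous step, the hinge parts combine into \(\int\Over^\pi(t)\,\mu_\phi(dt)\) by \eqref{eq:overflow_profile}. The linear parts combine into \(\phi'_+(0)\bigl(\int r\,d\nu_{\bar\iota}+P^\perp(\Omega_{\bar\iota})\bigr)=\phi'_+(0)\,P(\Omega_{\bar\iota})\).

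Finally, \(P(\Omega_{\bar\iota})=\pi(\Omega_0\times\Omega_1)=\nu_\iota(\Omega_\iota)\), because \(\pi\in\Pi^{\mathcal E}_\iota\) has \(\iota\)-marginal \(\nu_\iota\). This yields \eqref{eq:Dphi_overflow_integral}. The first two terms depend only on \(\phi\), \(\nu_0\) and \(\nu_1\), so they are independent of \(\pi\). If \(\int\phi(r)\,d\nu_{\bar\iota}\) is a priori of indefinite sign, the decomposition above shows it is well defined in \((-\infty,+\infty]\): it is a finite affine part plus a nonnegative part, and the equality therefore holds in \(\overline{\mathbb R}\).
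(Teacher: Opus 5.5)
Your proposal is correct and follows the paper's proof essentially step for step. Both proceed through the scalar hinge formula obtained from $\phi'_+(u)=\phi'_+(0)+\mu_\phi((0,u])$ and Tonelli, integration against $\nu_{\bar\iota}$, absorption of the singular term via $\mu_\phi((0,\infty))=\phi'_\infty-\phi'_+(0)$, and the mass identity $P(\Omega_{\bar\iota})=\nu_\iota(\Omega_\iota)$.

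Your caveat about $\phi'_+(0)=-\infty$ is well placed: the paper simply asserts that $\phi'_+$ is finite everywhere, and $\phi(u)=-\sqrt{u}$ is a counterexample at $0$. Your first resolution is the right one, since finiteness of $\phi'_+(0)$ is exactly what the phrase ``finite Borel measure'' encodes. The limiting reinterpretation cannot help, because $\phi'_+(0^+)=\phi'_+(0)$ for convex $\phi$ continuous at $0$, so the right-hand side would become $-\infty+\infty$.
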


\begin{proof}
Write \(P:=\nu^{(\pi)}_{\bar\iota}\), and let \(P=r\,\nu_{\bar\iota}+P^\perp\) be its Lebesgue
decomposition with respect to \(\nu_{\bar\iota}\). The proof combines a hinge representation of
\(\phi\) with the definition of \(\D_\phi\).

\paragraph{Hinge representation of \(\phi\).}
By standard one-dimensional convex analysis~\cite{rockafellar_convex_1970}, since \(\phi\) is convex and finite on
\([0,\infty)\), the right derivative \(\phi'_+\) exists everywhere, is finite,
nondecreasing, and right-continuous, and
\[
  \phi(u)=\phi(0)+\int_0^u \phi'_+(s)\,ds,
  \qquad u\ge 0.
\]
By definition of \(\mu_\phi\),
\[
  \phi'_+(s)=\phi'_+(0)+\mu_\phi((0,s]),
  \qquad s\ge 0.
\]
Hence
\[
  \phi(u)
  =
  \phi(0)+\phi'_+(0)\,u+\int_0^u \mu_\phi((0,s])\,ds.
\]
Since the integrand is nonnegative, Tonelli's theorem yields
\[
  \int_0^u \mu_\phi((0,s])\,ds
  =
  \int_{(0,\infty)} (u-t)_+\,\mu_\phi(dt),
\]
so
\begin{equation}
\label{eq:phi_hinge_clean}
  \phi(u)
  =
  \phi(0)+\phi'_+(0)\,u+\int_{(0,\infty)} (u-t)_+\,\mu_\phi(dt),
  \qquad u\ge 0.
\end{equation}

\paragraph{Substitution into \(\D_\phi\).}
Since \(\phi'_\infty\in\mathbb R\), the definition of \(\D_\phi\) gives
\[
  \D_\phi(P\|\nu_{\bar\iota})
  =
  \int_{\Omega_{\bar\iota}} \phi(r)\,d\nu_{\bar\iota}
  +
  \phi'_\infty\,P^\perp(\Omega_{\bar\iota}).
\]
Integrating \eqref{eq:phi_hinge_clean} against \(\nu_{\bar\iota}\) and applying Tonelli again,
\[
  \int_{\Omega_{\bar\iota}} \phi(r)\,d\nu_{\bar\iota}
  =
  \phi(0)\,\nu_{\bar\iota}(\Omega_{\bar\iota})
  +
  \phi'_+(0)\int_{\Omega_{\bar\iota}} r\,d\nu_{\bar\iota}
  +
  \int_{(0,\infty)}
    \left(\int_{\Omega_{\bar\iota}} (r-t)_+\,d\nu_{\bar\iota}\right)\mu_\phi(dt).
\]
Also,
\[
  P(\Omega_{\bar\iota})
  =
  \int_{\Omega_{\bar\iota}} r\,d\nu_{\bar\iota}+P^\perp(\Omega_{\bar\iota})
  =
  \pi(\Omega_0\times\Omega_1)
  =
  \nu_\iota(\Omega_\iota),
\]
hence
\[
  \int_{\Omega_{\bar\iota}} r\,d\nu_{\bar\iota}
  =
  \nu_\iota(\Omega_\iota)-P^\perp(\Omega_{\bar\iota}).
\]
Substituting, we obtain
\begin{align*}
  \D_\phi(P\|\nu_{\bar\iota})
  &=
  \phi(0)\,\nu_{\bar\iota}(\Omega_{\bar\iota})
  +\phi'_+(0)\,\nu_\iota(\Omega_\iota) \\
  &\qquad
  +
  \int_{(0,\infty)}
    \left(\int_{\Omega_{\bar\iota}} (r-t)_+\,d\nu_{\bar\iota}\right)\mu_\phi(dt)
  +\bigl(\phi'_\infty-\phi'_+(0)\bigr)\,P^\perp(\Omega_{\bar\iota}).
\end{align*}

\paragraph{The singular term.}
Since \(\phi'_+\) is nondecreasing and \(\phi'_+(u)\uparrow \phi'_\infty\) as \(u\to\infty\),
\[
  \mu_\phi((0,\infty))
  =
  \lim_{u\to\infty}\mu_\phi((0,u])
  =
  \lim_{u\to\infty}\bigl(\phi'_+(u)-\phi'_+(0)\bigr)
  =
  \phi'_\infty-\phi'_+(0).
\]
Therefore
\[
  \bigl(\phi'_\infty-\phi'_+(0)\bigr)\,P^\perp(\Omega_{\bar\iota})
  =
  \int_{(0,\infty)} P^\perp(\Omega_{\bar\iota})\,\mu_\phi(dt).
\]
For each \(t\ge 0\), the definition \eqref{eq:overflow_profile} gives
\[
  \Over^\pi(t)
  =
  P^\perp(\Omega_{\bar\iota})
  +
  \int_{\Omega_{\bar\iota}} (r-t)_+\,d\nu_{\bar\iota}.
\]
Substituting this into the preceding formula yields
\[
  \D_\phi(P\|\nu_{\bar\iota})
  =
  \phi(0)\,\nu_{\bar\iota}(\Omega_{\bar\iota})
  +
  \phi'_+(0)\,\nu_\iota(\Omega_\iota)
  +
  \int_{(0,\infty)} \Over^\pi(t)\,\mu_\phi(dt),
\]
which is \eqref{eq:Dphi_overflow_integral}. The final sentence is immediate.
\end{proof}

\section{Measure-Theoretic Bipartite Tools}
\label{sec:measure_bipartite}

This section gathers the measure-theoretic infrastructure used later in the analysis of
level-optimal maximin and convex refinement.  The first tool is a measurable max-flow/min-cut
formula for constrained shipment on a bipartite relation; in particular, it is the device that
will later identify capped feasible mass in the sense of Section~\ref{sec:maximin}.  The second
tool is an augmenting-subplan lemma, which plays the role of a measure-theoretic replacement for
finite exchange arguments and will supply the local perturbations used later in
Section~\ref{sec:one_side}.

\subsection{Measure-Theoretic Max-Flow/Min-Cut}
\label{sec:measure_maxflow_mincut}

The first ingredient is a measurable max-flow/min-cut formula for constrained shipment on a
bipartite relation, with standard Borel vertex spaces and finite measures in place of finite
capacities. The key duality input is Kellerer's measurable marginal duality theorem~\cite[Corollary~(2.18)]{Kellerer_1984}. More precisely, after an elementary augmentation that converts the constrained
shipment problem into a fixed-marginal transport problem with bounded Borel payoff \(1_{\mathcal E}\),
Kellerer's theorem yields the relevant dual formulation. The remaining argument identifies that
duality formula with the cut expression used later in the paper and records the associated
measurability point for neighborhood sets. Since this Kellerer-based specialization is exactly the
form needed for the capped feasibility quantities from Section~\ref{sec:maximin}, we state it separately and
include the short derivation.

\begin{proposition}[Measurable max-flow/min-cut duality on a bipartite relation]
\label{prop:measurable_maxflow_mincut_bipartite}
Let \((\Omega_0,\Sigma_0)\) and \((\Omega_1,\Sigma_1)\) be standard Borel spaces, let
\(\mathcal E\in\Sigma_0\otimes\Sigma_1\) be a measurable bipartite relation, and let
\(a\in\mathcal M_+(\Omega_0)\) and \(b\in\mathcal M_+(\Omega_1)\) be finite measures.

Define
\[
  \mathcal F(a,b)
  :=
  \sup\Bigl\{
    \sigma(\Omega_0\times\Omega_1):
    \sigma\in\mathcal M_+(\Omega_0\times\Omega_1),\
    \sigma(\mathcal E^\c)=0,\
    (\pr_0)_\#\sigma\le a,\
    (\pr_1)_\#\sigma\le b
  \Bigr\}.
\]
For \(C\in\Sigma_0\), write
\[
  N_1(C):=\{y\in\Omega_1:\exists x\in C \text{ with } (x,y)\in\mathcal E\}.
\]
Then \(N_1(C)\) is analytic, hence universally measurable, so \(b(N_1(C))\) is
well defined after completion. Moreover,
\begin{equation}\label{eq:mfmc_correct}
  \mathcal F(a,b)
  =
  \inf_{C\in\Sigma_0}\bigl\{a(C)+b\bigl(N_1(C^\c)\bigr)\bigr\}.
\end{equation}
\end{proposition}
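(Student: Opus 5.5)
The plan is to prove the analyticity claim directly, then establish \eqref{eq:mfmc_correct} by proving weak duality ($\le$) elementarily and strong duality ($\ge$) through Kellerer's theorem~\cite[Corollary~(2.18)]{Kellerer_1984}, applied after an augmentation with dummy points.

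\textbf{Analyticity.} For $C\in\Sigma_0$, the set $N_1(C)$ is the image of the Borel set $\mathcal E\cap(C\times\Omega_1)$ under the Borel map $\pr_1$. Hence $N_1(C)$ is analytic, and therefore universally measurable, so $b(N_1(C))$ is defined via the completion of $b$.

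\textbf{Weak duality.} Fix an admissible $\sigma$ and $C\in\Sigma_0$, and split $\sigma=\sigma|_{C\times\Omega_1}+\sigma|_{C^\c\times\Omega_1}$.
\begin{itemize}
\item The first piece has mass at most $a(C)$, since $(\pr_0)_\#\sigma\le a$.
\item The second piece is carried by $\mathcal E\cap(C^\c\times\Omega_1)$, whose $\pr_1$-image lies in $N_1(C^\c)$. Its mass is therefore at most $\bigl((\pr_1)_\#\sigma\bigr)(N_1(C^\c))\le b(N_1(C^\c))$, where the completed measures are used on the universally measurable set.
\end{itemize}
Taking the supremum over $\sigma$ and the infimum over $C$ gives $\mathcal F(a,b)\le\inf_C\{\dots\}$.

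\textbf{Reduction to a fixed-marginal problem.} Let $X:=\Omega_0\sqcup\{\ast_0\}$ and $Y:=\Omega_1\sqcup\{\ast_1\}$, with
\[
a':=a+b(\Omega_1)\delta_{\ast_0},\qquad b':=b+a(\Omega_0)\delta_{\ast_1}.
\]
Both have total mass $a(\Omega_0)+b(\Omega_1)$. Use the bounded Borel payoff $c:=1_{\mathcal E}$ on $\Omega_0\times\Omega_1$, extended by $0$ elsewhere. We check that $\sup\{\int c\,d\lambda:\lambda\text{ a coupling of }a',b'\}=\mathcal F(a,b)$.
\begin{itemize}
\item A coupling restricts on $\mathcal E$ to an admissible $\sigma$ with mass $\int c\,d\lambda$.
\item Conversely, an admissible $\sigma$ completes to a coupling by routing the deficits of $a-(\pr_0)_\#\sigma$ and $b-(\pr_1)_\#\sigma$ to the dummy points, and balancing the dummy-to-dummy mass $\lambda(\{(\ast_0,\ast_1)\})=\sigma(\Omega_0\times\Omega_1)$.
\end{itemize}
Kellerer's duality then gives $\mathcal F(a,b)=\inf\{\int f\,da'+\int g\,db'\}$ over bounded measurable $f,g$ with $f(x)+g(y)\ge c(x,y)$ on $X\times Y$. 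I take $f,g$ Borel here; if Kellerer only supplies universally measurable potentials, I will pass to Borel modifications up to null sets, checking that the constraint survives.

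\textbf{Normalizing the dual, the step I expect to be the main obstacle.} The dummy constraints give
\[
f\ge -g(\ast_1)\ \text{on }\Omega_0,\qquad g\ge -f(\ast_0)\ \text{on }\Omega_1,\qquad f(\ast_0)+g(\ast_1)\ge0.
\]
Set $\tilde f:=f+g(\ast_1)\ge0$ on $\Omega_0$ and $\tilde g:=g+f(\ast_0)\ge0$ on $\Omega_1$.
\begin{itemize}
\item The dual value is unchanged: $\int\tilde f\,da+\int\tilde g\,db=\int f\,da'+\int g\,db'$.
\item Feasibility is kept: $\tilde f+\tilde g\ge 1_{\mathcal E}+f(\ast_0)+g(\ast_1)\ge 1_{\mathcal E}$ on $\Omega_0\times\Omega_1$.
\item Replacing $\tilde f,\tilde g$ by $\tilde f\wedge1,\tilde g\wedge1$ keeps the constraint, because $1_{\mathcal E}\le1$, and does not increase the cost.
\end{itemize}
So the infimum may be taken over Borel $f,g:\,\cdot\to[0,1]$ with $f(x)+g(y)\ge1$ on $\mathcal E$.

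\textbf{Layer-cake extraction of a cut.} For such a pair and $s\in(0,1)$, put $C_s:=\{f\ge s\}\in\Sigma_0$. If $x\in C_s^\c$ and $(x,y)\in\mathcal E$, then $g(y)\ge1-f(x)>1-s$. Hence $N_1(C_s^\c)\subseteq\{g>1-s\}$, and
\[
\int_0^1\bigl[a(C_s)+b(N_1(C_s^\c))\bigr]ds\le\int_0^1 a(\{f\ge s\})\,ds+\int_0^1 b(\{g>1-s\})\,ds=\int f\,da+\int g\,db.
\]
Some $s$ therefore yields a cut of cost at most the dual value, which gives $\inf_C\{\dots\}\le\mathcal F(a,b)$. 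Measurability of $s\mapsto b(N_1(C_s^\c))$ is sidestepped by bounding the integrand above by the measurable function $s\mapsto a(C_s)+b(\{g>1-s\})$ and choosing $s$ where that function is at most its average.
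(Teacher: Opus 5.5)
Your proposal is correct and follows the paper's proof essentially step for step. It uses the same cut bound, the same augmentation by $b(\Omega_1)\delta_{\ast_0}$ and $a(\Omega_0)\delta_{\ast_1}$ feeding into Kellerer's Corollary~(2.18), the same shift by the dummy values followed by truncation to $[0,1]$, and the same layer-cake extraction via $C_s=\{f\ge s\}$ and $N_1(C_s^\c)\subseteq\{g>1-s\}$. The paper also checks, with indicator potentials, that the dual value is at most the cut infimum, but your weak-duality step already makes that redundant.

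One small caution on your fallback. Any null-set modification of the potentials should be made upward, or alternatively you can replace $C_s$ by a Borel superset of equal $a$-measure. The reason is that the everywhere constraint on $\mathcal E$ is exactly what the inclusion $N_1(C_s^\c)\subseteq\{g>1-s\}$ requires.
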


\begin{proof}
The argument consists of a cut bound, followed by a reduction to a fixed-marginal transport
problem on an augmented space. The duality step is then supplied by Kellerer's measurable marginal
duality theorem~\cite[Corollary~(2.18)]{Kellerer_1984}, and the resulting dual problem is identified with the cut
expression by a layer-cake argument.

Write
\[
  \Phi(a,b):=\inf_{C\in\Sigma_0}\bigl\{a(C)+b\bigl(N_1(C^\c)\bigr)\bigr\}.
\]

\paragraph{The cut bound.}
Let \(\sigma\) be feasible in the definition of \(\mathcal F(a,b)\) and fix
\(C\in\Sigma_0\). Decompose \(\sigma\) into its restrictions to
\(C\times\Omega_1\) and \(C^\c\times\Omega_1\). The first part has mass at most
\(a(C)\) since \((\pr_0)_\#\sigma\le a\). For the second part, the relation
constraint \(\sigma(\mathcal E^\c)=0\) implies that its \(\Omega_1\)-marginal is
carried by \(N_1(C^\c)\); hence its mass is at most \(b(N_1(C^\c))\). Therefore
\[
  \sigma(\Omega_0\times\Omega_1)\le a(C)+b\bigl(N_1(C^\c)\bigr).
\]
Taking the supremum over feasible \(\sigma\) and then the infimum over \(C\)
gives
\[
  \mathcal F(a,b)\le \Phi(a,b).
\]

\paragraph{Reduction to a fixed-marginal problem.}
Adjoin isolated points \(\partial_0,\partial_1\) and set
\(\overline\Omega_0:=\Omega_0\sqcup\{\partial_0\}\) and
\(\overline\Omega_1:=\Omega_1\sqcup\{\partial_1\}\). Define
\[
  \overline a:=a+b(\Omega_1)\delta_{\partial_0},
  \qquad
  \overline b:=b+a(\Omega_0)\delta_{\partial_1},
\]
so that \(\overline a(\overline\Omega_0)=\overline b(\overline\Omega_1)\). Let
\(\Pi(\overline a,\overline b)\) denote the set of couplings of
\((\overline a,\overline b)\), and let \(h:=\mathbf 1_{\mathcal E}\) on
\(\overline\Omega_0\times\overline\Omega_1\), where \(\mathcal E\) is viewed as a
subset of \(\overline\Omega_0\times\overline\Omega_1\).

Then
\begin{equation}\label{eq:augmented_primal}
  \mathcal F(a,b)=\sup_{\gamma\in\Pi(\overline a,\overline b)}\int h\,d\gamma .
\end{equation}
Indeed, if \(\sigma\) is feasible for \(\mathcal F(a,b)\), then
\[
  \gamma
  :=
  \sigma
  +(a-(\pr_0)_\#\sigma)\otimes\delta_{\partial_1}
  +\delta_{\partial_0}\otimes(b-(\pr_1)_\#\sigma)
  +\sigma(\Omega_0\times\Omega_1)\delta_{(\partial_0,\partial_1)}
\]
belongs to \(\Pi(\overline a,\overline b)\) and satisfies
\(\int h\,d\gamma=\sigma(\Omega_0\times\Omega_1)\). Conversely, if
\(\gamma\in\Pi(\overline a,\overline b)\), then
\(\sigma:=\gamma\restriction\mathcal E\) satisfies
\(\sigma(\mathcal E^\c)=0\), \((\pr_0)_\#\sigma\le a\), \((\pr_1)_\#\sigma\le b\),
and \(\sigma(\Omega_0\times\Omega_1)=\int h\,d\gamma\). This proves
\eqref{eq:augmented_primal}.

Since \(\overline\Omega_0\) and \(\overline\Omega_1\) are standard Borel, we may
equip them with Polish topologies generating the given Borel structures.
Thus each factor is a Suslin space, and
\(h=\mathbf 1_{\mathcal E}\) is bounded and Borel measurable on
\(\overline\Omega_0\times\overline\Omega_1\).
Kellerer's duality theorem for measurable marginal problems therefore applies to \eqref{eq:augmented_primal};
specifically, this is
\cite[Corollary~(2.18)]{Kellerer_1984}.
Thus
\[
  \mathcal F(a,b)
  =
  \inf\Bigl\{
    \int_{\overline\Omega_0}u\,d\overline a
    +\int_{\overline\Omega_1}v\,d\overline b:
    u(\bar x)+v(\bar y)\ge \mathbf 1_{\mathcal E}(\bar x,\bar y)
    \ \text{for all }(\bar x,\bar y)
  \Bigr\}.
\]
For such a dual-feasible pair \((u,v)\), set
\[
  f(x):=u(x)+v(\partial_1), \qquad x\in\Omega_0,
\]
and
\[
  g(y):=v(y)+u(\partial_0), \qquad y\in\Omega_1.
\]
Because \(h(x,\partial_1)=h(\partial_0,y)=0\), feasibility implies
\(f\ge 0\) on \(\Omega_0\) and \(g\ge 0\) on \(\Omega_1\); and since
\(h(x,y)=1\) on \(\mathcal E\), one has \(f(x)+g(y)\ge 1\) for all
\((x,y)\in\mathcal E\). Moreover,
\[
  \int_{\overline\Omega_0}u\,d\overline a
  +\int_{\overline\Omega_1}v\,d\overline b
  =
  \int_{\Omega_0} f\,da+\int_{\Omega_1} g\,db.
\]
Conversely, any measurable \(f,g\ge 0\) with \(f(x)+g(y)\ge 1\) on \(\mathcal E\)
arises in this way by taking \(u=f\) on \(\Omega_0\), \(u(\partial_0)=0\),
\(v=g\) on \(\Omega_1\), and \(v(\partial_1)=0\). Hence
\[
  \mathcal F(a,b)
  =
  \inf\Bigl\{
    \int_{\Omega_0} f\,da+\int_{\Omega_1} g\,db:
    f,g\ge 0 \text{ measurable},\
    f(x)+g(y)\ge 1 \ \text{on }\mathcal E
  \Bigr\}.
\]
Truncating above by \(1\) preserves feasibility and does not increase the
objective, so
\begin{equation}\label{eq:dual_value}
  \mathcal F(a,b)=\mathcal D(a,b),
\end{equation}
where
\[
  \mathcal D(a,b)
  :=
  \inf\Bigl\{
    \int_{\Omega_0} f\,da+\int_{\Omega_1} g\,db:
    f:\Omega_0\to[0,1],\ g:\Omega_1\to[0,1]\ \text{measurable},\
    f(x)+g(y)\ge 1 \ \text{on }\mathcal E
  \Bigr\}.
\]

\paragraph{Identification of the dual value.}
It remains to show that \(\mathcal D(a,b)=\Phi(a,b)\). For the inequality
\(\mathcal D(a,b)\le \Phi(a,b)\), fix \(C\in\Sigma_0\). Since \(N_1(C^\c)\) is
analytic, hence \(b\)-measurable after completion, there exists
\(B\in\Sigma_1\) with \(N_1(C^\c)\subseteq B\) and \(b(B)=b(N_1(C^\c))\).
Set \(f=\mathbf 1_C\) and \(g=\mathbf 1_B\). Then \(f\) and \(g\) are measurable, and
for \((x,y)\in\mathcal E\) one has either \(x\in C\), hence \(f(x)=1\), or
\(x\in C^\c\), in which case \(y\in N_1(C^\c)\subseteq B\), hence \(g(y)=1\).
Therefore \(f(x)+g(y)\ge 1\) on \(\mathcal E\), so
\[
  \mathcal D(a,b)\le a(C)+b(B)=a(C)+b\bigl(N_1(C^\c)\bigr).
\]
Taking the infimum over \(C\) yields \(\mathcal D(a,b)\le \Phi(a,b)\).

For the reverse inequality, let \(f,g\) be dual-feasible with values in \([0,1]\),
and set \(C_s:=\{x\in\Omega_0:f(x)\ge s\}\) for \(s\in[0,1]\). If
\(y\in N_1(C_s^\c)\), then there exists \(x\in C_s^\c\) with \((x,y)\in\mathcal E\),
hence \(f(x)<s\) and therefore \(g(y)\ge 1-f(x)>1-s\). Thus
\[
  N_1(C_s^\c)\subseteq \{y\in\Omega_1:g(y)>1-s\}.
\]
It follows that for every \(s\in[0,1]\),
\[
  a(C_s)+b(\{g>1-s\})\ge a(C_s)+b\bigl(N_1(C_s^\c)\bigr)\ge \Phi(a,b).
\]
Using the layer-cake formula,
\[
  \int_{\Omega_0} f\,da \ge \int_0^1 a(C_s)\,ds,
  \qquad
  \int_{\Omega_1} g\,db
  = \int_0^1 b(\{g>1-s\})\,ds.
\]
Hence
\[
  \int_{\Omega_0} f\,da+\int_{\Omega_1} g\,db
  \ge
  \int_0^1 \bigl(a(C_s)+b(\{g>1-s\})\bigr)\,ds
  \ge \Phi(a,b).
\]
Taking the infimum over all dual-feasible \((f,g)\) gives
\(\mathcal D(a,b)\ge \Phi(a,b)\).

Combining this with \eqref{eq:dual_value}, we obtain
\(\mathcal F(a,b)=\mathcal D(a,b)=\Phi(a,b)\), which is exactly
\eqref{eq:mfmc_correct}.
\end{proof}

Proposition~\ref{prop:measurable_maxflow_mincut_bipartite} is the measurable cut formula behind the
capped feasibility quantities introduced in Section~\ref{sec:maximin}.  Its role later is to turn
optimality of truncated shipment into a dual obstruction statement on the underlying bipartite
relation.

\subsection{Measure-Theoretic Augmenting Subplan}
\label{sec:measure_augment_subplan}

The second ingredient is a measure-theoretic augmenting-subplan statement in the spirit of the
classical theory of probability measures with prescribed marginals and relation constraints, going
back to Strassen \cite{Strassen1965} and its extensions by Edwards
\cite{Edwards1978Marginals} and Kellerer \cite{Kellerer_1984}.  The proof given here is not quoted
from a single source in this exact form, but follows the same general paradigm of reducing to a
finite max-flow/min-cut problem on a measurable discretization and then passing back to the measure
setting by compactness; compare also Lovász’s measurable-space flow framework
\cite{Lovasz_2021}.  Its role in the sequel is to replace finite exchange or augmenting-path
arguments by a measure-theoretic perturbation principle, which will be used later in
Section~\ref{sec:one_side}.

\begin{lemma}[Measure-theoretic augmenting subplan]
\label{lem:measure_theoretic_augmenting_subplan}
Let \((\Omega_0,\Sigma_0)\) and \((\Omega_1,\Sigma_1)\) be measurable spaces. Fix
\(\iota\in\B\) and write \(\bar\iota:=1-\iota\). Let
\(b\in \mathcal M_+(\Omega_{\bar\iota})\) be finite, and let
\(\sigma,\sigma_0\in \mathcal M_+(\Omega_0\times\Omega_1)\).

Write
\[
  \xi := (\pr_\iota)_\#\sigma,\qquad
  \xi_0 := (\pr_\iota)_\#\sigma_0,\qquad
  \psi := (\pr_{\bar\iota})_\#\sigma,\qquad
  \psi_0 := (\pr_{\bar\iota})_\#\sigma_0.
\]
Assume that \(\psi\le b\), \(\psi_0\le b\), and
\(\sigma(\Omega_0\times\Omega_1)>\sigma_0(\Omega_0\times\Omega_1)\). Define
\[
  L:=\Bigl\{y\in\Omega_{\bar\iota}:\frac{d\psi_0}{db}(y)<1\Bigr\}.
\]

Then there exist measures \(\gamma,\gamma_0\in\mathcal M_+(\Omega_0\times\Omega_1)\),
with \(\gamma\le\sigma\) and \(\gamma_0\le\sigma_0\), such that, writing
\[
  \alpha := (\pr_\iota)_\#\gamma,\qquad
  \alpha_0 := (\pr_\iota)_\#\gamma_0,\qquad
  \beta := (\pr_{\bar\iota})_\#\gamma,\qquad
  \beta_0 := (\pr_{\bar\iota})_\#\gamma_0,
\]
one has:
\begin{enumerate}[(i)]
\item \(\alpha_0\le \alpha\) and
      \(0\neq \alpha-\alpha_0\le (\xi-\xi_0)_+\);

\item \(\beta_0\le \beta\), and the nonzero measure \(\beta-\beta_0\) is carried by \(L\),
      i.e. \((\beta-\beta_0)(L^\c)=0\);

\item \(\psi_0-\beta_0+\beta\le b\).
\end{enumerate}
\end{lemma}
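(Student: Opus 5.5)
The approach is a measure-theoretic version of the alternating-path (residual graph) argument. Reachability is encoded by \emph{measures} dominated by $\xi$, $\psi$, and so on, rather than by sets, and a contradiction is obtained from a cut-type mass count. We fix $\iota=0$ by symmetry. Disintegrate $\sigma=\xi\kprod K$ and $\sigma_0=\psi_0\kprod K_0^{\ast}$, where $K$ is a kernel from $\Omega_0$ and $K_0^{\ast}$ is a reverse kernel from $\Omega_1$. We also write $h:=d\psi_0/db\in[0,1]$, so that $L=\{h<1\}$. This step needs the spaces to be standard Borel, which is the setting in which the lemma is applied. Because $\sigma$ has strictly larger total mass than $\sigma_0$, and the two $0$-marginals $\xi,\xi_0$ have total masses $\sigma(\Omega_0\times\Omega_1)$ and $\sigma_0(\Omega_0\times\Omega_1)$, the measure $(\xi-\xi_0)_+$ is nonzero. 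The source of the augmentation is a piece of this excess.

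\textbf{Alternating sequence.} We build sequences $\gamma^{(n)}\le\sigma$ and $\gamma_0^{(n)}\le\sigma_0$. Start with $\gamma^{(1)}:=(\xi-\xi_0)_+\kprod K$, a forward step along $\sigma$ from the excess. Its $1$-marginal $\beta^{(1)}$ splits into $\beta^{(1)}|_L$, which is absorbed, and $\beta^{(1)}|_{L^\c}$. On $L^\c$ we have $\psi_0=b\ge\psi$, so the arriving mass can be matched against mass of $\psi_0$ already present there. We therefore take a backward step $\gamma_0^{(1)}:=(\beta^{(1)}|_{L^\c}\wedge\psi_0)\kprod K_0^{\ast}$ along $\sigma_0$, whose $0$-marginal lands at new points of $\Omega_0$. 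From there we take another forward step along $\sigma$, restricted to the part of $\xi$ not yet used, and continue in the same way.

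All steps are capped by densities, for instance $\min(\cdot\,,\xi-\text{used})$, so that the cumulative sums stay below $\sigma$ and $\sigma_0$ respectively. We then set $\gamma:=\sum_n\gamma^{(n)}$ and $\gamma_0:=\sum_n\gamma_0^{(n)}$, after a final proportional rescaling that keeps only the flow that actually reaches $L$. This is the measure analogue of keeping only the augmenting paths. With the bookkeeping arranged this way:
\begin{itemize}
\item (i) holds because the $0$-side net change $\alpha-\alpha_0$ is exactly the source part, which is dominated by $(\xi-\xi_0)_+$;
\item (ii) holds because on $L^\c$ every forward arrival is cancelled by a backward departure, so $\beta-\beta_0$ lives on $L$;
\item (iii) follows on $L^\c$ from this cancellation, and on $L$ by additionally capping the absorbed mass by $(1-h)\,b$.
\end{itemize}

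\textbf{Nontriviality via a cut.} It remains to show that some positive mass is absorbed in $L$. Suppose instead that the maximal alternating construction absorbs nothing. Take the limiting reachable measures: $\rho$ on $\Omega_0$, the sum of the source and all backward-step $0$-marginals, and $\tau$ on $\Omega_1$, the sum of all forward-step $1$-marginals. Maximality then says three things:
\begin{itemize}
\item $\tau$ is carried by $L^\c$;
\item the $\sigma$-mass leaving the support of $\rho$ lands inside the support of $\tau$;
\item all $\sigma_0$-mass entering the support of $\tau$ comes from the support of $\rho$.
\end{itemize}
Take Borel supports $X$ of $\rho$ and $Y$ of $\tau$. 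This gives
\[
\sigma(X\times\Omega_1)\le\psi(Y)\le b(Y)=\psi_0(Y)\le\sigma_0(X\times\Omega_1).
\]
Off $X$ we have $\xi\le\xi_0$, since the whole excess was placed in the source. Adding the two regions gives $\sigma(\Omega_0\times\Omega_1)\le\sigma_0(\Omega_0\times\Omega_1)$, a contradiction.

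\textbf{Main obstacle.} The hard step is making this maximality argument rigorous. In the finite case, reachability is a set computed by search. Here "$\sigma$-mass from $X$ lands in $Y$" must be stated through kernels, and it must hold up to null sets relative to the right measures. It is also not automatic that the iteration saturates after countably many steps. The first thing to try is to define the reachable pair $(\rho,\tau)$ as a supremum over an increasing countable family of admissible alternating constructions, and then to show that any positive leftover would allow one more step. An alternative is the route hinted at in the surrounding text, if the kernel bookkeeping becomes unwieldy: discretize by finite measurable partitions, apply finite augmenting paths, and pass to a limit. That route needs a compactness or weak-limit argument for the dominated measures $\gamma\le\sigma$ and $\gamma_0\le\sigma_0$; setwise compactness of order intervals under a fixed measure, via Dunford--Pettis on the densities, should supply this.
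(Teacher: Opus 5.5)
Your cut computation is correct. The chain $\sigma(X\times\Omega_1)\le\psi(Y)\le b(Y)=\psi_0(Y)\le\sigma_0(X\times\Omega_1)$, together with $\xi\le\xi_0$ off $X$, is exactly the zero-capacity case of the cut estimate at the heart of the paper's proof. But neither of your two routes actually produces $\gamma,\gamma_0$, so there is a genuine gap.

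\textbf{The alternating construction.} Conservation fails at intermediate nodes.
\begin{itemize}
\item A backward step deposits on $\Omega_0$ a measure dominated only by $\xi_0$, while the next forward step is capped by the unused part of $\xi$. Part of the deposited mass is therefore stranded, and $\alpha_0\le\alpha$ fails there.
\item Forward arrivals in $L$ beyond the cap $(1-h)\,b$ are stranded in the same way. Either they stay in $\gamma$ and violate (iii), or they are deleted and conservation breaks upstream.
\end{itemize}
So ``$\alpha-\alpha_0$ is exactly the source part'' is false for the construction as written, and a proportional rescaling cannot repair local violations.

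What is needed is a thinning that depends on both the state and the step. Concretely, this is the probability of eventual absorption in $L$ for the killed alternating chain driven by $K$ and $K_0^\ast$; equivalently, a path measure on $\Omega_0\times(\Omega_1\times\Omega_0)^{\mathbb N}$ restricted to the absorption event. On top of that, you must prove your three maximality properties modulo the correct null sets, and show saturation after countably many steps. You name exactly these as the main obstacle but do not supply them. In addition, your disintegrations restrict you to standard Borel spaces, whereas the lemma is stated for arbitrary measurable spaces.

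\textbf{The fallback.} This is the paper's route, but as sketched it lacks the ingredient that keeps the limit nonzero. Single augmenting paths on finer and finer partitions give positive augmentations that may tend to $0$. Since ``nonzero'' is not a closed condition, the compactness limit may have $\alpha=\alpha_0$.

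The paper instead extends your inequality to arbitrary $A\in\Sigma_0$, $B\in\Sigma_1$. With $\nu^+:=(\xi-\xi_0)_+$ and $\Delta:=\sigma(\Omega_0\times\Omega_1)-\sigma_0(\Omega_0\times\Omega_1)$, it shows
\[
  \nu^+(A^{\c})+\sigma(A\times B^{\c})+\sigma_0(A^{\c}\times B)+(b-\psi_0)(B)\ \ge\ \Delta .
\]
Consider the network with capacities $\nu^+(P_i)$ on $s\to x_i$, $\sigma(P_i\times Q_j)$ on $x_i\to y_j$, $\sigma_0(P_i\times Q_j)$ on $y_j\to x_i$, and $(b-\psi_0)(Q_j)$ on $y_j\to t$. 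Every cut has capacity at least $\Delta$, so max-flow/min-cut gives a flow of value at least $\Delta$ for every pair of finite partitions.

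The cell flows are then realized as $\sum_{i,j}c_{ij}\,\sigma\restriction(P_i\times Q_j)\le\sigma$ and $\sum_{i,j}d_{ij}\,\sigma_0\restriction(P_i\times Q_j)\le\sigma_0$. Weak-* compactness of the order intervals in $L^\infty(\sigma+\sigma_0)$, plus the finite intersection property, yields a pair satisfying the following closed constraints on all measurable sets:
\begin{itemize}
\item $0\le\alpha-\alpha_0\le\nu^+$;
\item $0\le\beta-\beta_0\le b-\psi_0$;
\item $(\beta-\beta_0)(\Omega_1)\ge\Delta$.
\end{itemize}
Since $b-\psi_0$ is carried by $L$, these give (i)--(iii) directly. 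No disintegration, path bookkeeping or saturation argument is needed, and the argument works on arbitrary measurable spaces.
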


\begin{proof}
By symmetry it is enough to treat the case \(\iota=0\), so \(\bar\iota=1\). Write
\[
  \xi := (\pr_0)_\#\sigma,\qquad
  \xi_0 := (\pr_0)_\#\sigma_0,\qquad
  \psi := (\pr_1)_\#\sigma,\qquad
  \psi_0 := (\pr_1)_\#\sigma_0.
\]
Let \(f_0\) be a measurable version of \(d\psi_0/db\), and set
\[
  L:=\{y\in\Omega_1:f_0(y)<1\}.
\]
Since \(\psi_0\le b\), one has \(\psi_0\ll b\) and \(f_0\le 1\) \(b\)-a.e. Hence
\[
  (b-\psi_0)(L^\c)=\int_{L^\c}(1-f_0)\,db=0,
\]
so \(b-\psi_0\) is carried by \(L\).

Set \(\nu:=\xi-\xi_0\), let \(\nu=\nu^+-\nu^-\) be its Jordan decomposition, and write
\[
  \Delta:=\sigma(\Omega_0\times\Omega_1)-\sigma_0(\Omega_0\times\Omega_1)
  =\nu(\Omega_0)>0.
\]
It is enough to construct measures \(\gamma\le \sigma\) and \(\gamma_0\le \sigma_0\) such that, with
\[
  \alpha := (\pr_0)_\#\gamma,\qquad
  \alpha_0 := (\pr_0)_\#\gamma_0,\qquad
  \beta := (\pr_1)_\#\gamma,\qquad
  \beta_0 := (\pr_1)_\#\gamma_0,
\]
one has
\begin{equation}\label{eq:target_bounds_aug_subplan}
  0\le \alpha-\alpha_0\le \nu^+,\qquad
  0\le \beta-\beta_0\le b-\psi_0,\qquad
  (\beta-\beta_0)(\Omega_1)\ge \Delta.
\end{equation}
Indeed, these inequalities imply \(\alpha_0\le \alpha\) and
\(0\neq \alpha-\alpha_0\le (\xi-\xi_0)_+\), since
\[
  (\alpha-\alpha_0)(\Omega_0)
  =\gamma(\Omega_0\times\Omega_1)-\gamma_0(\Omega_0\times\Omega_1)
  =(\beta-\beta_0)(\Omega_1)\ge \Delta>0.
\]
Likewise \(\beta_0\le \beta\), the measure \(\beta-\beta_0\) is nonzero and carried by \(L\), because it is dominated by \(b-\psi_0\), and
\[
  \psi_0-\beta_0+\beta=\psi_0+(\beta-\beta_0)\le \psi_0+(b-\psi_0)=b.
\]
Thus it remains to prove \eqref{eq:target_bounds_aug_subplan}.

\paragraph{Finite-dimensional model.}
Fix finite measurable partitions \(\mathcal P=\{P_1,\dots,P_m\}\subset\Sigma_0\) and
\(\mathcal Q=\{Q_1,\dots,Q_n\}\subset\Sigma_1\). Write
\[
  s_{ij}:=\sigma(P_i\times Q_j),\qquad
  t_{ij}:=\sigma_0(P_i\times Q_j),\qquad
  u_i:=\nu^+(P_i),\qquad
  r_j:=(b-\psi_0)(Q_j).
\]
We claim that there exist nonnegative numbers \(g_{ij}\le s_{ij}\) and \(h_{ij}\le t_{ij}\) such that, with
\[
  \eta_i:=\sum_{j=1}^n(g_{ij}-h_{ij}),\qquad
  \delta_j:=\sum_{i=1}^m(g_{ij}-h_{ij}),
\]
one has
\begin{equation}\label{eq:finite_goal_aug_subplan}
  0\le \eta_i\le u_i\quad(1\le i\le m),\qquad
  0\le \delta_j\le r_j\quad(1\le j\le n),
\end{equation}
and
\begin{equation}\label{eq:finite_value_aug_subplan}
  \sum_{i=1}^m \eta_i=\sum_{j=1}^n \delta_j\ge \Delta.
\end{equation}

Consider the finite directed network with source \(s\), sink \(t\), row vertices \(x_1,\dots,x_m\), column vertices \(y_1,\dots,y_n\), and capacities
\[
  s\to x_i:\ u_i,\qquad
  x_i\to y_j:\ s_{ij},\qquad
  y_j\to x_i:\ t_{ij},\qquad
  y_j\to t:\ r_j.
\]
By the finite max-flow/min-cut theorem, it is enough to show that every \(s\)-\(t\) cut has capacity at least \(\Delta\). Let \(U\) be the source side of such a cut, and set
\[
  A:=\bigcup_{x_i\in U}P_i,\qquad
  B:=\bigcup_{y_j\in U}Q_j.
\]
The capacity of the cut is
\[
  \nu^+(A^\c)+\sigma(A\times B^\c)+\sigma_0(A^\c\times B)+(b-\psi_0)(B).
\]
Since \(\Delta=\nu(\Omega_0)=\nu(A)+\nu(A^\c)\le \nu(A)+\nu^+(A^\c)\), it suffices to prove
\begin{equation}\label{eq:need_nuA_bound_aug_subplan}
  \nu(A)\le \sigma(A\times B^\c)+\sigma_0(A^\c\times B)+(b-\psi_0)(B).
\end{equation}
Now
\[
  \nu(A)=\xi(A)-\xi_0(A)
  =\bigl[\sigma(A\times B)-\sigma_0(A\times B)\bigr]
   +\sigma(A\times B^\c)-\sigma_0(A\times B^\c),
\]
while
\[
  \sigma(A\times B)-\sigma_0(A\times B)
  =\psi(B)-\psi_0(B)-\sigma(A^\c\times B)+\sigma_0(A^\c\times B)
  \le \psi(B)-\psi_0(B)+\sigma_0(A^\c\times B).
\]
Using also \(-\sigma_0(A\times B^\c)\le 0\), we obtain
\[
  \nu(A)\le \psi(B)-\psi_0(B)+\sigma_0(A^\c\times B)+\sigma(A\times B^\c).
\]
Finally \(\psi\le b\) gives \(\psi(B)-\psi_0(B)\le (b-\psi_0)(B)\), and \eqref{eq:need_nuA_bound_aug_subplan} follows.

Hence there is a flow of value at least \(\Delta\). Let \(g_{ij}\) and \(h_{ij}\) be the flows on
\(x_i\to y_j\) and \(y_j\to x_i\), and let \(\eta_i\) and \(\delta_j\) be the flows on
\(s\to x_i\) and \(y_j\to t\). Flow conservation gives the displayed formulas for \(\eta_i\) and \(\delta_j\), the edge capacities yield \eqref{eq:finite_goal_aug_subplan}, and the value of the flow gives \eqref{eq:finite_value_aug_subplan}.

\paragraph{Realization on finitely many test sets.}
Fix finite families \(\mathscr A\subset\Sigma_0\) and \(\mathscr B\subset\Sigma_1\). Let \(\mathcal P\) and \(\mathcal Q\) be the finite partitions generated by \(\mathscr A\) and \(\mathscr B\), and apply the preceding paragraph. For each cell \(P_i\times Q_j\), set
\[
  c_{ij}:=
  \begin{cases}
    g_{ij}/\sigma(P_i\times Q_j),& \sigma(P_i\times Q_j)>0,\\
    0,& \sigma(P_i\times Q_j)=0,
  \end{cases}
  \qquad
  d_{ij}:=
  \begin{cases}
    h_{ij}/\sigma_0(P_i\times Q_j),& \sigma_0(P_i\times Q_j)>0,\\
    0,& \sigma_0(P_i\times Q_j)=0.
  \end{cases}
\]
Then \(0\le c_{ij},d_{ij}\le 1\), so the measures
\[
  \gamma_{\mathscr A,\mathscr B}
  :=\sum_{i,j} c_{ij}\,\sigma\!\restriction(P_i\times Q_j),
  \qquad
  \gamma_{0,\mathscr A,\mathscr B}
  :=\sum_{i,j} d_{ij}\,\sigma_0\!\restriction(P_i\times Q_j)
\]
satisfy \(\gamma_{\mathscr A,\mathscr B}\le \sigma\) and
\(\gamma_{0,\mathscr A,\mathscr B}\le \sigma_0\). Let
\[
  \alpha_{\mathscr A,\mathscr B}:=(\pr_0)_\#\gamma_{\mathscr A,\mathscr B},\qquad
  \alpha_{0,\mathscr A,\mathscr B}:=(\pr_0)_\#\gamma_{0,\mathscr A,\mathscr B},
\]
\[
  \beta_{\mathscr A,\mathscr B}:=(\pr_1)_\#\gamma_{\mathscr A,\mathscr B},\qquad
  \beta_{0,\mathscr A,\mathscr B}:=(\pr_1)_\#\gamma_{0,\mathscr A,\mathscr B}.
\]
By construction,
\[
  \alpha_{\mathscr A,\mathscr B}(P_i)-\alpha_{0,\mathscr A,\mathscr B}(P_i)=\eta_i,\qquad
  \beta_{\mathscr A,\mathscr B}(Q_j)-\beta_{0,\mathscr A,\mathscr B}(Q_j)=\delta_j.
\]
Hence, for every \(A\in\mathscr A\),
\[
  0\le \alpha_{\mathscr A,\mathscr B}(A)-\alpha_{0,\mathscr A,\mathscr B}(A)\le \nu^+(A),
\]
and for every \(B\in\mathscr B\),
\[
  0\le \beta_{\mathscr A,\mathscr B}(B)-\beta_{0,\mathscr A,\mathscr B}(B)\le (b-\psi_0)(B).
\]
Moreover,
\[
  \bigl(\beta_{\mathscr A,\mathscr B}-\beta_{0,\mathscr A,\mathscr B}\bigr)(\Omega_1)
  =\sum_{j=1}^n\delta_j\ge \Delta.
\]

\paragraph{Compactness.}
Let \(\lambda:=\sigma+\sigma_0\), and write \(a:=d\sigma/d\lambda\) and \(a_0:=d\sigma_0/d\lambda\). Then \(0\le a,a_0\le 1\) \(\lambda\)-a.e. Every \(\gamma\le \sigma\) has the form \(\gamma=p\,\lambda\) with \(0\le p\le a\) \(\lambda\)-a.e., and every \(\gamma_0\le \sigma_0\) has the form \(\gamma_0=q\,\lambda\) with \(0\le q\le a_0\) \(\lambda\)-a.e. Set
\[
  \mathcal K:=\{(p,q)\in L^\infty(\lambda)\times L^\infty(\lambda): 0\le p\le a,\ 0\le q\le a_0
  \ \lambda\text{-a.e.}\}.
\]
Equip \(L^\infty(\lambda)\times L^\infty(\lambda)\) with the product weak-* topology. Since \(\lambda\) is finite, \(L^\infty(\lambda)\) is the dual of \(L^1(\lambda)\), and Banach--Alaoglu yields weak-* compactness of the unit ball. The set \(\mathcal K\) is weak-* closed, because the order constraints are equivalent to weak-* closed inequalities against nonnegative \(L^1(\lambda)\)-functions. Thus \(\mathcal K\) is weak-* compact.

For \((p,q)\in\mathcal K\), let \(\gamma:=p\,\lambda\), \(\gamma_0:=q\,\lambda\), and define signed measures
\[
  \eta_{p,q}:=(\pr_0)_\#\gamma-(\pr_0)_\#\gamma_0,\qquad
  \delta_{p,q}:=(\pr_1)_\#\gamma-(\pr_1)_\#\gamma_0.
\]
For each \(A\in\Sigma_0\) and \(B\in\Sigma_1\),
\[
  \eta_{p,q}(A)=\int_{A\times\Omega_1}(p-q)\,d\lambda,\qquad
  \delta_{p,q}(B)=\int_{\Omega_0\times B}(p-q)\,d\lambda,
\]
so these evaluations are weak-* continuous on \(\mathcal K\). Consequently the sets
\[
  C_A^-:=\{(p,q)\in\mathcal K:\eta_{p,q}(A)\ge 0\},\qquad
  C_A^+:=\{(p,q)\in\mathcal K:\eta_{p,q}(A)\le \nu^+(A)\},
\]
\[
  D_B^-:=\{(p,q)\in\mathcal K:\delta_{p,q}(B)\ge 0\},\qquad
  D_B^+:=\{(p,q)\in\mathcal K:\delta_{p,q}(B)\le (b-\psi_0)(B)\},
\]
and
\[
  T:=\{(p,q)\in\mathcal K:\delta_{p,q}(\Omega_1)\ge \Delta\}
\]
are weak-* closed.

It remains to verify the finite intersection property. Given finitely many such constraints, collect the corresponding sets from \(\Sigma_0\) and \(\Sigma_1\) into finite families \(\mathscr A\) and \(\mathscr B\). The preceding paragraph then yields measures
\(\gamma_{\mathscr A,\mathscr B}\le \sigma\) and \(\gamma_{0,\mathscr A,\mathscr B}\le \sigma_0\), hence a pair \((p,q)\in\mathcal K\), satisfying all those finitely many constraints and also
\(\delta_{p,q}(\Omega_1)\ge \Delta\). Thus every finite subfamily has nonempty intersection. By compactness, the whole family has nonempty intersection. Choose \((p,q)\) in that intersection, set
\(\gamma:=p\,\lambda\) and \(\gamma_0:=q\,\lambda\), and let \(\alpha,\alpha_0,\beta,\beta_0\) be the corresponding marginals.

Then for every \(A\in\Sigma_0\),
\[
  0\le (\alpha-\alpha_0)(A)\le \nu^+(A),
\]
hence \(0\le \alpha-\alpha_0\le \nu^+\). Likewise, for every \(B\in\Sigma_1\),
\[
  0\le (\beta-\beta_0)(B)\le (b-\psi_0)(B),
\]
so \(0\le \beta-\beta_0\le b-\psi_0\). Finally,
\[
  (\beta-\beta_0)(\Omega_1)\ge \Delta.
\]
This is \eqref{eq:target_bounds_aug_subplan}, and the proof is complete.
\end{proof}

\section{Convex Characterizations of Level-Optimal Maximin Refinements}
\label{sec:one_side}

The preceding sections identified level-optimal maximin as the candidate
one-sided invariant and developed the measurable tools needed to analyze it.
The purpose of the present section is to relate that invariant to convex
refinement. We first prove existence of one-sided minimizers for general convex
\(\vartheta\)-divergence objectives. We then show that every strictly convex
minimizer is level-optimal maximin, and conversely that every level-optimal
maximin refinement is optimal for every proper convex lower semicontinuous
objective. Thus level-optimal maximin is contained in the minimizer class of the
entire convex family, while strict convexity recovers it as the distinguished
structure. In this sense, the maximin structure from
Section~\ref{sec:maximin} and the variational structure of convex refinement are
two complementary descriptions of the same one-sided refinement phenomenon,
although an individual convex objective need not characterize the level-optimal
maximin class uniquely. This variational viewpoint is adjacent to the classical
divergence-projection framework of Csisz\'ar
\cite{csiszar1975idivergence}, but the later universal comparison results go
beyond a fixed-divergence projection statement: the point is not merely that one
chosen divergence is minimized, but that the same structural class is optimal
throughout the convex family and ultimately for every divergence
satisfying channel-DPI.

\subsection{Existence of one-sided convex minimizers}
\label{sec:one_sided_convex_existence}

We begin with attainment.  Before comparing convex minimizers with level-optimal maximin, one must
know that the one-sided convex program is a genuine variational problem in the measurable setting.
The only real obstruction is loss of mass in the opposite coordinate, and the proof below isolates
exactly where the closedness of \(\mathcal E\) enters in the passage from a limiting defect measure
to a repaired feasible plan.

We use standard facts from convex analysis concerning Fenchel conjugates,
Fenchel--Young inequalities, Fenchel--Moreau reconstruction, recession slopes,
and convex integral functionals; see
\cite{rockafellar_convex_1970,Rockafellar1971}.

\begin{proposition}[Attainment of the $\D_\vartheta$-closest one-sided refinement problem]
\label{prop:vartheta_one_sided_attainment}
Under Assumption~\ref{assump:closed_polish_graph_nonempty_nbhd}, fix $\iota\in\mathbb B$ and write $\bar\iota:=1-\iota$.
Let $\vartheta:[0,\infty)\to\mathbb R\cup\{+\infty\}$ be proper, convex, and lower semicontinuous.
Then there exists $\pi^\star\in\Pi_\iota^{\mathcal E}$ such that
\[
\D_\vartheta\bigl(\nu_{\bar\iota}^{(\pi^\star)} \,\|\, \nu_{\bar\iota}\bigr)
=
\inf_{\pi\in\Pi_\iota^{\mathcal E}}
\D_\vartheta\bigl(\nu_{\bar\iota}^{(\pi)} \,\|\, \nu_{\bar\iota}\bigr).
\]
In other words, the $\D_\vartheta$-closest one-sided refinement problem on side $\iota$
defined in~\eqref{eq:one_closest} attains its infimum.
\end{proposition}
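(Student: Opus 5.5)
The plan is the direct method: take a minimizing sequence, extract a weak limit after compactifying the opposite coordinate, and repair the mass that escapes to infinity using closedness of \(\mathcal E\). Fix \(\iota=0\) by symmetry and write \(I:=\inf_{\pi\in\Pi^{\mathcal E}_0}\D_\vartheta(\nu^{(\pi)}_1\|\nu_1)\). If \(I=+\infty\), any element of \(\Pi^{\mathcal E}_0\) is a minimizer; this set is nonempty by a measurable selection of \(N_1(x)\), which is available via Jankov--von Neumann since \(\mathcal E\) is closed. So assume \(I<\infty\) and take a minimizing sequence \(\pi_n\), with \(P_n:=\nu^{(\pi_n)}_1\).

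\textbf{Compactness.} The first marginals are all equal to \(\nu_0\), hence tight. The second marginals \(P_n\) need not be tight, so I will embed \(\Omega_1\) into a compact metrizable space \(\widehat\Omega_1\), which is possible because \(\Omega_1\) is Polish, and regard \(\pi_n\) as measures on \(\Omega_0\times\widehat\Omega_1\). These have fixed total mass \(\nu_0(\Omega_0)\) and tight \(\Omega_0\)-marginal, so Prokhorov gives a subsequence with \(\pi_n\to\widehat\pi\) narrowly. Write \(\widehat\pi=\pi'+\rho\), where \(\pi'\) is the restriction to \(\Omega_0\times\Omega_1\) and \(\rho\) lives on \(\Omega_0\times(\widehat\Omega_1\setminus\Omega_1)\); the latter set is Borel because \(\Omega_1\) is \(G_\delta\). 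The \(\Omega_0\)-marginal of \(\widehat\pi\) is \(\nu_0\). Since \(\widehat\pi\) is carried by the closure of \(\mathcal E\) in \(\Omega_0\times\widehat\Omega_1\), and \(\mathcal E\) is closed in \(\Omega_0\times\Omega_1\), we get \(\pi'(\mathcal E^\c)=0\). This is where closedness enters.

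\textbf{Lower semicontinuity.} I will use the dual representation
\[
\D_\vartheta(P\|\nu_1)=\sup_{h}\Bigl\{\int h\,dP-\int\vartheta^*(h)\,d\nu_1\Bigr\},
\]
with the supremum over bounded continuous \(h\) valued in \(\operatorname{dom}\vartheta^*\), using the Rockafellar integral functional theorem together with the recession term. This shows that \(P\mapsto\D_\vartheta(P\|\nu_1)\) is narrowly l.s.c.\ and convex. Along the subsequence, \(P_n\) restricted to test functions \(h\) with compact support in \(\Omega_1\) converges to \(P':=(\pr_1)_\#\pi'\). The defect of mass \(m:=\rho(\Omega_0\times\widehat\Omega_1)\) leaves the \(\Omega_1\)-coordinate.

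\textbf{Repair (the main obstacle).} The task is to reassign the escaped mass without raising the cost. Disintegrate \(\rho\) with respect to its \(\Omega_0\)-marginal \(\rho_0\le\nu_0\). Fix a measurable selection \(s(x)\in N_1(x)\), and set
\[
\pi^\star:=\pi'+(\mathrm{id},s)_\#\rho_0 .
\]
Then \(\pi^\star\in\Pi^{\mathcal E}_0\). To compare costs, I will need the bound
\[
\D_\vartheta(P'+Q\|\nu_1)\le\liminf_n\D_\vartheta(P_n\|\nu_1)
\]
for the added piece \(Q:=s_\#\rho_0\). The attempt will go as follows. First split each \(P_n\) into a part on a large compact \(K\) and an escaping part; the escaping part contributes at least its mass times \(\vartheta'_\infty\) in the limit. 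Then handle \(Q\): by moving it onto the singular region, or by mixing it in, its cost is at most \(\vartheta'_\infty\cdot m\) plus the change in the ac part, where the change is controlled by convexity, namely \(\vartheta(r+q)\le\vartheta(r)+q\,\vartheta'_\infty\) per unit. If \(\vartheta'_\infty=+\infty\), finiteness of \(I\) together with de la Vallée-Poussin uniform integrability forces \(m=0\) outright, so no repair is needed. Making the first split rigorous, i.e.\ showing that the escaping mass really costs at least \(\vartheta'_\infty m\) in the \(\liminf\), is the delicate step; I expect it to follow from the l.s.c.\ of \(\D_\vartheta\) on the compactified space, using \(\nu_1(\widehat\Omega_1\setminus\Omega_1)=0\), so that the escaped mass is singular there.
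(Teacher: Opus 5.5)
Your proof is correct, but it handles the escape of mass in the \(\Omega_1\)-coordinate differently from the paper.

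\textbf{The step you flag as delicate.} It is already settled by the observation you end with. Write \(\widehat\nu_1\) for \(\nu_1\) viewed on \(\widehat\Omega_1\). Then \(\D_\vartheta((\pr_1)_\#\pi_n\|\widehat\nu_1)=\D_\vartheta(P_n\|\nu_1)\). The narrow limit \((\pr_1)_\#\widehat\pi=P'+(\pr_1)_\#\rho\) has its second piece carried by the \(\widehat\nu_1\)-null Borel set \(\widehat\Omega_1\setminus\Omega_1\), so its divergence equals \(\D_\vartheta(P'\|\nu_1)+\vartheta'_\infty m\). Lower semicontinuity on the compact space then gives \(\liminf_n\D_\vartheta(P_n\|\nu_1)\ge\D_\vartheta(P'\|\nu_1)+\vartheta'_\infty m\) directly. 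No compact-\(K\) splitting is needed. When \(\vartheta'_\infty=+\infty\), the same inequality forces \(m=0\) without de la Vall\'ee-Poussin.

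\textbf{The paper's route.} The paper does not compactify. It exhausts \(\nu_1\) by compacts \(U_m\subseteq\Omega_1\) with \(\nu_1(U_m^\c)\le 1/m\), and extracts limits of the restrictions \(\pi_n\!\restriction(\Omega_0\times U_m)\) by a diagonal argument. It defines the defect on the source side as \(\nu_0-(\pr_0)_\#\pi_\infty\). It then proves the analogous mass-loss inequality by combining three things: lower semicontinuity on each \(U_m\), the Fenchel--Young tail bound \(\vartheta(t)\ge\lambda t-\vartheta^*(\lambda)\) on \(U_m^\c\), and a passage to the limit along the exhaustion followed by \(\lambda\uparrow\vartheta'_\infty\). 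That limit step is why the paper first normalizes to \(\vartheta\ge0\). The repair step is identical in both arguments: a measurable selector together with \(\vartheta(r+q)\le\vartheta(r)+q\,\vartheta'_\infty\), which covers \(s_\#\rho_0\) wherever the selector places it.

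\textbf{What each route buys.} Yours needs a single extraction, with no diagonalization, no tail estimate and no normalization. The price is that lower semicontinuity must be invoked for \(\D_\vartheta(\cdot\|\widehat\nu_1)\) on \(\widehat\Omega_1\), not on \(\Omega_1\) as you state it. There \(\widehat\nu_1\) need not have full support, so you need a form of the convex-integral duality that does not assume full support; such forms are standard. The paper only applies duality on compact subsets of \(\Omega_1\).

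\textbf{One correction.} Drop the sentence about test functions with compact support in \(\Omega_1\). A non-locally-compact Polish space may carry no nonzero such functions, and what you actually use is narrow convergence on \(\widehat\Omega_1\).
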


\noindent \textbf{Remark.}
An example showing that the closedness of $\mathcal{E}$
is genuinely needed for the existence theory is given in Appendix~\ref{app:closed-support-needed}.

\begin{proof}
By symmetry it suffices to treat the case \(\iota=0\). Write
\[
J(\pi):=\D_\vartheta\bigl(\nu_1^{(\pi)}\|\nu_1\bigr),
\qquad
\pi\in\Pi_0^{\mathcal E},
\]
where \(\nu_1^{(\pi)}=(\pr_1)_\#\pi\), and set \(w_0:=\nu_0(\Omega_0)\).

\paragraph{Reduction to the normalized integrand.}
We first reduce to the case \(\vartheta\ge 0\). Since \(\vartheta\) is proper, convex, and lower semicontinuous on
\([0,\infty)\), it admits an affine lower bound: there exist \(a,b\in\mathbb R\) such that
\(\vartheta(t)\ge at+b\) for all \(t\ge0\). Replacing \(\vartheta\) by
\(\tilde\vartheta(t):=\vartheta(t)-at-b\), we obtain a nonnegative proper convex lower semicontinuous function.
Moreover,
\[
\D_{\tilde\vartheta}(P\|\nu_1)
=
\D_\vartheta(P\|\nu_1)-a\,P(\Omega_1)-b\,\nu_1(\Omega_1),
\]
so on the feasible class \(\Pi_0^{\mathcal E}\), where \(P=(\pr_1)_\#\pi\) always has total mass \(w_0\), the two
objectives differ by a constant. Thus the minimizers are unchanged. We may therefore assume from the outset that
\(\vartheta\ge0\).

The feasible class is nonempty. Let \(\overline{\Sigma}_0^{\,\nu_0}\) denote the \(\nu_0\)-completion of
\(\Sigma_0\). Define the multifunction \(F:\Omega_0\rightrightarrows \Omega_1\) by
\(F(x):=N_1(x)=\{y\in\Omega_1:(x,y)\in\mathcal E\}\). By assumption, each \(F(x)\) is nonempty and closed.
Moreover, for every open set \(V\subseteq\Omega_1\),
\[
\{x\in\Omega_0:F(x)\cap V\neq\varnothing\}
=
\pr_0\bigl(\mathcal E\cap(\Omega_0\times V)\bigr),
\]
which is analytic in \(\Omega_0\), hence universally measurable and therefore
\(\overline{\Sigma}_0^{\,\nu_0}\)-measurable. Thus \(F\) is weakly measurable as a closed-valued
multifunction on \((\Omega_0,\overline{\Sigma}_0^{\,\nu_0})\). The Kuratowski--Ryll-Nardzewski theorem
\cite{KuratowskiRyllNardzewski1965} therefore yields an
\(\overline{\Sigma}_0^{\,\nu_0}\)-measurable selector \(\mathfrak{s}:\Omega_0\to\Omega_1\) such that
\((x,\mathfrak{s}(x))\in\mathcal E\) for all \(x\in\Omega_0\).

Viewing \(\nu_0\) as its completion on \((\Omega_0,\overline{\Sigma}_0^{\,\nu_0})\), the map
\((\id_{\Omega_0},\mathfrak{s}):(\Omega_0,\overline{\Sigma}_0^{\,\nu_0})\to \Omega_0\times\Omega_1\) is measurable
with respect to \(\Sigma_0\otimes\Sigma_1\). Hence
\[
\pi^0:=(\id_{\Omega_0},\mathfrak{s})_\#\nu_0
\]
is a well-defined finite Borel measure on \(\Omega_0\times\Omega_1\). Since
\((\id_{\Omega_0},\mathfrak{s})(x)\in\mathcal E\) for all \(x\), one has \(\pi^0(\mathcal E^\c)=0\), and
\((\pr_0)_\#\pi^0=\nu_0\). Thus \(\pi^0\in\Pi_0^{\mathcal E}\).

If \(\inf_{\pi\in\Pi_0^{\mathcal E}}J(\pi)=+\infty\), then \(\pi^0\) is already a minimizer. Hence it remains to treat the case
\[
J^\star:=\inf_{\pi\in\Pi_0^{\mathcal E}}J(\pi)<+\infty.
\]

\paragraph{Compactness and the defect measure.}
Choose a minimizing sequence \((\pi_n)\subset\Pi_0^{\mathcal E}\) such that \(J(\pi_n)\downarrow J^\star\), and write
\[
P_n:=(\pr_1)_\#\pi_n\in\mathcal M_+(\Omega_1).
\]
Then \(P_n(\Omega_1)=w_0\) for every \(n\).

The only compactness issue is escape of mass in the \(\Omega_1\)-direction. Since \(\nu_1\) is a finite Borel measure on the
Polish space \(\Omega_1\), it is tight. Choose increasing compact sets \((U_m)_{m\ge1}\) such that
\[
\nu_1(U_m^\c)\le \frac1m \qquad (m\ge1),
\]
and set \(U_\infty:=\bigcup_{m\ge1}U_m\). Then \(\nu_1(U_\infty^\c)=0\).

For each \(m\), define
\[
\pi_n^{(m)}:=\pi_n\!\restriction(\Omega_0\times U_m).
\]
For fixed \(m\), the family \(\{\pi_n^{(m)}\}_n\) is tight on the Polish space \(\Omega_0\times U_m\): the second coordinate
already lies in the compact set \(U_m\), and the first marginals satisfy
\[
(\pr_0)_\#\pi_n^{(m)}\le (\pr_0)_\#\pi_n=\nu_0.
\]
Since \(\sup_n\pi_n^{(m)}(\Omega_0\times U_m)\le w_0\), Prokhorov's theorem yields relative compactness in the weak topology.
A diagonal extraction therefore gives a subsequence, still denoted \((\pi_n)\), and finite measures
\(\pi_\infty^{(m)}\in\mathcal M_+(\Omega_0\times U_m)\) such that
\[
\pi_n^{(m)}\Rightarrow\pi_\infty^{(m)}
\qquad\text{for each fixed }m.
\]
The limits are consistent as \(m\) varies, hence determine a unique finite measure
\(\pi_\infty\in\mathcal M_+(\Omega_0\times\Omega_1)\), carried by \(\Omega_0\times U_\infty\), with
\[
\pi_\infty\!\restriction(\Omega_0\times U_m)=\pi_\infty^{(m)}
\qquad\text{for every }m.
\]

At this point, closedness of \(\mathcal E\) enters essentially: the following Portmanteau  argument genuinely requires it.
Since each \(\pi_n^{(m)}\) is concentrated on \(\mathcal E\), the Portmanteau theorem on \(\Omega_0\times U_m\) gives
\[
\pi_\infty^{(m)}\bigl(\mathcal E^\c\cap(\Omega_0\times U_m)\bigr)=0
\qquad (m\ge1).
\]
Because \(\pi_\infty\) is carried by \(\Omega_0\times U_\infty\), it follows that \(\pi_\infty(\mathcal E^\c)=0\).

Let
\[
\mu_\infty^{(m)}:=(\pr_0)_\#\pi_\infty^{(m)},
\qquad
\mu_\infty:=(\pr_0)_\#\pi_\infty.
\]
For every bounded continuous \(f\ge0\) on \(\Omega_0\),
\[
\int f\,d\mu_\infty^{(m)}
=
\lim_{n\to\infty}\int f\,d\bigl((\pr_0)_\#\pi_n^{(m)}\bigr)
\le
\int f\,d\nu_0,
\]
since \((\pr_0)_\#\pi_n^{(m)}\le \nu_0\). Hence \(\mu_\infty^{(m)}\le \nu_0\) for every \(m\). Moreover, for \(A\in\Sigma_0\),
\[
\mu_\infty(A)
=
\pi_\infty(A\times \Omega_1)
=
\pi_\infty(A\times U_\infty)
=
\sup_m \pi_\infty(A\times U_m)
=
\sup_m \mu_\infty^{(m)}(A),
\]
so \(\mu_\infty\le \nu_0\). Define the defect measure
\[
\nu_{\mathrm{def}}:=\nu_0-\mu_\infty.
\]

\paragraph{Lower bound under loss of mass.}
Put
\[
P_\infty:=(\pr_1)_\#\pi_\infty,
\qquad
c:=\vartheta'_\infty\in\mathbb R\cup\{+\infty\}.
\]
We first show
\begin{equation}
\label{eq:mass_loss_liminf}
\liminf_{n\to\infty}\D_\vartheta(P_n\|\nu_1)
\ge
\D_\vartheta(P_\infty\|\nu_1)
+
c\,\nu_{\mathrm{def}}(\Omega_0).
\end{equation}

For each \(m\), let \(P_n^{(m)}:=P_n\!\restriction U_m\) and \(P_\infty^{(m)}:=P_\infty\!\restriction U_m\).
Since pushforward is continuous under weak convergence of finite measures,
\[
P_n^{(m)}\Rightarrow P_\infty^{(m)}
\qquad\text{on }U_m.
\]
In particular,
\[
P_n(U_m)\to P_\infty(U_m),
\qquad
P_n(U_m^\c)\to w_0-P_\infty(U_m).
\]

Write \(P=P_{\mathrm{ac}}+P_\perp\) for the Lebesgue decomposition with respect to \(\nu_1\). On \(U_m\), define
\[
\D_\vartheta^{(m)}(P\|\nu_1)
:=
\int_{U_m}\vartheta\!\left(\frac{dP_{\mathrm{ac}}}{d\nu_1}\right)\,d\nu_1
+
c\,P_\perp(U_m).
\]

For each \(m\), regard \(P_n^{(m)}\) and \(P_\infty^{(m)}\) as elements of
\(\mathcal M_+(U_m)\), and write \(\nu_{1,m}:=\nu_1\!\restriction U_m\).
For \(Q\in\mathcal M_+(U_m)\),
\[
  \D_\vartheta^{(m)}(Q\|\nu_1)=\D_\vartheta(Q\|\nu_{1,m}).
\]

By Rockafellar \cite[Theorem~5, formula~(4.4)]{Rockafellar1971}, applied to
\(T=U_m\), the finite measure \(\nu_{1,m}\), and the scalar integrand
\(f(t,s)=\vartheta^*(s)\), provided \(U_m\) is viewed with the support topology of
\(\nu_{1,m}\) (so that \(T\) has no nonempty open sets of \(\nu_{1,m}\)-measure zero),
the conjugate of
\(
g\mapsto \int_{U_m}\vartheta^*(g)\,d\nu_{1,m}
\)
on \(C(U_m)\) is
\(
Q\mapsto \D_\vartheta(Q\|\nu_{1,m})
\).

Hence
\[
\D_\vartheta(Q\|\nu_{1,m})
=
\sup_{\substack{f\in C(U_m)\\ f<c}}
\left\{
\int_{U_m} f\,dQ-\int_{U_m}\vartheta^*(f)\,d\nu_{1,m}
\right\},
\]
with \(c=\vartheta'_\infty\), and the restriction \(f<c\) omitted when \(c=+\infty\).

For each fixed \(f\in C(U_m)\), the map \(Q\mapsto \int_{U_m} f\,dQ\) is weakly continuous on
\(\mathcal M_+(U_m)\). Hence \(Q\mapsto \D_\vartheta^{(m)}(Q\|\nu_1)\) is weakly lower semicontinuous on
\(\mathcal M_+(U_m)\). Applying this to \(P_n^{(m)}\Rightarrow P_\infty^{(m)}\), we get
\[
  \liminf_{n\to\infty}\D_\vartheta^{(m)}(P_n^{(m)}\|\nu_1)
  \ge
  \D_\vartheta^{(m)}(P_\infty^{(m)}\|\nu_1).
\]

To estimate the tail, fix \(\lambda<c\) if \(c<+\infty\), and \(\lambda\in\mathbb R\) arbitrary if \(c=+\infty\).
Fenchel--Young gives
\[
\vartheta(t)\ge \lambda t-\vartheta^\ast(\lambda)
\qquad\text{for all }t\ge0.
\]
Applying this on \(U_m^\c\) to the absolutely continuous part of \(P_n\), and using \(c\ge \lambda\) on the singular part, yields
\[
\D_\vartheta(P_n\|\nu_1)
\ge
\D_\vartheta^{(m)}(P_n\|\nu_1)
+
\lambda\,P_n(U_m^\c)-\vartheta^\ast(\lambda)\,\nu_1(U_m^\c).
\]
Taking \(\liminf\) and using the two preceding displays, we obtain
\[
\liminf_{n\to\infty}\D_\vartheta(P_n\|\nu_1)
\ge
\D_\vartheta^{(m)}(P_\infty\|\nu_1)
+
\lambda\bigl(w_0-P_\infty(U_m)\bigr)
-
\vartheta^\ast(\lambda)\,\nu_1(U_m^\c).
\]
Now let \(m\to\infty\). Since \(\vartheta\ge0\), the localized divergences increase to the full divergence:
\[
\D_\vartheta^{(m)}(P_\infty\|\nu_1)\uparrow \D_\vartheta(P_\infty\|\nu_1).
\]
Also \(P_\infty\) is carried by \(U_\infty\), so
\[
P_\infty(U_m)\uparrow P_\infty(U_\infty)=P_\infty(\Omega_1)=\mu_\infty(\Omega_0),
\]
and \(\nu_1(U_m^\c)\to0\). Hence
\[
\liminf_{n\to\infty}\D_\vartheta(P_n\|\nu_1)
\ge
\D_\vartheta(P_\infty\|\nu_1)
+
\lambda\,\nu_{\mathrm{def}}(\Omega_0).
\]
If \(c<+\infty\), let \(\lambda\uparrow c\). If \(c=+\infty\), the same inequality holds for every real \(\lambda\), which forces
\(\nu_{\mathrm{def}}(\Omega_0)=0\) whenever the left-hand side is finite; thus \eqref{eq:mass_loss_liminf} follows in all cases.

\paragraph{Repair of the defect and conclusion.}
Using again the selector \(\mathfrak{s}\) chosen above, note that \(\nu_{\mathrm{def}}\le \nu_0\), so every
\(\overline{\Sigma}_0^{\,\nu_0}\)-measurable set is measurable for the \(\nu_{\mathrm{def}}\)-completion of
\(\Sigma_0\). Hence \(\mathfrak{s}\) is also measurable with respect to that completion, and
\[
\pi_{\mathrm{rep}}:=(\id_{\Omega_0},\mathfrak{s})_\#\nu_{\mathrm{def}}
\]
is a well-defined finite Borel measure on \(\Omega_0\times\Omega_1\). Define
\[
\pi^\star:=\pi_\infty+\pi_{\mathrm{rep}}.
\]
Since \((\id_{\Omega_0},\mathfrak{s})(x)\in\mathcal E\) for all \(x\) and \(\pi_\infty(\mathcal E^\c)=0\), one has
\(\pi_{\mathrm{rep}}(\mathcal E^\c)=0\) and therefore \(\pi^\star(\mathcal E^\c)=0\). Also

\[
(\pr_0)_\#\pi^\star=\mu_\infty+\nu_{\mathrm{def}}=\nu_0,
\]
so \(\pi^\star\in\Pi_0^{\mathcal E}\).

If \(c=+\infty\), then the previous paragraph already gives \(\nu_{\mathrm{def}}(\Omega_0)=0\), hence
\(\pi_{\mathrm{rep}}=0\) and \(\pi^\star=\pi_\infty\). Therefore \eqref{eq:mass_loss_liminf} yields
\[
J(\pi^\star)=\D_\vartheta(P_\infty\|\nu_1)\le \liminf_{n\to\infty}J(\pi_n)=J^\star.
\]

Assume now that \(c<+\infty\). For \(P,M\in\mathcal M_+(\Omega_1)\), one has
the elementary bound
\begin{equation}
\label{eq:add_mass_bound_clean}
\D_\vartheta(P+M\|\nu_1)\le \D_\vartheta(P\|\nu_1)+c\,M(\Omega_1).
\end{equation}
Indeed, writing \(P=r\,\nu_1+P_\perp\) and \(M=u\,\nu_1+M_\perp\), convexity gives
\[
\vartheta(r+u)\le \vartheta(r)+c\,u
\qquad \nu_1\text{-a.e.},
\]
and \eqref{eq:add_mass_bound_clean} follows after adding the singular contributions.

Apply \eqref{eq:add_mass_bound_clean} with \(P=P_\infty\) and
\(M:=(\pr_1)_\#\pi_{\mathrm{rep}}\). Since \(M(\Omega_1)=\nu_{\mathrm{def}}(\Omega_0)\), we obtain
\[
J(\pi^\star)
=
\D_\vartheta(P_\infty+M\|\nu_1)
\le
\D_\vartheta(P_\infty\|\nu_1)+c\,\nu_{\mathrm{def}}(\Omega_0).
\]
Combining this with \eqref{eq:mass_loss_liminf} gives
\[
J(\pi^\star)\le \liminf_{n\to\infty}J(\pi_n)=J^\star.
\]
Since \(\pi^\star\in\Pi_0^{\mathcal E}\), the reverse inequality \(J(\pi^\star)\ge J^\star\) is automatic. Therefore
\(J(\pi^\star)=J^\star\), and \(\pi^\star\) is a minimizer.
\end{proof}

\subsection{Strictly convex minimizers are level-optimal maximin}
\label{sec:strict_convex_implies_level_opt_maximin}

We now prove the converse direction from convex optimization back to the one-sided extremal criterion of Section~\ref{sec:maximin}.  The converse requires a genuine perturbative argument.  Starting from a strictly convex minimizer, we use the overflow decomposition from Lemma~\ref{lem:overflow_decomposition} and the measure-theoretic augmenting-subplan lemma, Lemma~\ref{lem:measure_theoretic_augmenting_subplan}, to show that any failure of level-optimal maximin at some threshold \(t\) would yield a feasible spliced perturbation with strictly smaller \(\D_\vartheta\)-value.  This is where the maximin structure and the convex variational structure come together.

\begin{theorem}[Strictly convex minimizers are level-optimal maximin]
\label{thm:strict_convex_implies_level_opt_maximin}
Work under Assumption~\ref{assump:closed_polish_graph_nonempty_nbhd}.
Fix \(\iota\in\mathbb B\), and write \(\bar\iota:=1-\iota\).
Let \(\vartheta:[0,\infty)\to\mathbb R\cup\{+\infty\}\) be proper, lower semicontinuous, and
strictly convex on \(\dom\vartheta\).

Let \(\pi^\star\in\Pi_\iota^{\mathcal E}\) be a minimizer of the
\(\D_\vartheta\)-closest one-sided refinement problem, and assume that
\[
\D_\vartheta\bigl(\nu_{\bar\iota}^{(\pi^\star)}\,\|\,\nu_{\bar\iota}\bigr)
=
\inf_{\pi\in\Pi_\iota^{\mathcal E}}
\D_\vartheta\bigl(\nu_{\bar\iota}^{(\pi)}\,\|\,\nu_{\bar\iota}\bigr)
<+\infty.
\]
Then \(\pi^\star\) is level-optimal maximin, in the sense of
Definition~\ref{defn:level_opt_maximin}.

In particular, level-optimal maximin refinements exist.
\end{theorem}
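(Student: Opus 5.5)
We argue by contradiction, taking \(\iota=0\) by symmetry. The easy inequality comes first. By Lemma~\ref{lem:overflow_decomposition}, for every \(t>0\) the subplan \(\sigma_t\) lies in \(\Feas_1(t)\) and has mass \(P^{(\pi^\star)}_t(\Omega_1)\). Hence \(P^{(\pi^\star)}_t(\Omega_1)\le\Fit_1(t)\) always holds, and we only need to rule out strict inequality.

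So suppose that for some \(t>0\) there is \(\sigma\in\Feas_1(t)\) with \(\sigma(\Omega_0\times\Omega_1)>\sigma_t(\Omega_0\times\Omega_1)\). We apply Lemma~\ref{lem:measure_theoretic_augmenting_subplan} with \(b:=t\,\nu_1\), the plan \(\sigma\), and \(\sigma_0:=\sigma_t\). This yields \(\gamma\le\sigma\) and \(\gamma_0\le\sigma_t\) with the following properties:
\begin{itemize}
\item \(0\neq\alpha-\alpha_0\le(\xi-\xi_0)_+\);
\item \(\beta-\beta_0\neq 0\) is carried by \(L=\{r\wedge t<t\}=\{r<t\}\) (up to \(\nu_1\)-null sets);
\item \(\psi_0+\beta-\beta_0\le t\nu_1\).
\end{itemize}
Here \(\xi_0=(\pr_0)_\#\sigma_t\), \(\xi\le\nu_0\), and \(\nu_0-\xi_0=(\pr_0)_\#\pi^{\mathrm{ov}}_t\). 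So the excess \(\alpha-\alpha_0\) on side \(0\) is dominated by the \(0\)-marginal of the overflow part \(\pi^{\mathrm{ov}}_t\).

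Next we build the spliced perturbation. Disintegrate \(\pi^{\mathrm{ov}}_t\) with respect to its \(0\)-marginal, and let \(\rho\le\pi^{\mathrm{ov}}_t\) be the subplan whose \(0\)-marginal is exactly \(\alpha-\alpha_0\). For \(\epsilon\in(0,1]\) define
\[
\pi_\epsilon:=\pi^\star+\epsilon(\gamma-\gamma_0-\rho).
\]
This is a nonnegative measure, since \(\gamma_0\le\sigma_t\le\pi^\star\) and \(\rho\le\pi^{\mathrm{ov}}_t\) live in complementary parts of \(\pi^\star\). It is carried by \(\mathcal E\), and its \(0\)-marginal is \(\nu_0+\epsilon(\alpha-\alpha_0-(\alpha-\alpha_0))=\nu_0\). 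So \(\pi_\epsilon\in\Pi^{\mathcal E}_0\).

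The new opposite marginal is \(P_\epsilon=P+\epsilon(\beta-\beta_0)-\epsilon\beta_\rho\), with \(\beta_\rho:=(\pr_1)_\#\rho\). The mass \(\beta-\beta_0\) is added where \(r<t\), and by the third property it keeps the density there at most \(t\). The mass \(\beta_\rho\) is removed from the overflow region, that is, from \(P^\perp+(r-t)_+\nu_1\), which sits on \(\{r>t\}\cup\) the singular set. Both pieces have the same total mass \(\delta>0\).

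It remains to show \(\D_\vartheta(P_\epsilon\|\nu_1)<\D_\vartheta(P\|\nu_1)\) for small \(\epsilon\); this is the main obstacle. Finiteness of the value forces \(r\in\dom\vartheta\) a.e., and if \(P^\perp\neq0\) it forces \(\vartheta'_\infty<\infty\).

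We use a one-sided directional-derivative argument. Write \(P_\epsilon\) as a convex combination \((1-\epsilon)P+\epsilon\widetilde P\), where \(\widetilde P:=P+\beta-\beta_0-\beta_\rho\). This gives \(\D_\vartheta(P_\epsilon)\le(1-\epsilon)\D_\vartheta(P)+\epsilon\D_\vartheta(\widetilde P)\), which is not yet sufficient. The sharper pointwise comparison is as follows:
\begin{itemize}
\item on the receiving set \(\{r<t\}\), the added density \(\epsilon u\) changes \(\vartheta\) by at most \(\epsilon u\,\vartheta'_-(t)\) (convexity, the density staying \(\le t\));
\item on the donating set \(\{r>t\}\), the removed density \(\epsilon v\) changes \(\vartheta\) by at most \(-\epsilon v\,\vartheta'_+(t)\), with strict decrease of the slope there;
\item on the singular part, removal decreases the value by exactly \(\epsilon\,\vartheta'_\infty\) per unit mass.
\end{itemize}
Strict convexity gives \(\vartheta'_-(t)\le\vartheta'_+(t)\). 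More importantly, a chord slope on \(\{r<t\}\) is strictly below a chord slope on \(\{r>t\}\), and \(\vartheta'_\infty>\vartheta'_+(t)\) strictly. Since \(\int u=\int v+\text{singular mass}=\delta\), the total change is strictly negative for small \(\epsilon\).

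The technical points are the possible endpoint of \(\dom\vartheta\) and the fact that the slopes must be handled by chord slopes, using monotone convergence as \(\epsilon\downarrow0\). To get strictness cleanly, we first shrink \(\gamma,\gamma_0,\rho\) so that the receiving densities lie in \(\{r<t-\eta\}\) and the donating densities lie in \(\{r>t+\eta\}\cup\) the singular set, for some \(\eta>0\). Such a shrinking is possible by taking a positive-measure portion, proportionally splitting the augmenting path through the disintegrations.

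The resulting strict decrease contradicts minimality of \(\pi^\star\). Hence \(\pi^\star\) is level-optimal maximin. Existence of level-optimal maximin refinements then follows by applying Proposition~\ref{prop:vartheta_one_sided_attainment} to a strictly convex \(\vartheta\) with finite value, for instance \(\vartheta(s)=\sqrt{1+s^2}\), whose value is finite because the slope is finite.
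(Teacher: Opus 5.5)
Your proposal follows the paper's proof step for step. It uses the overflow split of $\pi^\star$ at level $t$, then Lemma~\ref{lem:measure_theoretic_augmenting_subplan} with $b=t\,\nu_{\bar\iota}$ and $\sigma_0=\sigma_t$, then an overflow subplan with $\iota$-marginal $\alpha-\alpha_0$ obtained by disintegration, and finally a strict chord-slope comparison. Your competitor at $\epsilon=1$ is exactly the paper's $\pi^\star-\lambda+\gamma-\gamma_0$.

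Two points need fixing.

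\textbf{Drop the $\eta$-shrinking step.} As stated it is not justified: in the measurable setting $\gamma-\gamma_0$ carries no path decomposition that would let you cut down its $\bar\iota$-end while keeping the $\iota$-marginals balanced. It is also not needed. For every $\epsilon\in(0,1]$, strict convexity gives $\vartheta(r+\epsilon s)-\vartheta(r)<\epsilon s\,\vartheta'_-(t)$ pointwise on $\{s>0\}$, since there $r<t$ and $r+\epsilon s\le t$. The set $\{s>0\}$ has positive $\nu_{\bar\iota}$-measure, so the integrated decrease is already strict.

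\textbf{Handle the endpoints of $\dom\vartheta$ explicitly.} The slope argument requires $t\in\operatorname{int}(\dom\vartheta)$, where $\vartheta'_\pm(t)$ are finite. The endpoint cases you mention must be dispatched separately, as the paper does:
\begin{itemize}
\item If $t\le\inf\dom\vartheta$, then $r\ge t$ a.e. Hence $0\neq\beta-\beta_0\le t\,\nu_{\bar\iota}-\psi_0=0$, a contradiction.
\item If $t\ge\sup\dom\vartheta<\infty$, then $\vartheta'_\infty=+\infty$, which forces $P^\perp=0$ and $r\le t$. So the overflow vanishes, and $0\neq\alpha-\alpha_0\le\nu_\iota-\xi_0=0$, a contradiction.
\end{itemize}

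With these adjustments the argument is complete. Your choice $\vartheta(s)=\sqrt{1+s^2}$ for the existence clause works as well as the paper's $e^{-s}$.
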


\noindent \textbf{Remark.}
The assumption that the optimal value is finite is needed because the proof uses strict convexity
to obtain a strict decrease in \(\D_\vartheta\); this mechanism is meaningful only on the effective
domain of the divergence, not when all values under comparison are \(+\infty\).

For the existence conclusion, take \(\vartheta(t)=e^{-t}\). Then \(\vartheta\) is strictly convex and
bounded, so every feasible plan has finite \(\D_\vartheta\)-value. Proposition~\ref{prop:vartheta_one_sided_attainment}
therefore provides a minimizer, and Theorem~\ref{thm:strict_convex_implies_level_opt_maximin}
shows that this minimizer is level-optimal maximin.

\begin{proof}
Set \(P:=(\pr_{\bar\iota})_\#\pi^\star\), and write the Lebesgue decomposition of \(P\) with respect to \(\nu_{\bar\iota}\) as
\[
  P=r\,\nu_{\bar\iota}+P^\perp,
  \qquad
  r\in L^1(\nu_{\bar\iota}),
  \qquad
  P^\perp\perp \nu_{\bar\iota}.
\]
We must prove that
\[
  P_t^{(\pi^\star)}(\Omega_{\bar\iota})=\Fit_{\bar\iota}(t)
  \qquad\text{for every }t\ge 0,
\]
where \(P_t^{(\pi^\star)}:=(r\wedge t)\,\nu_{\bar\iota}\).

The case \(t=0\) is immediate, since \(\Feas_{\bar\iota}(0)=\{0\}\). Fix \(t>0\).

\paragraph{Truncation at level \(t\).}
Apply Lemma~\ref{lem:overflow_decomposition} at level \(t\). Then
\(\pi^\star=\sigma_0+\pi_{\mathrm{ov}}\) with \(\sigma_0\in\Feas_{\bar\iota}(t)\), and
\begin{equation}
\label{eq:strict_convex_split}
  (\pr_{\bar\iota})_\#\sigma_0=(r\wedge t)\,\nu_{\bar\iota},
  \qquad
  (\pr_{\bar\iota})_\#\pi_{\mathrm{ov}}
  =
  P^\perp+(r-t)_+\,\nu_{\bar\iota}.
\end{equation}
Hence \(\sigma_0(\Omega_0\times\Omega_1)=P_t^{(\pi^\star)}(\Omega_{\bar\iota})\), so
\begin{equation}
\label{eq:strict_convex_easy_dir}
  \Fit_{\bar\iota}(t)\ge P_t^{(\pi^\star)}(\Omega_{\bar\iota}).
\end{equation}

We prove the reverse inequality. Assume that it fails. Then there exists \(\sigma\in \Feas_{\bar\iota}(t)\) such that
\(\sigma(\Omega_0\times\Omega_1)>\sigma_0(\Omega_0\times\Omega_1)\). Write
\[
  \xi:=(\pr_\iota)_\#\sigma,
  \qquad
  \xi_0:=(\pr_\iota)_\#\sigma_0,
  \qquad
  \psi:=(\pr_{\bar\iota})_\#\sigma,
  \qquad
  \psi_0:=(\pr_{\bar\iota})_\#\sigma_0=(r\wedge t)\,\nu_{\bar\iota}.
\]

\paragraph{Boundary levels of \(\dom\vartheta\).}
These require no slope argument. Since \(\D_\vartheta(P\|\nu_{\bar\iota})<\infty\), one has \(r\in\dom\vartheta\) for
\(\nu_{\bar\iota}\)-a.e.\ point.

If \(t\le \inf\dom\vartheta\), then \(r\ge t\) \(\nu_{\bar\iota}\)-a.e., so \(\psi_0=t\,\nu_{\bar\iota}\). Applying
Lemma~\ref{lem:measure_theoretic_augmenting_subplan} with \(b=t\,\nu_{\bar\iota}\) would then produce a nonzero measure
\(\beta-\beta_0\) dominated by \(b-\psi_0=0\), which is impossible.

If \(\sup\dom\vartheta<\infty\) and \(t\ge \sup\dom\vartheta\), then \(\vartheta'_\infty=+\infty\). Since
\(\D_\vartheta(P\|\nu_{\bar\iota})<\infty\), we have \(P^\perp=0\) and \(r\le \sup\dom\vartheta\le t\)
\(\nu_{\bar\iota}\)-a.e. Hence \((r-t)_+=0\), so \(\pi_{\mathrm{ov}}=0\) in \eqref{eq:strict_convex_split}. Thus
\(\sigma_0=\pi^\star\), and every \(\sigma\in\Feas_{\bar\iota}(t)\) has total mass at most
\(\nu_\iota(\Omega_\iota)=\sigma_0(\Omega_0\times\Omega_1)\), again a contradiction.

We may therefore assume \(t\in \operatorname{int}(\dom\vartheta)\).

\paragraph{An augmenting exchange.}
Apply Lemma~\ref{lem:measure_theoretic_augmenting_subplan} with \(b=t\,\nu_{\bar\iota}\). Since
\[
  \frac{d\psi_0}{d(t\,\nu_{\bar\iota})}=\frac{r\wedge t}{t}
  \qquad t\,\nu_{\bar\iota}\text{-a.e.},
\]
the set \(L\) from that lemma agrees \(t\,\nu_{\bar\iota}\)-a.e. with \(\{r<t\}\). Hence there exist
\(\gamma,\gamma_0\in\mathcal M_+(\Omega_0\times\Omega_1)\) such that \(\gamma\le \sigma\) and \(\gamma_0\le \sigma_0\), and,
writing
\[
  \alpha:=(\pr_\iota)_\#\gamma,
  \qquad
  \alpha_0:=(\pr_\iota)_\#\gamma_0,
  \qquad
  \beta:=(\pr_{\bar\iota})_\#\gamma,
  \qquad
  \beta_0:=(\pr_{\bar\iota})_\#\gamma_0,
\]
one has
\begin{align}
\label{eq:strict_convex_alpha}
  &\alpha_0\le \alpha,
  \qquad
  0\neq \alpha-\alpha_0\le (\xi-\xi_0)_+,
  \\
\label{eq:strict_convex_beta}
  &\beta_0\le \beta,
  \qquad
  R:=\beta-\beta_0\neq 0,
  \qquad
  R \text{ is carried by } \{r<t\},
  \\
\label{eq:strict_convex_residual_feas}
  &\psi_0-\beta_0+\beta\le t\,\nu_{\bar\iota}.
\end{align}
Since \(R\le \beta\le t\,\nu_{\bar\iota}\), we may write \(R=s\,\nu_{\bar\iota}\) with
\(s\in L^1_+(\nu_{\bar\iota})\), carried by \(\{r<t\}\). Set
\[
  \delta:=R(\Omega_{\bar\iota})=\int s\,d\nu_{\bar\iota}>0.
\]

\paragraph{Using the overflow to supply the first marginal.}
Let \(\rho:=(\pr_\iota)_\#\pi_{\mathrm{ov}}\). Since \((\pr_\iota)_\#\pi^\star=\nu_\iota\) and
\(\pi^\star=\sigma_0+\pi_{\mathrm{ov}}\), we have \(\rho=\nu_\iota-\xi_0\). By \eqref{eq:strict_convex_alpha},
\[
  0\le \alpha-\alpha_0\le (\xi-\xi_0)_+\le \nu_\iota-\xi_0=\rho.
\]

Choose a disintegration \(\pi_{\mathrm{ov}}=\rho\kprod G_{\mathrm{ov}}\), and define
\(\lambda:=(\alpha-\alpha_0)\kprod G_{\mathrm{ov}}\). Then \(\lambda\le \pi_{\mathrm{ov}}\) and
\((\pr_\iota)_\#\lambda=\alpha-\alpha_0\). Let \(Q:=(\pr_{\bar\iota})_\#\lambda\). From
\(\lambda\le \pi_{\mathrm{ov}}\) and \eqref{eq:strict_convex_split},
\begin{equation}
\label{eq:strict_convex_Q_bound}
  Q\le P^\perp+(r-t)_+\,\nu_{\bar\iota}.
\end{equation}
Write
\[
  Q=q\,\nu_{\bar\iota}+Q^\perp,
  \qquad
  Q^\perp\perp \nu_{\bar\iota}.
\]
Then \(0\le q\le (r-t)_+\) \(\nu_{\bar\iota}\)-a.e., so \(q\) is carried by \(\{r>t\}\), and
\begin{equation}
\label{eq:strict_convex_Qperp}
  Q^\perp\le P^\perp.
\end{equation}
Moreover,
\[
  Q(\Omega_{\bar\iota})
  =
  \lambda(\Omega_0\times\Omega_1)
  =
  (\alpha-\alpha_0)(\Omega_\iota)
  =
  (\beta-\beta_0)(\Omega_{\bar\iota})
  =
  \delta.
\]

\paragraph{The spliced competitor.}
Define \(\widehat\pi:=\pi^\star-\lambda+\gamma-\gamma_0\). Because \(\gamma_0\le \sigma_0\),
\(\lambda\le \pi_{\mathrm{ov}}\), and \(\pi^\star=\sigma_0+\pi_{\mathrm{ov}}\), this is a nonnegative measure
carried by \(\mathcal E\). Its \(\iota\)-marginal is
\[
  (\pr_\iota)_\#\widehat\pi
  =
  \nu_\iota-(\alpha-\alpha_0)+\alpha-\alpha_0
  =
  \nu_\iota,
\]
so \(\widehat\pi\in\Pi_\iota^{\mathcal E}\). Writing \(\widehat P:=(\pr_{\bar\iota})_\#\widehat\pi\), we have
\[
  \widehat P=P-Q+R=(r-q+s)\,\nu_{\bar\iota}+(P^\perp-Q^\perp).
\]

\paragraph{Location of the moved mass.}
Since \(\beta\le t\,\nu_{\bar\iota}\) and \(\beta_0\le (r\wedge t)\,\nu_{\bar\iota}\), both are absolutely
continuous with respect to \(\nu_{\bar\iota}\); write
\[
  \beta=u\,\nu_{\bar\iota},
  \qquad
  \beta_0=u_0\,\nu_{\bar\iota},
\]
so \(s=u-u_0\) \(\nu_{\bar\iota}\)-a.e. From \eqref{eq:strict_convex_residual_feas},
\[
  (r\wedge t)-u_0+u\le t
  \qquad \nu_{\bar\iota}\text{-a.e.}
\]
Hence, on \(\{s>0\}\subseteq\{r<t\}\),
\begin{equation}
\label{eq:strict_convex_added_side}
  r+s\le t.
\end{equation}
On the other hand, \(0\le q\le (r-t)_+\) gives, on \(\{q>0\}\),
\begin{equation}
\label{eq:strict_convex_removed_side}
  r-q\ge t.
\end{equation}
Therefore \(\{s>0\}\cap\{q>0\}=\varnothing\).

\paragraph{Strict decrease of the objective.}
Set
\[
  m_-:=\vartheta'_-(t),
  \qquad
  m_+:=\vartheta'_+(t),
  \qquad
  c:=\vartheta'_\infty\in(-\infty,+\infty].
\]
Since \(t\in\operatorname{int}(\dom\vartheta)\), both \(m_-\) and \(m_+\) are finite. Strict convexity implies
that, whenever \(u<t\), \(h>0\), and \(u+h\le t\),
\begin{equation}
\label{eq:strict_convex_add_est}
  \vartheta(u+h)-\vartheta(u)<m_-\,h,
\end{equation}
and whenever \(v>t\), \(h>0\), and \(v-h\ge t\),
\begin{equation}
\label{eq:strict_convex_remove_est}
  \vartheta(v-h)-\vartheta(v)<-\,m_+\,h.
\end{equation}
Using \eqref{eq:strict_convex_added_side} and \eqref{eq:strict_convex_add_est},
\begin{equation}
\label{eq:strict_convex_add_integral}
  \int \bigl[\vartheta(r+s)-\vartheta(r)\bigr]\,d\nu_{\bar\iota}
  <m_-\,\delta.
\end{equation}
Using \eqref{eq:strict_convex_removed_side} and \eqref{eq:strict_convex_remove_est},
\begin{equation}
\label{eq:strict_convex_remove_integral}
  \int \bigl[\vartheta(r-q)-\vartheta(r)\bigr]\,d\nu_{\bar\iota}
  \le -\,m_+\,q(\Omega_{\bar\iota}),
\end{equation}
with strict inequality if \(q(\Omega_{\bar\iota})>0\). Since \(\{s>0\}\cap\{q>0\}=\varnothing\),
\[
  \vartheta(r-q+s)-\vartheta(r)
  =
  \bigl[\vartheta(r+s)-\vartheta(r)\bigr]
  +
  \bigl[\vartheta(r-q)-\vartheta(r)\bigr]
  \qquad \nu_{\bar\iota}\text{-a.e.}
\]

If \(c=+\infty\), then \(\D_\vartheta(P\|\nu_{\bar\iota})<\infty\) forces \(P^\perp=0\), hence
\(Q^\perp=0\) by \eqref{eq:strict_convex_Qperp}. Therefore \(\delta=q(\Omega_{\bar\iota})\), and
\[
  \D_\vartheta(\widehat P\|\nu_{\bar\iota})-\D_\vartheta(P\|\nu_{\bar\iota})
  =
  \int \bigl[\vartheta(r-q+s)-\vartheta(r)\bigr]\,d\nu_{\bar\iota}
  <
  (m_--m_+)\delta
  \le 0.
\]
Hence \(\D_\vartheta(\widehat P\|\nu_{\bar\iota})<\D_\vartheta(P\|\nu_{\bar\iota})\).

Assume now that \(c<+\infty\). Since \(c<\infty\), \(\dom\vartheta\) contains a ray \([T,\infty)\): otherwise
\(\vartheta(u)=+\infty\) along some sequence \(u\to\infty\), which would force \(\vartheta(u)/u\to+\infty\),
contrary to the definition of \(c\). On that ray the map \(u\mapsto \vartheta'_+(u)\) is nondecreasing and
converges to \(c\). If \(m_+=c\), then monotonicity gives \(\vartheta'_+(u)=c\) for every \(u\ge t\), forcing
\(\vartheta\) to be affine on \([t,\infty)\), contrary to strict convexity on \(\dom\vartheta\). Thus
\(m_+<c\), and therefore \(m_-<c\) as well.

Using the singular-part formula for \(\D_\vartheta\),
\[
  \D_\vartheta(M\|\nu_{\bar\iota})
  =
  \int \vartheta(u)\,d\nu_{\bar\iota}+c\,M^\perp(\Omega_{\bar\iota}),
  \qquad
  M=u\,\nu_{\bar\iota}+M^\perp,
\]
we obtain
\[
  \D_\vartheta(\widehat P\|\nu_{\bar\iota})-\D_\vartheta(P\|\nu_{\bar\iota})
  <
  m_-\,\delta-m_+\,q(\Omega_{\bar\iota})-c\,Q^\perp(\Omega_{\bar\iota}).
\]
Since \(\delta=q(\Omega_{\bar\iota})+Q^\perp(\Omega_{\bar\iota})\),
\[
  \D_\vartheta(\widehat P\|\nu_{\bar\iota})-\D_\vartheta(P\|\nu_{\bar\iota})
  <
  (m_--m_+)\,q(\Omega_{\bar\iota})+(m_--c)\,Q^\perp(\Omega_{\bar\iota}).
\]
Since \(q(\Omega_{\bar\iota})+Q^\perp(\Omega_{\bar\iota})=\delta>0\), the right-hand side is strictly negative. Thus
again \(\D_\vartheta(\widehat P\|\nu_{\bar\iota})<\D_\vartheta(P\|\nu_{\bar\iota})\).

In both cases \(\widehat\pi\in\Pi_\iota^{\mathcal E}\) is feasible and has strictly smaller
\(\D_\vartheta\)-value than \(\pi^\star\), contradicting optimality.

Therefore no \(\sigma\in\Feas_{\bar\iota}(t)\) can have mass exceeding
\[
  \sigma_0(\Omega_0\times\Omega_1)=P_t^{(\pi^\star)}(\Omega_{\bar\iota}),
\]
so \(\Fit_{\bar\iota}(t)\le P_t^{(\pi^\star)}(\Omega_{\bar\iota})\). Together with
\eqref{eq:strict_convex_easy_dir}, this yields
\[
  \Fit_{\bar\iota}(t)=P_t^{(\pi^\star)}(\Omega_{\bar\iota})
  \qquad\text{for every }t\ge 0.
\]
Thus \(\pi^\star\) is level-optimal maximin.

The final sentence follows by applying the theorem with \(\vartheta(s)=e^{-s}\), using the existence of a
minimizer for that choice of \(\vartheta\).
\end{proof}

\subsection{Level-optimal maximin is optimal for every convex objective}
\label{sec:level_opt_maximin_implies_universal_convex_opt}

We now prove the converse implication.  Once the one-sided overflow order from
Section~\ref{sec:maximin} is available, the comparison of convex objectives becomes essentially
formal: level-optimal maximin gives pointwise domination of overflow profiles, and the hinge
representation from Lemma~\ref{lem:convex_hinge_representation_for_divergence} integrates that
order into optimality for every proper convex lower semicontinuous \(\vartheta\).  This is the step
that shows the entire convex family selects the same one-sided object.

\begin{theorem}[Level-optimal maximin implies \texorpdfstring{$\vartheta$}{theta}-optimality]
\label{thm:level_opt_maximin_implies_theta_opt}
Work under Assumption~\ref{assump:closed_polish_graph_nonempty_nbhd}.
Fix \(\iota\in\mathbb B\), and write \(\bar\iota:=1-\iota\).
Let \(\vartheta:[0,\infty)\to\mathbb R\cup\{+\infty\}\) be proper, convex, and lower semicontinuous.

If \(\pi^\star\in\Pi_\iota^{\mathcal E}\) is level-optimal maximin from side \(\iota\) to side \(\bar\iota\), then
\[
  \D_\vartheta\bigl(\nu_{\bar\iota}^{(\pi^\star)}\,\|\,\nu_{\bar\iota}\bigr)
  =
  \inf_{\pi\in\Pi_\iota^{\mathcal E}}
  \D_\vartheta\bigl(\nu_{\bar\iota}^{(\pi)}\,\|\,\nu_{\bar\iota}\bigr).
\]
In other words, \(\pi^\star\) is optimal for the \(\D_\vartheta\)-closest one-sided refinement problem.
\end{theorem}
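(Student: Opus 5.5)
The plan is to reduce everything to a pointwise comparison of overflow profiles, and then integrate that comparison against the Stieltjes measure from Lemma~\ref{lem:convex_hinge_representation_for_divergence}. A general $\vartheta$ is handled by approximating it with finite-slope, real-valued convex functions.

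\emph{Step 1: overflow domination.} Write $w_\iota:=\nu_\iota(\Omega_\iota)$. Fix $\pi\in\Pi^{\mathcal E}_\iota$ and $t\ge0$. Every feasible plan has opposite marginal of total mass $w_\iota$, so
\[
\Over^\pi(t)=w_\iota-P^{(\pi)}_t(\Omega_{\bar\iota}).
\]
Lemma~\ref{lem:overflow_decomposition} gives $\sigma_t\in\Feas_{\bar\iota}(t)$ with mass $P^{(\pi)}_t(\Omega_{\bar\iota})$. Hence $P^{(\pi)}_t(\Omega_{\bar\iota})\le \Fit_{\bar\iota}(t)$, which means $\Over^\pi(t)\ge w_\iota-\Fit_{\bar\iota}(t)$. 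For $\pi^\star$ this bound is an equality by \eqref{eq:global_tail_optimality_exact}. Therefore
\[
\Over^{\pi^\star}(t)\le \Over^\pi(t)\qquad\text{for all }t\ge 0 \text{ and all }\pi\in\Pi^{\mathcal E}_\iota .
\]

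\emph{Step 2: finite-slope case.} Suppose $\phi:[0,\infty)\to\mathbb R$ is convex and lower semicontinuous with $\phi'_\infty\in\mathbb R$. Formula \eqref{eq:Dphi_overflow_integral} expresses $\D_\phi(\nu^{(\pi)}_{\bar\iota}\|\nu_{\bar\iota})$ as a $\pi$-independent constant plus $\int\Over^\pi\,d\mu_\phi$, where $\mu_\phi\ge0$. Step 1 then gives $\D_\phi(\nu^{(\pi^\star)}_{\bar\iota}\|\nu_{\bar\iota})\le \D_\phi(\nu^{(\pi)}_{\bar\iota}\|\nu_{\bar\iota})$ for every feasible $\pi$.

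\emph{Step 3: approximation for general $\vartheta$.} This is the main obstacle, because $\vartheta$ may take the value $+\infty$ or have $\vartheta'_\infty=+\infty$.

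First, by Fenchel--Moreau, $\vartheta$ restricted to $[0,\infty)$ is the supremum of its affine minorants $\ell(u)=au+b$. Second, a countable subfamily $(\ell_j)$ already suffices: take rational coefficients, using separability and lower semicontinuity. Moreover, the slopes $a_j$ can be chosen with $\sup_j a_j=\vartheta'_\infty$, since for a proper lsc convex function the recession slope equals the supremum of the slopes of its affine minorants on $[0,\infty)$.

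Now set $\phi_k:=\max_{j\le k}\ell_j$. Each $\phi_k$ is real-valued, convex, and continuous, with recession slope $(\phi_k)'_\infty=\max_{j\le k}a_j\in\mathbb R$. As $k\to\infty$ we have $\phi_k\uparrow\vartheta$ pointwise and $(\phi_k)'_\infty\uparrow\vartheta'_\infty$.

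For any $P=r\,\nu_{\bar\iota}+P^\perp$, the absolutely continuous term $\int\phi_k(r)\,d\nu_{\bar\iota}$ increases to $\int\vartheta(r)\,d\nu_{\bar\iota}$ by monotone convergence. This is legitimate because $\phi_k\ge\ell_1$ and $\ell_1(r)$ is integrable, as $\nu_{\bar\iota}$ is finite and $r\in L^1$. The singular term $(\phi_k)'_\infty P^\perp(\Omega_{\bar\iota})$ increases to $\vartheta'_\infty P^\perp(\Omega_{\bar\iota})$, consistently with the $+\infty$ convention of Definition~\ref{defn:div_measure}. Hence $\D_{\phi_k}(P\|\nu_{\bar\iota})\uparrow \D_\vartheta(P\|\nu_{\bar\iota})$.

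\emph{Conclusion.} Step 2 gives $\D_{\phi_k}(\nu^{(\pi^\star)}_{\bar\iota}\|\nu_{\bar\iota})\le \D_{\phi_k}(\nu^{(\pi)}_{\bar\iota}\|\nu_{\bar\iota})$ for every $k$. Passing to the limit along the monotone approximation yields $\D_\vartheta(\nu^{(\pi^\star)}_{\bar\iota}\|\nu_{\bar\iota})\le\D_\vartheta(\nu^{(\pi)}_{\bar\iota}\|\nu_{\bar\iota})$ for every $\pi\in\Pi^{\mathcal E}_\iota$. Taking the infimum over $\pi$, and noting that $\pi^\star$ is itself feasible, proves the theorem.
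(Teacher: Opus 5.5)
Your proposal is correct and matches the paper's proof step for step: overflow domination via Lemma~\ref{lem:overflow_decomposition} and $P_t^{(\pi^\star)}(\Omega_{\bar\iota})=\Fit_{\bar\iota}(t)$, integration against $\mu_\phi\ge 0$ from Lemma~\ref{lem:convex_hinge_representation_for_divergence} in the finite-slope case, and then monotone approximation of $\vartheta$ from below, with a fixed affine minorant justifying monotone convergence and with the recession slopes increasing to $\vartheta'_\infty$. The only difference is the approximating sequence: you take finite maxima of an enumeration of rational affine minorants, while the paper uses $\vartheta_n(u)=\sup_{s\in[a_0-n,a_0+n]}\{su-\vartheta^*(s)\}$; both routes rest on Fenchel--Moreau and on $\vartheta'_\infty=\sup\dom\vartheta^*$.
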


\noindent\textbf{Remark.}
In Theorem~\ref{thm:level_opt_maximin_implies_theta_opt}, no strict convexity is needed:
level-optimal maximin implies optimality for every proper, convex, lower semicontinuous
\(\vartheta\). The strict convexity assumption enters only in the converse direction,
where one shows that a \(\D_\vartheta\)-minimizer must be level-optimal maximin.

The theorem may also be viewed as a continuous-majorization statement for the feasible opposite
marginals relative to the base measure \(\nu_{\bar\iota}\). Indeed, for a feasible refinement
\(\pi\), the quantity \(\Over^\pi(t)\) is the analogue of an integrated tail, or stop-loss,
transform of the density \(r=dP^{\ac}/d\nu_{\bar\iota}\), together with the singular mass
\(P^\perp(\Omega_{\bar\iota})\). The inequality
\[
  \Over^{\pi^\star}(t)\le \Over^\pi(t)
  \qquad \forall\,t\ge 0
\]
therefore identifies the opposite marginal of a level-optimal maximin refinement as 
a feasible opposite marginal dominated by every other feasible one in the corresponding convex-order sense.
The passage from this
pointwise overflow domination to optimality for every convex \(\vartheta\) is precisely the role of
the hinge representation. 
In this sense, the theorem is closely related to the classical literature on
continuous majorization and convex-order comparison on measure spaces; see, for example,
\cite{marshall2011inequalities,ryff1970measure}.

\begin{proof}
Let \(\pi\in\Pi_\iota^{\mathcal E}\) be arbitrary, and write
\[
  P^\star:=(\pr_{\bar\iota})_\#\pi^\star,
  \qquad
  P:=(\pr_{\bar\iota})_\#\pi.
\]
Let
\[
  P^\star=r^\star\,\nu_{\bar\iota}+P^\star_\perp,
  \qquad
  P=r\,\nu_{\bar\iota}+P_\perp
\]
be their Lebesgue decompositions with respect to \(\nu_{\bar\iota}\), and set
\[
  P_t^{(\pi^\star)}:=(r^\star\wedge t)\,\nu_{\bar\iota},
  \qquad
  P_t^{(\pi)}:=(r\wedge t)\,\nu_{\bar\iota},
  \qquad t\ge 0.
\]
Since \((\pr_\iota)_\#\pi^\star=(\pr_\iota)_\#\pi=\nu_\iota\), both opposite marginals have the same total mass:
\[
  P^\star(\Omega_{\bar\iota})=P(\Omega_{\bar\iota})=\nu_\iota(\Omega_\iota).
\]

\paragraph{Comparison of truncated masses and overflow profiles.}
Fix \(t\ge 0\). By Lemma~\ref{lem:overflow_decomposition}, applied to \(\pi\), there is
\(\sigma_t\in\Feas_{\bar\iota}(t)\) with
\[
  (\pr_{\bar\iota})_\#\sigma_t=P_t^{(\pi)}.
\]
Hence
\[
  P_t^{(\pi)}(\Omega_{\bar\iota})
  =
  \sigma_t(\Omega_0\times\Omega_1)
  \le \Fit_{\bar\iota}(t).
\]
Since \(\pi^\star\) is level-optimal maximin,
\[
  P_t^{(\pi^\star)}(\Omega_{\bar\iota})=\Fit_{\bar\iota}(t),
\]
and therefore
\begin{equation}
\label{eq:trunc_order_clean}
  P_t^{(\pi^\star)}(\Omega_{\bar\iota})
  \ge
  P_t^{(\pi)}(\Omega_{\bar\iota})
  \qquad \forall\,t\ge 0.
\end{equation}

Now
\[
  \Over^{\pi^\star}(t)
  =
  P^\star_\perp(\Omega_{\bar\iota})+\int (r^\star-t)_+\,d\nu_{\bar\iota}
  =
  P^\star(\Omega_{\bar\iota})-P_t^{(\pi^\star)}(\Omega_{\bar\iota}),
\]
and likewise
\[
  \Over^\pi(t)
  =
  P_\perp(\Omega_{\bar\iota})+\int (r-t)_+\,d\nu_{\bar\iota}
  =
  P(\Omega_{\bar\iota})-P_t^{(\pi)}(\Omega_{\bar\iota}).
\]
Since \(P^\star(\Omega_{\bar\iota})=P(\Omega_{\bar\iota})\), \eqref{eq:trunc_order_clean} yields
\begin{equation}
\label{eq:overflow_order_clean}
  \Over^{\pi^\star}(t)\le \Over^\pi(t)
  \qquad \forall\,t\ge 0.
\end{equation}

\paragraph{Finite-valued convex integrands with finite recession slope.}
Assume first that \(\vartheta\) is finite on \([0,\infty)\) and that
\(\vartheta'_\infty\in\mathbb R\). By
Lemma~\ref{lem:convex_hinge_representation_for_divergence},
\[
  \D_\vartheta(P\|\nu_{\bar\iota})
  =
  C_\vartheta+\int_{(0,\infty)}\Over^\pi(t)\,\mu_\vartheta(dt),
\]
and
\[
  \D_\vartheta(P^\star\|\nu_{\bar\iota})
  =
  C_\vartheta+\int_{(0,\infty)}\Over^{\pi^\star}(t)\,\mu_\vartheta(dt),
\]
where \(\mu_\vartheta\) is a finite nonnegative Borel measure on \((0,\infty)\), and
\[
  C_\vartheta
  :=
  \vartheta(0)\,\nu_{\bar\iota}(\Omega_{\bar\iota})
  +
  \vartheta'_+(0)\,\nu_\iota(\Omega_\iota)
\]
is independent of the plan. Since \(\mu_\vartheta\ge 0\), \eqref{eq:overflow_order_clean} implies
\[
  \D_\vartheta(P^\star\|\nu_{\bar\iota})
  \le
  \D_\vartheta(P\|\nu_{\bar\iota}).
\]

\paragraph{Reduction of the general case to bounded slopes.}
Now let \(\vartheta:[0,\infty)\to\mathbb R\cup\{+\infty\}\) be proper, convex, and lower semicontinuous.
Extend \(\vartheta\) to \(\mathbb R\) by setting \(\vartheta(u)=+\infty\) for \(u<0\), and let
\[
  \vartheta^*(s):=\sup_{u\in\mathbb R}\{su-\vartheta(u)\},
  \qquad s\in\mathbb R.
\]
Choose \(a_0\in\mathbb R\) with \(\vartheta^*(a_0)<\infty\). For \(n\in\mathbb N\), define
\[
  I_n:=[a_0-n,a_0+n],
  \qquad
  \vartheta_n(u):=\sup_{s\in I_n}\{su-\vartheta^*(s)\},
  \qquad u\ge 0.
\]
Because \(a_0\in I_n\) and \(\vartheta^*(a_0)<\infty\), each \(\vartheta_n\) is proper; because \(I_n\) is compact,
each \(\vartheta_n\) is finite-valued, convex, and lower semicontinuous on \([0,\infty)\). Since \(I_n\uparrow\mathbb R\),
the Fenchel--Moreau theorem gives
\begin{equation}
\label{eq:theta_n_up_clean}
  \vartheta_n(u)\uparrow \vartheta(u)
  \qquad \forall\,u\ge 0.
\end{equation}
Moreover, every \(\vartheta_n\) has finite recession slope \(c_n:=(\vartheta_n)'_\infty<\infty\). The preceding paragraph therefore gives
\[
  \D_{\vartheta_n}(P^\star\|\nu_{\bar\iota})
  \le
  \D_{\vartheta_n}(P\|\nu_{\bar\iota})
  \qquad \forall\,n\in\mathbb N.
\]

\paragraph{Passage to the limit.}
Set \(\ell(u):=a_0u-\vartheta^*(a_0)\). Since \(a_0\in I_n\),
\[
  \ell(u)\le \vartheta_n(u)\le \vartheta(u)
  \qquad \forall\,u\ge 0,\ \forall\,n.
\]
Because \(r,r^\star\in L^1(\nu_{\bar\iota})\), both \(\ell(r)\) and \(\ell(r^\star)\) are integrable. Applying monotone
convergence to \(\vartheta_n(r)-\ell(r)\) and \(\vartheta_n(r^\star)-\ell(r^\star)\), we obtain
\[
  \int \vartheta_n(r)\,d\nu_{\bar\iota}\uparrow \int \vartheta(r)\,d\nu_{\bar\iota},
  \qquad
  \int \vartheta_n(r^\star)\,d\nu_{\bar\iota}\uparrow \int \vartheta(r^\star)\,d\nu_{\bar\iota}.
\]

For each \(n\),
\[
  c_n=\sup\bigl(I_n\cap\dom\vartheta^*\bigr).
\]
Since \(I_n\uparrow\mathbb R\), these suprema increase to \(\sup(\dom\vartheta^*)\), which equals
\(\vartheta'_\infty\) by the standard dual characterization of the recession slope for the extended convex function on \(\mathbb R\); see \cite{rockafellar_convex_1970}. Thus
\[
  c_n\uparrow \vartheta'_\infty\in\mathbb R\cup\{+\infty\}.
\]
Hence
\[
  c_n\,P_\perp(\Omega_{\bar\iota})\uparrow \vartheta'_\infty\,P_\perp(\Omega_{\bar\iota}),
  \qquad
  c_n\,P^\star_\perp(\Omega_{\bar\iota})\uparrow \vartheta'_\infty\,P^\star_\perp(\Omega_{\bar\iota}),
\]
with the convention \(+\infty\cdot m=+\infty\) for \(m>0\). Therefore
\[
  \D_{\vartheta_n}(P\|\nu_{\bar\iota})\uparrow \D_\vartheta(P\|\nu_{\bar\iota}),
  \qquad
  \D_{\vartheta_n}(P^\star\|\nu_{\bar\iota})\uparrow \D_\vartheta(P^\star\|\nu_{\bar\iota}).
\]
Passing to the limit in
\[
  \D_{\vartheta_n}(P^\star\|\nu_{\bar\iota})
  \le
  \D_{\vartheta_n}(P\|\nu_{\bar\iota})
\]
gives
\[
  \D_\vartheta(P^\star\|\nu_{\bar\iota})
  \le
  \D_\vartheta(P\|\nu_{\bar\iota}).
\]

Since \(\pi\) was arbitrary, \(\pi^\star\) is optimal for the
\(\D_\vartheta\)-closest one-sided refinement problem.
\end{proof}

\subsection{Consequences of level-optimal maximin}
\label{sec:level_opt_maximin_consequences}

At this point the convex characterization is complete: strictly convex minimization yields
level-optimal maximin, and level-optimal maximin is optimal for every convex objective.  The first
consequence is the promised one-sided universal closestness statement, obtained by combining
Theorem~\ref{thm:level_opt_maximin_implies_theta_opt} with the hockey-stick reduction from
Section~\ref{sec:reduction_HS}.  The same convex viewpoint also yields a rigidity statement:
although singular parts need not be unique, the absolutely continuous part of a strictly convex
minimizer is uniquely determined.

\begin{corollary}[Universal closestness of level-optimal maximin refinements]
\label{cor:level_opt_maximin_universal_closest_one_sided}
Work under Assumption~\ref{assump:closed_polish_graph_nonempty_nbhd}.
Fix \(\iota\in\mathbb B\), and write \(\bar\iota:=1-\iota\).
Let \(\pi^\star\in\Pi_\iota^{\mathcal E}\) be level-optimal maximin from side \(\iota\) to side \(\bar\iota\).

Then \(\pi^\star\) is universally closest for the one-sided refinement problem: for every
\(\mathsf D\in\mathfrak D_{\mathrm{DPI}}\), the plan \(\pi^\star\) is a \(\mathsf D\)-closest
one-sided refinement in the sense of
Definition~\ref{defn:D_closest_one_sided_refinement_problem}.
\end{corollary}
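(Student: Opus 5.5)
The plan is to combine Theorem~\ref{thm:level_opt_maximin_implies_theta_opt}, specialized to hockey-stick integrands, with Proposition~\ref{prop:HS_implies_DPI}, applied on the opposite side \(\Omega_{\bar\iota}\). Set
\[
  \mathcal A:=\{\nu^{(\pi)}_{\bar\iota}:\pi\in\Pi^{\mathcal E}_\iota\}\subseteq\Mplus(\Omega_{\bar\iota}),
  \qquad
  \mathcal B:=\{\nu_{\bar\iota}\}.
\]
The one-sided objective \(\mathsf D(\nu^{(\pi)}_{\bar\iota}\|\nu_{\bar\iota})\), minimized over \(\pi\in\Pi^{\mathcal E}_\iota\), is exactly the infimum of \(\mathsf D(x\|y)\) over \((x,y)\in\mathcal A\times\mathcal B\). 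So it suffices to show that \((x^\star,y^\star):=(\nu^{(\pi^\star)}_{\bar\iota},\nu_{\bar\iota})\) is universally closest in \(\mathcal A\times\mathcal B\).

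First, I verify the hypotheses of Proposition~\ref{prop:HS_implies_DPI}. The space \(\Omega_{\bar\iota}\) is Polish, hence standard Borel, by Assumption~\ref{assump:closed_polish_graph_nonempty_nbhd}. Every \(x\in\mathcal A\) has total mass \(a:=\nu_\iota(\Omega_\iota)>0\), since \(\nu^{(\pi)}_{\bar\iota}(\Omega_{\bar\iota})=\pi(\Omega_0\times\Omega_1)=\nu_\iota(\Omega_\iota)\). The single element of \(\mathcal B\) has mass \(b:=\nu_{\bar\iota}(\Omega_{\bar\iota})>0\). Both positivity claims follow from Definition~\ref{defn:vertex_measures}.

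Second, for each \(\gamma>0\), the integrand \(\vartheta_\gamma(t)=(t-\gamma)_+\) is finite, convex, continuous, and has recession slope \(1\). Theorem~\ref{thm:level_opt_maximin_implies_theta_opt} applied with \(\vartheta=\vartheta_\gamma\) (or directly the overflow domination \(\Over^{\pi^\star}\le\Over^\pi\) from its proof, since \(\HS_\gamma(\nu^{(\pi)}_{\bar\iota}\|\nu_{\bar\iota})=\Over^\pi(\gamma)\)) gives \(\HS_\gamma(x^\star\|y^\star)=\inf_{\mathcal A\times\mathcal B}\HS_\gamma\). Proposition~\ref{prop:HS_implies_DPI} then yields \(\mathsf D(x^\star\|y^\star)=\inf_{\mathcal A\times\mathcal B}\mathsf D\) for every \(\mathsf D\in\mathfrak D_{\mathrm{DPI}}\). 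This is precisely the assertion that \(\pi^\star\) is a \(\mathsf D\)-closest one-sided refinement.

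There is no real obstacle; the only point needing care is the fixed-mass hypothesis, and the constant mass of \(\mathcal A\) is forced by the fixed \(\iota\)-marginal. All the measure-theoretic difficulty has already been absorbed into Theorem~\ref{thm:level_opt_maximin_implies_theta_opt} and the binary randomization criterion used in Proposition~\ref{prop:HS_implies_DPI}.
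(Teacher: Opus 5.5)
Your proposal is correct and is essentially the paper's own proof: apply Theorem~\ref{thm:level_opt_maximin_implies_theta_opt} with \(\vartheta_\gamma(t)=(t-\gamma)_+\) to get hockey-stick optimality for every \(\gamma>0\), then invoke Proposition~\ref{prop:HS_implies_DPI} with \(\mathcal A\) the set of opposite marginals and \(\mathcal B=\{\nu_{\bar\iota}\}\), whose fixed masses \(\nu_\iota(\Omega_\iota)>0\) and \(\nu_{\bar\iota}(\Omega_{\bar\iota})>0\) you correctly verify. Your side remark that \(\HS_\gamma(\nu^{(\pi)}_{\bar\iota}\|\nu_{\bar\iota})=\Over^\pi(\gamma)\) holds because the recession slope is \(1\); it gives a valid shortcut through the overflow domination \(\Over^{\pi^\star}\le\Over^\pi\).
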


\begin{proof}
Fix \(\mathsf D\in\mathfrak D_{\mathrm{DPI}}\). To prove that \(\pi^\star\) is a
\(\mathsf D\)-closest one-sided refinement, it is enough, by
Proposition~\ref{prop:HS_implies_DPI}, to verify the corresponding universal statement for
hockey-stick divergences.

For \(\gamma>0\), let \(\vartheta_\gamma(t):=(t-\gamma)_+\). Then \(\vartheta_\gamma\) is proper,
convex, and lower semicontinuous on \([0,\infty)\), and
\[
  \HS_\gamma(P\|Q)=\D_{\vartheta_\gamma}(P\|Q)
\]
by Definition~\ref{defn:hockey_stick_divergence}. Hence
Theorem~\ref{thm:level_opt_maximin_implies_theta_opt}, applied with
\(\vartheta=\vartheta_\gamma\), yields
\[
  \HS_\gamma\bigl(\nu_{\bar\iota}^{(\pi^\star)}\,\|\,\nu_{\bar\iota}\bigr)
  =
  \inf_{\pi\in\Pi_\iota^{\mathcal E}}
  \HS_\gamma\bigl(\nu_{\bar\iota}^{(\pi)}\,\|\,\nu_{\bar\iota}\bigr)
  \qquad \forall\,\gamma>0.
\]

Now apply Proposition~\ref{prop:HS_implies_DPI} with
\[
  \mathcal A:=\bigl\{\nu_{\bar\iota}^{(\pi)}:\pi\in\Pi_\iota^{\mathcal E}\bigr\},
  \qquad
  \mathcal B:=\{\nu_{\bar\iota}\}.
\]
Every measure in \(\mathcal A\) has total mass \(\nu_\iota(\Omega_\iota)\), and every measure in
\(\mathcal B\) has total mass \(\nu_{\bar\iota}(\Omega_{\bar\iota})\). Therefore
\(\nu_{\bar\iota}^{(\pi^\star)}\) is universally closest with respect to
\(\mathfrak D_{\mathrm{DPI}}\) among opposite marginals of feasible one-sided refinements.
Equivalently, for every \(\mathsf D\in\mathfrak D_{\mathrm{DPI}}\), the plan \(\pi^\star\) is a
\(\mathsf D\)-closest one-sided refinement in the sense of
Definition~\ref{defn:D_closest_one_sided_refinement_problem}.
\end{proof}

\begin{lemma}[Uniqueness of the \(Q\)-absolutely continuous part of finite strictly convex minimizers]
\label{lem:strict_convex_unique_ac_part}
Let \((\Omega,\Sigma)\) be a measurable space, let \(\mathcal A\subseteq \mathcal M_+(\Omega)\) be convex, let
\(Q\in\mathcal M_+(\Omega)\), and let
\(\vartheta:[0,\infty)\to\mathbb R\cup\{+\infty\}\) be proper, convex, lower semicontinuous, and strictly convex on
\(\dom\vartheta\).

Suppose \(P_0,P_1\in\mathcal A\) satisfy
\[
  \D_\vartheta(P_0\|Q)=\D_\vartheta(P_1\|Q)
  =
  \inf_{P\in\mathcal A}\D_\vartheta(P\|Q)
  <\infty.
\]
Write their Lebesgue decompositions with respect to \(Q\) as
\(P_j=r_j\,Q+P_j^\perp\), with \(P_j^\perp\perp Q\), \(j\in\{0,1\}\).
Then \(r_0=r_1\) \(Q\)-a.e. Equivalently, \(P_0^\ac=P_1^\ac\).

If, in addition, \(\vartheta'_\infty=+\infty\), then \(P_0^\perp=P_1^\perp=0\).
In general, no uniqueness of the \(Q\)-singular part is asserted.
\end{lemma}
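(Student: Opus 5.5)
The plan is the standard midpoint argument. Set \(P_{1/2}:=\tfrac12(P_0+P_1)\), which lies in \(\mathcal A\) by convexity. Lebesgue decomposition with respect to \(Q\) is linear, so
\[
  P_{1/2}=\tfrac12(r_0+r_1)\,Q+\tfrac12(P_0^\perp+P_1^\perp),
\]
and \(\tfrac12(P_0^\perp+P_1^\perp)\perp Q\). We will use the two finiteness consequences of \(\D_\vartheta(P_j\|Q)<\infty\). First, \(r_j\in\dom\vartheta\) holds \(Q\)-a.e. Second, \(\vartheta'_\infty P_j^\perp(\Omega)\) is finite, so either \(\vartheta'_\infty<\infty\) or \(P_j^\perp=0\). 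Note also that \(\int\vartheta(r_j)\,dQ\) is well defined and finite: \(\vartheta\) has an affine minorant \(at+b\) and \(r_j\in L^1(Q)\), so the negative part is integrable.

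Next we compute \(\D_\vartheta(P_{1/2}\|Q)\) directly from Definition~\ref{defn:div_measure}. Convexity gives \(\vartheta\bigl(\tfrac12(r_0+r_1)\bigr)\le\tfrac12\vartheta(r_0)+\tfrac12\vartheta(r_1)\) pointwise. The singular term is exactly linear:
\[
  \vartheta'_\infty\cdot\tfrac12(P_0^\perp+P_1^\perp)(\Omega)=\tfrac12\vartheta'_\infty P_0^\perp(\Omega)+\tfrac12\vartheta'_\infty P_1^\perp(\Omega),
\]
with the convention \(+\infty\cdot 0=0\) in the infinite-slope case, where both singular parts vanish. Hence
\[
  \D_\vartheta(P_{1/2}\|Q)\le\tfrac12\D_\vartheta(P_0\|Q)+\tfrac12\D_\vartheta(P_1\|Q)=\inf_{P\in\mathcal A}\D_\vartheta(P\|Q).
\]
Minimality then forces equality. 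Subtracting the finite singular contributions, we get
\[
  \int\Bigl[\tfrac12\vartheta(r_0)+\tfrac12\vartheta(r_1)-\vartheta\bigl(\tfrac12(r_0+r_1)\bigr)\Bigr]dQ=0 .
\]
The integrand is nonnegative and finite \(Q\)-a.e., since \(r_0,r_1\in\dom\vartheta\). So it vanishes \(Q\)-a.e. Strict convexity on \(\dom\vartheta\) then gives \(r_0=r_1\) \(Q\)-a.e., i.e.\ \(P_0^\ac=P_1^\ac\).

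For the final clause: when \(\vartheta'_\infty=+\infty\), the convention in Definition~\ref{defn:div_measure} assigns \(+\infty\) whenever the singular part has positive mass. Finiteness therefore gives \(P_0^\perp=P_1^\perp=0\) directly, without needing minimality.

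The only delicate step is the bookkeeping with extended values. It must be justified that subtracting integrals is legitimate, which holds because all terms are finite, and that the pointwise midpoint inequality is applied only on \(\dom\vartheta\), where strict convexity is assumed. Everything else is routine. No claim is made about the singular parts in the finite-slope case, consistent with the statement.
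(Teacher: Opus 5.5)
Your proof is correct and follows the paper's argument essentially step for step. Both average the two minimizers, use exact linearity of the singular term and pointwise convexity to force equality in the density integral, conclude \(r_0=r_1\) \(Q\)-a.e.\ from strict convexity on \(\dom\vartheta\), and read off the infinite-slope clause directly from finiteness. Your explicit bookkeeping (finiteness of \(\int\vartheta(r_j)\,dQ\) via the affine minorant, and legitimacy of subtracting the singular contributions) simply spells out what the paper leaves implicit.
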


\begin{proof}
Set
\[
  m:=\inf_{P\in\mathcal A}\D_\vartheta(P\|Q)
  =
  \D_\vartheta(P_0\|Q)
  =
  \D_\vartheta(P_1\|Q)
  <\infty.
\]
Since \(\vartheta\) is proper, convex, and lower semicontinuous on \([0,\infty)\), it admits an affine lower
bound \(au+b\). As \(Q\) and \(P_j\) are finite, this shows that \(\int \vartheta(r_j)\,dQ\) is well defined in
\((-\infty,\infty]\). The finiteness of \(\D_\vartheta(P_j\|Q)\) therefore implies
\(\vartheta(r_j)<\infty\) for \(Q\)-a.e.\ point, hence \(r_j\in\dom\vartheta\) \(Q\)-a.e., \(j\in\{0,1\}\).

\paragraph{Averaging the minimizers.}
Set \(\bar P:=\tfrac12(P_0+P_1)\). Since \(\mathcal A\) is convex, \(\bar P\in\mathcal A\), so
\[
  \D_\vartheta(\bar P\|Q)\ge m.
\]
Moreover,
\[
  \bar P=\bar r\,Q+\bar P^\perp,
  \qquad
  \bar r:=\frac{r_0+r_1}{2},
  \qquad
  \bar P^\perp:=\frac{P_0^\perp+P_1^\perp}{2},
\]
and \(\bar P^\perp\perp Q\). By convexity of \(\vartheta\),
\[
  \vartheta(\bar r)\le \frac{\vartheta(r_0)+\vartheta(r_1)}{2}
  \qquad Q\text{-a.e.}
\]
Hence
\[
  \D_\vartheta(\bar P\|Q)
  =
  \int \vartheta(\bar r)\,dQ+\vartheta'_\infty\,\bar P^\perp(\Omega)
  \le
  \frac12\D_\vartheta(P_0\|Q)+\frac12\D_\vartheta(P_1\|Q)
  =
  m.
\]
Therefore \(\D_\vartheta(\bar P\|Q)=m\). Since
\[
  \vartheta'_\infty\,\bar P^\perp(\Omega)
  =
  \frac12\,\vartheta'_\infty P_0^\perp(\Omega)
  +
  \frac12\,\vartheta'_\infty P_1^\perp(\Omega),
\]
the only source of inequality in the preceding display is the convexity inequality for the density part. It follows that
\[
  \int
  \left[
    \frac{\vartheta(r_0)+\vartheta(r_1)}{2}
    -
    \vartheta\!\left(\frac{r_0+r_1}{2}\right)
  \right] dQ
  =0.
\]

\paragraph{Equality in the convexity inequality.}
Define
\[
  g:=
  \frac{\vartheta(r_0)+\vartheta(r_1)}{2}
  -
  \vartheta\!\left(\frac{r_0+r_1}{2}\right).
\]
Then \(g\ge 0\) \(Q\)-a.e., and the previous paragraph gives \(\int g\,dQ=0\). Hence \(g=0\) \(Q\)-a.e.

Since \(r_0,r_1\in\dom\vartheta\) \(Q\)-a.e.\ and \(\vartheta\) is strictly convex on \(\dom\vartheta\),
\[
  g(\omega)>0
  \qquad\text{whenever } r_0(\omega)\neq r_1(\omega).
\]
Thus \(Q(\{r_0\neq r_1\})=0\), so \(r_0=r_1\) \(Q\)-a.e. Equivalently,
\[
  P_0^\ac=P_1^\ac.
\]

\paragraph{The case \(\vartheta'_\infty=+\infty\).}
If \(\vartheta'_\infty=+\infty\), then the finiteness of \(\D_\vartheta(P_j\|Q)\) forces
\(P_j^\perp(\Omega)=0\), hence \(P_j^\perp=0\), for \(j\in\{0,1\}\).

No uniqueness of the \(Q\)-singular part is asserted when \(\vartheta'_\infty<\infty\), and none follows from
the argument above, since the singular contribution enters \(\D_\vartheta(\,\cdot\,\|Q)\) only through the total
mass \(P^\perp(\Omega)\).
\end{proof}

\begin{corollary}[Uniqueness of the absolutely continuous opposite marginal]
\label{cor:unique_ac_opposite_marginal_level_opt_maximin}
Work under Assumption~\ref{assump:closed_polish_graph_nonempty_nbhd}.
Fix \(\iota\in\mathbb B\), and write \(\bar\iota:=1-\iota\).

If \(\pi_0,\pi_1\in\Pi_\iota^{\mathcal E}\) are level-optimal maximin from side \(\iota\) to side \(\bar\iota\), then
\[
  \bigl(\nu_{\bar\iota}^{(\pi_0)}\bigr)^\ac
  =
  \bigl(\nu_{\bar\iota}^{(\pi_1)}\bigr)^\ac
\]
as measures on \(\Omega_{\bar\iota}\), where absolute continuity is taken with respect to \(\nu_{\bar\iota}\).

Equivalently, the \(\nu_{\bar\iota}\)-absolutely continuous part of \(\nu_{\bar\iota}^{(\pi)}\) is the same for every
level-optimal maximin refinement \(\pi\in\Pi_\iota^{\mathcal E}\).
\end{corollary}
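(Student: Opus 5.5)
The plan is to combine Theorem~\ref{thm:level_opt_maximin_implies_theta_opt} with Lemma~\ref{lem:strict_convex_unique_ac_part}, using a single bounded strictly convex integrand. Take \(\vartheta(u):=e^{-u}\) on \([0,\infty)\). It is proper, finite, continuous, and strictly convex on \(\dom\vartheta=[0,\infty)\), with recession slope \(\vartheta'_\infty=0\). Since \(0\le\vartheta\le 1\), every \(\pi\in\Pi^{\mathcal E}_\iota\) satisfies
\[
0\le \D_\vartheta\bigl(\nu^{(\pi)}_{\bar\iota}\,\|\,\nu_{\bar\iota}\bigr)\le \nu_{\bar\iota}(\Omega_{\bar\iota})<\infty,
\]
so the infimum of the one-sided problem is finite.

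First, apply Theorem~\ref{thm:level_opt_maximin_implies_theta_opt} with this \(\vartheta\) to each of \(\pi_0\) and \(\pi_1\). Both are level-optimal maximin, so both attain
\[
m:=\inf_{\pi\in\Pi^{\mathcal E}_\iota}\D_\vartheta\bigl(\nu^{(\pi)}_{\bar\iota}\,\|\,\nu_{\bar\iota}\bigr)<\infty .
\]

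Second, place this in the setting of Lemma~\ref{lem:strict_convex_unique_ac_part}. Take \(\Omega:=\Omega_{\bar\iota}\), \(Q:=\nu_{\bar\iota}\), and
\[
\mathcal A:=\bigl\{\nu^{(\pi)}_{\bar\iota}:\pi\in\Pi^{\mathcal E}_\iota\bigr\}.
\]
The lemma needs \(\mathcal A\) to be convex. This holds because \(\Pi^{\mathcal E}_\iota\) is convex: a convex combination of plans is still null on \(\mathcal E^\c\) and still has \(\iota\)-marginal \(\nu_\iota\). The pushforward \(\pi\mapsto(\pr_{\bar\iota})_\#\pi\) is linear, so it maps convex combinations of plans to convex combinations of their opposite marginals. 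Setting \(P_j:=\nu^{(\pi_j)}_{\bar\iota}\), we have \(P_0,P_1\in\mathcal A\) with \(\D_\vartheta(P_j\|Q)=\inf_{P\in\mathcal A}\D_\vartheta(P\|Q)=m<\infty\).

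Third, Lemma~\ref{lem:strict_convex_unique_ac_part} now gives \(r_0=r_1\) \(\nu_{\bar\iota}\)-a.e., that is, \(P_0^\ac=P_1^\ac\), which is the claim. The equivalent formulation follows because \(\pi_0\) and \(\pi_1\) were arbitrary level-optimal maximin refinements.

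There is no real obstacle here. The only points to check are that the infimum is finite, which is why a bounded strictly convex \(\vartheta\) is chosen, and that \(\mathcal A\) is convex, which follows from the linearity of pushforward. The singular parts are deliberately left unconstrained, consistent with the final remark of the lemma, since \(\vartheta'_\infty=0<\infty\).
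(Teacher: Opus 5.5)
Your proposal is correct and is essentially the paper's proof: the same bounded strictly convex integrand \(\vartheta(u)=e^{-u}\), optimality of both plans via Theorem~\ref{thm:level_opt_maximin_implies_theta_opt}, and convexity of \(\mathcal A\) from linearity of the pushforward. You then conclude, as the paper does, by applying Lemma~\ref{lem:strict_convex_unique_ac_part} with \(Q=\nu_{\bar\iota}\), and your finiteness check (\(0\le\vartheta\le 1\), \(\vartheta'_\infty=0\)) matches the paper's justification.
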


\begin{proof}
Set \(\vartheta(t):=e^{-t}\), \(t\ge 0\), and
\[
  P_0:=\nu_{\bar\iota}^{(\pi_0)},
  \qquad
  P_1:=\nu_{\bar\iota}^{(\pi_1)},
\]
and define
\[
  \mathcal A:=\bigl\{\nu_{\bar\iota}^{(\pi)}:\pi\in\Pi_\iota^{\mathcal E}\bigr\}
  \subseteq \mathcal M_+(\Omega_{\bar\iota}).
\]
The set \(\mathcal A\) is convex: if \(P_k=\nu_{\bar\iota}^{(\pi_k)}\in\mathcal A\) for \(k=0,1\) and
\(\lambda\in[0,1]\), then \(\lambda\pi_0+(1-\lambda)\pi_1\in\Pi_\iota^{\mathcal E}\), and its
\(\bar\iota\)-marginal is \(\lambda P_0+(1-\lambda)P_1\).

Since \(\pi_0\) and \(\pi_1\) are level-optimal maximin, Theorem~\ref{thm:level_opt_maximin_implies_theta_opt},
applied with \(\vartheta(t)=e^{-t}\), shows that \(\pi_0\) and \(\pi_1\) are optimal for the
\(\D_\vartheta\)-closest one-sided refinement problem. Equivalently, \(P_0\) and \(P_1\) minimize
\(P\mapsto \D_\vartheta(P\|\nu_{\bar\iota})\) over \(\mathcal A\), since every \(P\in\mathcal A\) is of the
form \(P=\nu_{\bar\iota}^{(\pi)}\) for some \(\pi\in\Pi_\iota^{\mathcal E}\).

Moreover, \(\vartheta\) is proper, convex, lower semicontinuous, and strictly convex on \([0,\infty)\), and
\(\D_\vartheta(P_j\|\nu_{\bar\iota})<\infty\) for \(j=0,1\), because \(\vartheta\) is bounded and
\(\vartheta'_\infty=0\).

Applying Lemma~\ref{lem:strict_convex_unique_ac_part} on \(\Omega_{\bar\iota}\) with
\(Q=\nu_{\bar\iota}\), we obtain \(P_0^\ac=P_1^\ac\). That is,
\[
  \bigl(\nu_{\bar\iota}^{(\pi_0)}\bigr)^\ac
  =
  \bigl(\nu_{\bar\iota}^{(\pi_1)}\bigr)^\ac.
\]

The final sentence is the reformulation of this pairwise uniqueness statement.
\end{proof}

\section{Universal Closest Refinement Pairs}
\label{sec:universal_closest_pairs}

This section completes the passage from the one-sided level-optimal maximin refinements to paired refinements. The first two subsections show that proportional response upgrades a one-sided level-optimal maximin refinement to a universally closest pair, and conversely that every strictly convex closest pair already has this structure. The final subsection then turns to a broader symmetric setting: it records the density decomposition and structure associated with level-optimal maximin pairs, which will be used later in the equilibrium characterization.

\subsection{Universal closestness via proportional response}
\label{sec:paired_universal_closest}

After the one-sided theory of Section~\ref{sec:one_side}, the universal paired theorem follows by
combining one-sided universal closestness with the paired-to-one-sided reduction from
Section~\ref{sec:paired_reduction}.  The point is that proportional response reconstructs the
paired object from the one-sided refinement without any further optimization.

\begin{theorem}[Universal closestness via proportional response]
\label{thm:universal_closest_refinement_pair}
Work under Assumption~\ref{assump:closed_polish_graph_nonempty_nbhd}.
Fix \(\iota\in\mathbb B\), and write \(\bar\iota:=1-\iota\).

Let \(\pi_\iota\in\Pi_\iota^{\mathcal E}\) be level-optimal maximin from side \(\iota\) to side \(\bar\iota\), and let
\(\pi_{\bar\iota}\in\Pi_{\bar\iota}^{\mathcal E}\) be a proportional response to \(\pi_\iota\), in the sense of Definition~\ref{defn:proportional_response_plan}.
Then \((\pi_\iota,\pi_{\bar\iota})\in\Pairi\) is a universal closest refinement pair, in the sense of Definition~\ref{defn:universal_closest_paired_refinement_problem}.
\end{theorem}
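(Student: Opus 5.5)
The plan is to combine Corollary~\ref{cor:level_opt_maximin_universal_closest_one_sided} with Lemma~\ref{lem:paired_to_one_sided_reduction}, applied divergence by divergence. No new optimization is needed.

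Fix an arbitrary \(\mathsf D\in\mathfrak D_{\mathrm{DPI}}\). Since \(\pi_\iota\) is level-optimal maximin, Corollary~\ref{cor:level_opt_maximin_universal_closest_one_sided} shows that \(\pi_\iota\) is a \(\mathsf D\)-closest one-sided refinement on side \(\iota\). Part~1 of Lemma~\ref{lem:paired_to_one_sided_reduction} then shows that \((\pi_\iota,\PR_{\bar\iota}(\pi_\iota))\) is a \(\mathsf D\)-closest refinement pair. Concretely, the proof of that lemma gives the chain
\[
  I_{\mathrm{pair}}
  \le
  \mathsf D\bigl(\pi_\iota\,\|\,\pi_{\bar\iota}\bigr)
  \le
  \mathsf D\bigl(\nu^{(\pi_\iota)}_{\bar\iota}\,\|\,\nu_{\bar\iota}\bigr)
  =
  I_{\mathrm{one}}
  =
  I_{\mathrm{pair}}.
\]

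The only point needing care is that \(\pi_{\bar\iota}\) is \emph{a} proportional response. It is built from some choice of the disintegration \(\K^{(\pi_\iota)}_\iota\) and some fallback kernel \(\K^{\mathrm{fb}}_\iota\), and need not be the particular one used in the lemma. I would recheck that the middle inequality holds for any such choices. The proportional-response comparison in the proof of Lemma~\ref{lem:paired_to_one_sided_reduction} fixed arbitrary admissible kernels, so the argument goes through verbatim:
\begin{itemize}
\item take \(Z\) with \(P(Z)=0\) and \((\nu_{\bar\iota})_\perp(Z^\c)=0\);
\item splice the two kernels along \(Z\) to obtain \(\widehat\K\);
\item observe that \(\nu_{\bar\iota}\kprod\widehat\K=\pi_{\bar\iota}\) and \(P\kprod\widehat\K=\pi_\iota\), where \(P=\nu^{(\pi_\iota)}_{\bar\iota}\);
\item apply channel-DPI to the Markov kernel induced by \(\widehat\K\).
\end{itemize}
This gives \(\mathsf D(\pi_\iota\|\pi_{\bar\iota})\le\mathsf D(\nu^{(\pi_\iota)}_{\bar\iota}\|\nu_{\bar\iota})=I_{\mathrm{one}}=I_{\mathrm{pair}}\). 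Membership \((\pi_\iota,\pi_{\bar\iota})\in\Pairi\) is recorded in Definition~\ref{defn:proportional_response_plan}, so the pair attains the paired infimum.

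Since \(\mathsf D\) was arbitrary, the pair is a \(\mathsf D\)-closest refinement pair for every \(\mathsf D\in\mathfrak D_{\mathrm{DPI}}\). That is exactly Definition~\ref{defn:universal_closest_paired_refinement_problem}.

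I expect no real obstacle beyond this bookkeeping. The substantive input is Proposition~\ref{prop:HS_implies_DPI}, through Corollary~\ref{cor:level_opt_maximin_universal_closest_one_sided}, and it has already been established. The mild subtlety is the kernel-choice independence just described, together with noting that the fallback kernel is only required to be carried by \(N_\iota(y)\) for \((\nu_{\bar\iota})_\perp\)-a.e. \(y\). That condition guarantees \(\pi_{\bar\iota}(\mathcal E^\c)=0\) regardless of the choice.
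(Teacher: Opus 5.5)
Your proposal is correct and is essentially the paper's proof: fix \(\mathsf D\in\mathfrak D_{\mathrm{DPI}}\), use Corollary~\ref{cor:level_opt_maximin_universal_closest_one_sided} to get that \(\pi_\iota\) is a \(\mathsf D\)-closest one-sided refinement, then apply part~1 of Lemma~\ref{lem:paired_to_one_sided_reduction}. Your extra check, that the lemma's splicing argument works for any admissible choice of disintegration and fallback kernel, is a point the paper leaves implicit (the lemma's proof already works with arbitrary admissible kernels), and making it explicit is accurate.
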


\begin{proof}
By Definition~\ref{defn:proportional_response_plan}, \(\pi_{\bar\iota}\in\Pi_{\bar\iota}^{\mathcal E}\), so
\((\pi_\iota,\pi_{\bar\iota})\in\Pairi\).

Let \(\mathsf D\in\mathfrak D_{\mathrm{DPI}}\). Since \(\pi_\iota\) is level-optimal maximin,
Corollary~\ref{cor:level_opt_maximin_universal_closest_one_sided} implies that \(\pi_\iota\) is a
\(\mathsf D\)-closest one-sided refinement on side \(\iota\). Lemma~\ref{lem:paired_to_one_sided_reduction},
part~(1), therefore yields that \((\pi_\iota,\pi_{\bar\iota})\) is a \(\mathsf D\)-closest refinement pair.
As this holds for every \(\mathsf D\in\mathfrak D_{\mathrm{DPI}}\), the pair is universally closest.
\end{proof}

\subsection{Strictly convex closest pairs have the same structure}
\label{sec:strict_convex_closest_pairs_canonical}

We now prove the converse statement at the paired level.  The preceding theorem showed that a level-optimal maximin one-sided refinement, together with proportional response, produces a universally closest pair.  The next result asks for the converse implication: if one minimizes a single strictly convex paired objective, must the resulting pair already have this structure?  The answer is yes.  The one-sided component is identified through the paired-to-one-sided reduction, and the opposite-side component is then recovered from the uniqueness of minimizers for the adjoint strictly convex divergence.

The following fact is standard for \(\vartheta\)-divergences; see, for example,
Liese--Vajda~\cite[Section~II]{LieseVajda2006} and, for the classical probability-measure formulation,
Csisz\'ar~\cite{Csiszar1967}.

\begin{fact}[Adjoint integrand and divergence symmetry]
\label{fact:adjoint_integrand_divergence_symmetry}
Let \((\Omega,\Sigma)\) be a measurable space, and let
\(\vartheta:[0,\infty)\to\mathbb R\cup\{+\infty\}\) be proper, convex, and lower semicontinuous.
Define its adjoint integrand \(\widehat\vartheta:[0,\infty)\to\mathbb R\cup\{+\infty\}\) by
\[
  \widehat\vartheta(t):=
  \begin{cases}
    t\,\vartheta(t^{-1}), & t>0,\\[0.3em]
    \vartheta'_\infty, & t=0,
  \end{cases}
\]
where \(\vartheta'_\infty:=\lim_{u\to\infty}\vartheta(u)/u\in\mathbb R\cup\{+\infty\}\).

Then \(\widehat\vartheta\) is proper, convex, and lower semicontinuous. Moreover, for every
\(P,Q\in\Mplus(\Omega)\),
\[
  \D_\vartheta(P\|Q)=\D_{\widehat\vartheta}(Q\|P).
\]

If, in addition, \(\vartheta\) is strictly convex on \(\dom\vartheta\), then
\(\widehat\vartheta\) is strictly convex on \(\dom\widehat\vartheta\).
\end{fact}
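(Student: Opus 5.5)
The plan is to treat \(\widehat\vartheta\) as the restriction of the perspective function \((s,t)\mapsto t\,\vartheta(s/t)\) to the line \(s=1\). I would first settle the elementary properties of \(\widehat\vartheta\), then prove the divergence identity on a common dominating measure, and finish with strict convexity.

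\emph{Properness, convexity, lower semicontinuity.} Properness is immediate: if \(\vartheta(u)<\infty\) for some \(u>0\), then \(\widehat\vartheta(1/u)<\infty\). If \(\dom\vartheta=\{0\}\), then \(\vartheta'_\infty=+\infty\) and this case needs a separate trivial check. Otherwise an affine minorant \(\vartheta(u)\ge au+b\) gives \(\widehat\vartheta(t)\ge a+bt>-\infty\).

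For convexity on \((0,\infty)\), take \(t_1,t_2>0\) and \(\lambda\in(0,1)\), and set \(\bar t:=\lambda t_1+(1-\lambda)t_2\). With weights \(\mu_i:=\lambda_i t_i/\bar t\) (so \(\lambda_1=\lambda\), \(\lambda_2=1-\lambda\)) one has \(1/\bar t=\sum_i\mu_i (1/t_i)\). Convexity of \(\vartheta\) then gives
\[
  \widehat\vartheta(\bar t)=\bar t\,\vartheta\Bigl(\textstyle\sum_i\mu_i t_i^{-1}\Bigr)\le\textstyle\sum_i\lambda_i t_i\vartheta(t_i^{-1}).
\]
At the endpoint \(t=0\), note that \(\lim_{t\downarrow0}t\,\vartheta(1/t)=\lim_{u\to\infty}\vartheta(u)/u=\vartheta'_\infty=\widehat\vartheta(0)\). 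This limit gives both continuity at \(0\), hence lower semicontinuity there, and the convexity inequalities involving the point \(0\), by passing to the limit. On \((0,\infty)\), lower semicontinuity is inherited from that of \(\vartheta\) composed with \(t\mapsto 1/t\).

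Finally I would compute the recession slope
\[
  \widehat\vartheta'_\infty=\lim_{t\to\infty}\vartheta(1/t)=\lim_{u\downarrow0}\vartheta(u)=\vartheta(0).
\]
The last equality is the right-continuity at \(0\) of a convex lower semicontinuous function on \([0,\infty)\).

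\emph{Divergence identity.} Let \(\lambda:=P+Q\), \(p:=dP/d\lambda\) and \(q:=dQ/d\lambda\). The Lebesgue decomposition of \(P\) with respect to \(Q\) is \(P_{\mathrm{ac}}=P\restriction\{q>0\}\) with \(r=p/q\) there, and \(P_\perp=P\restriction\{q=0\}\). Hence
\[
  \D_\vartheta(P\|Q)=\int_{\{q>0\}}q\,\vartheta(p/q)\,d\lambda+\vartheta'_\infty P(\{q=0\}).
\]
The integral is well defined in \((-\infty,\infty]\) by the affine minorant. Symmetrically,
\[
  \D_{\widehat\vartheta}(Q\|P)=\int_{\{p>0\}}p\,\widehat\vartheta(q/p)\,d\lambda+\vartheta(0)\,Q(\{p=0\}).
\]
I would split \(\Omega\) into three regions and match terms.
\begin{itemize}
\item On \(\{p>0,q>0\}\), \(p\,\widehat\vartheta(q/p)=q\,\vartheta(p/q)\) pointwise.
\item On \(\{p>0,q=0\}\), the integrand \(p\,\widehat\vartheta(0)=p\,\vartheta'_\infty\) integrates to \(\vartheta'_\infty P(\{q=0\})\).
\item On \(\{p=0,q>0\}\), the \(\vartheta\)-side integrand \(q\,\vartheta(0)\) integrates to \(\vartheta(0)Q(\{p=0\})\).
\end{itemize}
Conventions must be checked here: when \(\vartheta'_\infty=+\infty\) and \(P(\{q=0\})=0\), the product is \(0\) on both sides, and additivity across regions with values in \((-\infty,\infty]\) is legitimate.

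\emph{Strict convexity.} For \(t_1\neq t_2\) in \(\dom\widehat\vartheta\cap(0,\infty)\), the points \(t_i^{-1}\) are distinct and lie in \(\dom\vartheta\), so the convexity inequality above is strict. If one point is \(0\in\dom\widehat\vartheta\), then \(c:=\vartheta'_\infty<\infty\). Take \(t>0\) in \(\dom\widehat\vartheta\) and \(\lambda\in(0,1)\), and put \(u=1/t\) and \(v=u/\lambda>u\). The strict inequality I need is \(\vartheta(v)<\vartheta(u)+c(v-u)\). It holds because strict convexity forces all right-derivative slopes on \([u,\infty)\) to be strictly below \(c\); this is the same argument as in the proof of Theorem~\ref{thm:strict_convex_implies_level_opt_maximin}. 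Multiplying by \(\lambda t\) gives
\[
  \widehat\vartheta(\lambda t)<\lambda\,\widehat\vartheta(t)+(1-\lambda)\,\widehat\vartheta(0).
\]
I expect this boundary case, together with the \(0\cdot\infty\) conventions in the three-region identity, to be the main obstacle; the interior computations are routine.
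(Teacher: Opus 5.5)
Apart from one case, your argument is correct. It is the standard perspective-function proof, which the paper itself does not give: it only cites Liese--Vajda and Csisz\'ar, where the integrand is real-valued on \((0,\infty)\). The following steps all go through:
\begin{itemize}
\item the interior convexity computation;
\item the passage to \(t=0\) via \(\lim_{t\downarrow 0}t\,\vartheta(1/t)=\vartheta'_\infty\);
\item the three-region matching over \(\lambda=P+Q\), including its \(0\cdot\infty\) conventions;
\item the boundary strict-convexity step \(\vartheta(v)<\vartheta(u)+c(v-u)\). Here you should say explicitly that \(c<\infty\) forces \([u,\infty)\subseteq\dom\vartheta\), so \(v\in\dom\vartheta\).
\end{itemize}

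The genuine problem is the case you set aside as ``a separate trivial check''. That check does not succeed, because the statement is false there. Take \(\vartheta(0)=0\) and \(\vartheta\equiv+\infty\) on \((0,\infty)\). This is proper, convex, lower semicontinuous, and (vacuously) strictly convex on \(\dom\vartheta=\{0\}\). Then \(\widehat\vartheta(t)=t\cdot(+\infty)=+\infty\) for \(t>0\), and \(\widehat\vartheta(0)=\vartheta'_\infty=+\infty\). So \(\widehat\vartheta\equiv+\infty\), which is not proper.

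This is also exactly where your recession-slope step fails. Here \(\lim_{u\downarrow 0}\vartheta(u)=+\infty\neq\vartheta(0)\), so \(\widehat\vartheta'_\infty=\vartheta(0)\) is false. The divergence identity fails with it. For \(P=0\neq Q\) one has \(\D_\vartheta(0\|Q)=\vartheta(0)\,Q(\Omega)=0\). But evaluating the defining formula for \(\D_{\widehat\vartheta}(Q\|0)\) gives \(\widehat\vartheta'_\infty\,Q(\Omega)=+\infty\), since all of \(Q\) is singular with respect to the zero measure.

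The remedy is to exclude this case rather than check it, by assuming \(\dom\vartheta\cap(0,\infty)\neq\emptyset\) (equivalently, \(\widehat\vartheta\not\equiv+\infty\)). Under that hypothesis your proof is complete:
\begin{itemize}
\item Properness follows from your two observations: some \(u>0\) with \(\vartheta(u)<\infty\), and the affine minorant.
\item The extended right-continuity \(\lim_{u\downarrow 0}\vartheta(u)=\vartheta(0)\) holds. Either \(0\in\dom\vartheta\), and then convexity on \([0,u_0]\) gives \(\limsup\le\vartheta(0)\) while lower semicontinuity gives \(\liminf\ge\vartheta(0)\). Or \(\vartheta(0)=+\infty\), and lower semicontinuity alone forces the limit to be \(+\infty\).
\end{itemize}

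The extra hypothesis costs nothing where the paper actually uses the Fact, namely Theorem~\ref{thm:strict_convex_closest_pair_induces_canonical_pair}. If \(\dom\vartheta=\{0\}\), then \(\vartheta'_\infty=+\infty\), and finiteness of \(\D_\vartheta(\pi_\iota\|\pi_{\bar\iota})\) would force \(\pi_\iota=0\). That contradicts \(\pi_\iota(\Omega_0\times\Omega_1)=\nu_\iota(\Omega_\iota)>0\).
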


\begin{theorem}[Strictly convex closest pairs have the level-optimal-maximin/proportional-response structure]
\label{thm:strict_convex_closest_pair_induces_canonical_pair}
Work under Assumption~\ref{assump:closed_polish_graph_nonempty_nbhd}.
Fix \(\iota\in\mathbb B\), write \(\bar\iota:=1-\iota\), and let
\(\vartheta:[0,\infty)\to\mathbb R\cup\{+\infty\}\) be proper, lower semicontinuous, and
strictly convex on \(\dom\vartheta\).

Let \((\pi_\iota,\pi_{\bar\iota})\in\Pairi\) satisfy
\[
  \D_\vartheta(\pi_\iota\|\pi_{\bar\iota})
  =
  \inf_{(\rho_\iota,\rho_{\bar\iota})\in\Pairi}
  \D_\vartheta(\rho_\iota\|\rho_{\bar\iota})
  <+\infty.
\]
Then:
\begin{enumerate}[(i)]
\item \(\pi_\iota\) is level-optimal maximin from side \(\iota\) to side \(\bar\iota\);
\item \(\pi_{\bar\iota}\) is a proportional response to \(\pi_\iota\), in the sense of
Definition~\ref{defn:proportional_response_plan}.
\end{enumerate}
\end{theorem}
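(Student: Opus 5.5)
Part (i) comes straight from the paired-to-one-sided reduction. A strictly convex $\vartheta$-divergence satisfies channel-DPI; I take this as the standard fact for $\vartheta$-divergences of Definition~\ref{defn:div_measure}. Lemma~\ref{lem:paired_to_one_sided_reduction}(2) then shows that $\pi_\iota$ is a $\D_\vartheta$-closest one-sided refinement on side $\iota$. The chain in that lemma also gives
\[
\D_\vartheta\bigl(\nu^{(\pi_\iota)}_{\bar\iota}\,\|\,\nu_{\bar\iota}\bigr)=I_{\mathrm{one}}=I_{\mathrm{pair}}<\infty .
\]
Theorem~\ref{thm:strict_convex_implies_level_opt_maximin} then applies and says $\pi_\iota$ is level-optimal maximin.

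For (ii), let $P:=\nu^{(\pi_\iota)}_{\bar\iota}$. Let $\widehat\pi:=\PR_{\bar\iota}(\pi_\iota)=\nu_{\bar\iota}\kprod\widehat{\K}$, with $\widehat{\K}$ the modified disintegration from the proof of Lemma~\ref{lem:paired_to_one_sided_reduction}, so that $\pi_\iota=P\kprod\widehat{\K}$. That proof shows $\widehat\pi$ is also optimal, and it gives the sandwich
\[
\D_\vartheta(P\|\nu_{\bar\iota})\le \D_\vartheta(\pi_\iota\|\rho)\qquad\text{for all }\rho\in\Pi^{\mathcal E}_{\bar\iota},
\]
obtained by projecting along $\pr_{\bar\iota}$. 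Hence $\pi_{\bar\iota}$ and $\widehat\pi$ both minimize $\rho\mapsto\D_\vartheta(\pi_\iota\|\rho)$ over the convex set $\Pi^{\mathcal E}_{\bar\iota}$, with common finite value $\D_\vartheta(P\|\nu_{\bar\iota})$. By Fact~\ref{fact:adjoint_integrand_divergence_symmetry}, this value equals $\D_{\widehat\vartheta}(\rho\|\pi_\iota)$ with $\widehat\vartheta$ strictly convex on its domain. Lemma~\ref{lem:strict_convex_unique_ac_part}, applied with $Q=\pi_\iota$ and $\mathcal A=\Pi^{\mathcal E}_{\bar\iota}$, then gives $\pi_{\bar\iota}^{\ac}=\widehat\pi^{\ac}$, with absolute continuity taken relative to $\pi_\iota$.

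It remains to identify the $\pi_\iota$-singular parts. First I would show that $\pi_{\bar\iota}$ decomposes relative to the same null set as $\widehat\pi$. Take $Z\subseteq\Omega_{\bar\iota}$ with $P(Z)=0$ and $(\nu_{\bar\iota})_\perp$ carried by $Z$. On $\Omega_0\times\Omega_1$, the set $\pr_{\bar\iota}^{-1}(Z)$ is $\pi_\iota$-null. Off this set, $\widehat\pi=(\nu_{\bar\iota})_{\ac}\kprod\K^{(\pi_\iota)}_\iota$ is absolutely continuous with respect to $\pi_\iota$. Next, write $\pi_{\bar\iota}^{\ac}=h\,\pi_\iota$. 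Its $\bar\iota$-marginal must be dominated by $\nu_{\bar\iota}$, and its restriction off $\pr_{\bar\iota}^{-1}(Z)$ equals $\widehat\pi^{\ac}$. Therefore $\pi_{\bar\iota}^{\perp}$ has $\bar\iota$-marginal $(\nu_{\bar\iota})_\perp$, it is carried by $\pr_{\bar\iota}^{-1}(Z)\cap\mathcal E$, and it disintegrates as $(\nu_{\bar\iota})_\perp\kprod\K'$ for some kernel $\K'$ carried by $N_\iota(y)$. The definition of proportional response allows an arbitrary fallback kernel on $(\nu_{\bar\iota})_\perp$, so $\pi_{\bar\iota}=(\nu_{\bar\iota})_{\ac}\kprod\K^{(\pi_\iota)}_\iota+(\nu_{\bar\iota})_\perp\kprod\K'$ is a proportional response.

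I expect the main obstacle to be this identification of the singular and absolutely continuous parts with respect to the correct reference measure. Two points need care. One is showing that uniqueness of the $\pi_\iota$-absolutely continuous part pins down $\pi_{\bar\iota}$ on the region where $P>0$, i.e.\ that $h$ coincides with $d\widehat\pi/d\pi_\iota$ exactly there; this comes from the marginal constraint $(\pr_{\bar\iota})_\#\pi_{\bar\iota}=\nu_{\bar\iota}$. The other is confirming that no singular mass of $\pi_{\bar\iota}$ sits above $\{P>0\}$. When $\vartheta'_\infty$ is finite, Lemma~\ref{lem:strict_convex_unique_ac_part} gives no control over singular parts. I would therefore first try a mass count: the $\bar\iota$-marginal of $\pi_{\bar\iota}^{\ac}$ already exhausts $(\nu_{\bar\iota})_{\ac}$, so any further singular mass over $Z^{\c}$ would violate $(\pr_{\bar\iota})_\#\pi_{\bar\iota}=\nu_{\bar\iota}$ on $Z^{\c}$.
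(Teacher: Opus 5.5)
Your proposal is correct and follows essentially the same route as the paper. Part (i) goes through Lemma~\ref{lem:paired_to_one_sided_reduction}(2) and Theorem~\ref{thm:strict_convex_implies_level_opt_maximin}. For part (ii) you do what the paper does: compare \(\pi_{\bar\iota}\) with a proportional response, pass to the adjoint integrand, apply Lemma~\ref{lem:strict_convex_unique_ac_part} with \(Q=\pi_\iota\), and then use the marginal constraint to show that the \(\pi_\iota\)-singular remainder has \(\bar\iota\)-marginal \((\nu_{\bar\iota})_\perp\) and disintegrates as an admissible fallback term.

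The first obstacle you flag is immediate: Lemma~\ref{lem:strict_convex_unique_ac_part} already gives that the \(\pi_\iota\)-absolutely continuous parts coincide, with no appeal to the marginal constraint. For the second, your mass count is exactly how the paper finishes.
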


\noindent \textbf{Remark.}
By Fact~\ref{fact:adjoint_integrand_divergence_symmetry}, the hypothesis of
Theorem~\ref{thm:strict_convex_closest_pair_induces_canonical_pair} is symmetric after replacing
\(\vartheta\) by its adjoint integrand and swapping the two sides. Hence
\(\pi_{\bar\iota}\) is level-optimal maximin from side \(\bar\iota\) to side \(\iota\), and
\(\pi_\iota\) is a proportional response to \(\pi_{\bar\iota}\).

\begin{proof}
The first assertion identifies the \(\iota\)-component. The second then reconstructs the
\(\bar\iota\)-component from the uniqueness of the \(\pi_\iota\)-absolutely continuous part.

Because \(\D_\vartheta\) satisfies channel-DPI, Lemma~\ref{lem:paired_to_one_sided_reduction}
applies with \(\mathsf D=\D_\vartheta\). Since \((\pi_\iota,\pi_{\bar\iota})\in\Pairi\) attains the paired
\(\D_\vartheta\)-infimum, part~\emph{(2)} of that lemma shows that \(\pi_\iota\) is a
\(\D_\vartheta\)-closest one-sided refinement on side \(\iota\). The optimal value is finite by assumption,
so Theorem~\ref{thm:strict_convex_implies_level_opt_maximin} yields that \(\pi_\iota\) is
level-optimal maximin from side \(\iota\) to side \(\bar\iota\). This proves \emph{(i)}.

For \emph{(ii)}, choose a proportional response \(\eta_{\bar\iota}\) to \(\pi_\iota\), in the sense of
Definition~\ref{defn:proportional_response_plan}. By Lemma~\ref{lem:paired_to_one_sided_reduction},
part~\emph{(2)}, the paired optimality of \((\pi_\iota,\pi_{\bar\iota})\) implies that \(\pi_\iota\) is a
\(\D_\vartheta\)-closest one-sided refinement on side \(\iota\). Because \(\eta_{\bar\iota}\) is a
proportional response to \(\pi_\iota\), part~\emph{(1)} of the same lemma shows that
\((\pi_\iota,\eta_{\bar\iota})\in\Pairi\) is likewise a \(\D_\vartheta\)-closest refinement pair.

It follows that both \(\pi_{\bar\iota}\) and \(\eta_{\bar\iota}\) minimize the map
\[
  R\longmapsto \D_\vartheta(\pi_\iota\|R)
\]
over the convex class \(\Pi_{\bar\iota}^{\mathcal E}\). Let \(\widehat\vartheta\) be the adjoint
integrand associated with \(\vartheta\). By Fact~\ref{fact:adjoint_integrand_divergence_symmetry},
\(\widehat\vartheta\) is proper, lower semicontinuous, and strictly convex on
\(\dom\widehat\vartheta\), and
\[
  \D_\vartheta(\pi_\iota\|R)=\D_{\widehat\vartheta}(R\|\pi_\iota)
  \qquad\text{for every }R\in\Mplus(\Omega_0\times\Omega_1).
\]
Therefore \(\pi_{\bar\iota}\) and \(\eta_{\bar\iota}\) are both minimizers of
\(R\mapsto \D_{\widehat\vartheta}(R\|\pi_\iota)\) over \(\Pi_{\bar\iota}^{\mathcal E}\). Applying
Lemma~\ref{lem:strict_convex_unique_ac_part} with \(Q=\pi_\iota\), we conclude that the
\(\pi_\iota\)-absolutely continuous parts of \(\pi_{\bar\iota}\) and \(\eta_{\bar\iota}\) coincide.

We now identify this common absolutely continuous part. Set
\[
  P:=(\pr_{\bar\iota})_\#\pi_\iota=\nu^{(\pi_\iota)}_{\bar\iota}.
\]
Choose a disintegration
\[
  \pi_\iota=P\kprod \K
\]
as in Definition~\ref{defn:disintegration}, and write the Lebesgue decomposition of
\(\nu_{\bar\iota}\) with respect to \(P\) as
\[
  \nu_{\bar\iota}=(\nu_{\bar\iota})_{\mathrm{ac}}+(\nu_{\bar\iota})_{\perp}.
\]
Since \(\eta_{\bar\iota}\) is a proportional response to \(\pi_\iota\), there exists a measurable
probability kernel \(\K^{\mathrm{fb}}\), carried by \(N_\iota(y)\) for
\((\nu_{\bar\iota})_{\perp}\)-a.e.\ \(y\), such that
\[
  \eta_{\bar\iota}
  =
  (\nu_{\bar\iota})_{\mathrm{ac}}\kprod \K
  +
  (\nu_{\bar\iota})_{\perp}\kprod \K^{\mathrm{fb}}.
\]

Because \((\nu_{\bar\iota})_{\mathrm{ac}}\ll P\), one has
\[
  (\nu_{\bar\iota})_{\mathrm{ac}}\kprod \K \ll P\kprod \K=\pi_\iota.
\]
On the other hand, since \((\nu_{\bar\iota})_{\perp}\perp P\), choose \(Z\in\Sigma_{\bar\iota}\) such that
\(P(Z)=0\) and \((\nu_{\bar\iota})_{\perp}(Z^c)=0\). Then
\[
  \pi_\iota\bigl(\pr_{\bar\iota}^{-1}(Z)\bigr)=P(Z)=0,
  \qquad
  \bigl((\nu_{\bar\iota})_{\perp}\kprod \K^{\mathrm{fb}}\bigr)
  \bigl(\pr_{\bar\iota}^{-1}(Z^c)\bigr)
  =
  (\nu_{\bar\iota})_{\perp}(Z^c)=0.
\]
Hence \((\nu_{\bar\iota})_{\perp}\kprod \K^{\mathrm{fb}}\perp \pi_\iota\). Thus the
\(\pi_\iota\)-absolutely continuous part of \(\eta_{\bar\iota}\) is exactly
\[
  (\nu_{\bar\iota})_{\mathrm{ac}}\kprod \K.
\]
Therefore the \(\pi_\iota\)-absolutely continuous part of \(\pi_{\bar\iota}\) is also
\((\nu_{\bar\iota})_{\mathrm{ac}}\kprod \K\). We may therefore write
\[
  \pi_{\bar\iota}
  =
  (\nu_{\bar\iota})_{\mathrm{ac}}\kprod \K + S,
  \qquad
  S\perp \pi_\iota.
\]

Since \((\pr_{\bar\iota})_\#\pi_{\bar\iota}=\nu_{\bar\iota}\) and
\[
  (\pr_{\bar\iota})_\#\bigl((\nu_{\bar\iota})_{\mathrm{ac}}\kprod \K\bigr)
  =
  (\nu_{\bar\iota})_{\mathrm{ac}},
\]
it follows that
\[
  (\pr_{\bar\iota})_\#S=(\nu_{\bar\iota})_{\perp}.
\]

Next we check that \(S\) is concentrated on \(\mathcal E\). The measure \(\pi_{\bar\iota}\) is concentrated on
\(\mathcal E\) because \((\pi_\iota,\pi_{\bar\iota})\in\Pairi\). The kernel \(\K\) is carried by
\(N_\iota(y)\) for \(P\)-a.e.\ \(y\), and \((\nu_{\bar\iota})_{\mathrm{ac}}\ll P\), so
\((\nu_{\bar\iota})_{\mathrm{ac}}\kprod \K\) is also carried by \(\mathcal E\). Hence \(S(\mathcal E^c)=0\).

Finally, disintegrate \(S\) with respect to its \(\bar\iota\)-marginal \((\nu_{\bar\iota})_{\perp}\):
\[
  S=(\nu_{\bar\iota})_{\perp}\kprod \widetilde{\K}^{\mathrm{fb}}
\]
for some measurable probability kernel \(\widetilde{\K}^{\mathrm{fb}}\). Since \(S(\mathcal E^c)=0\),
this kernel is carried by \(N_\iota(y)\) for \((\nu_{\bar\iota})_{\perp}\)-a.e.\ \(y\). Consequently
\[
  \pi_{\bar\iota}
  =
  (\nu_{\bar\iota})_{\mathrm{ac}}\kprod \K
  +
  (\nu_{\bar\iota})_{\perp}\kprod \widetilde{\K}^{\mathrm{fb}},
\]
which is exactly the form required in Definition~\ref{defn:proportional_response_plan}. Thus
\(\pi_{\bar\iota}\) is a proportional response to \(\pi_\iota\).
\end{proof}

\subsection{Symmetric density decomposition and geometric structure}
\label{subsec:symmetric_density_decomposition_support_geometry}

We now turn to structural consequences of level-optimal maximin pairs, which will be used
later in the equilibrium characterization.  This part of the section is broader than the proportional-response theory above: the input here is only a level-optimal maximin pair, not the stronger universal-closest paired structure, which also imposes mutual proportional response.  The resulting
decomposition may be viewed as the measure-theoretic analogue of the symmetric density
decomposition from the finite theory \cite{DBLP:conf/innovations/ChanX25}.  In the finite setting
that decomposition is generated by repeatedly peeling off densest subsets; 
here the same structural role is played instead by absolutely continuous/singular
decompositions together with the geometric structure forced by a level-optimal maximin pair.

\begin{definition}[Symmetric density decomposition]
\label{defn:sym_density_decomp}
Work under Assumption~\ref{assump:closed_polish_graph_nonempty_nbhd}. Let
\((\pi_0,\pi_1)\in \Pi^{\mathcal E}(\nu_0,\nu_1)\) be a refinement pair such that, for each
\(\iota\in\mathbb B\), the plan \(\pi_\iota\in\Pi_\iota^{\mathcal E}\) is level-optimal maximin in the
sense of Definition~\ref{defn:level_opt_maximin}.

For \(\iota\in\mathbb B\), define \(P_\iota:=(\pr_\iota)_\#\pi_{\bar\iota}\), the payload measure on side
\(\iota\) induced by the opposite-side refinement. Let
\(P_\iota=P_\iota^{\mathrm{ac}}+P_\iota^\perp\) be the Lebesgue decomposition of \(P_\iota\) with respect
to \(\nu_\iota\), and let
\(\nu_\iota=\nu_\iota^{\mathrm{ac}}+\nu_\iota^\perp\) be the Lebesgue decomposition of \(\nu_\iota\) with
respect to \(P_\iota^{\mathrm{ac}}\).

Finally, define
\[
  \rho_\iota^{\mathrm{ac}}
  :=
  \frac{dP_\iota^{\mathrm{ac}}}{d\nu_\iota}
  =
  \frac{dP_\iota^{\mathrm{ac}}}{d\nu_\iota^{\mathrm{ac}}}.
\]

The \emph{symmetric density decomposition} is the collection, for \(\iota\in\mathbb B\), of the measures
\(\nu_\iota^{\mathrm{ac}}, \nu_\iota^\perp\) and the density
\(\rho_\iota^{\mathrm{ac}}:\Omega_\iota\to[0,\infty)\).
\end{definition}

\begin{remark}[Independence of the choice of level-optimal maximin pair]
\label{rem:sym_density_decomp_independent}
Although Definition~\ref{defn:sym_density_decomp} is phrased using a chosen level-optimal maximin pair, the absolutely continuous data are in fact canonical. By
Theorem~\ref{thm:level_opt_maximin_implies_theta_opt} and
Corollary~\ref{cor:unique_ac_opposite_marginal_level_opt_maximin}, the absolutely continuous part
\(P_\iota^{\mathrm{ac}}\) depends only on the side \(\iota\), not on the particular choice of
level-optimal maximin refinement pair. Consequently, the symmetric density decomposition is canonical.
\end{remark}

The first structural fact is that transporting singular payload backward along an admissible reverse
kernel cannot create absolutely continuous mass on the opposite side.  This is the basic separation
principle for the singular component and will be used repeatedly below.

\begin{lemma}[Backward transport preserves singularity]
\label{lem:singular_payload_stays_singular_under_backward_kernel}
Work under Assumption~\ref{assump:closed_polish_graph_nonempty_nbhd}, and consider the symmetric density decomposition of Definition~\ref{defn:sym_density_decomp}. Fix \(\iota\in\mathbb B\), and write \(\bar\iota:=1-\iota\).

Let
\[
  K_\iota:\Omega_{\bar\iota}\times\Sigma_\iota\to[0,1]
\]
be a probability kernel such that
\[
  K_\iota\bigl(y,N_\iota(y)\bigr)=1
  \qquad\text{for }\nu_{\bar\iota}^\perp\text{-a.e.\ }y,
\]
where
\[
  N_\iota(y):=\{x\in\Omega_\iota:(x,y)\in\mathcal E\}.
\]
Then
\[
  (\pr_\iota)_\#\bigl(\nu_{\bar\iota}^\perp\kprod K_\iota\bigr)\perp \nu_\iota.
\]
\end{lemma}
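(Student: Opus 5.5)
We argue by contradiction, using only the level-optimal maximin property of \(\pi_\iota\) (the side-\(\iota\) refinement, whose payload on side \(\bar\iota\) is \(P_{\bar\iota}=(\pr_{\bar\iota})_\#\pi_\iota\)) at a small level \(t\). The intuition is as follows. The measure \(\nu_{\bar\iota}^\perp\) lives where \(\pi_\iota\) delivers zero absolutely continuous density. If the backward push of \(\nu_{\bar\iota}^\perp\) had a \(\nu_\iota\)-absolutely continuous part, then some \(\nu_\iota\)-substantial set of points of \(\Omega_\iota\) would be adjacent to this ``empty'' region. Shipping a little of their mass there would pack more mass below level \(t\) than \(\Fit_{\bar\iota}(t)\) allows.

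\textbf{Setup.} Write \(P_{\bar\iota}=r\,\nu_{\bar\iota}+P_{\bar\iota}^\perp\). By definition, \(\nu_{\bar\iota}^\perp\perp P_{\bar\iota}^{\mathrm{ac}}\), so we choose \(Z\in\Sigma_{\bar\iota}\) with \(\nu_{\bar\iota}^\perp(Z^\c)=0\) and \(P_{\bar\iota}^{\mathrm{ac}}(Z)=0\). The latter gives \(r=0\) \(\nu_{\bar\iota}\)-a.e.\ on \(Z\). Set \(\Lambda:=\nu_{\bar\iota}^\perp\kprod K_\iota\) and \(M:=(\pr_\iota)_\#\Lambda\). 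Since \(K_\iota\) is carried by \(N_\iota(y)\) for \(\nu_{\bar\iota}^\perp\)-a.e.\ \(y\), we have \(\Lambda(\mathcal E^\c)=0\). Also, \(\Lambda\) is carried by \(\Omega_\iota\times Z\), and \((\pr_{\bar\iota})_\#\Lambda=\nu_{\bar\iota}^\perp\le\nu_{\bar\iota}\). Suppose the claim fails, i.e.\ \(M^{\mathrm{ac}}=h\,\nu_\iota\neq0\). Then we fix \(c>0\) and \(A\in\Sigma_\iota\) with \(\nu_\iota(A)>0\) and \(h\ge c\) on \(A\).

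\textbf{Overflow at a small level.} For \(t>0\), apply Lemma~\ref{lem:overflow_decomposition} to \(\pi_\iota\), giving \(\pi_\iota=\sigma_t+\pi_t^{\mathrm{ov}}\). Here \((\pr_{\bar\iota})_\#\sigma_t=(r\wedge t)\nu_{\bar\iota}\), and this vanishes on \(Z\). Let \(\xi_t:=(\pr_\iota)_\#\sigma_t\) and \(\rho_t:=\nu_\iota-\xi_t=(\pr_\iota)_\#\pi^{\mathrm{ov}}_t\). Since \(\xi_t(\Omega_\iota)\le t\,\nu_{\bar\iota}(\Omega_{\bar\iota})\), we get \(\rho_t(A)\ge\nu_\iota(A)-t\,\nu_{\bar\iota}(\Omega_{\bar\iota})>0\) for \(t\) small. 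Fix such a \(t\), and let \(g:=d\rho_t/d\nu_\iota\in[0,1]\). Then \(g\) is not \(\nu_\iota\)-a.e.\ zero on \(A\).

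\textbf{The competitor.} Define
\[
  \tau:=s\,(f\circ\pr_\iota)\,\Lambda,
  \qquad
  f:=\mathbf 1_A\,\frac{g}{h\vee c},
  \qquad
  s:=\min(1,t).
\]
Then \(\tau\le s\Lambda\), which gives three properties:
\begin{enumerate}[(a)]
\item \(\tau\) is carried by \(\mathcal E\);
\item \((\pr_{\bar\iota})_\#\tau\le t\,\nu_{\bar\iota}\), carried by \(Z\);
\item \((\pr_\iota)_\#\tau\le f\,M\).
\end{enumerate}
The main technical point is the \(\iota\)-marginal bound in (c). The ac part satisfies \(f\,h\,\nu_\iota\le g\,\nu_\iota=\rho_t\). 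We must also discard the \(\nu_\iota\)-singular part of \(M\). To do so, we first multiply \(f\) by \(\mathbf 1_{Y^\c}\), where \(Y\) is a \(\nu_\iota\)-null set carrying \(M^\perp\). This kills \(M^\perp\) without changing the ac computation. Hence \((\pr_\iota)_\#\tau\le\rho_t\). Moreover, \(\tau\neq0\), because \(\int f\,dM^{\mathrm{ac}}\ge\int_A g\,(h/(h\vee c))\,d\nu_\iota>0\). Now set \(\sigma':=\sigma_t+\tau\). It is carried by \(\mathcal E\). Its \(\iota\)-marginal is at most \(\xi_t+\rho_t=\nu_\iota\). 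Its \(\bar\iota\)-marginal is at most \((r\wedge t)\nu_{\bar\iota}\) off \(Z\) plus \(t\nu_{\bar\iota}\) on \(Z\), hence at most \(t\,\nu_{\bar\iota}\). So \(\sigma'\in\Feas_{\bar\iota}(t)\), with \(\sigma'(\Omega_0\times\Omega_1)>\sigma_t(\Omega_0\times\Omega_1)=P_t^{(\pi_\iota)}(\Omega_{\bar\iota})\). This contradicts \eqref{eq:global_tail_optimality_exact}. Therefore \(M\perp\nu_\iota\).
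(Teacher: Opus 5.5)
Your argument is the paper's own: a contradiction with the level-optimal maximin identity at a small level \(t\). It uses the overflow decomposition, the fact that \(r=0\) \(\nu_{\bar\iota}\)-a.e.\ on a carrier \(Z\) of \(\nu_{\bar\iota}^\perp\), and adds to \(\sigma_t\) a small multiple of the \(\nu_\iota\)-absolutely continuous part of the backward plan \(\Lambda\). The only difference is bookkeeping. The paper truncates the density from above (\(A=\{0<f\le n\}\)), keeps the weight \(\le t\), and locates a set \(B\) where the source slack is at least \(\tfrac12\nu_\iota\). You instead weight by the slack density itself, \(g/h\), which handles the source constraint in one stroke.

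That weighting, however, introduces one step that fails as written. On \(A\) you only know \(h\ge c\) and \(g\le 1\), so \(f=\mathbf 1_A\,g/(h\vee c)\) can be as large as \(1/c\). Hence the claim \(\tau\le s\Lambda\) is false when \(c<1\). With \(s=\min(1,t)\), item (b) then only yields \((\pr_{\bar\iota})_\#\tau\le (t/c)\,\nu_{\bar\iota}^\perp\). On \(Z\) one has \(\nu_{\bar\iota}\!\restriction Z=\nu_{\bar\iota}^\perp\) and \((r\wedge t)\nu_{\bar\iota}\!\restriction Z=0\), so the available capacity there is exactly \(t\,\nu_{\bar\iota}^\perp\). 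A bound of \((t/c)\,\nu_{\bar\iota}^\perp\) therefore need not respect the cap, and \(\sigma'\in\Feas_{\bar\iota}(t)\) is not established.

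The fix is immediate; either of the following works:
\begin{enumerate}
\item take \(s:=\min\{1,ct\}\), so that \(sf\le t\) and \(s\le 1\);
\item cap the weight, \(f:=\mathbf 1_A\mathbf 1_{Y^\c}\min\{1,g/(h\vee c)\}\).
\end{enumerate}
In both cases \(sfh\le g\) on \(A\) still gives \((\pr_\iota)_\#\tau\le\rho_t\), and the \(\bar\iota\)-marginal is at most \(t\,\nu_{\bar\iota}^\perp\). Also \(\tau\neq 0\) still holds, because \(g\) is not \(\nu_\iota\)-a.e.\ zero on \(A\) and \(h\ge c>0\) there. With this change your proof is complete.
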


\begin{proof}
Fix \(\iota\in\mathbb B\), write \(\bar\iota:=1-\iota\), and let \(\pi:=\pi_\iota\) be the
level-optimal maximin refinement from side \(\iota\) to side \(\bar\iota\) appearing in
Definition~\ref{defn:sym_density_decomp}. Set
\[
  P:=(\pr_{\bar\iota})_\#\pi.
\]
Write the Lebesgue decomposition of \(P\) with respect to \(\nu_{\bar\iota}\) as
\(P=r\,\nu_{\bar\iota}+P^\perp\).

Let \(K_\iota\) be as in the statement, and define
\[
  \tau:=\nu_{\bar\iota}^\perp\kprod K_\iota\in \mathcal M_+(\Omega_0\times\Omega_1).
\]
By the defining property of \(K_\iota\), the measure \(\tau\) is concentrated on \(\mathcal E\), and
\((\pr_{\bar\iota})_\#\tau=\nu_{\bar\iota}^\perp\). Set
\[
  \mu:=(\pr_\iota)_\#\tau\in\mathcal M_+(\Omega_\iota).
\]
We must show that \(\mu\perp \nu_\iota\).

Assume for contradiction that \(\mu\not\perp \nu_\iota\). Let
\(\mu=\mu^{\mathrm{ac}}+\mu^\perp\) be the Lebesgue decomposition with respect to \(\nu_\iota\), and
write \(f:=d\mu^{\mathrm{ac}}/d\nu_\iota\). Since \(\mu^{\mathrm{ac}}\neq 0\), the sets
\(A_n:=\{0<f\le n\}\) satisfy \(\mu^{\mathrm{ac}}(A_n)>0\) for some \(n\). Fix such an \(n\), and set
\[
  A:=A_n=\{0<f\le n\}.
\]

Let \(m_{\bar\iota}:=\nu_{\bar\iota}(\Omega_{\bar\iota})\), and choose
\begin{equation}\label{eq:t_choice_singular_backward_clean}
  t:=\frac{\mu^{\mathrm{ac}}(A)}{4n\,m_{\bar\iota}}>0.
\end{equation}
Then
\begin{equation}\label{eq:t_choice_bounds_singular_backward_clean}
  tn=\frac{\mu^{\mathrm{ac}}(A)}{4m_{\bar\iota}}\le \frac14,
  \qquad
  2nt\,m_{\bar\iota}=\frac{\mu^{\mathrm{ac}}(A)}{2}.
\end{equation}

\paragraph{A \(P\)-null set carrying \(\nu_{\bar\iota}^\perp\).}
Since \(\nu_{\bar\iota}^\perp\perp P\), there exists \(S\in\Sigma_{\bar\iota}\) such that
\(P(S)=0\) and \(\nu_{\bar\iota}^\perp(S^\c)=0\). Because
\(P(S)=\int_S r\,d\nu_{\bar\iota}+P^\perp(S)\), nonnegativity gives
\begin{equation}\label{eq:r_zero_on_support_of_nu_perp_clean}
  r=0 \ \ \nu_{\bar\iota}\text{-a.e.\ on }S,
  \qquad
  P^\perp(S)=0.
\end{equation}

\paragraph{The under-level part.}
Apply Lemma~\ref{lem:overflow_decomposition} to \(\pi\) at level \(t\). Thus
\[
  \pi=\sigma_t+\pi_t^{\mathrm{ov}},
  \qquad
  \sigma_t\in\Feas_{\bar\iota}(t),
  \qquad
  (\pr_{\bar\iota})_\#\sigma_t=(r\wedge t)\,\nu_{\bar\iota}.
\]
Let \(\mu_t:=(\pr_\iota)_\#\sigma_t\). Since \(\mu_t\le \nu_\iota\), we may write
\(\mu_t=g\,\nu_\iota\) with \(0\le g\le 1\) \(\nu_\iota\)-a.e.

Set
\[
  B:=\{x\in A:g(x)\le 1/2\}.
\]
On \(B\),
\begin{equation}\label{eq:slack_on_B_singular_backward_clean}
  (\nu_\iota-\mu_t)\!\restriction B\ge \frac12\,\nu_\iota\!\restriction B.
\end{equation}
Moreover,
\[
  \nu_\iota(A\setminus B)
  =\nu_\iota(\{g>1/2\}\cap A)
  \le 2\int_A g\,d\nu_\iota
  =2\mu_t(A)
  \le 2\mu_t(\Omega_\iota).
\]
Since \(\mu_t(\Omega_\iota)=\sigma_t(\Omega_0\times\Omega_1)\le t\,m_{\bar\iota}\), we obtain
\[
  \nu_\iota(A\setminus B)\le 2t\,m_{\bar\iota}.
\]
Because \(f\le n\) on \(A\),
\[
  \mu^{\mathrm{ac}}(A\setminus B)
  =\int_{A\setminus B} f\,d\nu_\iota
  \le n\,\nu_\iota(A\setminus B)
  \le 2nt\,m_{\bar\iota}
  =\frac{\mu^{\mathrm{ac}}(A)}{2}
\]
by \eqref{eq:t_choice_bounds_singular_backward_clean}. Hence
\begin{equation}\label{eq:muac_B_positive_singular_backward_clean}
  \mu^{\mathrm{ac}}(B)\ge \frac{\mu^{\mathrm{ac}}(A)}{2}>0.
\end{equation}

\paragraph{A positive perturbation carried by \(S\).}
Since \(\mu^{\mathrm{ac}}\ll \mu\), let \(w:=d\mu^{\mathrm{ac}}/d\mu\), so \(0\le w\le 1\)
\(\mu\)-a.e.
Define the submeasure \(\tau^{\mathrm{ac}}\le \tau\) by
\[
  \tau^{\mathrm{ac}}(E):=\int_E w(x)\,\tau(d(x,y)),
  \qquad
  E\in \Sigma_\iota\otimes\Sigma_{\bar\iota}.
\]

Then
\[
  (\pr_\iota)_\#\tau^{\mathrm{ac}}=\mu^{\mathrm{ac}},
  \qquad
  (\pr_{\bar\iota})_\#\tau^{\mathrm{ac}}\le (\pr_{\bar\iota})_\#\tau=\nu_{\bar\iota}^\perp.
\]
In particular, \((\pr_{\bar\iota})_\#\tau^{\mathrm{ac}}\) is carried by \(S\).

Now set
\[
  \eta:=t\,\tau^{\mathrm{ac}}\!\restriction \pr_\iota^{-1}(B).
\]
Then \(\eta(\mathcal E^\c)=0\) and
\[
  (\pr_\iota)_\#\eta=t\,\mu^{\mathrm{ac}}\!\restriction B.
\]
Since \(\mu^{\mathrm{ac}}\!\restriction B\le n\,\nu_\iota\!\restriction B\) and \(tn\le 1/4\),
\[
  (\pr_\iota)_\#\eta
  \le tn\,\nu_\iota\!\restriction B
  \le \frac14\,\nu_\iota\!\restriction B
  \le \nu_\iota\!\restriction B-\mu_t\!\restriction B
\]
by \eqref{eq:slack_on_B_singular_backward_clean}. Therefore
\begin{equation}\label{eq:source_feasibility_sigma_plus_eta_clean}
  (\pr_\iota)_\#(\sigma_t+\eta)\le \nu_\iota.
\end{equation}

For the opposite marginal, we have
\[
  (\pr_{\bar\iota})_\#\eta\le t\,(\pr_{\bar\iota})_\#\tau^{\mathrm{ac}}\le t\,\nu_{\bar\iota}^\perp,
\]
so \((\pr_{\bar\iota})_\#\eta\) is carried by \(S\). On the other hand,
\((\pr_{\bar\iota})_\#\sigma_t=(r\wedge t)\nu_{\bar\iota}\), and
\eqref{eq:r_zero_on_support_of_nu_perp_clean} gives \((r\wedge t)=0\) \(\nu_{\bar\iota}\)-a.e.\ on \(S\).
Hence
\[
  (\pr_{\bar\iota})_\#\sigma_t\!\restriction S=0,
  \qquad
  (\pr_{\bar\iota})_\#\eta\le t\,\nu_{\bar\iota}\!\restriction S.
\]
Outside \(S\), \((\pr_{\bar\iota})_\#\eta=0\), while
\((\pr_{\bar\iota})_\#\sigma_t\le t\,\nu_{\bar\iota}\). Therefore
\begin{equation}\label{eq:target_feasibility_sigma_plus_eta_clean}
  (\pr_{\bar\iota})_\#(\sigma_t+\eta)\le t\,\nu_{\bar\iota}.
\end{equation}
Combining \eqref{eq:source_feasibility_sigma_plus_eta_clean} and
\eqref{eq:target_feasibility_sigma_plus_eta_clean}, we obtain
\[
  \sigma_t+\eta\in \Feas_{\bar\iota}(t).
\]

Finally,
\[
  \eta(\Omega_0\times\Omega_1)
  =(\pr_\iota)_\#\eta(\Omega_\iota)
  =t\,\mu^{\mathrm{ac}}(B)
  >0
\]
by \eqref{eq:muac_B_positive_singular_backward_clean}. Hence
\[
  \Fit_{\bar\iota}(t)\ge (\sigma_t+\eta)(\Omega_0\times\Omega_1)>\sigma_t(\Omega_0\times\Omega_1).
\]
But
\[
  \sigma_t(\Omega_0\times\Omega_1)
  =(\pr_{\bar\iota})_\#\sigma_t(\Omega_{\bar\iota})
  =\int_{\Omega_{\bar\iota}}(r\wedge t)\,d\nu_{\bar\iota}
  =P_t^{(\pi)}(\Omega_{\bar\iota}),
\]
so this contradicts the level-optimal maximin identity
\(\Fit_{\bar\iota}(t)=P_t^{(\pi)}(\Omega_{\bar\iota})\).

Therefore \(\mu\perp \nu_\iota\), that is,
\[
  (\pr_\iota)_\#(\nu_{\bar\iota}^\perp\kprod K_\iota)\perp \nu_\iota.
\]
\end{proof}

The next lemma records the reciprocal compatibility of the absolutely continuous density
layers.  It shows that no positive subflow can travel from a region where
\(\rho_\iota^{\mathrm{ac}}\le t\) into a region where \(\rho_{\bar\iota}^{\mathrm{ac}}<1/t\).  This
is the key geometric obstruction behind the null-neighborhood statements proved afterward.

\begin{lemma}[No subflow across incompatible layers]
\label{lem:reciprocal_densities_layer_compatibility}
Work under Assumption~\ref{assump:closed_polish_graph_nonempty_nbhd}, and consider the symmetric density decomposition of Definition~\ref{defn:sym_density_decomp}. Fix \(\iota\in\mathbb B\), and write \(\bar\iota:=1-\iota\).

For \(t>0\), define
\[
  A_t:=\{x\in\Omega_\iota:\rho_\iota^{\mathrm{ac}}(x)\le t\},
  \qquad
  B_t:=\Bigl\{y\in\Omega_{\bar\iota}:\rho_{\bar\iota}^{\mathrm{ac}}(y)\ge \frac1t\Bigr\}.
\]

If \(\sigma\in\mathcal M_+(\Omega_\iota\times\Omega_{\bar\iota})\) satisfies
\[
  \sigma(\mathcal E_\iota^\c)=0,
  \qquad
  (\pr_\iota)_\#\sigma\le \nu_\iota\!\restriction A_t,
  \qquad
  (\pr_{\bar\iota})_\#\sigma\le \nu_{\bar\iota}^{\mathrm{ac}}\!\restriction B_t^\c,
\]
then \(\sigma=0\).
\end{lemma}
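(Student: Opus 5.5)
The plan is an exchange argument by contradiction. It combines the level-optimal maximin property of \emph{both} plans in the pair: \(\pi_\iota\) at level \(1/t\), and \(\pi_{\bar\iota}\) at level \(t\). Recall that \(\rho_\iota^{\mathrm{ac}}\) is the density of the payload that \(\pi_{\bar\iota}\) induces on side \(\iota\). Likewise, \(\rho_{\bar\iota}^{\mathrm{ac}}\) is the density of the payload that \(\pi_\iota\) induces on side \(\bar\iota\).

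Suppose \(\sigma\neq 0\). First normalize \(\sigma\). By restricting \(\sigma\) to a set of positive mass, scaling by a small \(\varepsilon>0\), and restricting to where \(\rho_{\bar\iota}^{\mathrm{ac}}\le 1/t-\kappa\) for some \(\kappa>0\), we may assume \(\sigma\) carries very little mass. We may also assume \((\pr_{\bar\iota})_\#\sigma\le \varepsilon\,\nu_{\bar\iota}^{\mathrm{ac}}\restriction B\) with \(B\subseteq\{\rho_{\bar\iota}^{\mathrm{ac}}\le 1/t-\kappa\}\), and \((\pr_\iota)_\#\sigma\le\varepsilon\,\nu_\iota\restriction A\) with \(A\subseteq A_t\). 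The strict gap \(\kappa\) is what the argument will exploit. The boundary case \(\rho_\iota^{\mathrm{ac}}=t\) on the \(\iota\)-side will be handled by replacing \(t\) by \(t'=t(1+\kappa t)^{-1}\)-type perturbations, or by an averaging over \(t'\) near \(t\).

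The core step is a simultaneous bound on two truncated masses. On one side, consider the overflow decomposition of Lemma~\ref{lem:overflow_decomposition} for \(\pi_\iota\) at level \(1/t\). Its under-level part \(\sigma^{(\iota)}\) has \(\bar\iota\)-marginal \((\rho_{\bar\iota}^{\mathrm{ac}}\wedge 1/t)\nu_{\bar\iota}\). On \(B\) this leaves slack of density at least \(\kappa\) below the cap \(\tfrac1t\nu_{\bar\iota}\).

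On the other side, consider the decomposition for \(\pi_{\bar\iota}\) at level \(t\). Its under-level part \(\sigma^{(\bar\iota)}\) has \(\iota\)-marginal \((\rho_\iota^{\mathrm{ac}}\wedge t)\nu_\iota\). On \(A\subseteq A_t\), this marginal equals all of the absolutely continuous payload arriving there.

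The idea is to reroute mass so that one of the two fit quantities strictly exceeds its level-optimal value. Specifically, I would reroute the part of \(\pi_\iota\) emanating from \(A\) along \(\sigma\) into \(B\). By the slack on \(B\), the added shipment stays under the cap \(\tfrac1t\nu_{\bar\iota}\). The \(\iota\)-marginal must still be freed, and how depends on where the removed mass went:
\begin{itemize}
\item If a positive fraction of \(\pi_\iota\)'s mass from \(A\) went into the overflow part, we remove it from \(\pi_{\mathrm{ov}}\), as in the proof of Theorem~\ref{thm:strict_convex_implies_level_opt_maximin}. The result is a member of \(\Feas_{\bar\iota}(1/t)\) with mass exceeding \(\Fit_{\bar\iota}(1/t)\), which contradicts level-optimal maximin of \(\pi_\iota\).
\item Otherwise all of \(A\)'s mass under \(\pi_\iota\) lands under level \(1/t\). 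Then I would argue by double counting on the two cut formulas of Proposition~\ref{prop:measurable_maxflow_mincut_bipartite}, for \(\Fit_{\bar\iota}(1/t)\) and for \(\Fit_\iota(t)\). The reciprocal weights \(1/t\) and \(t\) turn the two level identities into the same inequality read from opposite sides. Comparing \(\int_A \rho_\iota^{\mathrm{ac}}\,d\nu_\iota\le t\,\nu_\iota(A)\) with the mass that \(B\) sends back under \(\pi_{\bar\iota}\), whose density is at most \(1/t-\kappa\) times the available mass, yields a strict deficit. That deficit can be filled by the \(\sigma\)-edges, which again exceeds a \(\Fit\) value.
\end{itemize}
Singular mass, namely \(\nu_{\bar\iota}^\perp\) and \(P^\perp\) terms, is kept out of the bookkeeping by Lemma~\ref{lem:singular_payload_stays_singular_under_backward_kernel}. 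This is also why \(\sigma\) is only required to be dominated by \(\nu_{\bar\iota}^{\mathrm{ac}}\).

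I expect the main obstacle to be the second case: making the double-counting comparison between the two sides rigorous measure-theoretically. The two plans \(\pi_\iota\) and \(\pi_{\bar\iota}\) are unrelated except through their level-optimality. The cleanest route I would try first is to apply the augmenting-subplan Lemma~\ref{lem:measure_theoretic_augmenting_subplan} with \(b=\tfrac1t\nu_{\bar\iota}\), taking \(\sigma_0\) to be the under-level part of \(\pi_\iota\) and the competitor to be \(\sigma_0+\sigma\) restricted suitably. The competitor has strictly larger mass exactly when the reciprocal inequality fails, which gives the contradiction. The remaining work is to check that the competitor's \(\iota\)-marginal stays at most \(\nu_\iota\); this is where \(\rho_\iota^{\mathrm{ac}}\le t\) on \(A_t\) and the level-\(t\) identity for \(\pi_{\bar\iota}\) must be used.
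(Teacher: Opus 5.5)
\emph{Verdict: the proposal has a genuine gap.} Your first case is fine, once you localize $\sigma$ to where the overflow's source density is bounded below. Your second case, which you leave open, is the whole content of the lemma.

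\textbf{Why the second case is the crux.} At level exactly $1/t$ this case genuinely occurs. On $\{\rho^{\mathrm{ac}}_\iota=t\}$ the under-level part $\sigma_0$ of $\pi_\iota$, and possibly every $\Fit_{\bar\iota}(1/t)$-optimal flow, may saturate the source. An edge from a saturated source point to an unsaturated sink point does not by itself contradict maximality.

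\textbf{Why your proposed tools do not close it.} Neither supplies the missing source capacity on $A$.
\begin{itemize}
\item The augmenting-subplan lemma, applied to $\sigma_0$ and $\sigma_0+\sigma$, returns an exchange with $0\neq\alpha-\alpha_0\le(\pr_\iota)_\#\sigma$. The source increment therefore sits exactly under the source of $\sigma$, so $\sigma_0-\gamma_0+\gamma$ again violates $(\pr_\iota)_\#(\cdot)\le\nu_\iota$ on $A$. In Theorem~\ref{thm:strict_convex_implies_level_opt_maximin} this was repaired by the overflow, which by assumption carries nothing from $A$ here.
\item The scaling identity $\Fit_\iota(t)=t\,\Fit_{\bar\iota}(1/t)$ is correct. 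As you use it, though, it gives no inequality linking the source marginal of $\sigma_0$ (a property of $\pi_\iota$) to $\rho^{\mathrm{ac}}_\iota$ (defined through the other plan).
\item Your boundary perturbation goes the wrong way. The map $t\mapsto t(1+\kappa t)^{-1}$ lowers the $\iota$-level and raises the $\bar\iota$-cap to $1/t+\kappa$, which removes rather than creates slack on $A_t$. You need a $\bar\iota$-level $t'\in(1/t-\kappa,1/t)$, equivalently an $\iota$-level slightly above $t$.
\end{itemize}

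\textbf{The missing idea.} At such a level you must exhibit a $\Fit$-attaining flow with strictly positive source slack on $A_t$ and sink slack on $B$.

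The paper does this using that the absolutely continuous data are canonical (Remark~\ref{rem:sym_density_decomp_independent}).
\begin{itemize}
\item It realizes $\rho^{\mathrm{ac}}_\iota$ by the proportional response built from a reverse kernel $K$ of $\pi_\iota$. That response is level-optimal maximin by Theorems~\ref{thm:universal_closest_refinement_pair} and~\ref{thm:strict_convex_closest_pair_induces_canonical_pair}.
\item With Lemma~\ref{lem:singular_payload_stays_singular_under_backward_kernel} this gives $P^{\mathrm{ac}}_\iota=(\pr_\iota)_\#(\nu^{\mathrm{ac}}_{\bar\iota}\kprod K)$.
\item Take $\sigma_0:=\theta\,\pi_\iota$ with $\theta(y)=\min\{1,t'g(y)\}$ and $g=d\nu^{\mathrm{ac}}_{\bar\iota}/dP_{\bar\iota}$. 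This is $\Fit_{\bar\iota}(t')$-optimal, and $(\pr_\iota)_\#\sigma_0\le t'\rho^{\mathrm{ac}}_\iota\,\nu_\iota\le tt'\,\nu_\iota$ on $A_t$.
\item A small multiple of the localized $\sigma$ can then be added, contradicting optimality.
\end{itemize}

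There is also an alternative closer to your two-sided intuition that avoids proportional response. Fix $s>t$ with $1/s>1/t-\kappa$.
\begin{itemize}
\item The scaled under-level part $s^{-1}\sigma'_s$ of $\pi_{\bar\iota}$ at level $s$ lies in $\Feas_{\bar\iota}(1/s)$ with mass $\Fit_{\bar\iota}(1/s)$. This uses level-optimality of $\pi_{\bar\iota}$ and $\Fit_\iota(s)=s\,\Fit_{\bar\iota}(1/s)$.
\item The under-level part of $\pi_\iota$ at level $1/s$ also lies in $\Feas_{\bar\iota}(1/s)$ with mass $\Fit_{\bar\iota}(1/s)$, by level-optimality of $\pi_\iota$.
\item Their average is again optimal. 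It has source slack at least $\tfrac12(1-t/s)\,\nu_\iota$ on $A_t$ and sink slack at least $\tfrac12\bigl(1/s-\rho^{\mathrm{ac}}_{\bar\iota}\bigr)\nu_{\bar\iota}$ on $B$.
\item Adding a small multiple of $\sigma$ then contradicts optimality.
\end{itemize}
Either way, no case split and no augmenting-subplan lemma is needed.
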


\begin{proof}
Fix \(\iota\in\mathbb B\), and write \(\bar\iota:=1-\iota\). Let
\(\pi_\iota\in\Pi_\iota^{\mathcal E}\) be a level-optimal maximin refinement from side \(\iota\) to
side \(\bar\iota\). Set
\[
  P_{\bar\iota}:=(\pr_{\bar\iota})_\#\pi_\iota,
  \qquad
  P_{\bar\iota}=P_{\bar\iota}^{\mathrm{ac}}+P_{\bar\iota}^\perp
  \quad\text{with respect to }\nu_{\bar\iota},
\]
so that \(P_{\bar\iota}^{\mathrm{ac}}=\rho_{\bar\iota}^{\mathrm{ac}}\,\nu_{\bar\iota}\).

Choose a reverse disintegration \(\pi_\iota=P_{\bar\iota}\kprod K\), and let
\(\nu_{\bar\iota}=\nu_{\bar\iota}^{\mathrm{ac}}+\nu_{\bar\iota}^{\perp}\) be the Lebesgue decomposition
with respect to \(P_{\bar\iota}^{\mathrm{ac}}\). Choose a proportional response \(\pi_{\bar\iota}\) to
\(\pi_\iota\) built from this same kernel \(K\), namely
\[
  \pi_{\bar\iota}
  =
  \nu_{\bar\iota}^{\mathrm{ac}}\kprod K
  +
  \nu_{\bar\iota}^{\perp}\kprod K^{\mathrm{fb}}
\]
for some admissible fallback kernel \(K^{\mathrm{fb}}\). By
Theorem~\ref{thm:universal_closest_refinement_pair}, the pair
\((\pi_\iota,\pi_{\bar\iota})\) is a universal closest refinement pair. In particular, it is
\(\D_\vartheta\)-closest for the bounded strictly convex integrand \(\vartheta(u)=e^{-u}\). Applying
Theorem~\ref{thm:strict_convex_closest_pair_induces_canonical_pair}, together with the symmetry
remark following that theorem, we may realize the symmetric density decomposition by the pair
\((\pi_\iota,\pi_{\bar\iota})\).

Define \(P_\iota:=(\pr_\iota)_\#\pi_{\bar\iota}\), let
\(P_\iota=P_\iota^{\mathrm{ac}}+P_\iota^\perp\) be the Lebesgue decomposition with respect to
\(\nu_\iota\), and write \(P_\iota^{\mathrm{ac}}=\rho_\iota^{\mathrm{ac}}\,\nu_\iota\). The measure
\(\nu_{\bar\iota}^{\mathrm{ac}}\kprod K\) is absolutely continuous with respect to
\(P_{\bar\iota}\kprod K=\pi_\iota\), hence its \(\iota\)-marginal is absolutely continuous with
respect to \((\pr_\iota)_\#\pi_\iota=\nu_\iota\). On the other hand,
Lemma~\ref{lem:singular_payload_stays_singular_under_backward_kernel} shows that the
\(\iota\)-marginal of \(\nu_{\bar\iota}^{\perp}\kprod K^{\mathrm{fb}}\) is singular with respect to
\(\nu_\iota\). Therefore
\begin{equation}
\label{eq:Piac_from_proportional_response}
  P_\iota^{\mathrm{ac}}
  =
  (\pr_\iota)_\#(\nu_{\bar\iota}^{\mathrm{ac}}\kprod K).
\end{equation}

Now let \(\sigma\in\mathcal M_+(\Omega_\iota\times\Omega_{\bar\iota})\) satisfy
\(\sigma(\mathcal E_\iota^\c)=0\),
\((\pr_\iota)_\#\sigma\le \nu_\iota\!\restriction A_t\), and
\((\pr_{\bar\iota})_\#\sigma\le \nu_{\bar\iota}^{\mathrm{ac}}\!\restriction B_t^\c\).
Assume for contradiction that \(\sigma\neq 0\).

\paragraph{Localization to a bounded incompatible layer.}
Write \(\alpha:=(\pr_\iota)_\#\sigma\) and \(\beta:=(\pr_{\bar\iota})_\#\sigma\). Since
\(\alpha\ll \nu_\iota\!\restriction A_t\) and
\(\beta\ll \nu_{\bar\iota}^{\mathrm{ac}}\!\restriction B_t^\c\), let
\(h:=d\alpha/d\nu_\iota\) on \(A_t\) and \(k:=d\beta/d\nu_{\bar\iota}^{\mathrm{ac}}\) on \(B_t^\c\).
For \(n,m,\ell\in\mathbb N\), set
\[
  X_n:=\{x\in A_t:h(x)\le n\},
  \qquad
  Y_m:=\{y\in B_t^\c:k(y)\le m\},
  \qquad
  L_\ell:=\Bigl\{y\in B_t^\c:\rho_{\bar\iota}^{\mathrm{ac}}(y)\le \frac1t-\frac1\ell\Bigr\}.
\]
Since \(A_t=\bigcup_n X_n\) \(\nu_\iota\)-a.e., \(B_t^\c=\bigcup_m Y_m\)
\(\nu_{\bar\iota}^{\mathrm{ac}}\)-a.e., and \(B_t^\c=\bigcup_\ell L_\ell\), there exist
\(n,m,\ell\) such that
\[
  \widetilde\sigma:=\sigma\!\restriction\bigl(X_n\times(Y_m\cap L_\ell)\bigr)
\]
is nonzero. Set \(L:=Y_m\cap L_\ell\), and write
\(\widetilde\alpha:=(\pr_\iota)_\#\widetilde\sigma\) and
\(\widetilde\beta:=(\pr_{\bar\iota})_\#\widetilde\sigma\). Then
\begin{equation}
\label{eq:tilde_bounds_layer_compatibility_revised}
  \widetilde\alpha\le n\,\nu_\iota\!\restriction A_t,
  \qquad
  \widetilde\beta\le m\,\nu_{\bar\iota}^{\mathrm{ac}}\!\restriction L,
\end{equation}
and
\begin{equation}
\label{eq:rho_bar_strict_slack_layer_compatibility_revised}
  \rho_{\bar\iota}^{\mathrm{ac}}(y)\le \frac1t-\frac1\ell
  \qquad\text{for all }y\in L.
\end{equation}

Choose
\[
  t':=\frac1t-\frac{1}{2\ell},
  \qquad
  \varepsilon:=\frac{1}{2\ell},
  \qquad
  \kappa:=1-tt'>0.
\]
Then \(t'<1/t\), and \eqref{eq:rho_bar_strict_slack_layer_compatibility_revised} yields
\begin{equation}
\label{eq:rho_bar_slack_layer_compatibility_revised}
  \rho_{\bar\iota}^{\mathrm{ac}}(y)\le t'-\varepsilon
  \qquad\text{for all }y\in L.
\end{equation}

\paragraph{The competitor at level \(t'\).}
Let \(g:=d\nu_{\bar\iota}^{\mathrm{ac}}/dP_{\bar\iota}\), define
\(\theta(y):=\min\{1,t'g(y)\}\), and set
\[
  \sigma_0(E):=\int_E \theta(y)\,\pi_\iota(d(x,y)),
  \qquad
  E\in\Sigma_\iota\otimes\Sigma_{\bar\iota}.
\]
Then \(\sigma_0(\mathcal E_\iota^\c)=0\). Writing
\(\xi_0:=(\pr_\iota)_\#\sigma_0\) and \(\psi_0:=(\pr_{\bar\iota})_\#\sigma_0\), one has
\[
  \psi_0
  =
  \theta\,P_{\bar\iota}
  =
  (\rho_{\bar\iota}^{\mathrm{ac}}\wedge t')\,\nu_{\bar\iota},
\]
so \(\psi_0\le t'\nu_{\bar\iota}\). Since \(0\le \theta\le 1\), also \(\sigma_0\le \pi_\iota\), hence
\((\pr_\iota)_\#\sigma_0\le (\pr_\iota)_\#\pi_\iota=\nu_\iota\). Thus \(\sigma_0\in\Feas_{\bar\iota}(t')\).
By the level-optimal maximin property of \(\pi_\iota\),
\[
  \sigma_0(\Omega_\iota\times\Omega_{\bar\iota})
  =
  \int_{\Omega_{\bar\iota}}(\rho_{\bar\iota}^{\mathrm{ac}}\wedge t')\,d\nu_{\bar\iota}
  =
  \Fit_{\bar\iota}(t').
\]

We next estimate the source slack of \(\sigma_0\) on \(A_t\). For measurable \(A\subseteq A_t\),
\[
  \xi_0(A)
  =
  \int_{A\times\Omega_{\bar\iota}} \theta(y)\,\pi_\iota(d(x,y))
  \le
  t'\int_{A\times\Omega_{\bar\iota}} g(y)\,\pi_\iota(d(x,y)).
\]
By \eqref{eq:Piac_from_proportional_response}, the right-hand side is
\(t'P_\iota^{\mathrm{ac}}(A)\). Hence
\[
  \xi_0(A)\le t'\int_A \rho_\iota^{\mathrm{ac}}\,d\nu_\iota\le tt'\,\nu_\iota(A),
\]
because \(A\subseteq A_t=\{\rho_\iota^{\mathrm{ac}}\le t\}\). Therefore
\begin{equation}
\label{eq:source_slack_layer_compatibility_revised}
  \nu_\iota-\xi_0\ge \kappa\,\nu_\iota\!\restriction A_t.
\end{equation}

\paragraph{Adding a small incompatible subflow.}
Choose
\[
  \lambda:=\frac12\min\Bigl\{\frac{\kappa}{n},\,\frac{\varepsilon}{m}\Bigr\}>0,
  \qquad
  \eta:=\lambda\,\widetilde\sigma.
\]
Then \(\eta\neq 0\), \(\eta(\mathcal E_\iota^\c)=0\), and
\[
  (\pr_\iota)_\#\eta=\lambda\widetilde\alpha
  \le \lambda n\,\nu_\iota\!\restriction A_t
  \le \kappa\,\nu_\iota\!\restriction A_t
  \le \nu_\iota-\xi_0
\]
by \eqref{eq:tilde_bounds_layer_compatibility_revised} and
\eqref{eq:source_slack_layer_compatibility_revised}. Hence
\[
  (\pr_\iota)_\#(\sigma_0+\eta)\le \nu_\iota.
\]

For the opposite marginal,
\[
  (\pr_{\bar\iota})_\#\eta
  =\lambda\widetilde\beta
  \le \lambda m\,\nu_{\bar\iota}^{\mathrm{ac}}\!\restriction L
  \le \varepsilon\,\nu_{\bar\iota}\!\restriction L
\]
by \eqref{eq:tilde_bounds_layer_compatibility_revised}, while on \(L\),
\[
  \psi_0
  =(\rho_{\bar\iota}^{\mathrm{ac}}\wedge t')\,\nu_{\bar\iota}
  =\rho_{\bar\iota}^{\mathrm{ac}}\,\nu_{\bar\iota}
  \le (t'-\varepsilon)\,\nu_{\bar\iota}
\]
by \eqref{eq:rho_bar_slack_layer_compatibility_revised}. Hence
\[
  (\pr_{\bar\iota})_\#(\sigma_0+\eta)\le t'\nu_{\bar\iota}.
\]
Thus \(\sigma_0+\eta\in\Feas_{\bar\iota}(t')\).

Since \(\eta\neq 0\),
\[
  (\sigma_0+\eta)(\Omega_\iota\times\Omega_{\bar\iota})
  >
  \sigma_0(\Omega_\iota\times\Omega_{\bar\iota})
  =
  \Fit_{\bar\iota}(t'),
\]
contradicting the definition of \(\Fit_{\bar\iota}(t')\). Therefore \(\sigma=0\).
\end{proof}

The final structural lemma packages the preceding compatibility statements into full-measure
sets for the absolutely continuous and singular parts.  These are the form needed later
for the price construction in the continuum-economy section.

\begin{lemma}[Disjoint good representatives and null bad neighborhoods]
\label{lem:disjoint_good_supports_null_bad_neighborhoods}
Consider the symmetric density decomposition of
Definition~\ref{defn:sym_density_decomp}.

For each \(\iota\in\mathbb B\), there exist disjoint measurable sets
\[
  S_\iota^{\mathrm{ac}},\,S_\iota^\perp \in \Sigma_\iota
\]
such that
\(\nu_\iota^{\mathrm{ac}}(S_\iota^{\mathrm{ac}})
= \nu_\iota^{\mathrm{ac}}(\Omega_\iota)\) and
\(\nu_\iota^\perp(S_\iota^\perp)
= \nu_\iota^\perp(\Omega_\iota)\).

Moreover, each of the following subsets of \(\Omega_{\bar\iota}\) is contained in a measurable
\(\nu_{\bar\iota}\)-null set; equivalently, each is \(\nu_{\bar\iota}\)-null after passage to the
\(\nu_{\bar\iota}\)-completion:
\begin{enumerate}[(a)]
\item
\[
  \bigcup_{x\in S_\iota^{\mathrm{ac}}}
  \Bigl\{y\in N_{\bar\iota}(x):
  \rho_\iota^{\mathrm{ac}}(x)\rho_{\bar\iota}^{\mathrm{ac}}(y)<1\Bigr\},
\]
\item
\[
  N_{\bar\iota}(S_\iota^\perp).
\]
\end{enumerate}
\end{lemma}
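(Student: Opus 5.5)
We first fix the good representatives and then reduce each null-neighborhood claim to one of the two preceding lemmas.

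\textbf{Good representatives.} By Definition~\ref{defn:sym_density_decomp}, \(\nu_\iota^{\mathrm{ac}}\perp\nu_\iota^\perp\), so there is a measurable partition \(\Omega_\iota=T\sqcup T^\c\) with \(\nu_\iota^\perp(T)=0\) and \(\nu_\iota^{\mathrm{ac}}(T^\c)=0\). Since \(\nu_\iota^{\mathrm{ac}}\sim P_\iota^{\mathrm{ac}}\), we may also arrange \(\rho_\iota^{\mathrm{ac}}>0\) on the first piece. We set \(S_\iota^{\mathrm{ac}}:=T\cap\{\rho_\iota^{\mathrm{ac}}>0\}\) and \(S_\iota^\perp:=T^\c\), shrinking each by further null sets as (a) and (b) require. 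Shrinking only helps, because the sets in (a) and (b) are monotone in \(S_\iota^{\mathrm{ac}}\) and \(S_\iota^\perp\).

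\textbf{Measurability.} Each of the sets in (a) and (b) is the \(\pr_{\bar\iota}\)-projection of a Borel subset of \(\mathcal E_\iota\), so it is analytic and hence universally measurable. To show such a set \(Y\) is \(\nu_{\bar\iota}\)-null, we argue by contradiction: if \(\nu_{\bar\iota}(Y)>0\), Jankov--von Neumann uniformization gives a universally measurable selector \(y\mapsto x(y)\in N_\iota(y)\) on \(Y\). Pushing \(\nu_{\bar\iota}\!\restriction Y\) along \((x(y),y)\) then produces a nonzero \(\tau\) concentrated on \(\mathcal E\), with \(\bar\iota\)-marginal \(\nu_{\bar\iota}\!\restriction Y\) and \(\iota\)-marginal carried by \(S_\iota^{\mathrm{ac}}\) or \(S_\iota^\perp\).

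\textbf{Part (a).} Since \(\rho_\iota^{\mathrm{ac}}\rho_{\bar\iota}^{\mathrm{ac}}<1\) with \(\rho_\iota^{\mathrm{ac}}>0\), the set in (a) equals the union over rational \(t>0\) of the pairs with \(x\in A_t\) and \(y\in B_t^\c\). It therefore suffices to treat a fixed \(t\).
\begin{itemize}
\item Split \(Y\) into its \(\nu_{\bar\iota}^{\mathrm{ac}}\)- and \(\nu_{\bar\iota}^\perp\)-parts.
\item On the \(\nu_{\bar\iota}^{\mathrm{ac}}\)-part, restrict \(\tau\) and rescale it by disintegration so that its \(\iota\)-marginal lies below \(\nu_\iota\!\restriction A_t\). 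Lemma~\ref{lem:reciprocal_densities_layer_compatibility} then forces this part to vanish.
\item The rescaling requires the \(\iota\)-marginal of \(\tau\) to be absolutely continuous with respect to \(\nu_\iota\). That marginal is the image under a selector, so it can be singular; if it is singular, however, it puts no mass on \(\nu_\iota\)-full sets. We therefore remove, from \(S_\iota^{\mathrm{ac}}\), a \(\nu_\iota\)-null set carrying the singular image, and this removal is where the careful choice of \(S_\iota^{\mathrm{ac}}\) enters.
\end{itemize}

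\textbf{Part (b).} Here the \(\iota\)-marginal of \(\tau\) sits on \(S_\iota^\perp\), which is \(\nu_\iota^{\mathrm{ac}}\)-null.
\begin{itemize}
\item Apply Lemma~\ref{lem:singular_payload_stays_singular_under_backward_kernel} in the reverse role, with \(\iota\) and \(\bar\iota\) swapped, to show that forward transport from \(\nu_\iota^\perp\) yields mass singular to \(\nu_{\bar\iota}\). Combined with the selector construction, this makes \(\nu_{\bar\iota}(Y)>0\) impossible.
\item The selector produces a \(\tau\) going from \(Y\) back into \(S_\iota^\perp\), whereas the lemma concerns kernels out of \(\nu^\perp\). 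To bridge the two directions, we use the level-optimal maximin argument of Lemma~\ref{lem:singular_payload_stays_singular_under_backward_kernel} directly. Adding a small multiple of \(\tau\), carried on \(\nu_\iota^\perp\)-slack where \(\rho_\iota^{\mathrm{ac}}=0\), to the under-level part of \(\pi_{\bar\iota}\) at a small level \(t\) raises \(\Fit_\iota(t)\) above the truncated mass. This contradicts level-optimality of \(\pi_{\bar\iota}\).
\end{itemize}

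\textbf{Main obstacle.} The hardest step is the measurable bookkeeping: choosing \(S_\iota^{\mathrm{ac}}\) and \(S_\iota^\perp\) once, so that the singular images of all countably many selector pushforwards, indexed by rational \(t\), are excised simultaneously. We handle this by noting that only countably many conditions are needed and removing the union of the corresponding null sets.
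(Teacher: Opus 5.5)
\textbf{There is a genuine gap: the selector plan has an uncontrolled marginal, and the excision repair is circular.} You pick the right inputs. Lemma~\ref{lem:reciprocal_densities_layer_compatibility} and Lemma~\ref{lem:singular_payload_stays_singular_under_backward_kernel} are exactly what rule out nonzero flows. The trouble is the selector step meant to connect them to the sets in (a) and (b).

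If \(Y\subseteq\Omega_{\bar\iota}\) is non-null and \(y\mapsto x(y)\) is a Jankov--von Neumann selector into \(S_\iota^{\mathrm{ac}}\) or \(S_\iota^\perp\), then \(\tau=(x,\mathrm{id})_\#(\nu_{\bar\iota}\restriction Y)\) has \(\iota\)-marginal \(x_\#(\nu_{\bar\iota}\restriction Y)\). That marginal is completely uncontrolled; the selector may send a positive-measure part of \(Y\) to a single point. Both lemmas, however, constrain only plans whose \(\iota\)-marginal lies under a capacity: \(\nu_\iota\restriction A_t\), or \(\nu_\iota^\perp\), or \(t\,\nu_\iota\) inside \(\Feas_\iota(t)\).

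Being carried by \(S_\iota^\perp\), or by a region where \(\rho_\iota^{\mathrm{ac}}=0\), places \(\tau\) under none of these capacities. So your part (b) contradiction with level-optimality of \(\pi_{\bar\iota}\) fails. The obstruction is this missing marginal bound, not the direction of transport. You also never say how the \(\nu_{\bar\iota}^\perp\)-part of \(Y\) in (a) is handled.

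Your repair in (a) is to excise a \(\nu_\iota\)-null set carrying the singular image. This is circular. The selector depends on the current good set, and after an excision the set in (a) or (b) can still be non-null. A new selector can then pick different points. There is no countable list of conditions fixed in advance. For example, suppose \(\mathcal E\cap(C\times\Omega_{\bar\iota})\) is the graph of a Borel surjection from a \(\nu_\iota\)-null set \(C\) with uncountable fibres. Each round removes only one section, and countably many rounds never exhaust the fibres.

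\textbf{The missing ingredient is the measurable max-flow/min-cut duality (Proposition~\ref{prop:measurable_maxflow_mincut_bipartite}).} It chooses the null set to remove globally, in one step. The paper first proves three zero-flow identities; in each, the capacity bound on the marginals holds by hypothesis, so the lemmas apply.
\begin{enumerate}[(1)]
\item \(\mathcal F_\iota(\nu_\iota,\nu_{\bar\iota}^\perp)=0\). Disintegrate a feasible \(\sigma\) over its \(\bar\iota\)-marginal and apply Lemma~\ref{lem:singular_payload_stays_singular_under_backward_kernel}.
\item The mirror identity with the two sides swapped.
\item For each \(q\in\mathbb Q_{>0}\), \(\mathcal F_\iota(\nu_\iota^{\mathrm{ac}}\restriction A_q,\ \nu_{\bar\iota}\restriction(\Omega_{\bar\iota}\setminus B_q))=0\). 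Split the second marginal along a \(P_{\bar\iota}\)-null carrier of \(\nu_{\bar\iota}^\perp\), then apply the two lemmas to the two pieces.
\end{enumerate}

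For (a), duality gives cuts \(C_{q,n}\) of cost \(<2^{-n}\). Set \(C_q:=\liminf_n C_{q,n}\) and \(S_q:=A_q\setminus C_q\). Every neighbor in \(\Omega_{\bar\iota}\setminus B_q\) of a point of \(S_q\) then lies in \(\bigcap_m\bigcup_{n\ge m}\bigl(N_{\bar\iota}(\Omega_\iota\setminus C_{q,n})\setminus B_q\bigr)\), which has outer measure \(0\). Intersecting over rational \(q\) gives \(S_\iota^{\mathrm{ac}}\).

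For (b), the swapped identity yields (by the same device) a cut \(C\) with \(\nu_{\bar\iota}(C)=0\) and \(\nu_\iota^\perp\bigl(N_\iota(\Omega_{\bar\iota}\setminus C)\bigr)=0\). Taking \(S_\iota^\perp\subseteq\Omega_\iota\setminus N_\iota(\Omega_{\bar\iota}\setminus C)\) gives \(N_{\bar\iota}(S_\iota^\perp)\subseteq C\) by set logic alone. No selector and no iteration are needed. This global choice of cut is the step your proposal would have to supply.
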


\begin{proof}
Fix \(\iota\in\mathbb B\), and write \(\bar\iota:=1-\iota\). Let
\[
  \mathcal E_\iota
  :=
  \{(x,y)\in \Omega_\iota\times\Omega_{\bar\iota}: y\in N_{\bar\iota}(x)\}.
\]
Thus \(\mathcal E_0=\mathcal E\), while \(\mathcal E_1\) is the transposed relation. We construct
\(S_\iota^\perp\subseteq \Omega_\iota\) and \(S_\iota^{\mathrm{ac}}\subseteq \Omega_\iota\) with the
required properties. The singular part is handled by a zero-flow argument, and the absolutely continuous
part by rational density layers together with measurable max-flow/min-cut duality.

\paragraph{A zero-flow principle against singular payloads.}
We first show that
\[
  \mathcal F_\iota(\nu_\iota,\nu_{\bar\iota}^\perp)=0,
\]
where \(\mathcal F_\iota(a,b)\) denotes the supremum of \(\sigma(\Omega_\iota\times\Omega_{\bar\iota})\)
over all \(\sigma\in\mathcal M_+(\Omega_\iota\times\Omega_{\bar\iota})\) such that
\(\sigma(\mathcal E_\iota^\c)=0\), \((\pr_\iota)_\#\sigma\le a\), and
\((\pr_{\bar\iota})_\#\sigma\le b\).

Assume otherwise, and choose a nonzero feasible \(\sigma\). Write
\(a:=(\pr_\iota)_\#\sigma\) and \(b:=(\pr_{\bar\iota})_\#\sigma\). Disintegrating \(\sigma\) with
respect to \(b\), we obtain \(\sigma=b\kprod \widehat K_\iota\), where
\(\widehat K_\iota(y,N_\iota(y))=1\) for \(b\)-a.e.\ \(y\). Since \(b\le \nu_{\bar\iota}^\perp\),
measurable selection yields a probability kernel
\(H_\iota:\Omega_{\bar\iota}\times\Sigma_\iota\to[0,1]\) with
\(H_\iota(y,N_\iota(y))=1\) for \(\nu_{\bar\iota}^\perp\)-a.e.\ \(y\). Replacing \(\widehat K_\iota\)
by \(H_\iota\) off a \(b\)-full set, we obtain a probability kernel \(K_\iota\) such that
\(K_\iota(y,N_\iota(y))=1\) for \(\nu_{\bar\iota}^\perp\)-a.e.\ \(y\) and
\(\sigma=b\kprod K_\iota\).

Lemma~\ref{lem:singular_payload_stays_singular_under_backward_kernel} gives
\[
  (\pr_\iota)_\#(\nu_{\bar\iota}^\perp\kprod K_\iota)\perp \nu_\iota.
\]
Since \(b\le \nu_{\bar\iota}^\perp\), the left-hand side dominates
\((\pr_\iota)_\#(b\kprod K_\iota)=a\). Hence \(a\perp \nu_\iota\). But also \(a\le \nu_\iota\), so
\(a=0\), contradicting \(\sigma\neq 0\). Thus \(\mathcal F_\iota(\nu_\iota,\nu_{\bar\iota}^\perp)=0\).

\paragraph{The singular carrier.}
Apply measurable max-flow/min-cut duality to the relation \(\mathcal E_{\bar\iota}\) and the pair
\((\nu_{\bar\iota},\nu_\iota^\perp)\). Since the flow value is \(0\), there exists
\(C_{\bar\iota}\in\Sigma_{\bar\iota}\) such that
\[
  \nu_{\bar\iota}(C_{\bar\iota})=0,
  \qquad
  \nu_\iota^\perp\bigl(N_\iota(C_{\bar\iota}^\c)\bigr)=0.
\]
Set
\[
  \widetilde S_\iota^\perp:=\Omega_\iota\setminus N_\iota(C_{\bar\iota}^\c).
\]
Then \(\nu_\iota^\perp(\widetilde S_\iota^\perp)=\nu_\iota^\perp(\Omega_\iota)\), and
\(N_{\bar\iota}(\widetilde S_\iota^\perp)\subseteq C_{\bar\iota}\). Hence
\(N_{\bar\iota}(\widetilde S_\iota^\perp)\) is contained in the measurable
\(\nu_{\bar\iota}\)-null set \(C_{\bar\iota}\).

This proves part~\emph{(b)} for~$\widetilde S_\iota^\perp$, hence also for
$S_\iota^\perp$, which will be obtained by shrinking~$\widetilde S_\iota^\perp$.

\paragraph{Density layers for the absolutely continuous part.}
Let \(\pi_\iota\) be the level-optimal maximin refinement from side \(\iota\) to side \(\bar\iota\)
appearing in Definition~\ref{defn:sym_density_decomp}, and set
\(P_{\bar\iota}:=(\pr_{\bar\iota})_\#\pi_\iota\). Write
\(P_{\bar\iota}=P_{\bar\iota}^{\mathrm{ac}}+P_{\bar\iota}^\perp\) with respect to
\(\nu_{\bar\iota}\). By Definition~\ref{defn:sym_density_decomp},
\(\nu_{\bar\iota}=\nu_{\bar\iota}^{\mathrm{ac}}+\nu_{\bar\iota}^\perp\) is the Lebesgue decomposition
of \(\nu_{\bar\iota}\) with respect to \(P_{\bar\iota}^{\mathrm{ac}}\). Since
\(P_{\bar\iota}^\perp\perp \nu_{\bar\iota}\) and
\(\nu_{\bar\iota}^\perp\perp P_{\bar\iota}^{\mathrm{ac}}\), a carrier argument gives
\(\nu_{\bar\iota}^\perp\perp P_{\bar\iota}\).

Define \(D_\iota:=\{x\in\Omega_\iota:0<\rho_\iota^{\mathrm{ac}}(x)<\infty\}\). Then
\(\nu_\iota^{\mathrm{ac}}(D_\iota)=\nu_\iota^{\mathrm{ac}}(\Omega_\iota)\): indeed,
\(\rho_\iota^{\mathrm{ac}}<\infty\) \(\nu_\iota\)-a.e., while
\(\nu_\iota^{\mathrm{ac}}\ll P_\iota^{\mathrm{ac}}=\rho_\iota^{\mathrm{ac}}\nu_\iota\), so
\(\nu_\iota^{\mathrm{ac}}(\{\rho_\iota^{\mathrm{ac}}=0\})=0\).

For \(q\in\mathbb Q_{>0}\), set
\[
  A_q:=\{x\in\Omega_\iota:\rho_\iota^{\mathrm{ac}}(x)\le q\},
  \qquad
  B_q:=\Bigl\{y\in\Omega_{\bar\iota}:\rho_{\bar\iota}^{\mathrm{ac}}(y)\ge \frac1q\Bigr\},
\]
and let \(a_q:=\nu_\iota^{\mathrm{ac}}\!\restriction A_q\) and
\(b_q:=\nu_{\bar\iota}\!\restriction B_q^\c\).

We claim that \(\mathcal F_\iota(a_q,b_q)=0\) for every \(q\in\mathbb Q_{>0}\). Fix \(q\), and let
\(\sigma\in\mathcal M_+(\Omega_\iota\times\Omega_{\bar\iota})\) satisfy
\[
  \sigma(\mathcal E_\iota^\c)=0,
  \qquad
  (\pr_\iota)_\#\sigma\le a_q,
  \qquad
  (\pr_{\bar\iota})_\#\sigma\le b_q.
\]
Choose \(Z\in\Sigma_{\bar\iota}\) such that
\(\nu_{\bar\iota}^\perp(Z^\c)=0\) and \(P_{\bar\iota}(Z)=0\). Since
\(\nu_{\bar\iota}^{\mathrm{ac}}\ll P_{\bar\iota}\), also \(\nu_{\bar\iota}^{\mathrm{ac}}(Z)=0\). Decompose
\[
  \sigma
  =
  \sigma\!\restriction\bigl(\Omega_\iota\times(Z^\c\cap B_q^\c)\bigr)
  +
  \sigma\!\restriction\bigl(\Omega_\iota\times(Z\cap B_q^\c)\bigr)
  =:\sigma^{\mathrm{ac}}+\sigma^\perp.
\]

The second marginal of \(\sigma^{\mathrm{ac}}\) is dominated by
\(\nu_{\bar\iota}^{\mathrm{ac}}\!\restriction B_q^\c\), and its first marginal is dominated by
\(a_q\le \nu_\iota\!\restriction A_q\). Hence
Lemma~\ref{lem:reciprocal_densities_layer_compatibility} gives \(\sigma^{\mathrm{ac}}=0\).

For \(\sigma^\perp\), the first marginal is dominated by \(a_q\le \nu_\iota\), while the second
marginal is dominated by \(\nu_{\bar\iota}^\perp\). The zero-flow principle proved above therefore gives
\(\sigma^\perp=0\). Thus \(\sigma=0\), and \(\mathcal F_\iota(a_q,b_q)=0\).

\paragraph{Rational cuts.}
For each \(q\in\mathbb Q_{>0}\), measurable max-flow/min-cut duality yields
\[
  \inf_{C\in\Sigma_\iota}
  \bigl\{
    a_q(C)+b_q\bigl(N_{\bar\iota}(C^\c)\bigr)
  \bigr\}
  =0.
\]
Choose \(C_{q,n}\in\Sigma_\iota\) such that
\[
  a_q(C_{q,n})+b_q\bigl(N_{\bar\iota}(C_{q,n}^\c)\bigr)<2^{-n}
  \qquad (n\ge 1).
\]
Set
\[
  C_q:=\liminf_{n\to\infty} C_{q,n}
  =\bigcup_{m=1}^\infty \bigcap_{n\ge m} C_{q,n},
  \qquad
  S_q:=A_q\cap C_q^\c.
\]
Since \(C_q\subseteq \bigcup_{n\ge m}C_{q,n}\) for every \(m\),
\[
  a_q(C_q)\le \sum_{n\ge m} a_q(C_{q,n})\le \sum_{n\ge m}2^{-n},
\]
hence \(a_q(C_q)=0\). Therefore
\[
  \nu_\iota^{\mathrm{ac}}(A_q\setminus S_q)
  =
  \nu_\iota^{\mathrm{ac}}(A_q\cap C_q)
  =
  a_q(C_q)
  =
  0.
\]

We next construct a measurable \(\nu_{\bar\iota}\)-null set \(M_q\) containing
\[
  \bigcup_{x\in S_q}\bigl(N_{\bar\iota}(x)\cap B_q^\c\bigr).
\]
If \(x\in S_q\), then \(x\notin C_q=\liminf_n C_{q,n}\), so \(x\in C_{q,n}^\c\) for infinitely many
\(n\). Hence
\[
  \bigcup_{x\in S_q}\bigl(N_{\bar\iota}(x)\cap B_q^\c\bigr)
  \subseteq
  \bigcap_{m=1}^\infty \bigcup_{n\ge m}\bigl(N_{\bar\iota}(C_{q,n}^\c)\cap B_q^\c\bigr).
\]
For each \(m\),
\[
  \nu_{\bar\iota}^*\!\left(
    \bigcup_{n\ge m}\bigl(N_{\bar\iota}(C_{q,n}^\c)\cap B_q^\c\bigr)
  \right)
  \le
  \sum_{n\ge m} b_q\bigl(N_{\bar\iota}(C_{q,n}^\c)\bigr)
  \le
  \sum_{n\ge m}2^{-n},
\]
where \(\nu_{\bar\iota}^*\) denotes outer measure. Letting \(m\to\infty\), the displayed intersection has
outer measure \(0\). Thus it is contained in a measurable \(\nu_{\bar\iota}\)-null set, denoted \(M_q\).

\paragraph{A global full-measure set and the bad-neighbor set.}
Define
\[
  \widetilde S_\iota^{\mathrm{ac}}
  :=
  D_\iota\cap \bigcap_{q\in\mathbb Q_{>0}}(A_q^\c\cup S_q).
\]
Since \(\mathbb Q_{>0}\) is countable, \(D_\iota^\c\) is \(\nu_\iota^{\mathrm{ac}}\)-null, and
\(A_q\setminus S_q\) is \(\nu_\iota^{\mathrm{ac}}\)-null for every \(q\), it follows that
\[
  \nu_\iota^{\mathrm{ac}}(\widetilde S_\iota^{\mathrm{ac}})
  =
  \nu_\iota^{\mathrm{ac}}(\Omega_\iota).
\]

Let
\[
  Y_{\mathrm{bad}}
  :=
  \bigcup_{x\in \widetilde S_\iota^{\mathrm{ac}}}
  \Bigl\{
    y\in N_{\bar\iota}(x):
    \rho_\iota^{\mathrm{ac}}(x)\rho_{\bar\iota}^{\mathrm{ac}}(y)<1
  \Bigr\}.
\]
We claim that \(Y_{\mathrm{bad}}\subseteq \bigcup_{q\in\mathbb Q_{>0}} M_q\). Since the right-hand side is
a measurable \(\nu_{\bar\iota}\)-null set, this proves part~\emph{(a)}.

Let \(y\in Y_{\mathrm{bad}}\). Then there exists \(x\in \widetilde S_\iota^{\mathrm{ac}}\) such that
\(y\in N_{\bar\iota}(x)\) and
\(\rho_\iota^{\mathrm{ac}}(x)\rho_{\bar\iota}^{\mathrm{ac}}(y)<1\). Because \(x\in D_\iota\), one has
\(0<\rho_\iota^{\mathrm{ac}}(x)<\infty\). Choose \(q\in\mathbb Q_{>0}\) so that
\[
  \rho_\iota^{\mathrm{ac}}(x)\le q<\frac{1}{\rho_{\bar\iota}^{\mathrm{ac}}(y)}
\]
when \(\rho_{\bar\iota}^{\mathrm{ac}}(y)>0\), and choose any \(q\in\mathbb Q_{>0}\) with
\(q\ge \rho_\iota^{\mathrm{ac}}(x)\) when \(\rho_{\bar\iota}^{\mathrm{ac}}(y)=0\). In either case,
\(x\in A_q\) and \(y\in B_q^\c\). Since
\(x\in \widetilde S_\iota^{\mathrm{ac}}\subseteq A_q^\c\cup S_q\), it follows that \(x\in S_q\). Hence
\[
  y\in \bigcup_{x'\in S_q}\bigl(N_{\bar\iota}(x')\cap B_q^\c\bigr)\subseteq M_q,
\]
which proves the claim.

\paragraph{Disjoint representatives.}
Since \(\nu_\iota^{\mathrm{ac}}\perp \nu_\iota^\perp\), choose disjoint measurable sets
\(T_\iota^{\mathrm{ac}},T_\iota^\perp\in\Sigma_\iota\) such that
\[
  \nu_\iota^{\mathrm{ac}}(T_\iota^{\mathrm{ac}})
  =
  \nu_\iota^{\mathrm{ac}}(\Omega_\iota),
  \qquad
  \nu_\iota^\perp(T_\iota^\perp)
  =
  \nu_\iota^\perp(\Omega_\iota).
\]
Set
\[
  S_\iota^{\mathrm{ac}}
  :=
  \widetilde S_\iota^{\mathrm{ac}}\cap T_\iota^{\mathrm{ac}},
  \qquad
  S_\iota^\perp
  :=
  \widetilde S_\iota^\perp\cap T_\iota^\perp.
\]
Then \(S_\iota^{\mathrm{ac}}\) and \(S_\iota^\perp\) are disjoint,
\[
  \nu_\iota^{\mathrm{ac}}(S_\iota^{\mathrm{ac}})
  =
  \nu_\iota^{\mathrm{ac}}(\Omega_\iota),
  \qquad
  \nu_\iota^\perp(S_\iota^\perp)
  =
  \nu_\iota^\perp(\Omega_\iota),
\]
and the two null-neighborhood properties persist because the sets were only shrunk.

Since \(\iota\in\mathbb B\) was arbitrary, the proof is complete.
\end{proof}

\section{Equilibrium-Theoretic Characterization via a Continuum Economy}
\label{sec:continuum_agents_commodities}

The preceding sections identified level-optimal maximin refinements as the central one-sided objects and showed that, together with proportional response, they yield a universally closest refinement pair. We now show that level-optimal maximin pairs admit an equivalent Walrasian formulation in a naturally associated continuum economy.

The point of the present section is not to introduce a separate economic model, but to recast the level-optimal maximin structure in equilibrium language. Under this reformulation, refinement plans become allocations of measure-valued commodities, and the constraint encoded by the bipartite relation becomes an agent-wise feasibility condition. We prove two converse statements. First, a level-optimal maximin pair gives rise to a Walrasian equilibrium. Second, every Walrasian equilibrium yields a level-optimal maximin pair. Proportional response is not required in the equilibrium formulation.

\subsection{Continuum-Economy Framework}
\label{subsec:continuum_agents_commodities}

We begin by recording the equilibrium language in the form needed later. The framework is standard in the literature on continuum economies with infinite-dimensional commodity spaces; see, for example,
\cite{Aumann1964,Aumann1966,Hildenbrand1974,GretskyOstroyZame1992,GretskyOstroyZame1999}. For broader context on infinite-dimensional and nonconvex continuum-economy models, see also
\cite{Podczeck1997,TOURKY2001189}. Here, however, the formalism serves only to state the induced economy associated with a measurable bipartite relation in a clean way. The point of this subsection is therefore just to fix terminology for that specialization. We do not use a general equilibrium existence theorem, and all equilibrium assertions needed later will instead be verified directly from the refinement structure.

\begin{definition}[Continuum Economy (Dual-Pair Form)]
\label{defn:continuum_economy_dual_pair}
A \emph{continuum economy} is a tuple
\[
  \mathfrak{E}
  \;=\;
  \bigl[(T,\Sigma,\mu),\ (L,\tau,L_+,E),\ X,\ \{\succeq_t\}_{t\in T},\ e\bigr],
\]
where:
\begin{enumerate}\setlength{\itemsep}{0.2em}
\item \textbf{Agents.}
      \((T,\Sigma,\mu)\) is a finite measure space of agents.

\item \textbf{Commodities and prices.}
      \((L,\tau)\) is a Hausdorff locally convex space, and
      \(E\subseteq L^*\) is a separating vector subspace of continuous linear functionals, with bilinear pairing
      \[
        \langle p,z\rangle := p(z)
        \qquad (p\in E,\ z\in L).
      \]

      The cone \(L_+\subseteq L\) is a convex order cone, inducing the preorder
      \[
        z\le z'
        \quad\Longleftrightarrow\quad
        z'-z\in L_+.
      \]
      We write
      \[
        E_+
        :=
        \{p\in E:\langle p,z\rangle\ge 0\ \ \forall z\in L_+\}
      \]
      for the corresponding positive price cone.

\item \textbf{Consumption sets.}
      \(X:T\rightrightarrows L_+\) is a correspondence such that \(X(t)\neq\emptyset\) for all \(t\).
      In the concrete applications below, the required measurability and closedness hypotheses will be imposed directly on the specialized model.

\item \textbf{Preferences.}
      For each \(t\in T\), \(\succeq_t\) is a preference relation on \(X(t)\).
      We do not assume convexity.
      We write \(z\succ_t z'\) for strict preference.

\item \textbf{Endowments.}
      \(e:T\to L_+\) is an endowment selection such that \(e(t)\in X(t)\) for all \(t\).
      We assume that \(e\) is \(E\)-scalarly measurable and \(E\)-integrable, namely: for every \(p\in E\), the scalar map
      \(t\mapsto \langle p,e(t)\rangle\) is \(\mu\)-measurable and \(\mu\)-integrable, and there exists \(\int_T e\,d\mu\in L\) satisfying
      \[
        \Bigl\langle p,\int_T e\,d\mu\Bigr\rangle
        =
        \int_T \langle p,e(t)\rangle\,\mu(dt)
        \qquad \forall\,p\in E.
      \]

\end{enumerate}
\end{definition}

\begin{definition}[Allocation and Feasibility]
\label{defn:continuum_allocation_feasible}
In the economy of Definition~\ref{defn:continuum_economy_dual_pair}, an \emph{allocation} is an \(E\)-scalarly measurable map
\[
  a:T\to L_+
\]
such that \(a(t)\in X(t)\) for \(\mu\)-a.e.\ \(t\), and \(a\) is \(E\)-integrable in the same sense as \(e\), so that \(\int_T a\,d\mu\in L\) exists and is characterized by the dual pair. We also write \(a_t:=a(t)\).

The allocation \(a\) is \emph{feasible} if
\[
  \int_T a\,d\mu
  =
  \int_T e\,d\mu
  \qquad\text{in }L.
\]
\end{definition}

\begin{definition}[Budget Set and Walrasian Equilibrium]
\label{defn:walras_equilibrium_dual_pair}
Fix a price \(p\in E_+\setminus\{0\}\). The \emph{budget set} of agent \(t\) is
\[
  \mathcal B(t,p)
  :=
  \{z\in X(t):\langle p,z\rangle\le \langle p,e(t)\rangle\}.
\]
A pair \((a,p)\) is a \emph{Walrasian equilibrium} if:
\begin{enumerate}\setlength{\itemsep}{0.2em}
\item \textbf{Feasibility:}
      \(a\) is feasible in the sense of Definition~\ref{defn:continuum_allocation_feasible};

\item \textbf{Individual optimality:}
      for \(\mu\)-a.e.\ \(t\), one has \(a(t)\in \mathcal B(t,p)\), and there is no \(z\in \mathcal B(t,p)\) with \(z\succ_t a(t)\).
\end{enumerate}
\end{definition}

\begin{remark}[Measure-Valued Commodities]
\label{rem:measure_valued_specialization}
In Subsection~\ref{subsec:graph_to_continuum_economy} we specialize the commodity space to
\[
  L:=\mathcal M(T),
  \qquad
  L_+:=\mathcal M_+(T),
\]
where \(\mathcal M(T)\) denotes the space of finite signed measures on \((T,\Sigma)\), and \(\mathcal M_+(T)\) its positive cone. We equip \(L\) with the weak topology
\[
  \tau:=\sigma\bigl(\mathcal M(T),B_b(T)\bigr),
\]
and take \(E:=B_b(T)\), the space of bounded measurable real-valued functions on \((T,\Sigma)\), with pairing
\[
  \langle p,\eta\rangle=\int_T p\,d\eta
  \qquad (p\in B_b(T),\ \eta\in \mathcal M(T)).
\]

This is the natural ambient space for the bipartite refinement problem. In the specialization below, an agent's consumption bundle will be a finite measure describing how that agent's mass is distributed across admissible counterparts. In particular, Dirac endowments represent the initial ownership of one unit of oneself, even when \((T,\Sigma,\mu)\) is atomless.
\end{remark}

\subsection{Embedding the Infinite Bipartite Relation Model}
\label{subsec:graph_to_continuum_economy}

We now specialize the continuum-economy framework to the measurable bipartite relation model from
Section~\ref{sec:prelim}. The point of the construction is to recast refinement plans as allocations of
measure-valued commodities. In this specialization, the bipartite relation enters through utility rather
than feasibility: an agent values only mass assigned to adjacent agents on the opposite side, while
retained self-mass is allowed to keep the endowment feasible.

Throughout, \(\iota\in\mathbb B:=\{0,1\}\) and \(\bar\iota:=1-\iota\).

\paragraph{Bipartite relation data.}
Recall that the underlying relation datum consists of measurable spaces
\((\Omega_\iota,\Sigma_\iota)\), finite measures \(\nu_\iota\in\mathcal M_+(\Omega_\iota)\) with
\(\nu_\iota(\Omega_\iota)>0\), and a measurable edge relation
\(\mathcal E\subseteq \Omega_0\times\Omega_1\), with neighborhood maps
\[
  N_1(x):=\{y\in\Omega_1:(x,y)\in\mathcal E\},
  \qquad
  N_0(y):=\{x\in\Omega_0:(x,y)\in\mathcal E\}.
\]
We continue to work under Assumption~\ref{assump:closed_polish_graph_nonempty_nbhd}.

\begin{remark}[On the nonempty-neighborhood assumption]
For the continuum-economy embedding considered in this section, the nonempty-neighborhood part of
Assumption~\ref{assump:closed_polish_graph_nonempty_nbhd} is not essential. If an agent has empty
opposite-side neighborhood, then its admissible bundle may still contain its own endowment mass, and
the corresponding mass can simply be retained in equilibrium. We nevertheless continue to impose the
standing assumption here, since it is needed earlier in the refinement theory and keeps the global
setup uniform.
\end{remark}

\paragraph{Agents, commodities, and prices.}
Set
\[
  T:=\Omega_0\sqcup\Omega_1,
  \qquad
  \Sigma:=\Sigma_0\oplus\Sigma_1,
  \qquad
  \mu:=\nu_0\oplus\nu_1,
\]
where
\[
  \Sigma
  :=
  \{A_0\sqcup A_1:A_0\in\Sigma_0,\ A_1\in\Sigma_1\},
\]
and
\[
  \mu(A_0\sqcup A_1):=\nu_0(A_0)+\nu_1(A_1).
\]
Since \((\Omega_0,\Sigma_0)\) and \((\Omega_1,\Sigma_1)\) are standard Borel under
Assumption~\ref{assump:closed_polish_graph_nonempty_nbhd}, so is \((T,\Sigma)\).

We take
\[
  L:=\mathcal M(T),
  \qquad
  L_+:=\mathcal M_+(T),
\]
equipped with the weak topology
\[
  \tau:=\sigma\bigl(\mathcal M(T),B_b(T)\bigr),
\]
and we set
\[
  E:=B_b(T),
\]
with pairing
\[
  \langle p,\eta\rangle:=\int_T p\,d\eta
  \qquad (p\in E,\ \eta\in L).
\]
Thus every \(\eta\in L\) decomposes uniquely as
\[
  \eta=\eta_0\oplus\eta_1
\]
with \(\eta_\iota\in\mathcal M(\Omega_\iota)\). Moreover,
\[
  E_+
  :=
  \{p\in E:\langle p,\eta\rangle\ge 0\ \ \forall\,\eta\in L_+\}
  =
  \{p\in B_b(T):p\ge 0\}.
\]

\paragraph{Consumption sets and endowments.}
For \(x\in\Omega_0\), define
\[
  X(x)
  :=
  \{z\in L_+: z(\Omega_0\setminus\{x\})=0,\ z(\Omega_1\setminus N_1(x))=0\},
\]
and for \(y\in\Omega_1\), define
\[
  X(y)
  :=
  \{z\in L_+: z(\Omega_1\setminus\{y\})=0,\ z(\Omega_0\setminus N_0(y))=0\}.
\]
Thus an agent may retain mass on its own singleton and may consume mass only from adjacent agents on the opposite side.

We define the endowment of agent \(t\in T\) by
\[
  e(t):=\delta_t\in L_+.
\]
Then \(e(t)\in X(t)\) for every \(t\in T\). Since
\[
  \langle p,\delta_t\rangle=p(t)
  \qquad (p\in E),
\]
the map \(e:T\to L_+\) is \(E\)-scalarly measurable. Moreover,
\[
  \Bigl\langle p,\int_T e\,d\mu\Bigr\rangle
  =
  \int_T \langle p,\delta_t\rangle\,\mu(dt)
  =
  \int_T p(t)\,\mu(dt)
  =
  \langle p,\mu\rangle
  \qquad \forall\,p\in E,
\]
so
\[
  \int_T e\,d\mu=\mu
  \qquad\text{in }L.
\]

Accordingly, an allocation in the induced economy is an \(E\)-scalarly measurable map
\[
  a:T\to L_+,
  \qquad
  t\mapsto a_t:=a(t),
\]
such that \(a_t\in X(t)\) for \(\mu\)-a.e.\ \(t\), and \(a\) is \(E\)-integrable. Feasibility means
\[
  \int_T a\,d\mu=\mu
  \qquad\text{in }L.
\]
Equivalently,
\[
  \int_T a_t(B)\,\mu(dt)=\mu(B)
  \qquad \forall\,B\in\Sigma,
\]
since \(\mathbf 1_B\in B_b(T)\) for every \(B\in\Sigma\).

\paragraph{Utilities and preferences.}
For \(x\in\Omega_0\) and \(z=z_0\oplus z_1\in X(x)\), define
\begin{equation}
\label{eq:utility_side0}
  u_x(z)
  :=
  \int_{\Omega_1}\mathbf 1_{\mathcal E}(x,y)\,z_1(dy)
  =
  z_1\bigl(N_1(x)\bigr).
\end{equation}
For \(y\in\Omega_1\) and \(z=z_0\oplus z_1\in X(y)\), define
\begin{equation}
\label{eq:utility_side1}
  u_y(z)
  :=
  \int_{\Omega_0}\mathbf 1_{\mathcal E}(x,y)\,z_0(dx)
  =
  z_0\bigl(N_0(y)\bigr).
\end{equation}
These quantities are well defined because \(\mathcal E\) is measurable, its sections are measurable, and the relevant measures are finite.

Let \(\succeq_t\) be the preference relation on \(X(t)\) induced by \(u_t\), namely
\[
  z\succeq_t z'
  \quad\Longleftrightarrow\quad
  u_t(z)\ge u_t(z').
\]
Thus an agent values only mass assigned to adjacent agents on the opposite side; retained self-mass is utility-null and is present only to keep the initial endowment admissible.

\paragraph{The induced continuum economy.}
The bipartite relation instance
\[
  \bigl[(\Omega_0,\Sigma_0,\nu_0),\ (\Omega_1,\Sigma_1,\nu_1),\ \mathcal E\bigr]
\]
therefore induces a continuum economy
\[
  \mathfrak E_{\mathcal E}
  :=
  \bigl[(T,\Sigma,\mu),\ (L,\tau,L_+,E),\ X,\ \{\succeq_t\}_{t\in T},\ e\bigr]
\]
in the sense of Definition~\ref{defn:continuum_economy_dual_pair}.

\subsection{From Level-Optimal Maximin Pairs to Walrasian Equilibrium}
\label{subsec:symmetric_level_optimal_pair_to_equilibrium}

We now turn from the level-optimal maximin refinement formulation to its equilibrium-theoretic description.  Subsection~\ref{subsec:symmetric_density_decomposition_support_geometry} provides the structural ingredients for this step. The symmetric density decomposition separates, on each side, the mass that participates in bilateral trade from the mass that remains singular. Lemma~\ref{lem:singular_payload_stays_singular_under_backward_kernel} shows that singular mass cannot be transported backward into an absolutely continuous payload on the opposite side; accordingly, the singular component is interpreted as retained rather than traded. Lemma~\ref{lem:reciprocal_densities_layer_compatibility} identifies the incompatibility between low-density and high-density layers, which is the reason for the price transform
\[
  r\longmapsto \frac{r}{1+r}.
\]

Lemma~\ref{lem:disjoint_good_supports_null_bad_neighborhoods} then provides full-measure disjoint sets for the absolutely continuous and singular
parts, together with measurable null exceptional sets on which the price may be raised to remove
the remaining bad neighborhoods. In particular, the null-set bookkeeping needed for the price
construction is already carried out there: before the equilibrium verification begins, one has fixed
full-measure representatives for the absolutely continuous and singular components and measurable
null sets containing the exceptional neighborhoods.

Using these ingredients, we associate to a level-optimal maximin pair an allocation--price pair in the induced continuum economy. Unlike the universal closest refinement pair, such a level-optimal maximin pair is not assumed to consist of proportional responses to one another; the equilibrium construction uses only the two level-optimal maximin refinements and the symmetric density decomposition they induce. The absolutely continuous part is interpreted as bilateral trade across the graph through reverse disintegrations of the refinement plans, while the singular part is retained by the corresponding agents. The price is first defined from the symmetric density decomposition by the transform above and then modified only on the null exceptional sets furnished by Lemma~\ref{lem:disjoint_good_supports_null_bad_neighborhoods}. The next theorem shows that the resulting pair satisfies the Walrasian conditions.

The construction is therefore formulated for a level-optimal maximin pair, that is, a refinement pair whose two components are level-optimal maximin. This is the natural input for the equilibrium description, since the symmetric density decomposition is then available on both sides and the structural lemmas apply simultaneously.

\begin{construction}[Allocation--price pair induced by a level-optimal maximin pair]
\label{const:allocation_price_from_refinement_pair}
Work under Assumption~\ref{assump:closed_polish_graph_nonempty_nbhd} and in the setting of
Subsection~\ref{subsec:graph_to_continuum_economy}.

Let \((\pi_0,\pi_1)\in \Pi_0^{\mathcal E}\times \Pi_1^{\mathcal E}\) be a refinement pair such that
both \(\pi_0\) and \(\pi_1\) are level-optimal maximin in the sense of
Definition~\ref{defn:level_opt_maximin}.

Let \(\nu_\iota=\nu_\iota^{\mathrm{ac}}+\nu_\iota^\perp\), \(\iota\in\mathbb B\), be the symmetric
density decomposition of Definition~\ref{defn:sym_density_decomp}, and let
\(\rho_\iota^{\mathrm{ac}}:\Omega_\iota\to[0,\infty)\) be the associated densities. Choose
disjoint measurable sets \(S_\iota^{\mathrm{ac}},S_\iota^\perp\in\Sigma_\iota\), \(\iota\in\mathbb B\),
with the properties given by Lemma~\ref{lem:disjoint_good_supports_null_bad_neighborhoods}.

\paragraph{Absolutely continuous transport kernels.}
The absolutely continuous components will be traded across the graph. Fix forward disintegrations
\[
  \pi_0=\nu_0\kprod G_0,
  \qquad
  \pi_1=\nu_1\kprod G_1,
\]
where \(G_0(x,\cdot)\) is carried by \(N_1(x)\) for \(\nu_0\)-a.e.\ \(x\), and
\(G_1(y,\cdot)\) is carried by \(N_0(y)\) for \(\nu_1\)-a.e.\ \(y\).

Define
\[
  \pi_0^{\mathrm{ac}}:=\nu_0^{\mathrm{ac}}\kprod G_0,
  \qquad
  \pi_1^{\mathrm{ac}}:=\nu_1^{\mathrm{ac}}\kprod G_1.
\]
By disintegration, there exist finite kernels
\[
  K_0:\Omega_1\times\Sigma_0\to[0,\infty),
  \qquad
  K_1:\Omega_0\times\Sigma_1\to[0,\infty),
\]
such that
\[
  \pi_0^{\mathrm{ac}}=\nu_1^{\mathrm{ac}}\kprod K_0,
  \qquad
  \pi_1^{\mathrm{ac}}=\nu_0^{\mathrm{ac}}\kprod K_1.
\]
Since \(\pi_0^{\mathrm{ac}}\) and \(\pi_1^{\mathrm{ac}}\) are concentrated on \(\mathcal E\), the kernels
\(K_0(y,\cdot)\) and \(K_1(x,\cdot)\) are carried by \(N_0(y)\) and \(N_1(x)\), respectively, for
\(\nu_1^{\mathrm{ac}}\)-a.e.\ \(y\) and \(\nu_0^{\mathrm{ac}}\)-a.e.\ \(x\).

\paragraph{Allocation.}
The absolutely continuous part is assigned according to these reverse kernels, whereas the singular
part is retained by the agents that carry it. Define \(a:T=\Omega_0\sqcup\Omega_1\to\mathcal M_+(T)\) by
\[
a(x):=
\begin{cases}
0\oplus K_1(x,\cdot), & x\in S_0^{\mathrm{ac}},\\
\delta_x\oplus 0, & x\in S_0^\perp,
\end{cases}
\qquad x\in\Omega_0,
\]
and
\[
a(y):=
\begin{cases}
K_0(y,\cdot)\oplus 0, & y\in S_1^{\mathrm{ac}},\\
0\oplus \delta_y, & y\in S_1^\perp,
\end{cases}
\qquad y\in\Omega_1.
\]
Since \(\nu_\iota(\Omega_\iota\setminus(S_\iota^{\mathrm{ac}}\cup S_\iota^\perp))=0\) for
\(\iota\in\mathbb B\), the remaining points form a \(\mu\)-null set. On those points, set
\(a(t):=\delta_t\). This does not affect any \(\mu\)-a.e.\ statement.

\paragraph{Preliminary price.}
The preliminary price is obtained from the absolutely continuous densities by the transform
\(r\mapsto r/(1+r)\). Define \(\widetilde p:T\to[0,\infty)\) by
\[
\widetilde p(x):=
\begin{cases}
\dfrac{\rho_0^{\mathrm{ac}}(x)}{1+\rho_0^{\mathrm{ac}}(x)}, & x\in S_0^{\mathrm{ac}},\\
0, & x\in S_0^\perp,
\end{cases}
\qquad x\in\Omega_0,
\]
and
\[
\widetilde p(y):=
\begin{cases}
\dfrac{\rho_1^{\mathrm{ac}}(y)}{1+\rho_1^{\mathrm{ac}}(y)}, & y\in S_1^{\mathrm{ac}},\\
0, & y\in S_1^\perp,
\end{cases}
\qquad y\in\Omega_1.
\]

\paragraph{Null bad neighborhoods and final price.}
Lemma~\ref{lem:disjoint_good_supports_null_bad_neighborhoods} yields measurable null sets on which
the preliminary price may be raised in order to remove the remaining exceptional neighborhoods. For
each \(\iota\in\mathbb B\), let \(M_{\bar\iota}\in\Sigma_{\bar\iota}\) be a measurable
\(\nu_{\bar\iota}\)-null set such that
\[
  \bigcup_{x\in S_\iota^{\mathrm{ac}}}
  \Bigl\{y\in N_{\bar\iota}(x):
  \rho_\iota^{\mathrm{ac}}(x)\rho_{\bar\iota}^{\mathrm{ac}}(y)<1\Bigr\}
  \subseteq M_{\bar\iota},
\]
and \(N_{\bar\iota}(S_\iota^\perp)\subseteq M_{\bar\iota}\). Define the final price
\(p:T\to[0,\infty)\) by
\[
  p(t):=
  \begin{cases}
    2, & t\in M_0\cup M_1,\\
    \widetilde p(t), & t\notin M_0\cup M_1.
  \end{cases}
\]
Finally, on any remaining points outside
\(S_0^{\mathrm{ac}}\cup S_0^\perp\cup S_1^{\mathrm{ac}}\cup S_1^\perp\), set \(p(t):=2\). These
points lie in a \((\nu_0\oplus\nu_1)\)-null set.

\paragraph{Output.}
The construction returns the pair \((a,p)\).
\end{construction}

The point of the following theorem is to verify that this construction matches the economic interpretation described above. Feasibility comes from the way the absolutely continuous and singular parts are assembled, while individual optimality is enforced by the price transform together with the null exceptional sets from Lemma~\ref{lem:disjoint_good_supports_null_bad_neighborhoods}.

\begin{theorem}[Level-optimal maximin construction yields a Walrasian equilibrium]
\label{thm:refinement_pair_construction_walras}
Work under Assumption~\ref{assump:closed_polish_graph_nonempty_nbhd} and in the setting of
Subsection~\ref{subsec:graph_to_continuum_economy}.

Let \((\pi_0,\pi_1)\in \Pi^{\mathcal E}(\nu_0,\nu_1)\) be a refinement pair such that both
\(\pi_0\) and \(\pi_1\) are level-optimal maximin in the sense of
Definition~\ref{defn:level_opt_maximin}. Let \((a,p)\) be the allocation--price pair produced by
Construction~\ref{const:allocation_price_from_refinement_pair}.

Then \((a,p)\) is a Walrasian equilibrium of the induced continuum economy
\(\mathfrak E_{\mathcal E}\).
\end{theorem}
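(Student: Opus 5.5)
The plan is to verify the two clauses of Definition~\ref{defn:walras_equilibrium_dual_pair} directly from Construction~\ref{const:allocation_price_from_refinement_pair}, discarding $\mu$-null sets throughout. The order is: (0) measurability and integrability; (1) feasibility; (2) the budget constraint, obtained as an equality through an aggregate Walras-law argument; (3) individual optimality, using the price floor from Lemma~\ref{lem:disjoint_good_supports_null_bad_neighborhoods}.

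\textbf{Step 0 (measurability).} For $p\in B_b(T)$, the map $t\mapsto\langle p,a(t)\rangle$ is assembled piecewise on measurable sets from three kinds of functions:
\[
x\mapsto\int p\,K_1(x,dy),\qquad y\mapsto\int p\,K_0(y,dx),\qquad t\mapsto p(t).
\]
Each is measurable because $K_0,K_1$ are kernels. Each is integrable because $K_1(x,\Omega_1)$ is a density of a finite measure. The price $p$ is bounded and measurable, since $\rho^{\mathrm{ac}}_\iota/(1+\rho^{\mathrm{ac}}_\iota)\in[0,1)$ and the value $2$ is used only on measurable sets.

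\textbf{Step 1 (feasibility).} The sets $S^{\mathrm{ac}}_\iota$ and $S^\perp_\iota$ are disjoint and carry $\nu^{\mathrm{ac}}_\iota$ and $\nu^\perp_\iota$ respectively. Hence
\[
\nu_\iota\restriction S^{\mathrm{ac}}_\iota=\nu^{\mathrm{ac}}_\iota,\qquad \nu_\iota\restriction S^\perp_\iota=\nu^\perp_\iota .
\]
For $B\subseteq\Omega_1$, the side-$0$ agents on $S_0^{\mathrm{ac}}$ therefore contribute
\[
\int K_1(x,B)\,\nu_0^{\mathrm{ac}}(dx)=\pi_1^{\mathrm{ac}}(\Omega_0\times B)=(\pr_1)_\#(\nu_1^{\mathrm{ac}}\kprod G_1)(B)=\nu_1^{\mathrm{ac}}(B).
\]
The side-$1$ agents on $S_1^\perp$ contribute $\nu_1^\perp(B)$ by retention. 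Summing gives $\nu_1(B)$. The symmetric computation on $\Omega_0$ gives $\int_T a\,d\mu=\mu$.

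\textbf{Step 2 (budget, via Walras' law).} Fix $x\in S_0^{\mathrm{ac}}$. Since $(\pr_1)_\#\pi_1^{\mathrm{ac}}=\nu_1^{\mathrm{ac}}$, for $\nu_0^{\mathrm{ac}}$-a.e.\ $x$ the measure $K_1(x,\cdot)$ gives zero mass to the null sets $M_0\cup M_1$ and to $\Omega_1\setminus S_1^{\mathrm{ac}}$. So, a.e., the cost of $a(x)$ is
\[
\int\frac{\rho_1^{\mathrm{ac}}}{1+\rho_1^{\mathrm{ac}}}\,dK_1(x,\cdot).
\]
Two facts control this cost.
\begin{itemize}
\item As in the proof of Lemma~\ref{lem:reciprocal_densities_layer_compatibility}, Lemma~\ref{lem:singular_payload_stays_singular_under_backward_kernel} identifies $P_0^{\mathrm{ac}}$ with $(\pr_0)_\#\pi_1^{\mathrm{ac}}$. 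Hence $K_1(x,\Omega_1)=\rho_0^{\mathrm{ac}}(x)$ for a.e.\ $x$.
\item Lemma~\ref{lem:disjoint_good_supports_null_bad_neighborhoods}(a) gives $\rho_0^{\mathrm{ac}}(x)\rho_1^{\mathrm{ac}}(y)\ge1$ for $K_1(x,\cdot)$-a.e.\ $y$.
\end{itemize}
These two facts only give $\mathrm{cost}(x)\ge\rho_0^{\mathrm{ac}}(x)/(1+\rho_0^{\mathrm{ac}}(x))=p(x)$, which points the wrong way. This is the main obstacle, and I will resolve it by aggregation. The same inequality holds on side $1$, and agents in $S^\perp$ spend exactly $p(t)=0$. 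Feasibility then yields
\[
\int_T\langle p,a(t)\rangle\,\mu(dt)=\langle p,\mu\rangle=\int_T\langle p,e(t)\rangle\,\mu(dt).
\]
The pointwise inequality $\mathrm{cost}\ge\mathrm{value}$ therefore forces equality $\mu$-a.e. This proves the budget constraint and, as a by-product, that $\rho_0^{\mathrm{ac}}\rho_1^{\mathrm{ac}}=1$ along the traded edges.

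\textbf{Step 3 (optimality).} Take $x\in S_0^{\mathrm{ac}}\setminus M_0$. The only utility-bearing mass is mass on $N_1(x)$, and I claim every $y\in N_1(x)$ has $p(y)\ge 1/(1+\rho_0^{\mathrm{ac}}(x))$:
\begin{itemize}
\item if $y\in M_1$, then $p(y)=2$;
\item if $y\in S_1^{\mathrm{ac}}\setminus M_1$, then $\rho_1^{\mathrm{ac}}(y)\ge 1/\rho_0^{\mathrm{ac}}(x)$ because $y$ lies outside the bad-neighbor set of Lemma~\ref{lem:disjoint_good_supports_null_bad_neighborhoods}(a);
\item $y\notin S_1^\perp$, since $N_0(S_1^\perp)\subseteq M_0$ and $x\notin M_0$;
\item the remaining points carry price $2$.
\end{itemize}
Retained self-mass costs $p(x)\ge0$ and gives no utility. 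Hence any $z\in\mathcal B(x,p)$ satisfies
\[
u_x(z)\le p(x)\bigl(1+\rho_0^{\mathrm{ac}}(x)\bigr)=\rho_0^{\mathrm{ac}}(x)=u_x(a(x)).
\]

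For $x\in S_0^\perp\setminus M_0$ the budget is $p(x)=0$. Every neighbor lies in $N_1(S_0^\perp)\subseteq M_1$ and so has price $2$. Thus no neighbor mass is affordable, and $u_x(a(x))=0$ is already optimal.

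Side $1$ is handled symmetrically. The excluded agents lie in the $\mu$-null set $M_0\cup M_1$ together with the complement of the $S$-sets, which proves individual optimality $\mu$-a.e.
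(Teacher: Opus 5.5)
Your proof is correct, but the budget step runs in the opposite direction from the paper's.

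\textbf{How the two routes differ.} The paper first fixes, on a full-measure set, the reciprocal identity $\rho_0^{\mathrm{ac}}(x)\rho_1^{\mathrm{ac}}(y)=1$ for $K_1(x,\cdot)$-a.e.\ $y$, attributing it to Lemma~\ref{lem:reciprocal_densities_layer_compatibility}. From this, $p=(1+\rho_0^{\mathrm{ac}}(x))^{-1}$ on the support of $a(x)$, and $\langle p,a(x)\rangle=p(x)$ follows directly. Optimality then uses the same price floor you use.

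You need only the one-sided bound $\rho_0^{\mathrm{ac}}\rho_1^{\mathrm{ac}}\ge 1$ on traded edges. This is what Lemma~\ref{lem:disjoint_good_supports_null_bad_neighborhoods}(a), on both sides, encodes through the sets $M_{\bar\iota}$. It gives expenditure $\ge$ budget pointwise. You then upgrade this to equality $\mu$-a.e.\ via the aggregate identity $\int_T\langle p,a_t\rangle\,\mu(dt)=\langle p,\mu\rangle=\int_T\langle p,e(t)\rangle\,\mu(dt)$, which comes from feasibility. The reciprocal identity then falls out because $r\mapsto r/(1+r)$ is strictly increasing.

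\textbf{What your route buys.} Lemma~\ref{lem:reciprocal_densities_layer_compatibility} by itself only excludes subflows along which $\rho_0^{\mathrm{ac}}\rho_1^{\mathrm{ac}}<1$. The reverse inequality $\le 1$ genuinely needs a mass-balance argument, of the kind used in Lemma~\ref{lem:walras_extracted_refinement_structure}(iii). Your Walras-law step is exactly that mass balance, and it makes the argument more self-contained than the paper's one-line citation.

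\textbf{Small items to add.}
\begin{itemize}
\item Definition~\ref{defn:walras_equilibrium_dual_pair} requires $p\in E_+\setminus\{0\}$, and you never check $p\not\equiv 0$. It follows as in the paper. If some $\nu_\iota^{\mathrm{ac}}\neq 0$, then $S_\iota^{\mathrm{ac}}\neq\emptyset$ and $p>0$ there, since $\rho_\iota^{\mathrm{ac}}>0$ on $S_\iota^{\mathrm{ac}}$. Otherwise $S_0^\perp\neq\emptyset$, and any neighbor of a point of $S_0^\perp$ lies in $M_1$, where $p=2$.
\item You should state that $a(t)\in X(t)$ for $\mu$-a.e.\ $t$. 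This holds because $K_1(x,\cdot)$ and $K_0(y,\cdot)$ are carried by the neighborhoods a.e.
\item Your justification of $K_1(x,\Omega_1)=\rho_0^{\mathrm{ac}}(x)$ ``as in the proof of'' the layer-compatibility lemma does not quite transfer. There, absolute continuity of the traded part came from proportional response, which is absent here. You are really relying, as the paper does, on the construction's disintegration $\pi_1^{\mathrm{ac}}=\nu_0^{\mathrm{ac}}\kprod K_1$. Lemma~\ref{lem:singular_payload_stays_singular_under_backward_kernel} handles only the $\nu_1^\perp$ part.
\end{itemize}
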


\begin{proof}
Write \(\rho_\iota:=\rho_\iota^{\mathrm{ac}}\) for \(\iota\in\mathbb B\), and let
\(M_0\in\Sigma_0\), \(M_1\in\Sigma_1\) be the null sets chosen in the construction.

Set
\[
  R_\iota:=\Omega_\iota\setminus\bigl(S_\iota^{\mathrm{ac}}\cup S_\iota^\perp\bigr),
  \qquad \iota\in\mathbb B.
\]
Then \(\nu_\iota(R_\iota)=0\). Since the construction sets \(p=2\) on every point not previously
assigned a price, we have \(p=2\) on \(R_0\cup R_1\). Replacing \(M_\iota\) by \(M_\iota\cup R_\iota\),
we may therefore assume, without changing notation, that each \(M_\iota\) is measurable,
\(\nu_\iota\)-null, and \(p=2\) on \(M_\iota\).

We also make a harmless full-measure reduction.

Beyond the carriers and null exceptional neighborhoods already fixed by
Lemma~\ref{lem:disjoint_good_supports_null_bad_neighborhoods},
we also make
a harmless further full-measure reduction.
Since Walrasian optimality is required only
for \(\mu\)-a.e.\ agent, and feasibility is unchanged by modifying \(a\) and \(p\) on \(\mu\)-null
sets, we may shrink \(S_\iota^{\mathrm{ac}}\) and \(S_\iota^\perp\) on null subsets and keep the same
notation so that:
\begin{enumerate}[(i)]
\item \(S_\iota^{\mathrm{ac}}\cap M_\iota=\emptyset\) and \(S_\iota^\perp\cap M_\iota=\emptyset\) for
      \(\iota\in\mathbb B\);
\item for every \(x\in S_0^{\mathrm{ac}}\),
      \[
        K_1(x,\Omega_1\setminus N_1(x))=0,\quad
        K_1(x,M_1)=0,\quad
        K_1(x,\Omega_1\setminus S_1^{\mathrm{ac}})=0,
      \]
      \[
        K_1(x,\Omega_1)=\rho_0(x),\qquad
        K_1\bigl(x,\{y\in S_1^{\mathrm{ac}}:\rho_0(x)\rho_1(y)\neq 1\}\bigr)=0;
      \]
\item for every \(y\in S_1^{\mathrm{ac}}\),
      \[
        K_0(y,\Omega_0\setminus N_0(y))=0,\quad
        K_0(y,M_0)=0,\quad
        K_0(y,\Omega_0\setminus S_0^{\mathrm{ac}})=0,
      \]
      \[
        K_0(y,\Omega_0)=\rho_1(y),\qquad
        K_0\bigl(y,\{x\in S_0^{\mathrm{ac}}:\rho_0(x)\rho_1(y)\neq 1\}\bigr)=0.
      \]
\end{enumerate}
The concentration assertions follow from \(\pi_0^{\mathrm{ac}},\pi_1^{\mathrm{ac}}\le \pi_0,\pi_1\) and
\(\pi_0(\mathcal E^\c)=\pi_1(\mathcal E^\c)=0\). Also,
\(\pi_1^{\mathrm{ac}}(\Omega_0\times(\Omega_1\setminus S_1^{\mathrm{ac}}))
 =\nu_1^{\mathrm{ac}}(\Omega_1\setminus S_1^{\mathrm{ac}})=0\),
and similarly
\(\pi_0^{\mathrm{ac}}((\Omega_0\setminus S_0^{\mathrm{ac}})\times\Omega_1)=0\).
Since \(M_1\) is \(\nu_1\)-null and \(\nu_1^{\mathrm{ac}}\le \nu_1\),
\(\pi_1^{\mathrm{ac}}(\Omega_0\times M_1)=\nu_1^{\mathrm{ac}}(M_1)=0\), hence
\(K_1(x,M_1)=0\) for \(\nu_0^{\mathrm{ac}}\)-a.e.\ \(x\); similarly \(K_0(y,M_0)=0\) for
\(\nu_1^{\mathrm{ac}}\)-a.e.\ \(y\). The identities
\(K_1(x,\Omega_1)=\rho_0(x)\) and \(K_0(y,\Omega_0)=\rho_1(y)\) come from the marginal equalities
\((\pr_0)_\#\pi_1^{\mathrm{ac}}=\rho_0\,\nu_0\) and
\((\pr_1)_\#\pi_0^{\mathrm{ac}}=\rho_1\,\nu_1\). Finally,
Lemma~\ref{lem:reciprocal_densities_layer_compatibility} yields the reciprocal identity
\(\rho_0(x)\rho_1(y)=1\) on a full-measure subset of the relevant mutually absolutely continuous
parts, which gives the last line in (ii) and (iii).

\paragraph{Feasibility and positivity of the price.}
By construction, \(a(t)\in L_+=\mathcal M_+(T)=X(t)\) for all \(t\in T\)
(on the final \(\mu\)-null set one may choose \(a(t)=0\)). Since \(K_0\) and \(K_1\) are measurable
kernels and \(a\) is defined piecewise on measurable sets, the map \(t\mapsto a(t)\) is measurable in
the sense of Definition~\ref{defn:continuum_allocation_feasible}. Likewise, \(p\) is measurable and
bounded by \(2\), so \(p\in B_b(T)=E\) and \(p\ge 0\), hence \(p\in E_+\).

We next verify feasibility. It suffices to check the two coordinate marginals. For
\(A_1\in\Sigma_1\),
\begin{align*}
  \int_T a_t(A_1)\,\mu(dt)
  &=
  \int_{\Omega_0} a(x)(A_1)\,\nu_0(dx)
  +
  \int_{\Omega_1} a(y)(A_1)\,\nu_1(dy) \\
  &=
  \int_{S_0^{\mathrm{ac}}} K_1(x,A_1)\,\nu_0(dx)
  +
  \int_{S_1^\perp}\delta_y(A_1)\,\nu_1(dy) \\
  &=
  \int_{S_0^{\mathrm{ac}}} K_1(x,A_1)\,\nu_0^{\mathrm{ac}}(dx)
  +
  \nu_1^\perp(A_1) \\
  &=
  \pi_1^{\mathrm{ac}}(\Omega_0\times A_1)+\nu_1^\perp(A_1)
  =
  \nu_1^{\mathrm{ac}}(A_1)+\nu_1^\perp(A_1)
  =
  \nu_1(A_1).
\end{align*}
Similarly, for \(A_0\in\Sigma_0\),
\begin{align*}
  \int_T a_t(A_0)\,\mu(dt)
  &=
  \int_{S_1^{\mathrm{ac}}} K_0(y,A_0)\,\nu_1(dy)
  +
  \int_{S_0^\perp}\delta_x(A_0)\,\nu_0(dx) \\
  &=
  \int_{S_1^{\mathrm{ac}}} K_0(y,A_0)\,\nu_1^{\mathrm{ac}}(dy)
  +
  \nu_0^\perp(A_0) \\
  &=
  \pi_0^{\mathrm{ac}}(A_0\times\Omega_1)+\nu_0^\perp(A_0)
  =
  \nu_0^{\mathrm{ac}}(A_0)+\nu_0^\perp(A_0)
  =
  \nu_0(A_0).
\end{align*}
Hence \(\int_T a\,d\mu=\nu_0\oplus\nu_1=\mu=\int_T e\,d\mu\), so \(a\) is feasible.

It remains to show that \(p\not\equiv 0\). If \(\nu_0^{\mathrm{ac}}(\Omega_0)>0\), then
\((\pr_1)_\#\pi_0^{\mathrm{ac}}=\rho_1\,\nu_1\) has positive total mass, so \(\rho_1>0\) on a set of
positive \(\nu_1\)-measure, and hence \(p>0\) there. The same argument applies if
\(\nu_1^{\mathrm{ac}}(\Omega_1)>0\). If instead
\(\nu_0^{\mathrm{ac}}(\Omega_0)=\nu_1^{\mathrm{ac}}(\Omega_1)=0\), then
\(\nu_0^\perp(\Omega_0)=\nu_0(\Omega_0)>0\), so \(S_0^\perp\neq\emptyset\). Choose
\(x\in S_0^\perp\). By Assumption~\ref{assump:closed_polish_graph_nonempty_nbhd}, \(N_1(x)\neq\emptyset\);
for any \(y\in N_1(x)\) one has \(y\in N_1(S_0^\perp)\subseteq M_1\), hence \(p(y)=2\). Thus
\(p\not\equiv 0\), so \(p\in E_+\setminus\{0\}\).

\paragraph{Individual optimality on the singular parts.}
Fix \(x\in S_0^\perp\). Because \(x\notin M_0\), the construction gives
\(p(x)=\widetilde p(x)=0\) and \(a(x)=\delta_x\oplus 0\). Moreover,
\(N_1(x)\subseteq N_1(S_0^\perp)\subseteq M_1\), so \(p(y)=2\) for every \(y\in N_1(x)\). Let
\(z=z_0\oplus z_1\in\mathcal B(x,p)\). Then \(0\le \langle p,z\rangle\le \langle p,\delta_x\rangle=0\),
so \(\langle p,z\rangle=0\). Since \(p=2\) on \(N_1(x)\) and \(z_1\ge 0\), this forces
\(z_1(N_1(x))=0\). Hence \(u_x(z)=z_1(N_1(x))=0=u_x(a(x))\). Therefore no affordable bundle is
strictly preferred to \(a(x)\).

The argument for \(y\in S_1^\perp\) is symmetric: one has \(p(y)=0\), \(a(y)=0\oplus\delta_y\), and
\(N_0(y)\subseteq N_0(S_1^\perp)\subseteq M_0\), so every affordable bundle \(z\) satisfies
\(z_0(N_0(y))=0\), hence \(u_y(z)=0=u_y(a(y))\).

\paragraph{Individual optimality on the absolutely continuous parts.}
Fix \(x\in S_0^{\mathrm{ac}}\) and set \(c_x:=(1+\rho_0(x))^{-1}\). Because \(x\notin M_0\),
\[
  p(x)=\widetilde p(x)=\frac{\rho_0(x)}{1+\rho_0(x)}=\rho_0(x)c_x.
\]

We first show that every desirable unit of commodity for agent \(x\) costs at least \(c_x\). Let
\(y\in N_1(x)\). If \(y\in M_1\), then \(p(y)=2\ge c_x\). Assume now that \(y\notin M_1\). Since
\(R_1\subseteq M_1\), one has \(y\notin R_1\), hence \(y\in S_1^{\mathrm{ac}}\cup S_1^\perp\). If
\(y\in S_1^\perp\), then \(x\in N_0(y)\subseteq N_0(S_1^\perp)\subseteq M_0\), contrary to
\(x\notin M_0\). Therefore \(y\in S_1^{\mathrm{ac}}\). Since \(y\notin M_1\) and \(M_1\) contains
\[
  \bigcup_{x'\in S_0^{\mathrm{ac}}}
  \Bigl\{z\in N_1(x'):\rho_0(x')\rho_1(z)<1\Bigr\},
\]
we have \(\rho_0(x)\rho_1(y)\ge 1\). If \(\rho_0(x)=0\), this is impossible, so necessarily
\(\rho_0(x)>0\), and hence \(\rho_1(y)\ge 1/\rho_0(x)\). Therefore
\[
  p(y)=\widetilde p(y)=\frac{\rho_1(y)}{1+\rho_1(y)}
  \ge
  \frac{1/\rho_0(x)}{1+1/\rho_0(x)}
  =
  \frac{1}{1+\rho_0(x)}
  =
  c_x.
\]
Thus
\begin{equation}
\label{eq:price_lb_side0}
  p(y)\ge c_x
  \qquad\forall\,y\in N_1(x).
\end{equation}

Now let \(z=z_0\oplus z_1\in\mathcal B(x,p)\). Using \eqref{eq:price_lb_side0},
\[
  u_x(z)=z_1(N_1(x))
  \le
  c_x^{-1}\int_{N_1(x)} p(y)\,z_1(dy)
  \le
  c_x^{-1}\langle p,z\rangle
  \le
  c_x^{-1}p(x)
  =
  \rho_0(x).
\]
Hence no affordable bundle yields utility exceeding \(\rho_0(x)\).

On the other hand, by the pointwise properties fixed at the beginning,
\(K_1(x,\cdot)\) is carried by \(N_1(x)\cap S_1^{\mathrm{ac}}\setminus M_1\) and satisfies
\(\rho_0(x)\rho_1(y)=1\) for \(K_1(x,\cdot)\)-a.e.\ \(y\). Therefore
\(p(y)=\widetilde p(y)=\rho_1(y)/(1+\rho_1(y))=c_x\) for \(K_1(x,\cdot)\)-a.e.\ \(y\). Using also
\(K_1(x,\Omega_1)=\rho_0(x)\) and \(K_1(x,\Omega_1\setminus N_1(x))=0\), we obtain
\begin{align*}
  \langle p,a(x)\rangle
  &=
  \int_{\Omega_1} p(y)\,K_1(x,dy)
  =
  c_x\,K_1(x,\Omega_1)
  =
  c_x\,\rho_0(x)
  =
  p(x), \\
  u_x(a(x))
  &=
  K_1(x,N_1(x))
  =
  K_1(x,\Omega_1)
  =
  \rho_0(x).
\end{align*}
Thus \(a(x)\in\mathcal B(x,p)\) and attains the maximal affordable utility for agent \(x\). Hence no
\(z\in\mathcal B(x,p)\) satisfies \(z\succ_x a(x)\).

The proof for \(y\in S_1^{\mathrm{ac}}\) is symmetric. Set \(d_y:=(1+\rho_1(y))^{-1}\). Because
\(y\notin M_1\), one has \(p(y)=\widetilde p(y)=\rho_1(y)d_y\). For every \(x\in N_0(y)\), one has
\(p(x)\ge d_y\): if \(x\in M_0\), then \(p(x)=2\ge d_y\); otherwise \(x\notin R_0\), so
\(x\in S_0^{\mathrm{ac}}\cup S_0^\perp\), and \(x\in S_0^\perp\) would imply
\(y\in N_1(x)\subseteq N_1(S_0^\perp)\subseteq M_1\), a contradiction. Hence
\(x\in S_0^{\mathrm{ac}}\), and because \(x\notin M_0\),
\[
  \rho_0(x)\rho_1(y)\ge 1.
\]
Therefore
\[
  p(x)=\widetilde p(x)=\frac{\rho_0(x)}{1+\rho_0(x)}
  \ge
  \frac{1}{1+\rho_1(y)}
  =
  d_y.
\]
Thus every affordable \(z=z_0\oplus z_1\in\mathcal B(y,p)\) satisfies
\[
  u_y(z)=z_0(N_0(y))
  \le d_y^{-1}\langle p,z\rangle
  \le d_y^{-1}p(y)
  =\rho_1(y).
\]
Meanwhile \(K_0(y,\cdot)\) is carried by \(N_0(y)\cap S_0^{\mathrm{ac}}\setminus M_0\) and
\(\rho_0(x)\rho_1(y)=1\) for \(K_0(y,\cdot)\)-a.e.\ \(x\), so
\(p(x)=\widetilde p(x)=\rho_0(x)/(1+\rho_0(x))=d_y\) for \(K_0(y,\cdot)\)-a.e.\ \(x\). Using
\(K_0(y,\Omega_0)=\rho_1(y)\) and \(K_0(y,\Omega_0\setminus N_0(y))=0\), we get
\begin{align*}
  \langle p,a(y)\rangle
  &=
  \int_{\Omega_0} p(x)\,K_0(y,dx)
  =
  d_y\,K_0(y,\Omega_0)
  =
  d_y\,\rho_1(y)
  =
  p(y), \\
  u_y(a(y))
  &=
  K_0(y,N_0(y))
  =
  K_0(y,\Omega_0)
  =
  \rho_1(y).
\end{align*}
So \(a(y)\in\mathcal B(y,p)\) and maximizes utility for agent \(y\).

\paragraph{Conclusion.}
The four sets \(S_0^{\mathrm{ac}}, S_0^\perp, S_1^{\mathrm{ac}}, S_1^\perp\) cover
\(T=\Omega_0\sqcup\Omega_1\) up to a \(\mu\)-null set. On each of these sets we have shown that the
assigned bundle is affordable and utility-maximizing. Together with the feasibility and price
properties established above, this proves that \((a,p)\) satisfies the conditions of
Definition~\ref{defn:walras_equilibrium_dual_pair}. Hence \((a,p)\) is a Walrasian equilibrium of the
induced continuum economy \(\mathfrak E_{\mathcal E}\).
\end{proof}

The following remark shows that the theorem does not require the two refinements to be proportional responses to one another.

\begin{remark}[Mutual proportional response is not needed]
\label{rem:pr_not_needed_for_walras}
The hypotheses of Theorem~\ref{thm:refinement_pair_construction_walras} do not require
\(\pi_0\) and \(\pi_1\) to be proportional responses to each other, and this is genuinely weaker.

Consider the finite example
\[
  \Omega_0=\{x_1,x_2\},\qquad
  \Omega_1=\{y_1,y_2\},
\]
with discrete \(\sigma\)-algebras, unit vertex measures
\(\nu_0(x_1)=\nu_0(x_2)=1\) and \(\nu_1(y_1)=\nu_1(y_2)=1\), and complete bipartite relation
\(\mathcal E=\Omega_0\times\Omega_1\). Define
\[
  \pi_0=
  \begin{pmatrix}
    1 & 0\\
    0 & 1
  \end{pmatrix},
  \qquad
  \pi_1=
  \begin{pmatrix}
    \frac14 & \frac34\\[0.3em]
    \frac34 & \frac14
  \end{pmatrix}.
\]
Then \(\pi_0\) is a refinement from side \(0\), \(\pi_1\) is a refinement from side \(1\), and both are
level-optimal maximin. Indeed, for the complete bipartite relation,
\[
  \Fit_0(t)=\Fit_1(t)=\min\{2,2t\}=2\min\{1,t\},
\]
while both opposite marginals are equal to \((1,1)\), so
\[
  P_t^{(\pi_0)}(\Omega_1)=P_t^{(\pi_1)}(\Omega_0)=2\min\{1,t\}.
\]

However, \(\pi_0\) and \(\pi_1\) are not proportional responses to each other. Since the payload of
\(\pi_0\) on \(\Omega_1\) is identically \(1\), one has \(\PR_1(\pi_0)=\pi_0\neq \pi_1\). Likewise,
the payload of \(\pi_1\) on \(\Omega_0\) is identically \(1\), so \(\PR_0(\pi_1)=\pi_1\neq \pi_0\).

Nevertheless, Construction~\ref{const:allocation_price_from_refinement_pair} still yields a
Walrasian equilibrium. In this example, \(\rho_0^{\mathrm{ac}}\equiv 1\) and
\(\rho_1^{\mathrm{ac}}\equiv 1\), so the constructed price is \(p\equiv \frac12\). The induced
allocation is
\[
  a(x_1)=0\oplus\Bigl(\frac14\delta_{y_1}+\frac34\delta_{y_2}\Bigr),
  \qquad
  a(x_2)=0\oplus\Bigl(\frac34\delta_{y_1}+\frac14\delta_{y_2}\Bigr),
\]
\[
  a(y_1)=\delta_{x_1}\oplus 0,
  \qquad
  a(y_2)=\delta_{x_2}\oplus 0.
\]
This allocation is feasible. Moreover, every agent faces the constant price \(1/2\) and values only
total mass on the opposite side. Hence every affordable bundle has utility at most \(1\), while each
assigned bundle attains utility \(1\). Therefore \((a,p)\) is a Walrasian equilibrium.

Thus mutual proportional response is not needed for the conclusion of
Theorem~\ref{thm:refinement_pair_construction_walras}.
\end{remark}

\subsection{From Walrasian Equilibrium to Level-Optimal Maximin Pairs}
\label{subsec:equilibrium_to_level_optimal_pairs}

The preceding subsection showed how a level-optimal maximin pair gives rise to a Walrasian equilibrium. We now begin the converse passage. Starting from a feasible allocation in the induced continuum economy, we extract a pair of graph-constrained refinements by retaining the graph-compatible part of the traded mass and assigning the remaining mass through measurable fallback kernels. The point of the construction is only to recover feasible refinements; the level-optimal maximin property will be established afterward.

\begin{construction}[Extracting a refinement pair from a feasible allocation]
\label{const:extract_refinement_from_allocation}
Work under Assumption~\ref{assump:closed_polish_graph_nonempty_nbhd} and in the setting of
Subsection~\ref{subsec:graph_to_continuum_economy}. Thus
\(T=\Omega_0\sqcup\Omega_1\), \(L_+=\mathcal M_+(T)\), and \(\mathfrak E_{\mathcal E}\) is the induced
continuum economy.

Let \(a:T\to \mathcal M_+(T)\) be a feasible allocation. For each \(t\in T\), write
\(a(t)=a_0(t)\oplus a_1(t)\), where \(a_\iota(t)\in \mathcal M_+(\Omega_\iota)\). Equivalently, define
finite kernels \(K_\iota:T\times\Sigma_\iota\to[0,\infty)\) by
\[
  K_\iota(t,A):=a_\iota(t)(A),
  \qquad
  \iota\in\mathbb B.
\]

We construct refinements \(\pi_\iota\in\Pi_\iota^{\mathcal E}\), \(\iota\in\mathbb B\).

\paragraph{Fallback kernels.}
By Assumption~\ref{assump:closed_polish_graph_nonempty_nbhd}, every neighborhood is nonempty. Since
\((\Omega_0,\Sigma_0)\) and \((\Omega_1,\Sigma_1)\) are standard Borel and \(\mathcal E\) is measurable,
the Kuratowski--Ryll-Nardzewski measurable selection theorem \cite{KuratowskiRyllNardzewski1965}
yields measurable probability kernels
\[
  G_0^{\mathrm{fb}}:\Omega_0\times\Sigma_1\to[0,1],
  \qquad
  G_1^{\mathrm{fb}}:\Omega_1\times\Sigma_0\to[0,1],
\]
such that \(G_0^{\mathrm{fb}}(x,N_1(x))=1\) for every \(x\in\Omega_0\), and
\(G_1^{\mathrm{fb}}(y,N_0(y))=1\) for every \(y\in\Omega_1\).

\paragraph{Extraction of the side-\(0\) refinement.}
Define a measure \(\widetilde\pi_0\) on \(\Omega_0\times\Omega_1\) by
\[
  \widetilde\pi_0(A\times B)
  :=
  \int_B K_0\bigl(y,A\cap N_0(y)\bigr)\,\nu_1(dy),
  \qquad
  A\in\Sigma_0,\ B\in\Sigma_1.
\]
Thus \(\widetilde\pi_0\) records the \(\Omega_0\)-commodity consumed by agents in \(\Omega_1\), after
restricting to graph-compatible pairs.

Let \(\widetilde\nu_0:=(\pr_0)_\#\widetilde\pi_0\). Since \(a\) is feasible, for every
\(A\in\Sigma_0\),
\[
  \widetilde\nu_0(A)
  =
  \int_{\Omega_1} K_0\bigl(y,A\cap N_0(y)\bigr)\,\nu_1(dy)
  \le
  \int_{\Omega_1} K_0(y,A)\,\nu_1(dy)
  \le
  \int_T a_t(A)\,\mu(dt)
  =
  \nu_0(A).
\]
Hence \(\widetilde\nu_0\le \nu_0\).

Now define \(\pi_0\) on \(\Omega_0\times\Omega_1\) by
\[
  \pi_0(A\times B)
  :=
  \widetilde\pi_0(A\times B)
  +
  \int_A G_0^{\mathrm{fb}}(x,B)\,(\nu_0-\widetilde\nu_0)(dx).
\]
Then \(\pi_0\) is concentrated on \(\mathcal E\), and
\[
  (\pr_0)_\#\pi_0
  =
  \widetilde\nu_0+(\nu_0-\widetilde\nu_0)
  =
  \nu_0.
\]
Thus \(\pi_0\in\Pi_0^{\mathcal E}\).

\paragraph{Extraction of the side-\(1\) refinement.}
Define a measure \(\widetilde\pi_1\) on \(\Omega_0\times\Omega_1\) by
\[
  \widetilde\pi_1(A\times B)
  :=
  \int_A K_1\bigl(x,B\cap N_1(x)\bigr)\,\nu_0(dx),
  \qquad
  A\in\Sigma_0,\ B\in\Sigma_1.
\]
Let \(\widetilde\nu_1:=(\pr_1)_\#\widetilde\pi_1\). Again feasibility gives \(\widetilde\nu_1\le \nu_1\),
since for every \(B\in\Sigma_1\),
\[
  \widetilde\nu_1(B)
  =
  \int_{\Omega_0} K_1\bigl(x,B\cap N_1(x)\bigr)\,\nu_0(dx)
  \le
  \int_{\Omega_0} K_1(x,B)\,\nu_0(dx)
  \le
  \int_T a_t(B)\,\mu(dt)
  =
  \nu_1(B).
\]

Define \(\pi_1\) on \(\Omega_0\times\Omega_1\) by
\[
  \pi_1(A\times B)
  :=
  \widetilde\pi_1(A\times B)
  +
  \int_B G_1^{\mathrm{fb}}(y,A)\,(\nu_1-\widetilde\nu_1)(dy).
\]
Then \(\pi_1\) is concentrated on \(\mathcal E\), and
\[
  (\pr_1)_\#\pi_1
  =
  \widetilde\nu_1+(\nu_1-\widetilde\nu_1)
  =
  \nu_1.
\]
Thus \(\pi_1\in\Pi_1^{\mathcal E}\).

\paragraph{Output.}
The construction returns the pair \((\pi_0,\pi_1)\in \Pi^{\mathcal E}(\nu_0,\nu_1)\).
\end{construction}

The extracted refinement pair is defined from an arbitrary feasible allocation. The next lemma records the additional structure forced by the Walrasian optimality conditions. In particular, positive prices determine the relevant part of the graph, confine the fallback term to zero-price goods, and impose the reciprocal relation on the genuinely traded mass.

\begin{lemma}[Structure of the refinement extracted from a Walrasian equilibrium]
\label{lem:walras_extracted_refinement_structure}
Work under Assumption~\ref{assump:closed_polish_graph_nonempty_nbhd} and in the setting of
Subsection~\ref{subsec:graph_to_continuum_economy}.

Let \((a,p)\) be a Walrasian equilibrium of the induced continuum economy \(\mathfrak E_{\mathcal E}\), and let
\(\pi_1=\widetilde\pi_1+\pi_1^{\mathrm{fb}}\) be the side-\(1\) refinement extracted from \(a\) by
Construction~\ref{const:extract_refinement_from_allocation}.

Let \(X_0\subseteq\Omega_0\) and \(X_1\subseteq\Omega_1\) be measurable full-measure sets on which the
individual optimality conclusion of Definition~\ref{defn:walras_equilibrium_dual_pair} holds. Set
\[
  S_0:=\{x\in\Omega_0:p(x)>0\},
  \qquad
  S_1:=\{y\in\Omega_1:p(y)>0\},
\]
and define \(\rho_0(x):=a_1(x)(N_1(x))\) for \(x\in\Omega_0\) and
\(\rho_1(y):=a_0(y)(N_0(y))\) for \(y\in\Omega_1\).

Recall that
\[
  \widetilde\pi_1(A\times B)
  :=
  \int_A a_1(x)\bigl(B\cap N_1(x)\bigr)\,\nu_0(dx),
  \qquad
  \widetilde\nu_1:=(\pr_1)_\#\widetilde\pi_1,
\]
and that \(\pi_1^{\mathrm{fb}}=(\nu_1-\widetilde\nu_1)\kprod G_1^{\mathrm{fb}}\).

Then:

\begin{enumerate}[(i)]
\item \textbf{Positive-price graph structure.}
One has
\[
  N_0(S_1^\c)\cap X_0=\emptyset,
  \qquad
  N_1(S_0^\c)\cap X_1=\emptyset.
\]
Moreover, if \((x,y)\in \mathcal E\cap (X_0\cap S_0)\times (X_1\cap S_1)\), then
\[
  \rho_0(x)\rho_1(y)\ge 1.
\]

\item \textbf{The fallback part comes only from zero-price goods.}
For every measurable \(B\in\Sigma_1\) with \(B\subseteq S_1\), one has
\[
  \widetilde\nu_1(B)=\nu_1(B).
\]
Equivalently, \(\nu_1-\widetilde\nu_1\) is concentrated on \(S_1^\c\). Consequently, for any choice of
the probability kernel \(G_1^{\mathrm{fb}}\),
\[
  (\pr_0)_\#\pi_1^{\mathrm{fb}} \perp \nu_0.
\]

\item \textbf{Reciprocal identity on the carrier of the traded part.}
For \(\widetilde\pi_1\)-a.e.\ \((x,y)\in\Omega_0\times\Omega_1\),
\[
  \rho_0(x)\rho_1(y)=1.
\]
\end{enumerate}
\end{lemma}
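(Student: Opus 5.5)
The plan is to derive everything from budget-constrained utility maximization together with the feasibility identity $\int_T a_t(B)\,\mu(dt)=\mu(B)$. Throughout I use that $a_t\in X(t)$ and that agents on the opposite side of a $y$-bundle are costed at the prices $p(x)$.

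\textbf{Part (i).} Fix $x\in X_0$ and suppose there is a $y\in N_1(x)$ with $p(y)=0$.
\begin{itemize}
\item The bundle $a(x)+c\,\delta_y$, for $c>0$, lies in $X(x)$ and costs the same as $a(x)$, so it is affordable.
\item Its utility is $u_x(a(x))+c>u_x(a(x))$, which contradicts optimality.
\item Hence no $x\in X_0$ lies in $N_0(S_1^\c)$. The statement $N_1(S_0^\c)\cap X_1=\emptyset$ is symmetric.
\end{itemize}
For the reciprocal inequality, take $x\in X_0\cap S_0$. Optimality means $\rho_0(x)=u_x(a(x))$ is the maximal utility attainable within the budget $p(x)$.
\begin{itemize}
\item Buying only $\delta_y$ at price $p(y)>0$ gives utility $p(x)/p(y)$, so $\rho_0(x)\ge p(x)/p(y)$.
\item Symmetrically, for $y\in X_1\cap S_1$ with $x\in N_0(y)$, we get $\rho_1(y)\ge p(y)/p(x)$.
\item Multiplying the two bounds gives $\rho_0(x)\rho_1(y)\ge 1$.
\end{itemize}

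\textbf{Part (ii).} The key input is a budget-exhaustion inequality. For $\mu$-a.e. $t$, affordability gives $\langle p,a(t)\rangle\le p(t)$. Integrating and using feasibility,
\[
\int_T \langle p,a(t)\rangle\,\mu(dt)=\langle p,\mu\rangle=\int_T p(t)\,\mu(dt).
\]
So equality $\langle p,a(t)\rangle=p(t)$ holds $\mu$-a.e.

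Next I show that expenditure off the neighbourhood is wasted. For $x\in X_0\cap S_0$, any mass of $a(x)$ placed at positive-price points outside $N_1(x)$ can be redirected to a point of $N_1(x)$. Such a point exists by nonemptiness, and by part (i) it has positive price, so the redirection strictly raises utility. Optimality therefore forces $p\cdot a_1(x)$ to be carried by $N_1(x)$.

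Now take $B\subseteq S_1$. Feasibility gives
\[
\nu_1(B)=\int_T a_t(B)\,\mu(dt),
\]
and I will show that all of this mass comes from $\widetilde\nu_1(B)$. Three contributions have to be ruled out.
\begin{itemize}
\item \emph{Retained self-mass of $y\in S_1$.} The utility-null, positive-price mass $a(y)(\{y\})$ could be sold off and reinvested in $N_0(y)$, which has positive price by part (i). Optimality therefore forces this mass to be zero.
\item \emph{Mass bought outside the neighbourhood by $x\in S_0$.} This is excluded by the redirection argument above.
\item \emph{Mass bought by $x\in S_0^\c$.} Such agents have budget $p(x)=0$, so they cannot buy anything of positive price.
\end{itemize}
Combining these, $\nu_1(B)=\widetilde\nu_1(B)$.

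The conclusion $(\pr_0)_\#\pi_1^{\mathrm{fb}}\perp\nu_0$ follows from part (i). The fallback term $\pi_1^{\mathrm{fb}}$ is carried by $S_1^\c$, so its $\Omega_0$-marginal lies in $N_0(S_1^\c)$, which is disjoint from the full-measure set $X_0$.

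\textbf{Part (iii).} By part (ii), $\widetilde\pi_1$ gives no mass to $S_1^\c$ beyond what zero-price sellers can absorb, and I will restrict to $x\in S_0$, $y\in S_1$. On this set, budget exhaustion combined with the price lower bound from part (i) gives
\[
p(x)=\int p(y)\,a_1(x)(dy)\ge \frac{\rho_0(x)}{\rho_0(x)}\cdots
\]
The precise approach is as follows.
\begin{itemize}
\item Utility maximization says $a_1(x)$ is carried by the cheapest neighbours, namely those $y$ with $p(y)=\min$. Indeed, if positive mass sat on a pricier neighbour, shifting it to a cheaper one would raise utility, since utility counts mass and cheaper neighbours give more mass per unit of budget.
\item On the support of $a_1(x)$, this gives $p(x)=\rho_0(x)\,p(y)$, so $\rho_0(x)=p(x)/p(y)$.
\item Symmetrically, feasibility shows that the $\Omega_0$-mass held by $y$ is $\rho_1(y)=p(y)/p(x')$ for $x'$ in its support.
\end{itemize}

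The main obstacle is matching these two supports: I need to show that for $\widetilde\pi_1$-a.e. $(x,y)$ the seller $y$ also buys only at price $p(x)$, i.e.\ that $p(y)\rho_1(y)=p(y)\cdot p(y)/p(x)$. I plan to get this by a value-balance integral. Integrate $\rho_0(x)\rho_1(y)-1\ge0$ (from part (i)) against $\widetilde\pi_1$, and evaluate both $\int\rho_1\,d\widetilde\nu_1$ and $\int p\,d\widetilde\nu_1$ via feasibility and budget exhaustion. This should show the integral is zero, which forces equality a.e.

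The zero-price pieces ($x\in S_0^\c$, or $\widetilde\pi_1$-mass landing in $S_1^\c$) need separate handling. I expect to exclude them using part (i), since $N_1(S_0^\c)$ misses $X_1$, together with the observation that zero-price agents consume no positive-price mass. I expect this bookkeeping on zero-price agents to be the most delicate step.
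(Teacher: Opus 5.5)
Parts (i) and (ii) are sound. Testing the single affordable bundle \((p(x)/p(y))\,\delta_y\) is in fact a more direct route to \(\rho_0(x)\rho_1(y)\ge 1\) than the paper's. The paper first proves that \(c_1(x)=\inf_{N_1(x)}p\) is attained and that \(p(x)=\rho_0(x)c_1(x)\).

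The gap is in part (iii), at exactly the step you leave as ``this should show the integral is zero.'' The unweighted integral \(\int(\rho_0(x)\rho_1(y)-1)\,\widetilde\pi_1(dx\,dy)\) cannot be computed from what you have. The term \(\int\rho_0(x)\rho_1(y)\,\widetilde\pi_1(dx\,dy)\) integrates a product of a function of \(x\) and a function of \(y\) against a coupling. It is therefore not determined by the marginals \((\pr_0)_\#\widetilde\pi_1=\rho_0\,\nu_0\) and \((\pr_1)_\#\widetilde\pi_1=\nu_1\!\restriction S_1\). Substituting \(\rho_0(x)=p(x)/p(y)\) on the support only turns it into \(\int p(x)/c_0(y)\,\widetilde\pi_1(dx\,dy)\), with \(c_0(y):=\inf_{N_0(y)}p\). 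That is still coupled, and knowing its value is equivalent to what you are trying to prove.

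The price-value balance \(\int p\,d\widetilde\nu_1\) does not help either. It only yields \(\int_{S_0}p\,d\nu_0=\int_{S_1}p\,d\nu_1\), which says nothing about reciprocity. Conversely, the zero-price bookkeeping you flag as most delicate is routine. The inclusion \(N_1(x)\subseteq S_1\) for \(x\in X_0\) gives \(\widetilde\pi_1(\Omega_0\times S_1^\c)=0\), and zero budget gives \(\rho_0=0\) on \(X_0\cap S_0^\c\).

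The missing idea is the right normalisation. Integrate the nonnegative function \(\rho_1(y)-1/\rho_0(x)\) against \(\widetilde\pi_1\), i.e.\ weight your integrand by \(1/\rho_0(x)\). Equivalently, use the probability kernel \(\Gamma\) in \(\widetilde\pi_1=(\rho_0\nu_0)\kprod\Gamma\); this is what the paper's auxiliary measure \(M\) and function \(\Phi\) encode.

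Nonnegativity holds \(\widetilde\pi_1\)-a.e. by your part (i). Indeed, \(\widetilde\pi_1\) is carried by pairs \((x,y)\in\mathcal E\) with \(x\in X_0\cap S_0\) and \(y\in X_1\cap S_1\), and there \(\rho_0(x)\ge p(x)/p(y)>0\).

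Now each term involves one variable only. First, \(\int\rho_0(x)^{-1}\,\widetilde\pi_1(dx\,dy)=\nu_0(\{\rho_0>0\})=\nu_0(S_0)\). Second, using \(\rho_1=0\) on \(X_1\cap S_1^\c\), one has \(\int\rho_1\,d\widetilde\nu_1=\int_{S_1}\rho_1\,d\nu_1=\int\rho_1\,d\nu_1=\widetilde\nu_0(\Omega_0)=\nu_0(S_0)\). The last step is the side-\(0\) analogue of your part (ii), namely \((\pr_0)_\#\widetilde\pi_0=\nu_0\!\restriction S_0\). So the relevant identity is a mass balance for the reverse trade, not a value balance.

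The integral therefore vanishes, giving \(\rho_0\rho_1=1\) \(\widetilde\pi_1\)-a.e. With this argument, your cheapest-neighbour and support-matching analysis becomes unnecessary.
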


\begin{proof}
We begin by isolating the pointwise consequences of Walrasian optimality that will be used repeatedly.

\paragraph{Preliminary claim.}
For \(x\in X_0\), let \(c_1(x):=\inf_{z\in N_1(x)}p(z)\). Then:

\begin{enumerate}[(a)]
\item \(N_1(x)\subseteq S_1\).

\item If \(x\in S_0\), then \(c_1(x)>0\), the infimum is attained, and if
\[
  M_x:=\{z\in N_1(x):p(z)=c_1(x)\},
\]
then
\[
  a_1(x)\bigl(N_1(x)\setminus M_x\bigr)=0,
  \qquad
  a_1(x)\bigl((\Omega_1\setminus N_1(x))\cap S_1\bigr)=0,
  \qquad
  a_0(x)(S_0)=0,
\]
and
\[
  p(x)=\rho_0(x)c_1(x).
\]

\item If \(x\in S_0^\c\), then \(a_1(x)(S_1)=0\) and \(a_0(x)(S_0)=0\), hence
\[
  \rho_0(x)=0.
\]
\end{enumerate}

The symmetric statements hold for \(y\in X_1\): if \(c_0(y):=\inf_{w\in N_0(y)}p(w)\), then
\(N_0(y)\subseteq S_0\); if \(y\in S_1\), then \(c_0(y)>0\), the infimum is attained, and with
\(M_y:=\{w\in N_0(y):p(w)=c_0(y)\}\) one has
\[
  a_0(y)\bigl(N_0(y)\setminus M_y\bigr)=0,
  \qquad
  a_0(y)\bigl((\Omega_0\setminus N_0(y))\cap S_0\bigr)=0,
  \qquad
  a_1(y)(S_1)=0,
\]
together with \(p(y)=\rho_1(y)c_0(y)\); and if \(y\in S_1^\c\), then
\(a_0(y)(S_0)=0\), \(a_1(y)(S_1)=0\), and hence \(\rho_1(y)=0\).

\emph{Proof of the claim for \(x\in X_0\).}
Since \(x\in X_0\), the bundle \(a(x)\) is utility-maximizing in
\[
  \mathcal B(x,p)=\{z=z_0\oplus z_1\in L_+:\langle p,z\rangle\le p(x)\},
\]
and the utility is \(u_x(z)=z_1(N_1(x))\).

If \(y\in N_1(x)\) and \(p(y)=0\), then \(M\delta_y\in\mathcal B(x,p)\) and
\(u_x(M\delta_y)=M\) for every \(M>0\), contradicting optimality. Thus \(N_1(x)\subseteq S_1\).

Assume next that \(x\in S_0\). If \(c_1(x)=0\), choose \(z_n\in N_1(x)\) with
\(p(z_n)<p(x)/n\). Then \(n\delta_{z_n}\in\mathcal B(x,p)\) and \(u_x(n\delta_{z_n})=n\), again
contradicting optimality. Hence \(c_1(x)>0\).

If the infimum were not attained, then every \(z\in N_1(x)\) would satisfy \(p(z)>c_1(x)\). Hence
every feasible utility level \(U=z_1(N_1(x))\) would satisfy
\[
  \int_{N_1(x)}p(w)\,z_1(dw)>c_1(x)\,U,
\]
so necessarily \(U<p(x)/c_1(x)\). On the other hand, if \(z_n\in N_1(x)\) satisfies
\(p(z_n)\downarrow c_1(x)\), then the bundles \((p(x)/p(z_n))\delta_{z_n}\) are feasible and their
utilities tend to \(p(x)/c_1(x)\). Thus the supremum utility is not attained, contradiction. So the
infimum is attained.

Fix \(z^\star\in M_x\). Suppose one of the three quantities
\[
  a_1(x)\bigl(N_1(x)\setminus M_x\bigr),\qquad
  a_1(x)\bigl((\Omega_1\setminus N_1(x))\cap S_1\bigr),\qquad
  a_0(x)(S_0)
\]
is positive. Then the positive-price part of \(a(x)\) has cost strictly larger than
\(\rho_0(x)c_1(x)\), whereas \(\rho_0(x)\delta_{z^\star}\) has the same utility \(\rho_0(x)\) and
cost exactly \(\rho_0(x)c_1(x)\). Replacing the positive-price part of \(a(x)\) by
\(\rho_0(x)\delta_{z^\star}\) strictly lowers cost while preserving utility. Since \(c_1(x)>0\), the
saved budget can be used to buy additional mass at \(z^\star\), producing a strictly preferred
affordable bundle, again contradicting optimality. Hence all three masses vanish, and therefore
\(p(x)=\rho_0(x)c_1(x)\).

Finally, if \(x\in S_0^\c\), then \(p(x)=0\). Since every point of \(S_1\) has strictly positive
price, affordability of \(a(x)\) implies \(a_1(x)(S_1)=0\) and \(a_0(x)(S_0)=0\). Because
\(N_1(x)\subseteq S_1\), it follows that
\[
  \rho_0(x)=a_1(x)\bigl(N_1(x)\bigr)=0.
\]
This proves the claim for \(x\). The proof for \(y\in X_1\) is symmetric.
\hfill\(\diamond\)

\paragraph{Part (i): positive-price graph structure.}
If \(x\in X_0\) and \(y\in N_1(x)\), then the claim gives \(y\in S_1\). Equivalently,
\[
  N_0(S_1^\c)\cap X_0=\emptyset.
\]
By symmetry,
\[
  N_1(S_0^\c)\cap X_1=\emptyset.
\]

Now let \((x,y)\in \mathcal E\cap (X_0\cap S_0)\times (X_1\cap S_1)\). Since \(y\in N_1(x)\), one
has \(c_1(x)\le p(y)\), and since \(x\in N_0(y)\), one has \(c_0(y)\le p(x)\). Using the claim,
\[
  p(x)=\rho_0(x)c_1(x)\le \rho_0(x)p(y),
  \qquad
  p(y)=\rho_1(y)c_0(y)\le \rho_1(y)p(x).
\]
Multiplying yields \(\rho_0(x)\rho_1(y)\ge 1\).

\paragraph{Part (ii): the fallback part comes only from zero-price goods.}
Let \(B\in\Sigma_1\) with \(B\subseteq S_1\). Feasibility gives
\[
  \nu_1(B)
  =
  \int_{\Omega_0} a_1(x)(B)\,\nu_0(dx)
  +
  \int_{\Omega_1} a_1(y)(B)\,\nu_1(dy).
\]
For \(y\in X_1\), the claim gives \(a_1(y)(S_1)=0\), hence \(a_1(y)(B)=0\). Since
\(\nu_1(\Omega_1\setminus X_1)=0\), the second integral vanishes.

For \(x\in X_0\), if \(x\in S_0\), then the claim gives
\(a_1(x)((\Omega_1\setminus N_1(x))\cap S_1)=0\); if \(x\in S_0^\c\), then it gives
\(a_1(x)(S_1)=0\). Since \(B\subseteq S_1\), in either case
\(a_1(x)(B)=a_1(x)(B\cap N_1(x))\). Using \(\nu_0(\Omega_0\setminus X_0)=0\), we therefore obtain
\[
  \int_{\Omega_0} a_1(x)(B)\,\nu_0(dx)
  =
  \int_{\Omega_0} a_1(x)(B\cap N_1(x))\,\nu_0(dx)
  =
  \widetilde\nu_1(B).
\]
Hence \(\widetilde\nu_1(B)=\nu_1(B)\) for every measurable \(B\subseteq S_1\). Equivalently,
\(\nu_1-\widetilde\nu_1\) is concentrated on \(S_1^\c\).

Now
\[
  \pi_1^{\mathrm{fb}}=(\nu_1-\widetilde\nu_1)\kprod G_1^{\mathrm{fb}}
\]
is concentrated on \(\{(x,y):y\in S_1^\c,\ x\in N_0(y)\}\). By part~(i),
\(N_0(S_1^\c)\subseteq \Omega_0\setminus X_0\), and \(\Omega_0\setminus X_0\) is \(\nu_0\)-null.
Therefore \((\pr_0)_\#\pi_1^{\mathrm{fb}}\) is carried by a \(\nu_0\)-null set, i.e.
\[
  (\pr_0)_\#\pi_1^{\mathrm{fb}}\perp \nu_0.
\]

\paragraph{Part (iii): reciprocal identity on the carrier of \(\widetilde\pi_1\).}
We first compute the marginals of the two intermediate plans. For measurable \(A\subseteq\Omega_0\),
\[
  (\pr_0)_\#\widetilde\pi_1(A)
  =
  \widetilde\pi_1(A\times\Omega_1)
  =
  \int_A a_1(x)\bigl(N_1(x)\bigr)\,\nu_0(dx)
  =
  \int_A \rho_0(x)\,\nu_0(dx),
\]
so \((\pr_0)_\#\widetilde\pi_1=\rho_0\,\nu_0\). Since \(p(x)=\rho_0(x)c_1(x)\) with \(c_1(x)>0\) on
\(X_0\cap S_0\), one has \(\rho_0(x)>0\) for \(\nu_0\)-a.e.\ \(x\in S_0\), while \(\rho_0=0\)
\(\nu_0\)-a.e.\ on \(S_0^\c\). Thus \((\pr_0)_\#\widetilde\pi_1\) is concentrated on \(S_0\).

Next, part~(ii) shows that \((\pr_1)_\#\widetilde\pi_1\) agrees with \(\nu_1\) on all measurable
subsets of \(S_1\). Also, for \(x\in X_0\), the claim gives \(N_1(x)\subseteq S_1\), hence
\(a_1(x)(S_1^\c\cap N_1(x))=0\). Since \(\nu_0(\Omega_0\setminus X_0)=0\),
\[
\begin{aligned}
  (\pr_1)_\#\widetilde\pi_1(S_1^\c)
  &=
  \widetilde\pi_1(\Omega_0\times S_1^\c) \\
  &=
  \int_{\Omega_0} a_1(x)\bigl(S_1^\c\cap N_1(x)\bigr)\,\nu_0(dx) \\
  &=0.
\end{aligned}
\]
Therefore \((\pr_1)_\#\widetilde\pi_1=\nu_1\!\restriction S_1\).

By the symmetric side-\(0\) analogue of part~(ii),
\[
  (\pr_0)_\#\widetilde\pi_0=\nu_0\!\restriction S_0,
\]
where
\[
  \widetilde\pi_0(A\times B):=\int_B a_0(y)\bigl(A\cap N_0(y)\bigr)\,\nu_1(dy).
\]
Also \((\pr_1)_\#\widetilde\pi_0=\rho_1\,\nu_1\), and symmetrically \(\rho_1(y)>0\) for
\(\nu_1\)-a.e.\ \(y\in S_1\), while \(\rho_1=0\) \(\nu_1\)-a.e.\ on \(S_1^\c\).

Since \((\Omega_0,\Sigma_0)\) and \((\Omega_1,\Sigma_1)\) are standard Borel, disintegration yields
probability kernels \(\Gamma:\Omega_0\times\Sigma_1\to[0,1]\) and
\(\Lambda:\Omega_1\times\Sigma_0\to[0,1]\) such that
\[
  \widetilde\pi_1=(\rho_0\,\nu_0)\kprod \Gamma,
  \qquad
  \widetilde\pi_0=(\rho_1\,\nu_1)\kprod \Lambda.
\]
Since \(\widetilde\pi_1(\mathcal E^\c)=0\), we may moreover choose \(\Gamma\) so that
\[
  \Gamma(x,N_1(x))=1
  \qquad
  \text{for }(\rho_0\nu_0)\text{-a.e. }x.
\]

Define a finite measure \(M\) on \(\Omega_0\times\Omega_0\) by
\[
  M(A\times C)
  :=
  \int_A \rho_0(x)
  \left(\int_{\Omega_1}\rho_1(y)\,\Lambda(y,C)\,\Gamma(x,dy)\right)\nu_0(dx).
\]
For measurable \(C\subseteq\Omega_0\),
\[
\begin{aligned}
  M(\Omega_0\times C)
  &=
  \int_{\Omega_1}\rho_1(y)\,\Lambda(y,C)\,(\pr_1)_\#\widetilde\pi_1(dy) \\
  &=
  \int_{S_1}\rho_1(y)\,\Lambda(y,C)\,\nu_1(dy) \\
  &=
  \int_{\Omega_1}\rho_1(y)\,\Lambda(y,C)\,\nu_1(dy) \\
  &=
  \widetilde\pi_0(C\times\Omega_1)
  =
  (\pr_0)_\#\widetilde\pi_0(C)
  =
  \nu_0(C\cap S_0).
\end{aligned}
\]
In particular,
\begin{equation}
\label{eq:M_total_mass_corrected_v2}
  M(\Omega_0\times\Omega_0)=\nu_0(S_0).
\end{equation}

On the other hand, for measurable \(A\subseteq\Omega_0\),
\[
  M(A\times\Omega_0)=\int_A \Phi(x)\,\nu_0(dx),
  \qquad
  \Phi(x):=\rho_0(x)\int_{\Omega_1}\rho_1(y)\,\Gamma(x,dy).
\]

We claim that \(\Phi\ge 1\) \(\nu_0\)-a.e.\ on \(S_0\). Since
\(\widetilde\pi_1((\Omega_0\setminus X_0)\times\Omega_1)=0\) and
\(\widetilde\pi_1(\Omega_0\times(\Omega_1\setminus X_1))=0\), one has \(\Gamma(x,X_1)=1\) for
\((\rho_0\nu_0)\)-a.e.\ \(x\in S_0\). Together with
\(\Gamma(x,N_1(x))=1\), this yields \(\Gamma(x,N_1(x)\cap X_1)=1\) for \(\nu_0\)-a.e.\ \(x\in S_0\),
because \(\rho_0>0\) \(\nu_0\)-a.e.\ on \(S_0\). If \(x\in S_0\cap X_0\) and
\(y\in N_1(x)\cap X_1\), then part~(i) gives \(\rho_0(x)\rho_1(y)\ge 1\). Therefore
\[
  \Phi(x)\ge 1
  \qquad\text{for }\nu_0\text{-a.e. }x\in S_0.
\]

Integrating and using \eqref{eq:M_total_mass_corrected_v2}, we obtain
\[
  \int_{S_0}\Phi(x)\,\nu_0(dx)
  =
  M(\Omega_0\times\Omega_0)
  =
  \nu_0(S_0)
  =
  \int_{S_0}1\,\nu_0(dx).
\]
Since \(\Phi\ge 1\) \(\nu_0\)-a.e.\ on \(S_0\), it follows that \(\Phi=1\) \(\nu_0\)-a.e.\ on \(S_0\).

Fix such an \(x\). Then the nonnegative function
\(y\mapsto \rho_0(x)\rho_1(y)-1\) has \(\Gamma(x,\cdot)\)-integral \(0\), so
\[
  \rho_0(x)\rho_1(y)=1
  \qquad\text{for }\Gamma(x,\cdot)\text{-a.e. }y.
\]
Since \(\widetilde\pi_1(dx\,dy)=\rho_0(x)\,\nu_0(dx)\,\Gamma(x,dy)\), we conclude that
\[
  \rho_0(x)\rho_1(y)=1
  \qquad\text{for }\widetilde\pi_1\text{-a.e. }(x,y).
\]
This proves part~(iii).
\end{proof}

The preceding lemma isolates the structural consequences of Walrasian optimality for the extracted refinement. These are exactly the ingredients needed to verify the maximin condition: the positive-price region determines the relevant traded part, the fallback term is confined to a singular component, and the traded mass satisfies the reciprocal relation. The theorem below shows that this structure is sufficient to recover a level-optimal maximin pair from equilibrium.

\begin{theorem}[Walrasian equilibrium yields a level-optimal maximin pair]
\label{thm:walras_to_level_opt_refinement_pair}
Work under Assumption~\ref{assump:closed_polish_graph_nonempty_nbhd}, in the setting of
Subsection~\ref{subsec:graph_to_continuum_economy}, and with the extraction procedure of
Construction~\ref{const:extract_refinement_from_allocation}.

If \((a,p)\) is a Walrasian equilibrium of the induced continuum economy
\(\mathfrak E_{\mathcal E}\), then the pair
\[
  (\pi_0,\pi_1)\in \Pi^{\mathcal E}(\nu_0,\nu_1)
\]
returned by Construction~\ref{const:extract_refinement_from_allocation} satisfies the following:
for each \(\iota\in\mathbb B\), the refinement \(\pi_\iota\) is level-optimal maximin from side
\(\iota\) to side \(\bar\iota\) in the sense of Definition~\ref{defn:level_opt_maximin}.
\end{theorem}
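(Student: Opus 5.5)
By symmetry it suffices to prove that \(\pi_1\) is level-optimal maximin from side \(1\) to side \(0\); the side-\(0\) statement follows by exchanging the roles of the two sides in Lemma~\ref{lem:walras_extracted_refinement_structure}. Write \(P:=(\pr_0)_\#\pi_1\). The easy inequality \(P_t^{(\pi_1)}(\Omega_0)\le\Fit_0(t)\) holds for every refinement, by Lemma~\ref{lem:overflow_decomposition}. So the task is the reverse inequality: an upper bound on \(\Fit_0(t)\) for each \(t>0\). I would obtain this bound by exhibiting an explicit cut and invoking the cut bound of Proposition~\ref{prop:measurable_maxflow_mincut_bipartite}, applied to the transposed relation \(\mathcal E_1\) with capacities \(a=\nu_1\) and \(b=t\nu_0\).

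\textbf{Step 1: identify the density.} By Lemma~\ref{lem:walras_extracted_refinement_structure}(ii), \((\pr_0)_\#\pi_1^{\mathrm{fb}}\perp\nu_0\). The computation in part~(iii) of that lemma gives \((\pr_0)_\#\widetilde\pi_1=\rho_0\nu_0\). Hence \(dP^{\ac}/d\nu_0=\rho_0\) and \(P^\perp=(\pr_0)_\#\pi_1^{\mathrm{fb}}\). Moreover:
\begin{itemize}
\item \((\pr_1)_\#\widetilde\pi_1=\nu_1\restriction S_1\);
\item \(\rho_0>0\) holds a.e.\ on \(S_0\), and \(\rho_0=0\) holds a.e.\ on \(S_0^\c\);
\item \(\rho_0(x)\rho_1(y)=1\) holds \(\widetilde\pi_1\)-a.e., by part~(iii);
\item \(\rho_0\rho_1\ge1\) holds on edges of \((X_0\cap S_0)\times(X_1\cap S_1)\), by part~(i).
\end{itemize}

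\textbf{Step 2: choose the cut.} Fix \(t>0\). Take the side-\(1\) set \(C:=\{y\in X_1\cap S_1:\rho_1(y)\ge 1/t\}\), together with the null set \(\Omega_1\setminus X_1\) (which is harmless). Every \(x\in X_0\cap N_0(C^\c)\) has a neighbour \(y\) of one of two kinds.
\begin{itemize}
\item If \(y\in S_1^\c\cap X_1\), then \(x\in N_0(S_1^\c)\), which is \(\nu_0\)-null by part~(i).
\item If \(y\in X_1\cap S_1\) with \(\rho_1(y)<1/t\), then \(x\in S_0\) and \(\rho_0(x)>t\), by the edge inequality. Here \(x\in S_0\) holds because \(N_1(x)\subseteq S_1\) forces \(x\) into \(S_0\) whenever \(x\in X_0\cap N_0(S_1)\); this follows from the symmetric claim \(N_0(y)\subseteq S_0\) for \(y\in X_1\).
\end{itemize}
Hence, up to a \(\nu_0\)-null set, \(N_0(C^\c)\subseteq\{\rho_0>t\}\). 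The cut bound then gives
\[
\Fit_0(t)\le \nu_1(C)+t\,\nu_0(\{\rho_0>t\}).
\]

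\textbf{Step 3: compare with \(P_t^{(\pi_1)}(\Omega_0)\).} This is the main obstacle: showing
\[
\nu_1(C)=\int_{\{\rho_0\le t\}}\rho_0\,d\nu_0.
\]
Use \(\widetilde\pi_1\), whose \(\Omega_1\)-marginal is \(\nu_1\restriction S_1\) and whose \(\Omega_0\)-marginal is \(\rho_0\nu_0\). By the reciprocal identity, \(\widetilde\pi_1\)-a.e.\ one has \(\rho_1(y)\ge1/t\) if and only if \(\rho_0(x)\le t\). Therefore
\[
\nu_1(C)=\widetilde\pi_1(\Omega_0\times C)=\widetilde\pi_1(\{\rho_0\le t\}\times\Omega_1)=\int_{\{\rho_0\le t\}}\rho_0\,d\nu_0.
\]
Adding the two contributions gives \(\Fit_0(t)\le\int(\rho_0\wedge t)\,d\nu_0=P_t^{(\pi_1)}(\Omega_0)\), which is the required identity~\eqref{eq:global_tail_optimality_exact}. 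The care needed lies in the null-set bookkeeping. One must ensure that \(C\) is measurable (it is, as a level set of measurable functions intersected with measurable sets). One must also ensure that the analytic set \(N_0(C^\c)\) is covered up to a \(\nu_0\)-null set, which is enough because the cut formula uses the completion.
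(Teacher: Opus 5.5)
Your proof is correct and is essentially the paper's argument: both feed parts (i)--(iii) of Lemma~\ref{lem:walras_extracted_refinement_structure} into the cut bound of Proposition~\ref{prop:measurable_maxflow_mincut_bipartite}. The paper cuts on side~$0$: it takes $A_t=\{x\in X_0:\rho_0(x)\le t\}$ and cut set $\Omega_0\setminus A_t$, then bounds $\nu_1(N_1(A_t))\le\nu_1(B_t)\le\int_{A_t}\rho_0\,d\nu_0$, where $B_t$ is your $C$ minus the null set $\Omega_1\setminus X_1$; you instead cut on side~$1$ with the contrapositively equivalent set, and you take the easy inequality from Lemma~\ref{lem:overflow_decomposition} rather than from the paper's explicit plan $\tau_t$. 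Your justification that the neighbour $x$ lies in $S_0$ is right only in its second half: the reason is $N_1(S_0^\c)\cap X_1=\emptyset$ applied at $y\in X_1$, and the phrase about $N_1(x)\subseteq S_1$ plays no role. With that fix, it spells out a step the paper uses silently when it applies $\rho_0\rho_1\ge 1$ to $x\in A_t$.
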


\begin{proof}
By symmetry it is enough to treat \(\pi_1\), that is, to prove that \(\pi_1\) is
level-optimal maximin from side \(1\) to side \(0\).

Set \(P_0:=(\pr_0)_\#\pi_1\). By Construction~\ref{const:extract_refinement_from_allocation},
\(\pi_1=\widetilde\pi_1+\pi_1^{\mathrm{fb}}\), where
\[
  \widetilde\pi_1(A\times B)
  =
  \int_A a_1(x)\bigl(B\cap N_1(x)\bigr)\,\nu_0(dx).
\]
Hence, for measurable \(A\subseteq\Omega_0\),
\[
  (\pr_0)_\#\widetilde\pi_1(A)
  =
  \widetilde\pi_1(A\times\Omega_1)
  =
  \int_A a_1(x)\bigl(N_1(x)\bigr)\,\nu_0(dx)
  =
  \int_A \rho_0\,d\nu_0,
\]
so \((\pr_0)_\#\widetilde\pi_1=\rho_0\,\nu_0\). By
Lemma~\ref{lem:walras_extracted_refinement_structure}(ii),
\((\pr_0)_\#\pi_1^{\mathrm{fb}}\perp \nu_0\). Therefore, for every \(t\ge 0\),
\[
  P_t^{(\pi_1)}=(\rho_0\wedge t)\,\nu_0,
  \qquad
  P_t^{(\pi_1)}(\Omega_0)=\int_{\Omega_0}(\rho_0\wedge t)\,d\nu_0.
\]

It remains to prove that
\[
  \Fit_0(t)=P_t^{(\pi_1)}(\Omega_0)
  \qquad\text{for every }t\ge 0.
\]

\paragraph{Lower bound.}
Since \((\pr_0)_\#\widetilde\pi_1=\rho_0\,\nu_0\) and the spaces are standard Borel, there exists a
reverse disintegration kernel \(K:\Omega_0\times\Sigma_1\to[0,1]\) such that
\[
  \widetilde\pi_1=(\rho_0\,\nu_0)\kprod K.
\]
Fix \(t\ge 0\), and define
\[
  \tau_t:=((\rho_0\wedge t)\,\nu_0)\kprod K.
\]
Then \(\tau_t(\mathcal E^\c)=0\), because \(\widetilde\pi_1(\mathcal E^\c)=0\) and the same kernel
\(K\) is used. Also \((\pr_0)_\#\tau_t=(\rho_0\wedge t)\,\nu_0\le t\,\nu_0\), while
\[
  (\pr_1)_\#\tau_t\le (\pr_1)_\#\widetilde\pi_1=\widetilde\nu_1\le \nu_1.
\]
Hence \(\tau_t\in\Feas_0(t)\). Its mass is
\[
  \tau_t(\Omega_0\times\Omega_1)
  =
  \int_{\Omega_0}(\rho_0\wedge t)\,d\nu_0
  =
  P_t^{(\pi_1)}(\Omega_0),
\]
so \(\Fit_0(t)\ge P_t^{(\pi_1)}(\Omega_0)\).

\paragraph{Upper bound.}
The case \(t=0\) is immediate: if \(\sigma\in\Feas_0(0)\), then
\((\pr_0)_\#\sigma\le 0\cdot\nu_0=0\), hence \(\sigma=0\), so
\(\Fit_0(0)=0=P_0^{(\pi_1)}(\Omega_0)\). Assume from now on that \(t>0\).

Set
\[
  A_t:=\{x\in X_0:\rho_0(x)\le t\},
  \qquad
  B_t:=\Bigl\{y\in X_1\cap S_1:\rho_1(y)\ge \frac1t\Bigr\}.
\]
Since \(\nu_0(\Omega_0\setminus X_0)=0\), one has
\[
  \nu_0(\Omega_0\setminus A_t)=\nu_0(\{\rho_0>t\}).
\]

We claim that
\begin{equation}
\label{eq:N1_At_subset_Bt_clean}
  N_1(A_t)\cap X_1 \subseteq B_t.
\end{equation}
Indeed, let \(y\in N_1(A_t)\cap X_1\). Then \(y\in N_1(x)\) for some \(x\in A_t\subseteq X_0\).
By Lemma~\ref{lem:walras_extracted_refinement_structure}(i), \(y\notin S_1^\c\), so \(y\in S_1\).
The same lemma gives \(\rho_0(x)\rho_1(y)\ge 1\). Since \(\rho_0(x)\le t\), it follows that
\(\rho_1(y)\ge 1/t\), hence \(y\in B_t\). This proves \eqref{eq:N1_At_subset_Bt_clean}.

Next, Lemma~\ref{lem:walras_extracted_refinement_structure}(iii) gives
\[
  \rho_0(x)\rho_1(y)=1
  \qquad\text{for }\widetilde\pi_1\text{-a.e. }(x,y).
\]
Hence
\[
  \widetilde\pi_1\bigl((\Omega_0\setminus A_t)\times B_t\bigr)=0,
\]
because on \(\Omega_0\setminus A_t\) one has \(\rho_0>t\), whereas on \(B_t\) one has
\(\rho_1\ge 1/t\), so \(\rho_0\rho_1>1\), contradicting the reciprocal identity.

Since \(B_t\subseteq S_1\), Lemma~\ref{lem:walras_extracted_refinement_structure}(ii) yields
\(\widetilde\nu_1(B_t)=\nu_1(B_t)\). Therefore
\[
\begin{aligned}
  \nu_1(B_t)
  &=
  \widetilde\nu_1(B_t)
  =
  \widetilde\pi_1(\Omega_0\times B_t) \\
  &=
  \widetilde\pi_1(A_t\times B_t)
  \le
  \widetilde\pi_1(A_t\times\Omega_1)
  =
  \int_{A_t}\rho_0\,d\nu_0.
\end{aligned}
\]
Combining this with \eqref{eq:N1_At_subset_Bt_clean} and \(\nu_1(\Omega_1\setminus X_1)=0\), we obtain
\[
  \nu_1\bigl(N_1(A_t)\bigr)\le \nu_1(B_t)\le \int_{A_t}\rho_0\,d\nu_0.
\]

Now apply Proposition~\ref{prop:measurable_maxflow_mincut_bipartite} with capacities
\(a:=t\,\nu_0\) on \(\Omega_0\) and \(b:=\nu_1\) on \(\Omega_1\). Since
\[
  \Fit_0(t)
  =
  \sup\Bigl\{
    \sigma(\Omega_0\times\Omega_1):
    \sigma(\mathcal E^\c)=0,\ 
    (\pr_0)_\#\sigma\le t\,\nu_0,\ 
    (\pr_1)_\#\sigma\le \nu_1
  \Bigr\},
\]
the cut bound with \(C:=\Omega_0\setminus A_t\) gives
\[
  \Fit_0(t)
  \le
  t\,\nu_0(C)+\nu_1\bigl(N_1(C^\c)\bigr)
  =
  t\,\nu_0(\Omega_0\setminus A_t)+\nu_1\bigl(N_1(A_t)\bigr).
\]
Using the preceding estimate on \(\nu_1(N_1(A_t))\), we get
\[
\begin{aligned}
  \Fit_0(t)
  &\le
  t\,\nu_0(\Omega_0\setminus A_t)+\int_{A_t}\rho_0\,d\nu_0 \\
  &= \int_{\Omega_0\setminus A_t} t\,d\nu_0+\int_{A_t}\rho_0\,d\nu_0 \\
  &= \int_{\Omega_0}(\rho_0\wedge t)\,d\nu_0
   = P_t^{(\pi_1)}(\Omega_0).
\end{aligned}
\]

Together with the lower bound, this proves
\[
  \Fit_0(t)=P_t^{(\pi_1)}(\Omega_0)
  \qquad\text{for every }t\ge 0.
\]
Hence \(\pi_1\) is level-optimal maximin from side \(1\) to side \(0\).

The argument for \(\pi_0\) is the same after interchanging the roles of \(\Omega_0\) and
\(\Omega_1\). Therefore, for each \(\iota\in\mathbb B\), the refinement \(\pi_\iota\) is
level-optimal maximin from side \(\iota\) to side \(\bar\iota\).
\end{proof}

Taken together, Theorems~\ref{thm:refinement_pair_construction_walras}
and~\ref{thm:walras_to_level_opt_refinement_pair} show that the continuum-economy formulation and the level-optimal maximin formulation determine the same structure. In one direction, a level-optimal maximin pair yields a Walrasian equilibrium of the induced continuum economy. In the other direction, every Walrasian equilibrium yields, through the extraction procedure, a level-optimal maximin pair. Thus the equilibrium-theoretic viewpoint is a characterization of the same measure-theoretic object, rather than an auxiliary application.

\newpage

\appendix

\section*{Appendix}

\section{Pointwise Local Maximin Is Too Weak}
\label{sec:pointwise_local_maximin}

Recall from Definition~\ref{defn:payload} that each refinement
\(\pi\in\Pi^{\mathcal E}_\iota(\nu_\iota)\) induces an opposite-side payload
\(\nu^{(\pi)}_{\bar\iota}\).
This appendix records a direct pointwise analogue of the maximin principle and shows that it is too
weak for the global theory: it does not prevent the opposite marginal from being singular with
respect to the reference measure.  We also note that the level-optimal maximin condition from
Section~\ref{sec:maximin} does imply this pointwise property. 

\subsection{Extended Density Representatives and the Pointwise Condition}

To formulate a pointwise local criterion, one must first choose a device that records both the
absolutely continuous density and the presence of singular mass.  The most direct choice is an
extended density representative, which treats singular points as carrying infinite density.  This is
already less intrinsic than the levelwise formulation from the main text, but it provides the
natural language for the appendix notion. 

\begin{definition}[Extended Density Representative]
\label{def:extended_density_representative}
Let \(\mu,\nu\in\mathcal M_+(\Omega)\), and write
\[
\mu=\mu^{\mathrm{ac}}+\mu^\perp=r\,\nu+\mu^\perp
\]
for the Lebesgue decomposition of \(\mu\) with respect to \(\nu\), in the notation of Section~\ref{sec:prelim}.
An \emph{extended density representative} of \(\mu\) with respect to \(\nu\) is a measurable function
\(\widehat\rho:\Omega\to[0,\infty]\) such that
\[
\widehat\rho=r \quad \nu\text{-a.e.},
\qquad
\widehat\rho=+\infty \quad \mu^\perp\text{-a.e.}
\]
\end{definition}

\begin{remark}
\label{rem:extended_density_representative}
An extended density representative is not canonical. It depends both on the chosen
\(\nu\)-a.e.\ representative of \(r\) and on the \(\nu\)-null set on which the singular part is declared to be infinite.
Thus \(\widehat\rho\) detects the presence of a \(\nu\)-singular component, but it does not encode the singular measure itself.
\end{remark}

\begin{definition}[Pointwise Locally Maximin]
\label{def:pointwise_locally_maximin}
Fix \(\iota\in\mathbb B\) and let \(\pi\in\Pi^{\mathcal E}_\iota(\nu_\iota)\).
Let \(\widehat\rho^{(\pi)}_{\bar\iota}:\Omega_{\bar\iota}\to[0,\infty]\) be an extended density representative of \(\nu^{(\pi)}_{\bar\iota}\) with respect to \(\nu_{\bar\iota}\).
Choose a disintegration
\[
\pi(dx\,dy)=\nu_\iota(dx)\,\K^{(\pi)}_{\bar\iota}(x,dy),
\]
so that \(\K^{(\pi)}_{\bar\iota}(x,\cdot)\) is carried by \(N_{\bar\iota}(x)\) for \(\nu_\iota\)-a.e.\ \(x\).

For each \(x\in\Omega_\iota\), define
\[
\mu_x(B):=\nu_{\bar\iota}\bigl(B\cap N_{\bar\iota}(x)\bigr)+\K^{(\pi)}_{\bar\iota}(x,B),
\qquad B\in\Sigma_{\bar\iota},
\]
and
\[
m(x):=\essinf_{z\in N_{\bar\iota}(x)}\widehat\rho^{(\pi)}_{\bar\iota}(z)
\quad\text{with respect to }\mu_x.
\]

We say that \(\pi\) is \emph{pointwise locally maximin with respect to \(\widehat\rho^{(\pi)}_{\bar\iota}\)} if, for \(\nu_\iota\)-a.e.\ \(x\in\Omega_\iota\),
\begin{equation}
\label{eq:pointwise_local_maximin_support}
\K^{(\pi)}_{\bar\iota}\!\left(
x,\{y\in N_{\bar\iota}(x):\widehat\rho^{(\pi)}_{\bar\iota}(y)>m(x)\}
\right)=0.
\end{equation}
Equivalently, for \(\nu_\iota\)-a.e.\ \(x\),
\begin{equation}
\label{eq:pointwise_local_maximin_ae}
\widehat\rho^{(\pi)}_{\bar\iota}(y)=m(x)
\qquad
\text{for }\K^{(\pi)}_{\bar\iota}(x,\cdot)\text{-a.e.\ }y.
\end{equation}
\end{definition}

\subsection{A Pathology}

The defect of the pointwise formulation is that the local benchmark may collapse on
\(\nu_{\bar\iota}\)-null neighborhoods.  In that case the condition compares the refinement only to
itself and becomes tautological, even when the opposite marginal is entirely singular.  The next
proposition exhibits exactly this pathology. 

\begin{proposition}[Pointwise Local Maximin Does Not Exclude Singular Opposite Marginals]
\label{prop:pointwise_local_maximin_too_weak}
There exists an instance satisfying Assumption~\ref{assump:closed_polish_graph_nonempty_nbhd} and a plan
\(\pi_0\in\Pi^{\mathcal E}_0(\nu_0)\) such that:
\begin{enumerate}
\item \(\nu^{(\pi_0)}_1\perp \nu_1\);
\item there exists \(\pi_\Delta\in\Pi^{\mathcal E}_0(\nu_0)\) with \(\nu^{(\pi_\Delta)}_1=\nu_1\);
\item \(\pi_0\) is pointwise locally maximin with respect to every extended density representative of \(\nu^{(\pi_0)}_1\) relative to \(\nu_1\).
\end{enumerate}
\end{proposition}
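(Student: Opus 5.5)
The plan is to build an explicit one-dimensional example in which every neighborhood is $\nu_1$-null apart from a single common atom. Take $\Omega_0=\Omega_1=[0,1]$ with the usual topology, and let $\nu_0=\nu_1=\mathrm{Leb}$. Set
\[
  \mathcal E:=\{(x,x):x\in[0,1]\}\cup\bigl([0,1]\times\{0\}\bigr),
\]
the diagonal together with the bottom edge. This is a union of two closed sets, so it is closed. The neighborhoods are
\[
  N_1(x)=\{0,x\},\qquad N_0(y)=\{y\}\ (y>0),\qquad N_0(0)=[0,1],
\]
all of them nonempty. Hence Assumption~\ref{assump:closed_polish_graph_nonempty_nbhd} holds.

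Next I will define the two plans. Let $\pi_0:=\nu_0\otimes\delta_0$, which sends every $x$ to the point $0$. It is carried by $\mathcal E$ and has first marginal $\nu_0$. Its opposite marginal is $\nu_1^{(\pi_0)}=\delta_0$, which is singular with respect to Lebesgue measure; this gives item~1. Let $\pi_\Delta:=(\id,\id)_\#\nu_0$, the diagonal plan. It lies in $\Pi^{\mathcal E}_0(\nu_0)$ and satisfies $\nu_1^{(\pi_\Delta)}=\nu_1$; this gives item~2.

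For item~3, first pin down the freedom in an extended density representative $\widehat\rho$ of $\delta_0$ relative to $\nu_1$. The absolutely continuous part is zero, so $\widehat\rho=0$ $\nu_1$-a.e. The singular part is $\delta_0$, so $\widehat\rho=+\infty$ $\delta_0$-a.e., which forces $\widehat\rho(0)=+\infty$. Everywhere else $\widehat\rho$ is arbitrary on a Lebesgue-null set.

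Now choose the disintegration kernel $\K^{(\pi_0)}_1(x,\cdot)=\delta_0$. The kernel is unique $\nu_0$-a.e., so this choice is harmless. Since $N_1(x)=\{0,x\}$ is Lebesgue-null, the reference measure in Definition~\ref{def:pointwise_locally_maximin} becomes
\[
  \mu_x=\nu_1\!\restriction\{0,x\}+\delta_0=\delta_0 .
\]
Therefore $m(x)=\essinf_{\mu_x}\widehat\rho=\widehat\rho(0)=+\infty$. The set $\{y\in N_1(x):\widehat\rho(y)>m(x)\}=\{\widehat\rho>+\infty\}$ is empty, so \eqref{eq:pointwise_local_maximin_support} holds for every $x$ and for every admissible $\widehat\rho$.

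The main point needing care is that the conclusion must not depend on the choice of representative or of disintegration. That is exactly why the example makes each $N_1(x)$ Lebesgue-null. Then $\mu_x$ sees only the kernel's own atom at $0$, where $\widehat\rho$ is forced to equal $+\infty$. With that, the pointwise condition reduces to the tautological comparison of the refinement with itself described before the proposition.
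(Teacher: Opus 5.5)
Your proposal is correct and uses the same mechanism as the paper's proof. The paper takes $\mathcal E=\Delta\cup([0,1]\times C)$ with $C$ the Cantor set and $\pi_0=(\mathrm{id},g)_\#\lambda$ for a Borel map $g:[0,1]\to C$; as in your argument, the Lebesgue-nullity of every $N_1(x)$ collapses $\mu_x$ to the kernel's own Dirac mass, so the pointwise condition holds tautologically. Your version replaces $C$ by the single point $\{0\}$, which gives an atomic rather than a nonatomic singular marginal, and this is enough for the existence claim.
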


\begin{proof}
Let \(\Omega_0=\Omega_1=[0,1]\) with the Borel \(\sigma\)-algebra, and let \(\nu_0=\nu_1=\lambda\), where \(\lambda\) is Lebesgue measure.
Let \(C\subset[0,1]\) be the middle-third Cantor set, so \(\lambda(C)=0\).

Define a Borel map \(g:[0,1]\to C\) by binary-to-ternary digit substitution: if
\(x=0.b_1b_2b_3\cdots\) is the binary expansion of \(x\) chosen not to be eventually all \(1\)'s, set
\(g(x):=0.(2b_1)(2b_2)(2b_3)\cdots\) in base \(3\).
Then \(g(x)\in C\) for all \(x\), so \(g_\#\lambda\) is concentrated on \(C\), hence \(g_\#\lambda\perp\lambda\).

Now set
\[
\mathcal E:=\Delta\cup([0,1]\times C),
\qquad
\Delta:=\{(x,x):x\in[0,1]\}.
\]
Then \(\Omega_0\) and \(\Omega_1\) are Polish, \(\mathcal E\) is closed, and \(N_1(x)\neq\emptyset\) for every \(x\) since \((x,x)\in\Delta\subset\mathcal E\).
Thus Assumption~\ref{assump:closed_polish_graph_nonempty_nbhd} holds.

Let \(\pi_\Delta:=(\id,\id)_\#\lambda\). Then \(\pi_\Delta\in\Pi^{\mathcal E}_0(\nu_0)\) and
\(\nu^{(\pi_\Delta)}_1=\lambda=\nu_1\).

Now define
\[
\pi_0:=(\id,g)_\#\lambda,
\qquad\text{equivalently}\qquad
\pi_0(dx\,dy)=\lambda(dx)\,\delta_{g(x)}(dy).
\]
Since \(g(x)\in C\), one has \((x,g(x))\in[0,1]\times C\subset\mathcal E\), so \(\pi_0(\mathcal E^\c)=0\).
Also \((\pr_0)_\#\pi_0=\lambda=\nu_0\), hence \(\pi_0\in\Pi^{\mathcal E}_0(\nu_0)\).

Its opposite marginal is
\[
\nu^{(\pi_0)}_1=(\pr_1)_\#\pi_0=g_\#\lambda.
\]
Since \(\lambda(C)=0\) and \((g_\#\lambda)(C)=1\), it follows that \(\nu^{(\pi_0)}_1\perp \nu_1\).

It remains to verify the pointwise local maximin property.
Fix an arbitrary extended density representative \(\widehat\rho^{(\pi_0)}_1\) of \(\nu^{(\pi_0)}_1\) with respect to \(\nu_1=\lambda\).
Disintegrating \(\pi_0\) over \(\nu_0\) gives
\[
\pi_0(dx\,dy)=\lambda(dx)\,\K^{(\pi_0)}_1(x,dy),
\qquad
\K^{(\pi_0)}_1(x,\cdot)=\delta_{g(x)}.
\]
For each \(x\in[0,1]\), one has \(N_1(x)=\{x\}\cup C\).
Since both \(\{x\}\) and \(C\) are \(\lambda\)-null, \(\nu_1\!\restriction N_1(x)=0\), and therefore the comparison measure from Definition~\ref{def:pointwise_locally_maximin} reduces to
\[
\mu_x=\nu_1\!\restriction N_1(x)+\K^{(\pi_0)}_1(x,\cdot)=\delta_{g(x)}.
\]
Hence
\[
m(x)=\essinf_{z\in N_1(x)}\widehat\rho^{(\pi_0)}_1(z)
\quad\text{with respect to }\mu_x
=\widehat\rho^{(\pi_0)}_1(g(x)),
\]
possibly equal to \(+\infty\).
Consequently,
\[
\K^{(\pi_0)}_1\!\left(
x,\{y\in N_1(x):\widehat\rho^{(\pi_0)}_1(y)>m(x)\}
\right)
=
\delta_{g(x)}\!\left(
\{y:\widehat\rho^{(\pi_0)}_1(y)>\widehat\rho^{(\pi_0)}_1(g(x))\}
\right)
=0.
\]
Thus \eqref{eq:pointwise_local_maximin_support} holds for every \(x\), so \(\pi_0\) is pointwise locally maximin with respect to every such representative.

The point is that \(\K^{(\pi_0)}_1(x,\cdot)\) is a Dirac mass and \(N_1(x)\) is \(\nu_1\)-null, so the benchmark measure \(\mu_x\) collapses to the kernel itself and the condition becomes tautological.
\end{proof}

\begin{remark}
\label{rem:pointwise_local_maximin_too_weak}
Proposition~\ref{prop:pointwise_local_maximin_too_weak} shows that pointwise local maximin cannot be the main optimality notion.
By itself, it does not control shipment into \(\nu_{\bar\iota}\)-null sets and therefore does not exclude singular opposite marginals, even when a completely \(\nu_{\bar\iota}\)-regular competitor exists.
\end{remark}

\subsection{From Level-Optimality to Pointwise Behavior}

The preceding proposition shows that pointwise local maximin is too weak for the global theory.
The next lemma records that it is nevertheless a consequence of the stronger level-optimal maximin condition from the main text.  This implication is included only to place the appendix notion relative to the main one-sided invariant. 

\begin{lemma}[Level-Optimal Maximin Implies Pointwise Local Maximin]
\label{lem:level_optimal_implies_pointwise_local_maximin}
Fix \(\iota\in\mathbb B\) and let \(\pi\in\Pi^{\mathcal E}_\iota(\nu_\iota)\).
Assume that \(\pi\) is level-optimal maximin from side \(\iota\) to side \(\bar\iota\) in the sense of Definition~\ref{defn:level_opt_maximin}.
Then \(\pi\) is pointwise locally maximin with respect to every extended density representative of \(\nu^{(\pi)}_{\bar\iota}\) relative to \(\nu_{\bar\iota}\).
\end{lemma}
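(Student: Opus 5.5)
The plan is to argue by contradiction via a one-level exchange. If $\pi$ fails the pointwise condition~\eqref{eq:pointwise_local_maximin_support}, I will produce, for a suitable level $t$, a measure in $\Feas_{\bar\iota}(t)$ whose mass exceeds $P_t^{(\pi)}(\Omega_{\bar\iota})$. This contradicts~\eqref{eq:global_tail_optimality_exact}.

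\textbf{Step 1: a uniform failure set.} Fix an extended density representative $\widehat\rho:=\widehat\rho^{(\pi)}_{\bar\iota}$ and the disintegration $\K:=\K^{(\pi)}_{\bar\iota}$. Suppose the set of $x$ with $\K(x,\{y\in N_{\bar\iota}(x):\widehat\rho(y)>m(x)\})>0$ is not $\nu_\iota$-null. Choosing rationals $a<b$, I obtain a measurable set $X\subseteq\Omega_\iota$ with $\nu_\iota(X)>0$ such that, for every $x\in X$,
\[
m(x)<a,\qquad \K\bigl(x,H\cap N_{\bar\iota}(x)\bigr)>0,\qquad H:=\{\widehat\rho>b\}.
\]
Let $L:=\{\widehat\rho<a\}$. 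By definition of the essential infimum, $\mu_x(L\cap N_{\bar\iota}(x))>0$ for $x\in X$. Shrinking $X$, I may assume one of two cases holds uniformly on $X$.
\begin{enumerate}[(A)]
\item $\nu_{\bar\iota}(L\cap N_{\bar\iota}(x))\ge c>0$.
\item $\K(x,L)\ge c>0$.
\end{enumerate}
The map $x\mapsto\nu_{\bar\iota}(L\cap N_{\bar\iota}(x))$ is measurable because $\mathcal E$ is product-measurable, so the shrinking is legitimate.

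Next, fix $t\in(a,b)$ and write $P=r\,\nu_{\bar\iota}+P^\perp$. Since $\widehat\rho=r$ holds $\nu_{\bar\iota}$-a.e., we get $r<a$ $\nu_{\bar\iota}$-a.e.\ on $L$. Since $\widehat\rho=\infty$ holds $P^\perp$-a.e., the set $L$ is $P^\perp$-null.

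\textbf{Step 2: the overflow supplies source slack.} Apply Lemma~\ref{lem:overflow_decomposition} at level $t$ to write $\pi=\sigma_t+\pi_t^{\mathrm{ov}}$, where $d\pi_t^{\mathrm{ov}}=(1-f\circ\pr_{\bar\iota})\,d\pi$. On $H$, the weight $1-f$ is at least $(b-t)/b$ on the absolutely continuous part and equals $1$ on the singular part. Hence
\[
\lambda:=\pi_t^{\mathrm{ov}}\!\restriction\bigl(X\times H\bigr)
\]
is nonzero, since $\pi(X\times H)=\int_X \K(x,H)\,\nu_\iota(dx)>0$. Its $\iota$-marginal $\alpha:=(\pr_\iota)_\#\lambda$ satisfies
\[
(\pr_\iota)_\#\sigma_t+\alpha\le(\pr_\iota)_\#\pi=\nu_\iota .
\]
So mass of size $\alpha$ can be re-sent from $X$ without violating the source constraint.

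\textbf{Step 3: reroute into the low-density region.} Choose a kernel $x\mapsto\kappa(x,\cdot)$ of probability measures carried by $L\cap N_{\bar\iota}(x)$, according to the case.
\begin{itemize}
\item In case (A), take $\kappa(x,\cdot):=\nu_{\bar\iota}(\cdot\cap L\cap N_{\bar\iota}(x))/\nu_{\bar\iota}(L\cap N_{\bar\iota}(x))$. Then $(\pr_{\bar\iota})_\#(\alpha\kprod\kappa)\le c^{-1}\|\alpha\|\,\nu_{\bar\iota}\!\restriction L$.
\item In case (B), take $\kappa(x,\cdot):=\K(x,\cdot\cap L)/\K(x,L)$. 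Using $\alpha\le\nu_\iota$, one gets $(\pr_{\bar\iota})_\#(\alpha\kprod\kappa)\le c^{-1}P\!\restriction L=c^{-1}r\,\nu_{\bar\iota}\!\restriction L\le c^{-1}a\,\nu_{\bar\iota}\!\restriction L$.
\end{itemize}
In both cases $\alpha\kprod\kappa$ is concentrated on $\mathcal E$, and its $\bar\iota$-marginal is bounded by $C\,\nu_{\bar\iota}\!\restriction L$ for some constant $C$.

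Now set $\sigma':=\sigma_t+\varepsilon\,(\alpha\kprod\kappa)$, where $\varepsilon>0$ is small. Then:
\begin{itemize}
\item The source constraint holds by Step 2, since $\varepsilon\le1$.
\item On $L$, the $\bar\iota$-marginal of $\sigma_t$ is $r\,\nu_{\bar\iota}\le a\,\nu_{\bar\iota}$. Taking $\varepsilon C\le t-a$ keeps the $\bar\iota$-marginal of $\sigma'$ below $t\,\nu_{\bar\iota}$ there.
\item Off $L$, nothing is added, so the cap $t\,\nu_{\bar\iota}$ still holds.
\end{itemize}
Thus $\sigma'\in\Feas_{\bar\iota}(t)$, and its mass exceeds $\sigma_t(\Omega_0\times\Omega_1)=P_t^{(\pi)}(\Omega_{\bar\iota})=\Fit_{\bar\iota}(t)$, a contradiction.

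\textbf{Expected obstacle.} The delicate point is the case analysis in Step 1. One must check that positive $\mu_x$-mass of $L$ yields genuine slack below level $t$. In case (B), this requires that the $\K$-mass landing in $L$ sits on $\nu_{\bar\iota}$-absolutely continuous points with $r<a$. That is exactly where the defining properties of an extended density representative (value $r$ $\nu_{\bar\iota}$-a.e., value $+\infty$ $P^\perp$-a.e.) are used. The rest is measurability bookkeeping, handled by restricting to rational thresholds.
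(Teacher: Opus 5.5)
Your proof is correct and takes essentially the same route as the paper. Both select rational levels \(a<t<b\) and a positive-measure bad set, split into a \(\nu_{\bar\iota}\)-neighborhood case and a kernel case, use the level-\(t\) overflow on \(H\) to free source capacity, and ship a small amount into the low set \(L\) to exceed \(\Fit_{\bar\iota}(t)\). The only difference is cosmetic: you route the freed overflow marginal \(\alpha=(\pr_\iota)_\#\bigl(\pi_t^{\mathrm{ov}}\!\restriction(X\times H)\bigr)\) itself through a normalized kernel, which removes the need for the paper's uniform bound \(\K(x,H)\ge\beta\) and the slack constant \(\kappa\).
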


\begin{proof}
Fix a disintegration
\[
\pi(dx\,dy)=\nu_\iota(dx)\,\K(x,dy),
\]
so that \(\K(x,\cdot)\) is carried by \(N_{\bar\iota}(x)\) for \(\nu_\iota\)-a.e.\ \(x\).
Write \(P:=\nu^{(\pi)}_{\bar\iota}=r\,\nu_{\bar\iota}+P^\perp\) for the Lebesgue decomposition of the opposite marginal with respect to \(\nu_{\bar\iota}\), and fix an extended density representative \(\widehat\rho:\Omega_{\bar\iota}\to[0,\infty]\) of \(P\).
Thus
\begin{equation}
\label{eq:rho_representative_properties}
\widehat\rho=r\quad \nu_{\bar\iota}\text{-a.e.},
\qquad
\widehat\rho=+\infty\quad P^\perp\text{-a.e.}
\end{equation}

For \(x\in\Omega_\iota\), set
\[
\mu_x:=\nu_{\bar\iota}\!\restriction N_{\bar\iota}(x)+\K(x,\cdot),
\qquad
m(x):=\essinf_{z\in N_{\bar\iota}(x)}\widehat\rho(z)
\quad\text{with respect to }\mu_x.
\]

Assume for contradiction that \(\pi\) is not pointwise locally maximin with respect to \(\widehat\rho\).
Then the set of \(x\) for which
\[
\K\bigl(x,\{y\in N_{\bar\iota}(x):\widehat\rho(y)>m(x)\}\bigr)>0
\]
has positive \(\nu_\iota\)-measure.

\paragraph{Uniform bad levels.}
By the definition of essential infimum and a standard countable approximation, there exist numbers \(a<t<b\), with \(\delta:=\min\{t-a,b-t\}>0\), measurable sets
\[
L:=\{\widehat\rho\le a\}\subseteq\{\widehat\rho<t\},
\qquad
H:=\{\widehat\rho\ge b\}\subseteq\{\widehat\rho>t\},
\]
a measurable set \(A\subseteq\Omega_\iota\) with \(\nu_\iota(A)>0\), and constants \(\alpha,\beta>0\) such that
\begin{equation}
\label{eq:uniform_witnesses}
\K(x,H)\ge \beta,
\qquad
\mu_x\bigl(N_{\bar\iota}(x)\cap L\bigr)\ge \alpha
\qquad
\text{for all }x\in A.
\end{equation}
After splitting \(A\) according to which part of \(\mu_x\) supplies the lower-set mass, we may further assume that one of the following holds for every \(x\in A\):
\begin{equation}
\label{eq:nu_case}
\nu_{\bar\iota}(N_{\bar\iota}(x)\cap L)\ge \alpha,
\end{equation}
or
\begin{equation}
\label{eq:kernel_case}
\K(x,L)\ge \alpha.
\end{equation}

\paragraph{Level-\(t\) splitting.}
Apply Lemma~\ref{lem:overflow_decomposition} at level \(t\):
\[
\pi=\sigma_t+\pi_t^{\mathrm{ov}},
\qquad
\sigma_t\in\Feas_{\bar\iota}(t),
\]
with
\[
(\pr_{\bar\iota})_\#\sigma_t=(r\wedge t)\,\nu_{\bar\iota},
\qquad
(\pr_{\bar\iota})_\#\pi_t^{\mathrm{ov}}=P^\perp+(r-t)_+\,\nu_{\bar\iota}.
\]
Hence the residual capacity below level \(t\) is
\[
t\,\nu_{\bar\iota}-(\pr_{\bar\iota})_\#\sigma_t=(t-r)_+\,\nu_{\bar\iota}.
\]
Since \(\widehat\rho\le a<t\) on \(L\) and \(\widehat\rho=r\) \(\nu_{\bar\iota}\)-a.e., we get
\begin{equation}
\label{eq:capacity_on_L}
(t-r)_+\,\nu_{\bar\iota}\ge (t-a)\,\nu_{\bar\iota}\!\restriction L
\ge \delta\,\nu_{\bar\iota}\!\restriction L.
\end{equation}

\paragraph{Source budget on \(A\).}
Let
\[
Q:=(r\wedge t)\,\nu_{\bar\iota},
\qquad
f:=\frac{dQ}{dP}.
\]
Then \(Q=fP\) and \(0\le f\le 1\) \(P\)-a.e.
On \(H\), either \(y\) lies in the singular part of \(P\), in which case \(f(y)=0\), or else \(y\) lies in the absolutely continuous part and \(\widehat\rho(y)=r(y)\ge b>t\), so
\[
1-f(y)=1-\frac{t}{r(y)}\ge 1-\frac{t}{b}>0.
\]
Thus there exists \(\kappa_0>0\) such that
\begin{equation}
\label{eq:overflow_fraction}
1-f(y)\ge \kappa_0
\qquad
\text{for }P\text{-a.e.\ }y\in H.
\end{equation}

Set \(B_{\mathrm{slack}}:=(\pr_\iota)_\#\pi_t^{\mathrm{ov}}\).
For every measurable \(C\subseteq A\),
\[
B_{\mathrm{slack}}(C)
\ge
\pi_t^{\mathrm{ov}}(C\times H)
=
\int_{C\times H}(1-f(y))\,\pi(dx\,dy)
\ge
\kappa_0\,\pi(C\times H).
\]
Using \eqref{eq:uniform_witnesses},
\[
\pi(C\times H)=\int_C \K(x,H)\,\nu_\iota(dx)\ge \beta\,\nu_\iota(C).
\]
Therefore
\begin{equation}
\label{eq:source_budget}
B_{\mathrm{slack}}\ge \kappa\,\nu_\iota\!\restriction A,
\qquad
\kappa:=\kappa_0\beta>0.
\end{equation}

\paragraph{Augmenting shipment into \(L\).}
We construct a nonzero measure \(\tau\) concentrated on \(\mathcal E\cap(A\times L)\) such that
\[
(\pr_\iota)_\#\tau\le B_{\mathrm{slack}},
\qquad
(\pr_{\bar\iota})_\#\tau\le \frac{\delta}{2}\,\nu_{\bar\iota}\!\restriction L.
\]

If \eqref{eq:nu_case} holds, define
\[
s(x):=\nu_{\bar\iota}(N_{\bar\iota}(x)\cap L)\ge \alpha.
\]
Choose \(\varepsilon>0\) so small that
\[
\varepsilon\le \kappa,
\qquad
\varepsilon\,\alpha^{-1}\,\nu_\iota(A)\le \frac{\delta}{2},
\]
and set
\[
\tau(dx\,dy)
:=
\varepsilon\,\nu_\iota\!\restriction A(dx)\,
\frac{\mathbf 1_{\mathcal E}(x,y)\mathbf 1_L(y)}{s(x)}\,\nu_{\bar\iota}(dy).
\]
Then \(\tau\) is concentrated on \(\mathcal E\cap(A\times L)\), has positive mass, and satisfies
\[
(\pr_\iota)_\#\tau=\varepsilon\,\nu_\iota\!\restriction A\le B_{\mathrm{slack}}
\]
by \eqref{eq:source_budget}, while
\[
(\pr_{\bar\iota})_\#\tau\le \frac{\delta}{2}\,\nu_{\bar\iota}\!\restriction L
\]
by the choice of \(\varepsilon\).

If instead \eqref{eq:kernel_case} holds, choose \(\varepsilon>0\) so small that
\[
\varepsilon\le \kappa,
\qquad
\varepsilon a\le \frac{\delta}{2},
\]
and set \(\tau:=\varepsilon\,\pi\!\restriction(A\times L)\).
Again \(\tau\) is concentrated on \(\mathcal E\cap(A\times L)\), has positive mass, and
\[
(\pr_\iota)_\#\tau\le \varepsilon\,\nu_\iota\!\restriction A\le B_{\mathrm{slack}}
\]
by \eqref{eq:source_budget}.
Since \(L\subseteq\{\widehat\rho<\infty\}\) and \(\widehat\rho=+\infty\) \(P^\perp\)-a.e.\ by \eqref{eq:rho_representative_properties}, we have \(P^\perp(L)=0\), hence
\[
P\!\restriction L=r\,\nu_{\bar\iota}\!\restriction L.
\]
Because \(r\le a\) \(\nu_{\bar\iota}\)-a.e.\ on \(L\), it follows that
\[
(\pr_{\bar\iota})_\#\tau
\le
\varepsilon a\,\nu_{\bar\iota}\!\restriction L
\le
\frac{\delta}{2}\,\nu_{\bar\iota}\!\restriction L.
\]

\paragraph{Contradiction.}
Set \(\sigma':=\sigma_t+\tau\).
Then \(\sigma'(\mathcal E^\c)=0\) and
\[
(\pr_\iota)_\#\sigma'
\le
(\pr_\iota)_\#\sigma_t+(\pr_\iota)_\#\tau
\le
(\pr_\iota)_\#\sigma_t+B_{\mathrm{slack}}
=
(\pr_\iota)_\#\pi
=
\nu_\iota.
\]
Also, by \eqref{eq:capacity_on_L} and the bound on \((\pr_{\bar\iota})_\#\tau\),
\[
(\pr_{\bar\iota})_\#\sigma'
\le
(r\wedge t)\,\nu_{\bar\iota}
+
\frac{\delta}{2}\,\nu_{\bar\iota}\!\restriction L
\le
t\,\nu_{\bar\iota}.
\]
Thus \(\sigma'\in\Feas_{\bar\iota}(t)\).

Since \(\tau\) has positive mass,
\[
\Fit_{\bar\iota}(t)
\ge
\sigma'(\Omega_\iota\times\Omega_{\bar\iota})
>
\sigma_t(\Omega_\iota\times\Omega_{\bar\iota}).
\]
But \((\pr_{\bar\iota})_\#\sigma_t=(r\wedge t)\,\nu_{\bar\iota}\) implies
\[
\sigma_t(\Omega_\iota\times\Omega_{\bar\iota})
=
P_t^{(\pi)}(\Omega_{\bar\iota}),
\]
contradicting the level-optimal maximin identity
\[
\Fit_{\bar\iota}(t)=P_t^{(\pi)}(\Omega_{\bar\iota}).
\]

This contradiction proves that \(\pi\) is pointwise locally maximin with respect to \(\widehat\rho\).
Since the representative was arbitrary, the conclusion holds for every extended density representative of \(\nu^{(\pi)}_{\bar\iota}\) relative to \(\nu_{\bar\iota}\).
\end{proof}

\section{Failure of attainment on a nonclosed relation}
\label{app:closed-support-needed}

The next example explains why closedness of the relation constraint enters the attainment theory, in
particular Proposition~\ref{prop:vartheta_one_sided_attainment}. The obstruction is a loss of
feasibility under weak limits: a minimizing sequence concentrates on the boundary \(y=x\), and the
limiting plan is excluded when the constraint set is the open set \(\{y>x\}\) but becomes feasible
after passing to its closure.

Fix \(\Omega_0=\Omega_1=[0,1]\), let \(\lambda\) denote Lebesgue measure, and take
\(\mu_0=\mu_1=\lambda\), \(w_0=w_1\equiv 1\), and \(\vartheta(t)=t^2\). For a measurable set
\(S\subseteq [0,1]^2\), write
\[
\Pi^S:=\bigl\{\pi\in \mathcal M_+([0,1]^2): \pi_0=\lambda,\ \pi(S)=\pi([0,1]^2)\bigr\},
\]
where \(\pi_0,\pi_1\) denote the first and second marginals. For \(\pi\in \Pi^S\), define
\[
J(\pi):=
\begin{cases}
\displaystyle \int_0^1 \Bigl(\frac{d\pi_1}{d\lambda}(y)\Bigr)^2\,dy, & \pi_1\ll \lambda,\\[2mm]
+\infty, & \text{otherwise}.
\end{cases}
\]
This is exactly the one-sided objective associated with \(\vartheta(t)=t^2\).

When \(\pi\ll \lambda\otimes\lambda\), we may write
\(\pi(dx\,dy)=\alpha(x,y)\mathbf 1_S(x,y)\,dx\,dy\). The condition \(\pi_0=\lambda\) is then
equivalent to
\[
\int \alpha(x,y)\mathbf 1_S(x,y)\,dy = 1
\qquad\text{for \(\lambda\)-a.e.\ }x\in[0,1],
\]
and in that case \(\frac{d\pi_1}{d\lambda}(y)=\int \alpha(x,y)\mathbf 1_S(x,y)\,dx\).

\begin{proposition}[Failure of attainment on an open relation]
\label{prop:closed-support-needed}
Let \(\mathcal E:=\{(x,y)\in[0,1]^2:y>x\}\) and
\(\overline{\mathcal E}:=\{(x,y)\in[0,1]^2:y\ge x\}\). Then:
\begin{enumerate}
\item \(\inf_{\pi\in\Pi^{\mathcal E}} J(\pi)=1\), but the infimum is not attained.

\item For each \(\varepsilon\in(0,\tfrac12)\), there exists \(\pi_\varepsilon\in\Pi^{\mathcal E}\) such that
\(J(\pi_\varepsilon)=1+\frac76\,\varepsilon\). Moreover,
\(\pi_\varepsilon \Rightarrow \pi^\Delta\) weakly as \(\varepsilon\downarrow 0\), where
\(\pi^\Delta := (\mathrm{id},\mathrm{id})_\#\lambda\) is the diagonal plan.

\item The diagonal plan \(\pi^\Delta\) belongs to \(\Pi^{\overline{\mathcal E}}\) and satisfies
\(J(\pi^\Delta)=1\). Hence \(\min_{\pi\in\Pi^{\overline{\mathcal E}}} J(\pi)=1\).
\end{enumerate}
\end{proposition}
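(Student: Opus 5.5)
The plan is to prove the three items in the order (3), (1) lower bound, (2), (1) non-attainment. The tools are Jensen's inequality and an explicit family of absolutely continuous plans.

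For item (3), take \(\pi^\Delta=(\mathrm{id},\mathrm{id})_\#\lambda\). It is carried by the diagonal, which lies in \(\overline{\mathcal E}\), and its first marginal is \(\lambda\). Its second marginal is also \(\lambda\), so the density is \(1\) and \(J(\pi^\Delta)=1\).

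For the lower bound in item (1), take any \(\pi\in\Pi^{\mathcal E}\) with \(J(\pi)<\infty\). Then \(\pi_1=r\lambda\) with \(\int r\,d\lambda=1\), and Cauchy--Schwarz (Jensen) gives \(\int r^2\,d\lambda\ge 1\). This holds for every \(\pi\) in \(\Pi^{\mathcal E}\), and also in \(\Pi^{\overline{\mathcal E}}\). Combined with item (3), this yields \(\min_{\Pi^{\overline{\mathcal E}}}J=1\).

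For item (2), I would build \(\pi_\varepsilon\) by shifting mass slightly to the right and wrapping the tail near \(x=1\).
\begin{itemize}
\item For \(x\le 1-\varepsilon\), spread the unit mass at \(x\) uniformly over \((x,x+\varepsilon)\). This contributes \(\lambda\otimes\frac1\varepsilon\mathbf 1_{(x,x+\varepsilon)}\).
\item For \(x\in(1-\varepsilon,1]\), send the mass uniformly onto \((x,1)\), renormalized by the length \(1-x\). This lies in \(\mathcal E\) but yields a logarithmically divergent density near \(1\).
\item A cleaner choice, which I would actually try, is to send the mass for \(x>1-\varepsilon\) uniformly onto \((1-\varepsilon,1)\) restricted to \(y>x\). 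Equivalently, choose any explicit a.c. kernel carried by \(\{y>x\}\) and compute the resulting density \(r_\varepsilon\).
\end{itemize}
Then I compute \(\int r_\varepsilon^2=1+\tfrac76\varepsilon\) by tuning the tail kernel. The ramp on \([0,\varepsilon]\), where \(r(y)=y/\varepsilon\), contributes \(\varepsilon/3\). The tail region needs to contribute so that the total excess is \(\tfrac76\varepsilon\). The precise constant depends on the chosen tail, and this bookkeeping is the main fiddly step. I would adjust the tail construction (for instance, a density growing linearly to \(2\) near \(1\)) until the integral matches, using \(\varepsilon<\tfrac12\) to keep the pieces disjoint.

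Weak convergence \(\pi_\varepsilon\Rightarrow\pi^\Delta\) follows because every \(\pi_\varepsilon\) has first marginal \(\lambda\) and is carried by \(\{x<y\le x+\varepsilon\}\) except on a mass \(O(\varepsilon)\) near \(x=1\). For bounded Lipschitz \(f\), this gives \(|\int f\,d\pi_\varepsilon-\int f(x,x)\,dx|=O(\varepsilon)\). This also gives \(\inf_{\Pi^{\mathcal E}}J\le 1\), so the infimum equals \(1\).

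Non-attainment is the conceptual core. Suppose \(\pi\in\Pi^{\mathcal E}\) had \(J(\pi)=1\). Equality in Cauchy--Schwarz forces \(r\equiv 1\), so \(\pi_1=\lambda\). Then \(\int y\,d\pi=\int x\,d\pi=\tfrac12\). But \(\pi\) is carried by \(\{y>x\}\), so \(\int (y-x)\,d\pi>0\), since \(\pi\) has total mass \(1\) and \(y-x>0\) \(\pi\)-a.e. This is a contradiction.
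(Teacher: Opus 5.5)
Your Jensen lower bound, your non-attainment argument (\(r\equiv 1\), hence \(0<\int(y-x)\,d\pi=\tfrac12-\tfrac12\)), and your treatment of item (3) match the paper's proof. The gap is in item (2), which asserts the exact value \(J(\pi_\varepsilon)=1+\tfrac76\varepsilon\). You never commit to a plan and evaluate it, and ``adjust the tail until the integral matches'' is not a proof. Your upper bound \(\inf_{\Pi^{\mathcal E}}J\le 1\) in item (1) comes from these plans, so it is also incomplete until some concrete \(\pi_\varepsilon\) is computed.

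The constant genuinely depends on the tail. The tail you suggest, with total density rising linearly from \(1\) to \(2\) on \((1-\varepsilon,1)\), can be realized on \(\{y>x\}\): use the monotone map \(w\mapsto\sqrt w\) in the rescaled tail coordinate \(w=(x-1+\varepsilon)/\varepsilon\). But it gives \(J=1+\tfrac23\varepsilon\), not \(1+\tfrac76\varepsilon\).

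The fix is the construction you discarded. Your second bullet is exactly the paper's plan,
\(\alpha_\varepsilon(x,y)=\ell_\varepsilon(x)^{-1}\mathbf 1_{\{x<y<x+\ell_\varepsilon(x)\}}\) with \(\ell_\varepsilon(x)=\min\{\varepsilon,1-x\}\). Your ``cleaner choice'' is the same kernel once renormalized; without renormalization the first marginal falls short of \(\lambda\). The logarithmic blow-up is harmless, since \(\log\) is square-integrable at \(0\).

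The second-marginal density of this plan is:
\begin{itemize}
\item \(y/\varepsilon\) on \([0,\varepsilon]\);
\item \(1\) on \((\varepsilon,1-\varepsilon]\);
\item \((1-y)/\varepsilon+\log(\varepsilon/(1-y))\) on \((1-\varepsilon,1)\).
\end{itemize}
The substitution \(u=(1-y)/\varepsilon\) turns the last contribution into
\(\varepsilon\int_0^1(u-\log u)^2\,du=\varepsilon\bigl(\tfrac13+\tfrac12+2\bigr)=\tfrac{17}{6}\varepsilon\).
Hence \(J(\pi_\varepsilon)=\tfrac{\varepsilon}{3}+(1-2\varepsilon)+\tfrac{17}{6}\varepsilon=1+\tfrac76\varepsilon\).

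This plan is also carried entirely by \(\{0<y-x<\varepsilon\}\). So weak convergence to \(\pi^\Delta\) follows directly from uniform continuity of \(f\in C([0,1]^2)\), with no exceptional mass near \(x=1\) to control.
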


\begin{proof}
We begin with the universal lower bound. Let \(S\subseteq [0,1]^2\) be measurable and let
\(\pi\in\Pi^S\). If \(J(\pi)<\infty\), write \(p:=d\pi_1/d\lambda\). Since
\(\pi_1([0,1])=\pi([0,1]^2)=\pi_0([0,1])=1\), one has \(\int_0^1 p\,d\lambda=1\). Jensen's
inequality gives
\[
J(\pi)=\int_0^1 p(y)^2\,dy \ge \Bigl(\int_0^1 p(y)\,dy\Bigr)^2 = 1.
\]
Thus \(J(\pi)\ge 1\) for every \(\pi\in\Pi^S\), with equality if and only if \(\pi_1=\lambda\).

We next prove nonattainment on \(\mathcal E\). Suppose \(\pi\in\Pi^{\mathcal E}\) satisfies
\(J(\pi)=1\). Then \(\pi_1=\lambda\). Since \(\pi(\mathcal E)=1\), one has \(y>x\) \(\pi\)-a.s., hence
\[
\int (y-x)\,d\pi>0.
\]
On the other hand,
\[
\int x\,d\pi = \int_0^1 x\,dx = \frac12,
\qquad
\int y\,d\pi = \int_0^1 y\,dy = \frac12,
\]
so \(\int (y-x)\,d\pi=0\), a contradiction. Thus the infimum on \(\mathcal E\) is not attained.

It remains to construct a minimizing sequence. Fix \(\varepsilon\in(0,\tfrac12)\), set
\(\ell_\varepsilon(x):=\min\{\varepsilon,1-x\}\), and define
\[
\alpha_\varepsilon(x,y):=
\begin{cases}
\displaystyle \frac{1}{\ell_\varepsilon(x)}\mathbf 1_{\{x<y<x+\ell_\varepsilon(x)\}},
& x<1,\\[2mm]
0, & x=1.
\end{cases}
\]
Let \(\pi_\varepsilon(dx\,dy):=\alpha_\varepsilon(x,y)\,dx\,dy\). Then
\(\pi_\varepsilon\in\Pi^{\mathcal E}\), because \(\pi_\varepsilon(\mathcal E^\c)=0\) and
\[
\int \alpha_\varepsilon(x,y)\,dy = 1
\qquad\text{for \(\lambda\)-a.e.\ }x\in[0,1].
\]

Write \(p_\varepsilon:=d(\pi_\varepsilon)_1/d\lambda\). A direct computation gives
\[
p_\varepsilon(y)=
\begin{cases}
y/\varepsilon, & 0\le y\le \varepsilon,\\[2mm]
1, & \varepsilon<y\le 1-\varepsilon,\\[2mm]
\dfrac{1-y}{\varepsilon}+\log\!\dfrac{\varepsilon}{1-y}, & 1-\varepsilon<y<1.
\end{cases}
\]
For \(y\in(1-\varepsilon,1)\), this is
\[
p_\varepsilon(y)
=
\int_{y-\varepsilon}^{1-\varepsilon}\frac{dx}{\varepsilon}
+
\int_{1-\varepsilon}^y \frac{dx}{1-x}
=
\frac{1-y}{\varepsilon}+\log\!\frac{\varepsilon}{1-y}.
\]
Therefore
\[
J(\pi_\varepsilon)
=
\int_0^\varepsilon \Bigl(\frac{y}{\varepsilon}\Bigr)^2\,dy
+
\int_\varepsilon^{1-\varepsilon}1\,dy
+
\int_{1-\varepsilon}^1
\Bigl(\frac{1-y}{\varepsilon}+\log\!\frac{\varepsilon}{1-y}\Bigr)^2\,dy.
\]
In the last integral, substitute \(u=(1-y)/\varepsilon\). Then
\[
\int_{1-\varepsilon}^1
\Bigl(\frac{1-y}{\varepsilon}+\log\!\frac{\varepsilon}{1-y}\Bigr)^2\,dy
=
\varepsilon\int_0^1 (u-\log u)^2\,du
=
\frac{17}{6}\,\varepsilon.
\]
Since \(\int_0^\varepsilon (y/\varepsilon)^2\,dy=\varepsilon/3\), we obtain
\[
J(\pi_\varepsilon)
=
\frac{\varepsilon}{3}+(1-2\varepsilon)+\frac{17}{6}\varepsilon
=
1+\frac76\,\varepsilon.
\]
Hence \(J(\pi_\varepsilon)\downarrow 1\), so \(\inf_{\pi\in\Pi^{\mathcal E}} J(\pi)=1\).

The same family exhibits the limiting boundary concentration. Let \(f\in C([0,1]^2)\), and let
\(\omega_f\) be a modulus of continuity for \(f\). Since
\(0<y-x<\ell_\varepsilon(x)\le \varepsilon\) for \(\pi_\varepsilon\)-a.e.\ \((x,y)\), one has
\[
\left|
\int f\,d\pi_\varepsilon - \int_0^1 f(x,x)\,dx
\right|
\le \omega_f(\varepsilon).
\]
Thus \(\pi_\varepsilon \Rightarrow \pi^\Delta\) weakly as \(\varepsilon\downarrow 0\).

Finally, \(\pi^\Delta(\overline{\mathcal E})=1\), and both of its marginals are equal to \(\lambda\).
Hence \(\pi^\Delta\in\Pi^{\overline{\mathcal E}}\), and \(J(\pi^\Delta)=1\). By the universal lower
bound, this is the minimum over \(\Pi^{\overline{\mathcal E}}\).
\end{proof}

\begin{remark}[Absolute continuity remains too restrictive]
\label{rem:closed-support-needed-density}
The preceding proposition concerns the unrestricted plan problem. If one instead restricts on
\(\overline{\mathcal E}\) to plans absolutely continuous with respect to \(\lambda\otimes\lambda\),
then nonattainment persists.

Indeed, let \(\pi(dx\,dy)=\alpha(x,y)\mathbf 1_{\overline{\mathcal E}}(x,y)\,dx\,dy\) be such a plan.
If \(J(\pi)=1\), then the lower-bound argument above forces \(\pi_1=\lambda\). But
\(\pi\ll \lambda\otimes\lambda\), while \((\lambda\otimes\lambda)(\{x=y\})=0\), so
\(\pi(\{x=y\})=0\). Since \(\pi(\overline{\mathcal E})=1\), it follows that \(\pi(\mathcal E)=1\),
hence \(y>x\) \(\pi\)-a.s. The same first-moment comparison used above then yields a contradiction.
Thus the density-restricted problem on \(\overline{\mathcal E}\) still has infimum \(1\), but no
minimizer.

The family \((\pi_\varepsilon)_{\varepsilon>0}\) constructed in the proof belongs to this restricted
class and still satisfies \(J(\pi_\varepsilon)\downarrow 1\).
\end{remark}

\ignore{

\input{fisher.tex}

}

\newpage
\bibliographystyle{alpha}
\bibliography{ref}

\end{document}